\documentclass[11pt, a4paper]{amsart}
\usepackage{amsmath,amssymb,amsthm,mathtools,wasysym,calc,verbatim,tikz,url,hyperref,mathrsfs,cite,fullpage,bbm,tikz,tikz-cd}
\usepackage[noabbrev,capitalize,nameinlink]{cleveref}
\usepackage[shortlabels]{enumitem}

\usepackage{comment}

\numberwithin{equation}{section}
\newtheorem{theorem}{Theorem}[section]
\newtheorem{lemma}[theorem]{Lemma}

\newtheorem{prop}[theorem]{Proposition}

\newtheorem{notation}[theorem]{Notation}

\newtheorem{claim}[theorem]{Claim}

\newtheorem{definition}[theorem]{Definition}

\newtheorem{problem}[theorem]{Problem}

\theoremstyle{definition}

\theoremstyle{remark}
\newtheorem{remark}[theorem]{Remark}
\newtheorem*{remark*}{Remark}

\pagestyle{plain}

% ----- Delimiters ----
\newcommand{\snorm}[1]{\lVert#1\rVert}
\newcommand{\norm}[1]{\bigg\lVert#1\bigg\rVert}

\newcommand{\imod}[1]{~\mathrm{mod}~#1}
\newcommand{\eps}{\varepsilon}

\newcommand{\mb}{\mathbb}

\newcommand{\mbm}{\mathbbm}
\newcommand{\mc}{\mathcal}

\newcommand{\mr}{\mathrm}

\newcommand{\ol}{\overline}
\newcommand{\on}{\operatorname}

\newcommand{\wh}{\widehat}
\newcommand{\wt}{\widetilde}

\begin{document}

\title{Structured extensions and multi-correlation sequences}
\author{James Leng}
\address{Department of Mathematics, UCLA, Los Angeles, CA 90095, USA}
\email{jamesleng@math.ucla.edu}

\maketitle

\begin{abstract}
We show that every multi-correlation sequence is the sum of a generalized nilsequence and a null-sequence. This proves a conjecture of N. Frantzikinakis. A key ingredient is the reduction of ergodic multidimensional inverse theorems to analogous finitary inverse theorems, offering a new approach to the structure theory of multidimensional Host-Kra factors. This reduction is proven by combining the methods of Tao (2015) with the Furstenberg correspondence principle. We also prove the analogous multidimensional finitary inverse theorem with quasi-polynomial bounds.
\end{abstract}

\section{Introduction}
Let $\mathbf{X} = (X, \mathcal{X}, \mu)$ be a probability space with $\mathcal{X}$ countably generated, $T_1, \dots, T_k: X \to X$ be commuting measure preserving transformations, and $f_0, f_1, \dots, f_k \in L^\infty(\mathbf{X})$. A \emph{$k$-fold multi-correlation} sequence is a sequence of the form
$$c(n) = \int f_0\cdot T_1^nf_1 \cdots T_k^n f_k d\mu.$$
A result of Herglotz states that if $k = 1$, then there exists a measure $\lambda$ on $\mathbb{T}$ such that
$$c(n) = \int_{\mathbb{T}} z^n d\lambda(z).$$
A question of Frantzikinakis \cite[Problem 1, Problem 2]{Fra16} asks whether a generalization of this formula exists for multi-correlation sequences. Frantzikinakis describes finding an analogous formula as being of ``fundamental importance which has been in the mind of experts for several years. A satisfactory solution is going to give us new insights and significantly improve our ability to deal with difficult questions involving multiple ergodic averages which at the moment seem out of reach." We do not claim to solve this problem, but a related one, as we shall explain later.

Concretely, as remarked by Briet and Green \cite{GB21}, one potential application of such a formula is that it gives an avenue for attack for the \emph{random Szemer\'edi theorem}. In addition, the study of such sequences has led to various multiplicative number theoretic results in \cite{FH16, TT18, Sh23}.
\subsection{Previous results}
There are several partial and negative results in this direction. Before stating these results, we require the following definitions of a nilsequence and a null-sequence.
\begin{definition}[Nilsequence]
A \textbf{nilsequence} is a sequence of the form $n \mapsto c(n)$ where $c(n) = F(g^n\Gamma)$ with $F: G/\Gamma \to \mathbb{C}$ is a smooth function on a nilmanifold $G/\Gamma$ and $g \in G$. We say that $n \mapsto c(n)$ is \textbf{degree $k$} if $G$ is at most $k$-step nilpotent, that is, the $k$-fold iterated commutator $[G, [G, \dots, [G, G]]]$ vanishes. (So $G$ being two-step implies that $[G, [G, G]] = \mathrm{Id}_G$.)
\end{definition}
\begin{definition}[Generalized nilsequence]
We define a \textbf{generalized nilsequence} as a uniform limit of nilsequences. We specify that a \text{degree $k$ generalized nilsequence} is the uniform limit of $k$-step nilsequences. 
\end{definition}
\begin{remark}
The terminology \emph{generalized nilsequence} and \emph{nilsequence} are used slightly differently across different papers in the literature. This holds similarly with the term \emph{multi-correlation sequence}. For instance, \cite{Fra16} uses ``\emph{basic nilsequence}" for what we use as a ``nilsequence" if $F$ were weakened to be merely continuous. They also define a notion of ``\emph{basic generalized nilsequence}" if $F$ were further weakened to be Riemann integrable. Also, \cite{KLMR21} permits multi-correlation (without the hyphen) sequences to be of the form
$$c(n) = \int f_0 \cdot T^{\lfloor q_1(n)\rfloor}f_1 \cdots T^{\lfloor q_k(n)\rfloor}f_k d\mu$$
where $q_1, \dots, q_k: \mathbb{N} \to \mathbb{R}$ are polynomials. Additionally, several forms of these definitions are equivalent. For instance, while it is more common to define the underlying function $F$ of a nilsequence as being \emph{continuous} rather than \emph{smooth}, they in fact lead to the same definition of \emph{generalized nilsequence} because a continuous function may be uniformly approximated by a smooth function. We thus caution the reader to carefully check these definitions and their equivalences before applying results of this paper.
\end{remark}
\begin{definition}[Null-sequence]
We define a \textbf{null-sequence} as a sequence $a:\mathbb{Z} \to \mathbb{C}$ such that
$$\limsup_{N \in \infty} \frac{1}{2N + 1} \sum_{n = -N}^N |a(n)|^2 = 0.$$
\end{definition}
A famous result of Bergelson, Host, and Kra \cite{BHK05} shows that if $T_i = T_1^i$, then one can write $c(n)$ as the sum of a generalized nilsequence and a null-sequence. Inspired by this, Frantzikinakis \cite{Fra15} shows that given $\epsilon > 0$, every multi-correlation sequence can be written as the sum of a nil-sequence and a sequence $a(n)$ such that
$$\limsup_{N \to \infty} \frac{1}{2N + 1} \sum_{n = -N}^N |a(n)|^2 \le \epsilon.$$
We henceforth refer to such a decomposition statement as a \emph{weak decomposition statement} and a version with $a$ a null-sequence a \emph{strong decomposition statement}. Frantzikinakis poses the following problem in \cite[Problem 20]{Fra16}.
\begin{problem}\label{pro:Frantzikinakisproblem}
Is it true that every multi-correlation sequence is the sum of a nilsequence and a null-sequence?
\end{problem}
This problem has been studied and restated multiple times, e.g., \cite{FH16, MR17, Mo20, LMR21, Le20, KF22a, KLMR21, DMKS21}. In \cite{Mo20}, Moragues solves the problem under the assumption that $T_i$ and $T_iT_j^{-1}$ are simultaneously ergodic. In addition, Koutsogiannis, Le, Moreira, and Richter \cite{KLMR21} (and also work by the latter three authors \cite{LMR21}), Le \cite{Le20}, Frantzikinakis-Host \cite{FH16}, and Kuca-Frantzikinakis \cite{KF22a} prove decomposition statements for sequences of the form
$$\int f_0 \cdot T_1^{\varphi_1(n)}f_1 \cdots T_k^{\varphi_k(n)}f_k d\mu$$
for various functions $\varphi_i: \mathbb{N} \to \mathbb{N}$. \cite{LMR21, FH16} proves a weak decomposition statement for $\varphi_i$ various arithmetic functions (such as the primes), \cite{KF22a} proves a strong decomposition statement for $\varphi_i$ pairwise independent polynomials, \cite{KLMR21} proves a weak decomposition theorem for $\varphi_i$ polynomial orbits of primes.

There are also negative results in this direction. Frantzikinakis, Lesigne, and Weirdl \cite{FLW12} shows that \emph{any} bounded sequence $a(n)$ can be written as
$$a(n) = \int T^n g \cdot S^n h d\mu$$
for $S$ and $T$ not necessarily commuting. Furthermore, Briet and Green \cite{GB21} constructs a sequence for $k = 2$ and $T_2 = T_1^2$ that cannot be written as the integral combination of (continuous) nilsequences. This is moreso a negative result for \cite[Problem 2]{Fra16}, which conjectures that all multi-correlation sequences can be writtten as an integral combination of generalized nilsequences, where a different definition of ``nilsequences" is used: one which permits the underlying function of the nilsequence to be Riemann integrable. We remark, though, that \cite[Problem 2]{Fra16} is still an open problem, as Briet and Green's example is explicitly an integral combination of (Riemann integrable) nilsequences.

We finally mention interesting work of Shalom \cite{Sh23} which proves via an inequality of Grothendieck, a related decomposition (and an application to partition regularity) for $k = 2$ of not-necessarily commuting transformations.
\subsection{Main results}
Our main result is an affirmative answer to Problem~\ref{pro:Frantzikinakisproblem}.
\begin{theorem}\label{thm:maintheorem}
Every $k$-fold multi-correlation sequence is the sum of a degree $k$ generalized nilsequence and a null-sequence.
\end{theorem}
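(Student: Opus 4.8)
The plan is to carry the Bergelson--Host--Kra decomposition argument (which handles the case $T_i = T_1^i$) over to $k$ commuting transformations, the essential new ingredient being a structure theorem for the characteristic factors of the averages $\frac1N \sum_{n} \prod_i T_i^n f_i$. Write $(S^{\vec m})_{\vec m \in \mathbb{Z}^k}$ for the $\mathbb{Z}^k$-action generated by $T_1, \dots, T_k$, so $c(n) = \int f_0 \cdot \prod_i S^{n e_i} f_i \, d\mu$. The first step is to pass to a characteristic factor. A repeated van der Corput (or PET-induction) estimate produces a factor $\mathcal{Z}$ of $\mathbf{X}$, built from the appropriate Host--Kra-type box seminorms for this $\mathbb{Z}^k$-system, with the property that if some $f_i$ satisfies $\mathbb{E}(f_i \mid \mathcal{Z}) = 0$ then $c(n)$ is a null-sequence: indeed $|c(n)|^2$ is itself a $k$-fold multi-correlation for the product system $\mathbf{X} \times \mathbf{X}$ and the functions $f_i \otimes \overline{f_i}$, whose Cesàro averages converge by Walsh's theorem to a quantity controlled by $\snorm{f_i}_{\square}$ (using that box seminorms multiply under tensor products), and this vanishes when $\mathbb{E}(f_i \mid \mathcal{Z}) = 0$. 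By multilinearity and telescoping we may therefore replace each $f_i$ by $\widetilde f_i := \mathbb{E}(f_i \mid \mathcal{Z})$ at the cost of a null-sequence, so it remains to analyze $d(n) := \int_{\mathcal{Z}} \widetilde f_0 \cdot \prod_i S^{n e_i} \widetilde f_i \, d\mu$.

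The crux is to show that (possibly after passing to an extension of $\mathbf{X}$, which does not affect $c(n)$) the factor $\mathcal{Z}$ is an inverse limit of degree-$\le k$ nilsystems on which $T_1, \dots, T_k$ act by commuting nilrotations. I would derive this from the multidimensional ergodic inverse theorem --- the statement that a bounded function with non-vanishing box seminorm must correlate with a degree-$\le k$ nilsequence --- via the standard Host--Kra inverse-limit argument: the nilsystem factors one extracts are increasing, and their join must exhaust $\mathcal{Z}$, since otherwise a nonzero $\mathcal{Z}$-measurable function orthogonal to all of them would have non-trivial box seminorm and hence, by the inverse theorem, correlate with a nilsequence it is orthogonal to. The multidimensional ergodic inverse theorem is where essentially all the work sits, and it is what lifts Frantzikinakis's weak decomposition to a strong one. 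I would prove it by running the Furstenberg correspondence principle in reverse: from an ergodic $\mathbb{Z}^k$-system and a function of large box seminorm, produce finitary models on large finite boxes by sampling a generic point's orbit, invoke the multidimensional finitary inverse theorem (proved separately, with quasi-polynomial bounds --- the quantitative control being what makes the limiting step manageable), and transfer the resulting finitary nilsequence back to a genuine nilsequence on $\mathbf{X}$ by an ultralimit/compactness argument, in the spirit of Tao's reduction of the ergodic Host--Kra inverse conjecture to the Green--Tao--Ziegler inverse theorem.

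Granting the structure of $\mathcal{Z}$, the conclusion follows as in the one-dimensional case. Writing $\mathcal{Z} = \varprojlim_j \mathcal{Y}_j$ with each $\mathcal{Y}_j$ a degree-$\le k$ nilsystem on which $T_i$ acts by a nilrotation $g_{i,j}$, the correlation integral computed within $\mathcal{Y}_j$ is a fixed continuous function evaluated along the orbit $(g_{1,j}^n, \dots, g_{k,j}^n)$, hence a degree-$\le k$ nilsequence $\psi_j(n)$. Since $\mathbb{E}(\widetilde f_i \mid \mathcal{Y}_j) \to \widetilde f_i$ in $L^2$ as $j \to \infty$, Cauchy--Schwarz bounds $\sup_n |d(n) - \psi_j(n)| = O\big(\max_i \snorm{\widetilde f_i - \mathbb{E}(\widetilde f_i \mid \mathcal{Y}_j)}_{L^2}\big) \to 0$, so $d$ is a uniform limit of degree-$\le k$ nilsequences, i.e.\ a degree-$k$ generalized nilsequence; adding back the null-sequence from the first step gives Theorem~\ref{thm:maintheorem}. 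The main obstacle throughout is the second step: the multidimensional characteristic factor is genuinely harder to control than the Host--Kra factor of a single transformation, and it is precisely the reverse correspondence principle together with a quantitatively effective finitary multidimensional inverse theorem that allows one to pin down its structure as an inverse limit of nilsystems.
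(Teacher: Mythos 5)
Your proposal breaks at the step where you claim that, after passing to an extension, the characteristic factor $\mathcal{Z}$ is an inverse limit of degree-$\le k$ nilsystems on which all the $T_i$ act by nilrotations. This is false for genuinely commuting transformations, and the heart of the paper's work is precisely in dealing with the fact that it is false. The correct structure (Theorem~\ref{thm:ergodicinversetheorem}) is that a suitable extension of the relevant multidimensional Host--Kra factor is of the form
\[
I(\widetilde{T_1}) \vee \cdots \vee I(\widetilde{T_j}) \vee \Xi_{j+j',\mathrm{pronil}},
\]
a join of several \emph{invariant} $\sigma$-algebras with a pro-nilfactor, not a pro-nilfactor alone. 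Correspondingly, the multidimensional inverse theorem (Theorem~\ref{thm:maininversetheorem2}) does not say that a function with large box seminorm correlates with a nilsequence; it says it correlates with a \emph{product} $\chi(n)\,f_1(n)\cdots f_\ell(n)$ where each $f_i$ is an unstructured bounded function that merely fails to depend on the $i$-th coordinate. These extra ``one-coordinate-invariant'' factors are not removable; this is the phenomenon already observed by Conze--Lesigne and formalized by Austin (see the historical discussion in the paper), and it is why Frantzikinakis's 2015 argument stopped at a weak decomposition. Your claimed inverse theorem -- ``a bounded function with non-vanishing box seminorm must correlate with a degree-$\le k$ nilsequence'' -- is the one-dimensional statement and is simply not true here.

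Consequently your final step, in which you compute $d(n)$ directly as a continuous function evaluated along a nilrotation orbit and take a uniform limit, has nothing to act on: the conditional expectation $\mathbb{E}(f_i\mid\mathcal{Z})$ is not a limit of functions on nilsystems, so the correlation integral is not a uniform limit of nilsequences by this route. The paper's actual argument has to do something more intricate to dispose of the invariant pieces: it builds a joining measure $\nu$ via Lemma~\ref{lem:joiningconstruction} so that each $f_i$ can be realized on its own extension $\mathbf{E}_i$, decomposes $f_i$ approximately as $\chi_i'\prod_{i'\neq i} b_{T_{i'}T_i^{-1}}$ with $b_S$ invariant under $S$, and then exploits the identity $T_j^n b_{T_{i'}T_j^{-1}} = T_{i'}^n b_{T_{i'}T_j^{-1}}$ to ``reshuffle'' the invariant factors onto other positions. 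This powers a downward induction on the number of unstructured positions $j$, culminating in the base case $j=0$ where every position carries a nilfactor function and Leibman-type results (Lemma~\ref{lem:Leibmanresult2}, Proposition~\ref{prop:Leibmanresult}) finish the job. Moreover, the generalized von Neumann step needs to be a \emph{twisted} one (Proposition~\ref{prop:twistedgeneralizedvonneumann}), producing control by a box seminorm whose shape $\|f_i\|_{T_i,\, T_1T_i^{-1},\dots, T_jT_i^{-1},\, \vec{T},\dots,\vec{T}}$ is exactly tailored to make the reshuffling work; a naive van der Corput bound by a symmetric box norm would not set up the induction correctly.

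Your sketch of how to prove the ergodic inverse theorem itself (correspondence principle, finitary inverse theorem with quasi-polynomial bounds, limiting argument in the spirit of Tao) is aligned with what the paper does in Section 9. But the conclusion you assign to that inverse theorem is wrong, and that error propagates fatally: without the invariant-factor bookkeeping, the argument cannot get past the first nontrivial $j$.
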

\begin{remark}
Via the argument of \cite[Proposition 4.5]{TT18}, one can upgrade this theorem to the statement that if $c(n)$ is the multi-correlation sequence and $c(n) = c_{\mathrm{nil}}(n) + c_{\mathrm{null}}(n)$ with $c_{\mathrm{nil}}(n)$ a generalized nilsequence and $c_{\mathrm{null}}(n)$ a null-sequence, then
$$\lim_{x \to \infty} \mathbb{E}_{p \le \mathbb{P} \cap [\pm x]} c_{\mathrm{null}}(p) = 0$$
where $\mathbb{P}$ denotes the primes.\footnote{We are indebted to Borys Kuca and Nikos Frantzikinakis for informing us of this application.}
\end{remark}
\begin{remark}
As this proof is rather technical, we have isolated the case of $k = 2$ in another paper \cite{Len24}.
\end{remark}
As Bergelson-Host-Kra's theorem was deduced from a structure theorem on (single dimensional) Host-Kra factors, we offer in the following result a structure theorem for multidimensional Host-Kra factors (see also Lemma~\ref{lem:joiningconstruction}), which is a key ingredient for the proof of Theorem~\ref{thm:maintheorem}. We remark that (multidimensional) Host-Kra factors are defined in Definition~\ref{def:HostKra}.
\begin{theorem}[Ergodic inverse theorem]\label{thm:ergodicinversetheorem}
Fix integers $k, j, j'$ with $0 < j + 1, j' + 1, k$ and $j \le k$. Let $(X, \mathcal{X}, \mu, \vec{T})$ be an ergodic $\mathbb{Z}^k$ system and let $\mathbf{Z} = \mathbf{Z}_{T_1, T_2, \dots, T_j, \vec{T}, \dots, \vec{T}}$ be the Host-Kra factor for the seminorm $\|\cdot \|_{T_1, T_2, \dots, T_j, \vec{T}, \dots, \vec{T}}$ with $j' + 1$ copies of $\vec{T}$. Then $\mathbf{Z}$ admits an extension to an ergodic system $(\widetilde{X}, \widetilde{\mathcal{X}}, \widetilde{\mu}, \widetilde{\vec{T}})$ with
$$\widetilde{\mathcal{X}} = I(\widetilde{T_1}) \vee I(\widetilde{T_2}) \vee \cdots \vee I(\widetilde{T_j}) \vee \Xi_{j + j', \mathrm{pronil}}$$
where $\Xi_{j + j', \mathrm{pronil}}$ is an inverse limit of $j + j'$-step $\mathbb{Z}^k$-nilfactors and if $S \in \mathrm{Aut}(\mathbf{X})$, $I(S)$ denotes the sigma algebra of $S$-invariant sets in $\wt{\mathcal{X}}$.
\end{theorem}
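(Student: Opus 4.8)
The plan is to reduce the theorem to a finitary multidimensional inverse theorem and then transfer its conclusion back to the measure-theoretic setting via the Furstenberg correspondence principle, following the strategy indicated in the introduction (combining Tao's methods with the Furstenberg correspondence principle). It is convenient to first isolate a single-function correlation statement: for every ergodic $\mathbb{Z}^k$-system and every $f$ in its $L^\infty$ with $\|f\|_\infty\le 1$ and $\|f\|_{T_1,\dots,T_j,\vec T,\dots,\vec T}\ge\delta>0$, one wants an ergodic extension $(\widetilde X,\widetilde{\mathcal X},\widetilde\mu,\widetilde{\vec T})$ and a sub-$\sigma$-algebra $\mathcal A'=I(\widetilde T_1)\vee\cdots\vee I(\widetilde T_j)\vee\Xi'$, with $\Xi'$ a $\mathbb{Z}^k$-nilfactor of step at most $j+j'$, such that $\mathbb{E}(\widetilde f\mid\mathcal A')\neq 0$. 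Granting this, one finishes by a standard exhaustion/maximality argument over the poset of extensions of $\mathbf{Z}$ equipped with a ``join of invariant factors and a pronilfactor'' sub-$\sigma$-algebra $\mathcal A$: since $\mathbf{Z}$ is the Host-Kra factor for the seminorm, every nonzero $g\in L^\infty(\mathbf{Z})$ satisfies $\|g\|_{T_1,\dots,T_j,\vec T,\dots,\vec T}>0$, so if $\mathcal A$ did not already contain $\mathbf{Z}$ then $g-\mathbb{E}(g\mid\mathcal A)$ would have positive seminorm, and applying the correlation statement to it would enlarge $\mathcal A$ while preserving its form, a contradiction. Restricting $\widetilde{\mathcal X}$ to the maximal such $\mathcal A$ gives the theorem, the reverse inclusion being immediate since invariant functions and functions on $\mathbb{Z}^k$-nilfactors of step $\le j+j'$ are not annihilated by the seminorm.

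For the correlation statement, one first passes to a finitary model. Averaging along a Følner sequence in $\mathbb{Z}^k$ and using the pointwise ergodic theorem, $\|f\|_{T_1,\dots,T_j,\vec T,\dots,\vec T}^{2^{j+j'+1}}$ is realized as a limit over $N$ of expectations of the associated Gowers-type product of shifts of $f$ sampled along orbits. Via the correspondence principle this produces, for $N$ in a density-one set, a $1$-bounded function $f_N$ on a suitable box (or finite abelian group model) whose \emph{mixed} box norm — the box being one-dimensional in the first $j$ coordinates and $k$-dimensional in the remaining $j'+1$ — is at least $\delta/2$.

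Next one applies the multidimensional finitary inverse theorem proven in this paper (the bounded-complexity version suffices here; the paper also obtains quasi-polynomial bounds): for each such $N$ there are a filtered nilmanifold $G_N/\Gamma_N$ of step $\le j+j'$ and dimension bounded only in terms of $\delta$, a polynomial orbit, a bounded-Lipschitz $F_N$, and functions invariant in each of the $j$ special directions, giving $|\mathbb{E}\,f_N\cdot\overline{h_{1,N}}\cdots\overline{h_{j,N}}\,\overline{\psi_N}|\ge c(\delta)$ with $\psi_N$ a nilsequence of step $\le j+j'$ and bounded complexity. Because the nilmanifolds have bounded step and dimension, an ultralimit over $N$ along a nonprincipal ultrafilter yields a single $\mathbb{Z}^k$-nilsystem $\mathbf{N}$ of step $\le j+j'$. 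To transfer the limiting correlation back to the ergodic world one cannot remain on the original system; instead one forms a joining of $\mathbf{Z}$ with $\mathbf{N}$ and with the limiting invariant systems in which the limiting correlation survives (the structured-extension/joining construction; cf. Lemma~\ref{lem:joiningconstruction}) and passes to an ergodic component lying over $\mathbf{Z}$. On the resulting extension $\widetilde X$ the limiting profiles $\widetilde h_i$ are $\widetilde T_i$-invariant and the limiting nilsequence $\widetilde\psi$ is measurable with respect to the image $\Xi'$ of $\mathbf{N}$, so their product witnesses $\mathbb{E}(\widetilde f\mid\mathcal A')\neq 0$.

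I expect two places to require substantial work. The first is the mixed finitary inverse theorem with the \emph{invariant-direction factorization}: one needs concatenation-type theorems to absorb the $j'+1$ copies of $\vec T$ into a genuine higher-degree uniformity statement, together with an analysis of the $j$ once-counted directions showing that the only obstruction to uniformity along such a direction is invariance in it (each such direction raising the effective step by one). The second — and I believe this is the crux — is the finitary-to-ergodic transfer, i.e. the ``structured extension'' construction itself: it must simultaneously realize the ultralimit of bounded-complexity finitary nilsequences as a genuine ergodic $\mathbb{Z}^k$-nilsystem of the correct step, keep the joinings ergodic (via ergodic components over $\mathbf{Z}$) as the maximality argument iterates, and ultimately yield an extension whose \emph{entire} $\sigma$-algebra is exactly the stated join rather than merely a system containing such a factor. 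Managing this interplay between the finitary inverse theorem, the correspondence principle, and the joining machinery is where the main difficulty lies.
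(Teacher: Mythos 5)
Your overall strategy---reduce to the finitary inverse theorem and transfer back via the Furstenberg correspondence principle---matches the paper's in broad outline, but you structure the argument very differently and there are genuine gaps in your route. The paper does not prove a single-function correlation statement and then iterate to saturation. Instead it observes that $L^2(\mathbf Z)$ is generated by countably many dual functions $\mathcal{D}f_n$, and constructs the entire extension $(\mathbf Y,\vec S)$ in one step by exhibiting a unital homomorphism $\pi\colon L^\infty(\mathbf Z)\to L^\infty(\mathbf Y)$ intertwining the actions and pushing forward the measure; concretely, by finding $\widetilde f_n\in L^\infty(\mathbf Y)$ reproducing all joint polynomial moments of the $\vec T^h\mathcal{D}f_n$. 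Your exhaustion/maximality scheme replaces this with an iteration over extensions, which raises unaddressed coherence issues: each application of the correlation statement produces a fresh extension, so you must organize a poset of (extension, sub-$\sigma$-algebra) pairs, verify that chains have ergodic upper bounds under inverse limits, and check that the ``join of invariant factors and a pronilfactor'' form is stable under pulling $\mathcal A$ back and joining with new invariant/nilfactor data. None of this is spelled out, and the paper's simultaneous construction avoids it entirely; in addition, the reference to Lemma~\ref{lem:joiningconstruction} is misplaced here, as that joining construction is used in the proof of Theorem~\ref{thm:maintheorem}, not in the construction of the extension in Theorem~\ref{thm:ergodicinversetheorem}.

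The more fundamental gap is in the finitary-to-ergodic transfer itself. To build $\mathbf Y$ via the correspondence principle one needs the structured finitary approximations $\chi_{m,n,q}$ to the orbit functions $h\mapsto\mathcal{D}_{N_{n,m}}f_n(\vec T^hx)$ to be close in $L^2[\pm H]^k$ for \emph{every} scale $H$, not merely at the single scale $N_{n,m}$ where the regularity lemma was applied; without uniformity over $H$, the weak-star limit of the Birkhoff averages $\nu_p=\mathbb E_{h\in[\pm H_p]^k}\delta_{\vec S^h x_0}$ fails to reproduce the correct moments and the extension does not model $\mathbf Z$. The paper achieves this scale-uniform control through a nontrivial chain of arguments using the maximal ergodic theorem and the Hardy--Littlewood maximal inequality (Steps~2 and~3), isolating a generic $x$ for which the approximation holds uniformly in $H$. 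Your sketch invokes the pointwise ergodic theorem and the correspondence principle but shows no awareness of the scale-uniformity obstacle, and I would flag this as the step where a literal implementation of your outline breaks. You rightly identify the finitary-to-ergodic transfer as ``the crux''---that instinct is correct---but the proposal does not supply the tools needed to carry it out. (As a minor point, your ultralimit step could be replaced, as the paper does, by a classical Arzel\`a--Ascoli and coefficient-convergence argument on the space of bounded-complexity nilmanifolds, which is somewhat less abstract and avoids interpreting a nonstandard nilsystem.)
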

\begin{remark}
This theorem resembles several theorems of Austin in \cite{A09, A10b, A10, A15, A15b}, particularly that of \cite[Theorem 1.3]{A10}, where the ``directional CL factor" considered there roughly corresponds to the case of $j' = 1$, $j = 2$, and $k = 2$ in our theorem (though we stress that Austin arrives at the ``directional CL factors" considered there by considering different directional seminorms).
\end{remark}
\begin{remark}
Our proof of this theorem uses the norm convergence of multiple ergodic averages \cite{A09, Tao08}. This is perhaps not necessary; it is plausible with a bit more work that one can obtain a new proof of the norm convergence multiple ergodic averages using our method.
\end{remark}
We shall deduce the above theorem from the following finitary inverse theorem. See Section 2 below for notation.
\begin{theorem}\label{thm:maininversetheorem2}
Let $\delta > 0$, $k, \ell, \ell', K$ be integers with $1 \le \ell' + 1, k, \ell - 1 \le K$, and $f: [N]^k \to S^1$ be such that 
$$\|f\|_{U([N]^k, \dots, [N]^k, e_1[N], \dots, e_\ell[N])} \ge \delta$$
with $\ell' + 1$ copies of $[N]^k$. Then there exist one-bounded functions $f_1, \dots, f_\ell$ with $f_i$ not depending on coordinate $i$, and a degree $\ell + \ell'$ nilsequence $\chi$ of dimension $m$ and complexity $M$ such that
$$|\mathbb{E}_{n \in [N]^k} f(n)\chi(n)f_1(n)\cdots f_\ell(n)| \ge \epsilon$$
where
$$\epsilon^{-1}, M \le \exp(\log(1/\delta)^{O_K(1)}), m \le \log(1/\delta)^{O_K(1)}.$$
\end{theorem}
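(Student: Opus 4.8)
The plan is to prove this multidimensional Gowers inverse theorem by combining the standard one‑dimensional quasipolynomial inverse theorem for the $U^s$‑norms (Green--Tao--Ziegler, with the quasipolynomial bounds of recent work) with a box‑norm / Cauchy--Schwarz argument that trades one "wrong" direction $e_i[N]$ at a time for a function not depending on coordinate $i$. First I would set up notation: the Gowers box norm $\|f\|_{U(Q_1,\dots,Q_t)}$ over a product of boxes, where $\ell'+1$ of the $Q$'s are the full cube $[N]^k$ and $\ell$ of them are the one‑dimensional directions $e_1[N],\dots,e_\ell[N]$. The key structural observation is that a box norm involving a one‑dimensional direction $e_i[N]$ is controlled by a "dual" function that is automatically independent of coordinate $i$; this is the mechanism that will produce the functions $f_1,\dots,f_\ell$ in the conclusion.

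The main steps, in order. \emph{Step 1 (peeling off the degenerate directions).} If $\|f\|_{U([N]^k,\dots,[N]^k,e_1[N],\dots,e_\ell[N])}\ge\delta$ with $\ell'+1$ full cubes, then by the defining identity for box norms, averaging over the "difference" parameters in the directions $e_1,\dots,e_\ell$ and applying Cauchy--Schwarz, I can find (a random choice of) shifts so that a weighted average of $f$ against a product of $\ell$ functions $g_1,\dots,g_\ell$, with $g_i$ independent of coordinate $i$ and one‑bounded, has $U([N]^k,\dots,[N]^k)$‑norm (with $\ell'+1$ cubes, i.e.\ the genuine $(\ell')$‑th Gowers norm of $f$ on $\mathbb{Z}^k$, up to the twist by the $g_i$) at least a quantity polynomial in $\delta$. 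Concretely one iterates the identity
$$\|f\|_{U(Q,\widetilde Q)}^{2}=\mathbb{E}_{h}\,\|\Delta_h f\|_{U(\widetilde Q)}^{?}$$
in the directional form, $\ell$ times, once per degenerate direction, absorbing the conjugates $\overline{f}$ into the $g_i$'s. \emph{Step 2 (one‑dimensional inverse theorem fibrewise or globally).} Now I have a one‑bounded function on $[N]^k$ whose ordinary degree‑$\ell'$ Gowers--Host--Kra box norm over $([N]^k)^{\ell'+1}$ is $\gtrsim\mathrm{poly}(\delta)$. Viewing $[N]^k\subset\mathbb{Z}^k$ and unwinding that this box norm is exactly the $U^{\ell'+1}(\mathbb{Z}^k)$‑type norm, I apply the known quasipolynomial $U^{s}$ inverse theorem over $\mathbb{Z}^k$ (equivalently over $\mathbb{Z}/\widetilde N\mathbb{Z}$ after a standard embedding, $\widetilde N\asymp N$) to obtain correlation with a degree $\ell'$ nilsequence on $\mathbb{Z}^k$ of dimension $\mathrm{poly}(\log(1/\delta))$ and complexity $\exp(\mathrm{poly}(\log(1/\delta)))$. \emph{Step 3 (merging degrees).} The product of this degree‑$\ell'$ nilsequence $\chi'$ with the functions $g_1,\dots,g_\ell$ coming from Step 1 is not yet of the required form, because the $g_i$ are arbitrary one‑bounded functions, not nilsequences. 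So I reverse the logic: I keep the $g_i=f_i$ as the promised coordinate‑independent functions, and I must still produce the nilsequence $\chi$ of degree $\ell+\ell'$. The point is that each application of the difference operator $\Delta_{h_i}$ in direction $e_i$ in Step 1, when it lands on a degree‑$d$ nilsequence, produces a degree‑$(d{-}1)$ object with a \emph{linear} dependence on the shift $h_i$; running this $\ell$ times and then invoking a quantitative Leibman/Host--Kra‑type lemma shows that correlation of $f$ with $\chi'\cdot f_1\cdots f_\ell$ can be promoted to correlation of $f$ itself with a degree $\ell'+\ell$ nilsequence $\chi$ (the extra $\ell$ degrees absorb the $\ell$ linear twists), at the cost of only polynomial losses in $\delta$ and polynomial losses in dimension and complexity. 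This is a finitary, quantitative version of the standard "a function correlating with $F(g(n)\Gamma)h_1(n)\cdots h_\ell(n)$ where each $h_i$ has a structured derivative correlates with a higher‑step nilsequence" argument.

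The hard part will be Step 3: making the passage from "$f$ correlates with $\chi'\cdot f_1\cdots f_\ell$ where $f_i$ is coordinate‑$i$‑independent and $\chi'$ has degree $\ell'$" to "$f$ correlates with a single degree $\ell'+\ell$ nilsequence times coordinate‑independent functions" effective with quasipolynomial bounds, uniformly in all the parameters $k,\ell,\ell',K$. In the infinitary/ergodic world this is exactly the content of Theorem~\ref{thm:ergodicinversetheorem} and the Host--Kra machinery for multidimensional seminorms; here one needs a hands‑on finitary substitute, presumably via an induction on $\ell$ where at each stage one applies the one‑dimensional $U^s$ inverse theorem along the fibres in direction $e_i$, uses the nilsequence structure to reconstruct a global nilsequence of one higher degree (a finitary "integration" step, where the cocycle straightening lemmas of Green--Tao or Manners, with quasipolynomial bounds, are the right tool), and tracks dimension $m$ additively and complexity $M$ multiplicatively so that after $\ell\le K$ steps the bounds remain $m\le\log(1/\delta)^{O_K(1)}$ and $M,\epsilon^{-1}\le\exp(\log(1/\delta)^{O_K(1)})$. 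The bookkeeping of how the box norm over a mixed product $([N]^k,\dots,[N]^k,e_1[N],\dots,e_\ell[N])$ decomposes — i.e.\ that it is genuinely a Gowers--Cauchy--Schwarz complexity $\ell+\ell'$ object — is the other place where care is needed, but it is routine once the degenerate‑direction identity is written out correctly.
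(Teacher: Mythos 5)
There is a genuine gap, and it occurs in two places.

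\textbf{Step 1 is not a valid reduction.} Differencing $f$ in a degenerate direction $e_i[N]$ produces $\Delta_{(he_i,h'e_i)}f(x) = f(x+he_i)\overline{f(x+h'e_i)}$, a product of two shifts of $f$ both of which depend on \emph{all} coordinates of $x$; this is not $f$ twisted by a coordinate-$i$-independent factor. After iterating over all $\ell$ degenerate directions you arrive at the statement ``for many shift parameters $\vec h$, the derivative $\Delta_{\vec h}f$ (a $2^\ell$-fold product of shifted copies of $f$) has large $U^{\ell'+1}$ norm.'' This is \emph{not} equivalent to ``some twist of $f$ by coordinate-independent functions has large $U^{\ell'+1}$ norm,'' and the latter is simply false: take $f(x)=e(P(x))$ with $P$ of genuine degree $\ell+\ell'$ in $x_1$. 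The box norm hypothesis holds, but $f\cdot g_1\cdots g_\ell$ with each $g_i$ coordinate-$i$-independent cannot have large $U^{\ell'+1}$ norm unless the $g_i$ cancel the $x_1$-dependent part of $P$ --- which they cannot, since $g_1$ (the only one allowed to not depend on $x_1$ yet depend on everything else) would have to encode a degree-$(\ell+\ell')$ polynomial in $x_1$, i.e.\ depend on $x_1$. So the black-box $U^{\ell'+1}$ inverse theorem in Step 2 never applies to the object you actually have.

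\textbf{Step 3 is where the entire content of the theorem lives, and the proposal does not engage with it.} After the differencing, the object you must deal with is a family of degree-$\ell'$ nilsequences $\chi_{\vec h}$ indexed by $\vec h\in[N]^k$, and you must prove that this family has a bracket-linear structure in $\vec h$ that allows reassembly into a single degree-$(\ell+\ell')$ nilsequence. This is precisely what Theorem~\ref{thm:nilsequenceconstruction} and Proposition~\ref{prop:degreerankextraction} establish, and the proof occupies Sections 5--7: it requires a multidimensional sunflower lemma with the Furstenberg--Weiss commutator argument (Lemma~\ref{lem:sunflower}), a linearization step built on a multidimensional approximate-homomorphism lemma (Lemma~\ref{lem:approximate}), multidimensional Taylor coefficients (Lemma~\ref{lem:taylorexansion}, Lemma~\ref{lem:Taylor-mod}), and an explicit higher-step nilmanifold construction (the semidirect product $G_{\mathrm{Multi}}$). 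None of these is a corollary of the one-dimensional Green--Tao or Leng--Sah--Sawhney cocycle-straightening lemmas, because the shift $\vec h$ ranges over $[N]^k$ rather than $[N]$ and the Taylor data is indexed by multi-indices; the paper has to redo the argument from scratch. Saying ``the cocycle straightening lemmas of Green--Tao or Manners with quasipolynomial bounds are the right tool'' names the difficulty but does not resolve it: those tools do not exist off the shelf in this setting. (It is also worth noting that the paper's actual structure is a two-level induction: first Theorem~\ref{thm:maininversetheorem}, the $\ell'=0$ case, by induction on $\ell$, peeling off one degenerate direction at a time; then Theorem~\ref{thm:maininversetheorem2} by a separate induction on $\ell'$ with a symmetry-and-integration argument at each stage; one does not difference away all $\ell$ degenerate directions and then invoke a one-dimensional black box.)
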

\begin{remark}
Mili\'cevi\'c proved a slightly more general inverse theorem on the setting of $\mathbb{F}_p^n$ with iterated exponential bounds in \cite{Mi21b} with $e_i[N]$ are replaced with disjoint subgroups. (As Mili\'cevi\'c works in the setting of $\mathbb{F}_p^n$, the obstructions obtained there are different, namely are phase polynomials rather than nilsequences.) A small modification of our methods would give a similar result also with quasi-polynomial bounds. 
%(We will revise this article to include this change and subsequently improve Theorem~\ref{thm:maintheorem} to a multidimensional result. However, we felt that it might be helpful to have the less cumbersome one-dimensional case on record.)
\end{remark} 
The proof of Theorem~\ref{thm:maininversetheorem2} extensively uses techniques developed in \cite{GTZ11, GTZ12, Len23, LSS24b} and many ingredients we require follow proofs in \cite{LSS24b} rather closely. As such, many intermediate lemmas we prove will follow proofs of lemmas in \cite{LSS24b} essentially verbatim. 
\subsection{Historical remarks about the structure of characteristic factors}
We will now briefly explain some historical context behind Theorem~\ref{thm:ergodicinversetheorem}. We particularly wish to highlight the work of Austin \cite{A09, A10, A10b, A15, A15b}, which unfortunately has not received much follow-up. 

Let $T, T_1, T_2: X \to X$ be measure preserving transformations on a probability space $X$ with $T$ and $\vec{T} = (T_1, T_2)$ ergodic. In 1977, Furstenberg \cite{Fur77} proved that the \emph{Kronecker factor} $\mathcal{K}$, that is the maximal compact group factor, is \emph{characteristic} for the average
$$\mathbb{E}_{n \in [N]} T^n f_1 \cdot T^{2n} f_2$$
meaning that
$$\lim_{N \to \infty} \|\mathbb{E}_{n \in [N]} T^n f_1 \cdot T^{2n} f_2 - T^n \mathbb{E}(f_1|\mathcal{K}) \cdot T^{2n} \mathbb{E}(f_2 |\mathcal{K})\|_{L^2} = 0.$$
A natural followup question is to give a description of the characteristic factors of more complicated ergodic averages such as
\begin{equation}\label{eq:ergodicaverage1}
\mathbb{E}_{n \in [N]} T^n f_1 \cdot T^{2n} f_2\cdots T^{kn}f_k  
\end{equation}
\begin{equation}\label{eq:ergodicaverage2}
\mathbb{E}_{n \in [N]} T_1^n f_1 \cdot T_2^n f_2.    
\end{equation}
The former was done in work of Conze-Lesigne \cite{CL84, CL87, CL88}, Furstenberg-Weiss \cite{FW96}, Host-Kra \cite{HK05}, and Ziegler \cite{Zie07}, that described the characteristic factors as inverse limits of $k - 1$-step nilsystems. This description of these factors is analogous to the inverse theory of Gowers norms \cite{GTZ12} which states that a function has large Gowers norm if and only if it correlates with a nilsequence. Indeed, one can show that \eqref{eq:ergodicaverage1} is controlled by the \emph{Host-Kra-Gowers norm} (the ergodic cousin of the Gowers norm) in the sense that if any of the $f_i$ have zero Host-Kra-Gowers norm, then the $L^2$ limit as $N$ goes to infinity of \eqref{eq:ergodicaverage1} is zero.  

One can ask if the characteristic factor of \eqref{eq:ergodicaverage2} admits a similar description as one obtains from the \emph{box norm} $\|\cdot\|_{U(\mathbb{Z}_Nv_1, \mathbb{Z}_Nv_2)}$ with
$$\|f\|_{U(\mathbb{Z}_Nv_1, \mathbb{Z}_Nv_2)}^4 = \mathbb{E}_{x \in \mathbb{Z}_N^2} \mathbb{E}_{h_1, h_2 \in \mathbb{Z}_N} f(x)\overline{f(x + h_1v_1)f(x + h_2v_2)}f(x + h_1v_1 + h_2v_2)$$
for $v_1, v_2$ two vectors in $\mathbb{Z}^2$. By the pigeonhole principle, one see that if $\|f\|_{U(\mathbb{Z}_Nv_1, \mathbb{Z}_Nv_2)} \ge \delta$, then $f$ correlates with a product of a function invariant under translation by $v_1$ and a function invariant under translation by $v_2$. Since \eqref{eq:ergodicaverage2} is controlled by the analogous Host-Kra-Gowers box norm with $v_1 = e_1$ and $v_2 = e_2 - e_1$ at position $f_1$ and $v_1 = e_2$ and $v_2 = e_2 - e_1$ at position $f_2$, one can ask if the characteristic factor at $f_1$ is $I(\vec{T}^{v_1}) \vee I(\vec{T}^{v_2})$.

Unfortunately, this turns out not to hold; rather, the characteristic factor can be described as a (direct integral of) compact extension(s) of $I(\vec{T}^{v_1}) \vee I(\vec{T}^{v_2})$. This had been folklore since the aforementioned work of Conze and Lesigne, but was formally proven by Austin in \cite{A10b}. (As evidenced by the length of that work, especially when compared to the few pages of \cite[IV]{CL84} that prove the convergence of \eqref{eq:ergodicaverage2}, the problem of determining that characteristic factor is much more subtle than one might expect.) The precise description of Austin's characteristic factor is rather technical and involves a direct integral of homogeneous systems over the sigma algebra generated by the Vietoris topology of a compact group. Thus, we will not state the characteristic factor they obtained here.

In breakthrough work, Austin \cite{A09} proved that by taking an \emph{extension} of the system, one can ensure that the characteristic factors for \eqref{eq:ergodicaverage2} agree with $I(\vec{T}^{e_1}) \vee I(\vec{T}^{e_2 - e_1})$ at position $f_1$ and $I(\vec{T}^{e_2}) \vee I(\vec{T}^{e_2 - e_1})$ at position $f_2$. (Austin also proved an analogue of this result for $n$ commuting transformations.) As box norms are a special case of multidimensional Gowers norms, Austin's vision is clear: one can perhaps get characteristic factors which are amenable with (conjectured) inverse theorems for finitary multidimensional Gowers norms if one took an extension of the system. Indeed, \cite{A10, A15, A15b} carries out this vision in the specific case of the average
\[\mathbb{E}_{n \in [N]} T^{np_1}f_1 \cdot T^{np_2} f_2 \cdot T^{np_3} f_3\]
with $p_1, p_2, p_3$ integer vectors in $\mathbb{Z}^2$. 

It should be noted that the idea of taking an extension of the system leading to more structured characteristic factors (along with the term \emph{characteristic factor}) first appeared in the aforementioned work of Furstenberg and Weiss, where they showed that by taking a suitable extension of the system, one can take the homogeneous space $G/H$ coming from a compact extension (i.e., Furstenberg-Mackey-Zimmer theory) of the Kronecker factor to simply be a compact group $G'$. Their construction was later generalized to the non-ergodic setting by Austin in \cite{A10, A15} and is an essential component of the ``FIS extensions" considered there.

We conclude this subsection with one further technical point about the \emph{very remarkable} \cite{A15, A10}. The work of Austin involves significant cohomological difficulties in addition to dealing with fiberwise homogeneous spaces. It is likely that if Austin's methods were adapted to deal with the ergodic inverse theorems considered in this paper, this would similarly involve dealing with direct integrals of fiberwise spaces as well as the cohomological difficulties involved solving ``directional Conze-Lesigne-type" equations. 
\subsection{Discussion of Theorem~\ref{thm:ergodicinversetheorem}}
The purposes of Theorem~\ref{thm:ergodicinversetheorem} are twofold. First, we emphasize results for multidimensional Host-Kra factors rather than directly prove results for characteristic factors of ergodic averages. The reason is that multidimensional Host-Kra factors encompass characteristic factors for a large number of ergodic averages since one can control a large number of ergodic averages by multidimensional Host-Kra-Gowers norms. As such, Theorem~\ref{thm:ergodicinversetheorem} is more amenable to use directly than Austin's theorems in \cite{A09, A10, A15, A15b}. Our hope is that the formulation of Theorem~\ref{thm:ergodicinversetheorem} will lead to a more streamlined structure theory of characteristic factors for multidimensional ergodic averages.

Second, we give a different approach to obtaining structural information of characteristic factors; instead of taking an extension of the original system, we take an extension of the Host-Kra factor. Our approach is inspired by \cite{Tao15} and is proved by transferring a sufficient finitary inverse theorem to the ergodic setting. This avoids much of the potential difficulty in fiberwise homogeneous spaces and measurable group cohomology when proving our ergodic inverse theorem. However, we do not claim an ergodic inverse theorem analogous to \cite[Theorem 1.2]{A15b}. Instead, we suggest that finitary methods could be used to alleviate difficulties in the infinitary setting, especially when used in conjunction with infinitary methods. We hope to revisit this in future work.

The proof of Theorem~\ref{thm:ergodicinversetheorem} is a synthesis of Tao's argument in \cite{Tao15} (which to our knowledge is the first work deducing an ergodic inverse theorem from a finitary inverse theorem) and the Furstenberg correspondence principle. A key ingredient is the use various maximal inequalities such as the ergodic maximal inequality and Hardy-Littlewood maximal inequality to transfer to finitary functions. We offer a brief sketch here. One can show that the Host-Kra factors are generated by \emph{dual functions} of the form
$$\mathcal{D}(f) = \lim_{N \to \infty} \mathbb{E}_{n_1, \dots, n_{\ell'+ 1} \in [N]^k} \mathbb{E}_{h_1, \dots, h_\ell \in [N]} \prod_{\omega \in \{0, 1\}^{\ell' + \ell + 1} \setminus \{0\}} C^{|\omega|}\vec{T}^{\omega \cdot (n_1, \dots, n_{\ell' + 1}, h_1e_1, \dots, h_\ell e_\ell)}f$$
where $C$ denotes the conjugation operator and $f \in L^\infty(\mathbf{X})$. Since $\mathcal{X}$ is countably generated, we can restrict our attention to countably many $\mathcal{D}(f_n)$. The problem then becomes trying to find a system $(\mathbf{Y}, \vec{S})$ and functions $\widetilde{f}_i$ in $L^\infty(\mathbf{Y})$ such that which ``model" $\mathcal{D}(f_n)$, that is, for any multivariate polynomial $P$ and shifts $h_1, \dots, h_n$,
\begin{equation}
\int P(\vec{T}^{h_1}\mathcal{D}f_{k_1}, \dots, \vec{T}^{h_n}\mathcal{D}f_{k_n}) d\mu_{\mathbf{Z}} = \int P(\vec{S}^{h_1}\tilde{f}_{k_1}, \dots, \vec{S}^{h_n}\tilde{f}_{k_n}) d\mu_{\mathbf{Y}}.    
\end{equation}
(Here, $\mathbf{Z}$ denotes the Host-Kra factor.) Now consider the function for fixed generic $x$,
$$\mathcal{D}_N(f_n)(x) := \mathbb{E}_{n_1, \dots, n_{\ell'+ 1} \in [N]^k} \mathbb{E}_{h_1, \dots, h_\ell \in [N]} \prod_{\omega \in \{0, 1\}^{\ell' + \ell + 1} \setminus \{0\}} C^{|\omega|}\vec{T}^{\omega \cdot (n_1, \dots, n_{\ell' + 1}, h_1e_1, \dots, h_\ell e_\ell)}f_n(x).$$
This is anti-uniform in the $U([N]^k, \dots, [N]^k, e_1[N], \dots, e_\ell [N])$ norm, in that any function which correlates with $h \mapsto \vec{T}^h\mathcal{D}_N(f_n)(x)$ has large $U([N]^k, \dots, [N]^k, e_1[N], \dots, e_\ell [N])$ norm. Hence, we can approximate $h \mapsto \vec{T}^h\mathcal{D}_N(f_n)(x)$ via the arithmetic regularity lemma by a sum of structured functions coming from Theorem~\ref{thm:maininversetheorem2}, that is, functions of the form
$$\chi_{x}(h) = \sum_{i = 1}^{D} (f_1)^i_{x}(h)\cdots (f_\ell)^i_{x}(h)F^i_{x}(g_{x}^i(h)\Gamma^i_x)$$
such that $F^i_{m, n, \ell, x}(g_{m, n, \ell, x}(h)\Gamma)$ is a nilsequence and $f_j$ does not depend on the $j$th component. Now, through the use of various pointwise estimates (i.e., maximal inequalities), there exists $x$ such that one can approximately model the $L^2$ limit $\mathcal{D}(f_n)$ by a family of $h \mapsto \vec{T}^h\mathcal{D}_N(f_n)(x)$ (with the approximations getting better as $N$ increases) and thus also model them by $\chi_x$ (again, with better approximations as $N$ tends to infinity). It then suffices to consider what happens to $\chi_x$ as $N$ goes to infinity. By taking successive products of the underlying nilmanifolds, the nilsequence part $F^i_x(g_x^i(h)\Gamma^i_x)$ becomes a ``pro-nilsequence" as $N$ to infinity, and an application of the Furstenberg correspondence principle models $(f_j)^i_x$ on functions invariant under $S_j$; taking a limit as $N$ goes to infinity does not affect our ability to use the Furstenberg correspondence principle as we can simply take an infinite product of the Furstenberg system obtained at each finitary scale. This is our chief innovation in the proof of Theorem~\ref{thm:ergodicinversetheorem} given Theorem~\ref{thm:maininversetheorem2}.

Using our methods, it is possible to deduce a similar ergodic inverse theorem for any Host-Kra factor, given an appropriate finitary inverse theorem. Since we are able to deduce a large class of finitary inverse theorems via Theorem~\ref{thm:maininversetheorem2}, we are able to deduce a large class of new ergodic inverse theorems not covered in the work of Austin.\footnote{It should also be noted that Austin's results are not stated in terms of the Host-Kra factors, but rather, factors arising from Furstenberg self-joinings and hence the inverse theorems for Host-Kra factors relevant to Austin's work \cite{A10, A15, A15b} are open though the relevant structural extension result for the Host-Kra analogue of \cite{A09} was proven by Host in \cite{Ho09}.}

\subsection{Discussion of the proof of Theorem~\ref{thm:maininversetheorem2}}
Our proof of Theorem~\ref{thm:maininversetheorem2} proceeds inductively. First, we prove Theorem~\ref{thm:maininversetheorem} and then using a similar approach to \cite{GTZ12} to upgrade this to Theorem~\ref{thm:maininversetheorem2}. Both of these approaches require a structure theorem for ``1\% additive quadruples attached to multidimensional nilsequences", which we provide in Theorem~\ref{thm:nilsequenceconstruction}. This theorem builds on work of \cite{GTZ12, LSS24b} (and is hence why we use their techniques extensively). This differs from the approach of \cite{Mi21b} since the approach there aims to give a result for ``multidimensional multilinear Freiman homomorphisms" which we avoid needing to do by arguing inductively. 

\subsection{Further directions}
A multidimensional variant of Theorem~\ref{thm:maintheorem} should be rather straightforward from methods of this paper. One would require the aforementioned small modifications to Theorem~\ref{thm:maininversetheorem2}. However, our inverse theorem is unlikely to be sufficient for polynomial multi-correlation sequences. In future work, we hope to simplify and extend Austin's work \cite{A10b, A15}.
\subsection{Organization of the paper}
Section 2 will contain the notation and conventions of the paper. To prove Theorem~\ref{thm:maininversetheorem2}, we will require Theorem~\ref{thm:maininversetheorem}. This will be stated in Section 3. Sections 4 and 5 will be devoted to proving Theorem~\ref{thm:maininversetheorem} assuming a key ingredient, Theorem~\ref{thm:nilsequenceconstruction}. This theorem will be proven in Section 7. Sections 6 and 8 will be devoted to proving Theorem~\ref{thm:maininversetheorem2}. Finally, in Section 9, we prove Theorem~\ref{thm:ergodicinversetheorem} and in Section 10, we prove Theorem~\ref{thm:maintheorem}. The paper is structured in a way that Sections 3-8 (the ``combinatorial portion") and 9-10 (the ``ergodic portion") can be read independently.

In Appendix A, we detail a few auxiliary lemmas used in the proof and in Appendix B, we will state a few other auxiliary lemmas in additive combinatorics. Finally, in Appendix C, we detail how given a structured extension result as in Theorem~\ref{thm:ergodicinversetheorem}, one can deduce a pleasant extensions result as in \cite{A09}.

\subsection{Acknowledgements}
We would like to thank Terry Tao for advisement, Tim Austin for comments and numerous discussions and encouragement regarding his papers \cite{A09, A10, A10b, A15, A15b}, Borys Kuca for helpful comments and encouragement, especially during the preparation of Section~\ref{sec:structuredtopleasant}, Nikos Frantzikinakis, Bryna Kra, and Mehtaab Sawhney for comments. The author was supported by a UCLA dissertation year fellowship and Terry Tao's NSF Grant No. 2347850 while this research was conducted.

\section{Notation and preliminaries}
As the proof of Theorem~\ref{thm:maininversetheorem2} uses extensively the techniques of \cite{GTZ11, GTZ12, Len23, LSS24b}, we require much of the notation in \cite{LSS24b}. Some notation there is standard, such as notions of dimension and complexity of a nilmanifold. We will not restate these notions here and refer the reader to (e.g.,) \cite[Section 2]{Len23b}, \cite[Section 3]{LSS24b}, and \cite{GT12}. However, some are not as standardized, notably the \emph{degree-rank filtration}, and we will restate them here. As this section is rather extensive, we remark that the reader interested only in the ergodic portion can skip to Section 2.6.
\subsection{Basic notation}
We work with the following.
\begin{itemize}
    \item We denote $[N] = \{0, \dots, N - 1\}$.
    \item We denote $[\pm N] = \{-(N - 1), \dots, 0, \dots, N - 1\}$.
    \item We denote $\mathbb{E}_{a \in A} = \frac{1}{|A|} \sum_{a \in A}$.
    \item A one-bounded function denotes a function whose absolute value is bounded above by $1$.
    \item For a finite set $A$, we denote $L^p(A)$ to be the $L^p$ norm with respect to the \textbf{normalized} counting measure on $A$. For example,
    $$\|f\|_{L^2(A)}^2 = \mathbb{E}_{a \in A} |f(a)|^2.$$
    \item For a finite set $A$, we denote $\ell^p(A)$ to be the $\ell^p$ norm with respect to the \textbf{unnormalized} counting measure on $A$. For example,
    $$\|f\|_{\ell^2(A)}^2 = \sum_{a \in A} |f(a)|^2.$$
\end{itemize}

\subsection{Filtrations}
The following section essentially follows \cite[Section 2.2]{LSS24b} verbatim.
\begin{definition}\label{def:arb-filtration}
An \textbf{ordering} $I = (I,\preceq,+,0)$ is a set $I$ with a distinguished element $0$, binary operation $+\colon I\times I\to I$, and a partial order $\preceq$ on $I$ such that   
\begin{itemize}
    \item $+$ is associative and commutative with $0$ acting as an identity element;
    \item $\preceq$ has $0$ as the minimal element;
    \item For all $i,j,k\in I$, if $i\preceq j$ then $i+k\preceq j+k$;
    \item The initial segments $\{i\in I\colon i\preceq d\}$ are finite for all $d$.
\end{itemize}
We define the following three orderings, with addition being the standard addition:
\begin{itemize}
        \item The \textbf{degree} ordering is given by the standard ordering on $\mb{N}$, denoted $I=\mb{N}$ for short;
        \item The \textbf{degree-rank} ordering is given by $\{(d,r)\in\mb{N}^2\colon 0\le r\le d\}$ with the ordering that $(d',r')\preceq(d,r)$ if $d'<d$ or $d' = d$ and $r'\le r$, denoted $I=\mr{DR}$ for short;
        \item The \textbf{multidegree} ordering is given by $\mb{N}^k$ with $(i_1',\ldots,i_k')\preceq(i_1,\ldots,i_k)$ when $i_j'\le i_j$ for all $1\le j\le k$, denoted $I=\mb{N}^k$ for short;
        \item The \textbf{multidegree-rank} ordering is given by $\mb{N}^{k + 1}$ with $\{(d_1, \dots, d_k, r): 0 \le r \le d_1 + \cdots + d_k\}$ and $(i_1', \dots, i_k', r') \preceq (i_1, \dots, i_k, r)$ when $i_j' < i_j$ for all $1 \le j \le k$ or if $i_j' = i_j$ for all $1 \le j \le k$, and $r' \le r$. We denote this as $I = \mr{MDR}$ for short.
\end{itemize}
An \textbf{$I$-filtration of $G$} is a collection of subgroups $G_I=(G_i)_{i\in I}$ such that $G_0=G$ and:
\begin{itemize}
    \item (Nesting) If $i,j\in I$ are such that $i\preceq j$ then $G_i\geqslant G_j$;
    \item (Commutator) For $i,j\in I$, we have $[G_i,G_j]\leqslant G_{i+j}$.
\end{itemize}
We say that a filtered group $G$ has \textbf{degree $\le d$} (for $d\in I$) if $G_i$ is trivial for $i\not\preceq d$. $G$ has \textbf{degree $\subseteq J$} for a downset $J$ if $G_i$ is trivial whenever $i\notin J$.
\end{definition}
\begin{remark}
The multidegree-rank filtration is rarely used and not strictly necessary for our paper, only being used in Lemma~\ref{lem:splitdegreerank}. We included it because it lead to a slightly less cumbersome statement of that lemma and because it was a natural extension of the \emph{degree-rank} filtration.
\end{remark}
Note that the commutator condition implies nested subgroups are normal within each other. We next define degree, degree-rank, and multidegree filtrations.
\begin{definition}\label{def:filt-prec}
Given $d\in\mb{N}$, we say a group $G$ is given a \textbf{degree filtration of degree $d$} if:
\begin{itemize}
    \item $G$ is given a $\mb{N}$-filtration $(G_i)_{i\in\mb{N}}$ with degree $\le d$;
    \item $G_0 = G_1$.
\end{itemize}
Given $(d,r)\in\mb{N}^2$ with $0\le r\le d$, $G$ is given a \textbf{degree-rank filtration of degree-rank $(d,r)$} if:
\begin{itemize}
    \item $G$ is given a $\mr{DR}$-filtration $(G_i)_{i\in\mr{DR}}$ with degree $\le(d,r)$;
    \item $G_{(0,0)}=G_{(1,0)}$ and $G_{(i,0)}=G_{(i,1)}$ for $i\ge 1$. (We also let $G_{(i,j)}=G_{(i+1,0)}$ for $j>i$.)
\end{itemize}
The associated degree filtration with respect to this degree-rank filtration is $(G_{(i,0)})_{i\ge 0}$.

Given $(d_1,\ldots,d_k)\in\mb{N}^k$, $G$ is given a \textbf{multidegree filtration of multidegree $J$} (where $J\subseteq\mb{N}^k$ is a downset) if:
\begin{itemize}
    \item $G$ is given a $\mb{N}^k$-filtration $(G_i)_{i\in\mb{N}^k}$ with degree $\subseteq J$;
    \item $G_{\vec{0}} = \bigvee_{i=1}^k G_{\vec{e_i}}$.
\end{itemize}
The associated degree filtration with respect to the multidegree filtration is $(\bigvee_{|\vec{i}|=i}G_{\vec{i}})_{i\ge 0}$. Here $|\vec{i}| = i_1 + \ldots + i_k$. For a \textbf{multidegree-rank filtration $(J, r)$} with $J \subset \mb{N}^k$ a downset, we specify
\begin{itemize}
    \item $G$ is given a $\mr{MDR}$ filtration, $(G_{i})_{i \in \mr{MDR}}$ with degree-rank $\subseteq \bigcup_{(d_1, \dots, d_k) \in J \text{ maximal}} \{(d_1, \dots, d_k, r)\}$.
    \item $G_{(\vec{0}, 0)} = \bigvee_{i = 1}^k G_{(\vec{e_i}, 0)}$ and $G_{(\vec{v}, 0)} = G_{(\vec{v}, 1)}$ for all $|\vec{v}| \ge 1$.
    \item (For $\vec{d} = (d_1, \dots, d_k)$, we also set $G_{(d_1, \dots, d_k, r')} = \bigvee_{i = 1}^k G_{(\vec{d} + e_i, 0)}$ for $r' > d_1 + \cdots + d_k$.)
\end{itemize}
\end{definition}

We now define polynomial sequences of an $I$-filtered group. The notion of a polynomial sequence for a group $G$ given a degree-rank filtration will be the same as treating this ordering as a $\mr{DR}$-filtration; the same applies for degree and multidegree filtrations. 
\begin{definition}\label{def:I-polynomial}
Given $g\colon H\to G$ a map between groups (not necessarily a homomorphism) and $h\in H$, we define the derivative $\partial_hg\colon H\to G$ via $\partial_hg(n)=g(hn)g(n)^{-1}$ for all $n\in H$. If $H,G$ are $I$-filtered, we say that this map $g$ is \textbf{polynomial} if for all $m\ge 0$ and $i_1,\ldots,i_m\in I$, we have
\[\partial_{h_1}\cdots\partial_{h_m}g(n)\in G_{i_1+\cdots+i_m}\]
for all choices of $h_j\in H_{i_j}$ and $n\in H_0$. The space of all polynomial maps with respect to this data is denoted $\on{poly}(H_I\to G_I)$.
\end{definition}

We will require various general properties of polynomial sequences established in \cite[Appendix~B]{GTZ12}. We will only consider $H=\mb{Z}^k$ for $k\ge 1$ and the following $I$-filtrations on $H$.

\begin{definition}\label{def:Z-filtration}
We define the following filtrations on $H=\mb{Z}^k$:
\begin{itemize}
    \item The \textbf{(domain) degree filtration} is with $I=\mb{N}$ the degree ordering and $H_0=H_1=\mb{Z}^k$, and $H_i=\{0\}$ for $i\ge 2$;
    \item The \textbf{(domain) multidegree filtration} is with $I=\mb{N}^k$ the multidegree ordering, $H_{\vec{0}}=\mb{Z}^k$, $H_{\vec{e}_i}=\mb{Z}\vec{e}_i$ for $i\in[k]$, and $H_{\vec{v}}=\{0\}$ otherwise, where $\vec{e}_i$ forms the standard basis of $\mb{Z}^k$;
    \item The \textbf{(domain) degree-rank filtration} is with $I=\mr{DR}$ the degree-rank ordering and $H_{(0,0)}=H_{(1,0)}=\mb{Z}^k$ and $H_{(d,r)}=\{0\}$ otherwise.
    \item The \textbf{(domain) multidegree-rank filtration} is with $I = \mathbb{N}^{k + 1}$ the multidegree-rank ordering and $H_{(\mathbf{0}, 0)} = \mathbb{Z}^k$, $H_{(\vec{e}_i, 0)} = H_{(\vec{e_i}, 1)} = \mathbb{Z}\vec{e}_i$ for all $i \in [k]$, and $H_{(\vec{v}, r)} = \{0\}$ otherwise with $\vec{e}_i$ forming the standard basis of $\mathbb{Z}^k$.
\end{itemize}
\end{definition}

\subsection{Nilcharacters}
We also require notions of nilcharacters as developed in \cite{GTZ12}. The following definition appears as \cite[Definition 2.15]{LSS24b}. (Note that we make a distinction between vertical characters and nilcharacters in this paper; this distinction is \emph{not} made in several other papers in this area and we caution the reader regarding this point.)
\begin{definition}\label{def:vert-char}
Consider a nilmanifold $G/\Gamma$ and a function $F\colon G/\Gamma\to\mb{C}$. Given a connected, simply connected subgroup $T$ of the center $Z(G)$ which is rational (i.e., $\Gamma\cap T$ is cocompact in $T$) and a continuous homomorphism $\eta\colon T\to\mb{R}$ such that $\eta(T\cap\Gamma)\subseteq \mb{Z}$, if 
\[F(gx)=e(\eta(g))F(x)\emph{ for all }g\in T\]
we say that $F$ is a \textbf{$T$-vertical character} with \textbf{$T$-vertical frequency} $\eta$.
\end{definition}
\begin{notation}
If a filtration on $G/\Gamma$ is implicit and has a specified largest element, we shall refer to a \textbf{vertical character} as a $G_J$-vertical character and a \textbf{vertical frequency} as a $G_J$-vertical frequency, with $J$ the largest element of the filtration.
\end{notation}
The following definition appears as \cite[Definition 2.16]{LSS24b}.
\begin{definition}\label{def:nilcharacter}
A \textbf{nilcharacter} of degree $d$ and output dimension $D$ is the following data. Consider an $I$-filtered nilmanifold $G/\Gamma$ of degree $d$ such that $[G,G_d] = \mr{Id}_G$ and an $I$-filtered abelian group $H$. Let $g\in\on{poly}(H_I\to G_I)$ and consider function $F\colon G/\Gamma\to\mb{C}^D$ such that:
\begin{itemize}
    \item $\snorm{F(x)}_2 = 1$ for all $x\in G/\Gamma$ pointwise;
    \item $F(g_dx) = e(\eta(g_d))F(x)$ for all $g_d\in G_d$ where $\eta$ is some continuous homomorphism $G_d\to\mb{R}$ such that $\eta(\Gamma\cap G_d) \subseteq\mb{Z}$.
\end{itemize}
The values of the nilcharacter are given by $\chi\colon H\to\mb{C}^D$ where $\chi(n)=F(g(n)\Gamma)$ for $n\in H$.
\end{definition}
In the above definition, we highlight the condition $\snorm{F(x)}_2 = 1$. This turns out to be useful in the proof of Lemma~\ref{lem:equiv}. The below definition appears as \cite[Definition 7.3]{LSS24b}.
\begin{definition}\label{def:equiv}
We say nilcharacters $\chi,\chi'$ are \textbf{$(M,D,d)$-equivalent for multidegree $J$} (or simply $(M, d)$-equivalent if $D$ is unambiguous) if $\chi,\chi'$ have output dimensions bounded by $D$ and all coordinates of 
\[\chi\otimes \ol{\chi'}\]
can be represented as sums of at most $M$ nilsequences of multidegree $J$ such that the underlying functions of each nilsequence are $M$-Lipschitz and the underlying nilmanifolds have complexity bounded by $M$ and dimension bounded by $d$.
\end{definition}
The following lemma appears as \cite[Lemma 7.4]{LSS24b}.
\begin{lemma}\label{lem:equiv}
Given a function $f\colon\Omega\to\mb{C}^{L}$ and nilcharacters $\chi,\chi'$ which are $(M,D,d)$-equivalent for multidegree $J$, if
\[\snorm{\mb{E}_{\vec{n}\in\Omega} f(\vec{n}) \otimes \chi(\vec{n})}_{\infty}\ge\rho\]
then 
\[\snorm{\mb{E}_{\vec{n}\in\Omega} f(\vec{n}) \otimes \chi'(\vec{n}) \cdot\psi(\vec{n})}_{\infty}\ge (\rho/(MD))^{O(1)},\]
where $\psi$ can be taken to be one of the nilsequences used as part of a represention of one of the coordinates in $\chi\otimes\ol{\chi'}$. In particular, $\psi$ is a nilsequence of multidegree $J$ such that underlying nilmanifold has complexity bounded by $M$ and dimension bounded by $d$ and the underlying function has Lipschitz constant bounded by $M$.
\end{lemma}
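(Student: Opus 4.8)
The plan is to unwind the definition of $(M,D,d)$-equivalence and exploit the normalization $\snorm{F(x)}_2 = 1$ built into the definition of a nilcharacter. Write $\chi = (\chi^{(1)}, \dots, \chi^{(D)})$ and $\chi' = (\chi'^{(1)}, \dots, \chi'^{(D')})$ with $D, D' \le D$. Since $\chi'$ is a nilcharacter, the pointwise identity $\snorm{\chi'(\vec n)}_2 = 1$ holds, which is the crucial gadget: it lets us ``insert'' a resolution of the identity $\sum_{b} \overline{\chi'^{(b)}(\vec n)} \chi'^{(b)}(\vec n) = 1$ for free. Starting from the hypothesis $\snorm{\mb{E}_{\vec n \in \Omega} f(\vec n) \otimes \chi(\vec n)}_\infty \ge \rho$, fix the coordinate $a$ of $f \otimes \chi$ (i.e., a coordinate $a_1$ of $f$ and a coordinate $a_2$ of $\chi$) achieving the supremum, so $|\mb{E}_{\vec n} f^{(a_1)}(\vec n) \chi^{(a_2)}(\vec n)| \ge \rho$. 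Multiply inside the expectation by $1 = \sum_{b=1}^{D'} \overline{\chi'^{(b)}(\vec n)} \chi'^{(b)}(\vec n)$ and apply the triangle/pigeonhole argument over the $D'$ summands to find a coordinate $b$ with
$$\Bigl| \mb{E}_{\vec n \in \Omega} \bigl(f^{(a_1)}(\vec n)\, \chi'^{(b)}(\vec n)\bigr) \cdot \bigl(\chi^{(a_2)}(\vec n) \overline{\chi'^{(b)}(\vec n)}\bigr) \Bigr| \ge \rho/D'.$$
The second factor $\chi^{(a_2)} \overline{\chi'^{(b)}}$ is precisely a coordinate of $\chi \otimes \overline{\chi'}$, hence by the equivalence hypothesis it equals a sum of at most $M$ nilsequences $\psi_1, \dots, \psi_M$ of multidegree $J$, each with underlying nilmanifold of complexity $\le M$ and dimension $\le d$ and underlying function $M$-Lipschitz.

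Next I would apply pigeonhole once more, this time over the $M$ nilsequences appearing in the representation: there is some index $t \in [M]$ with
$$\Bigl| \mb{E}_{\vec n \in \Omega} \bigl(f^{(a_1)}(\vec n)\, \chi'^{(b)}(\vec n)\bigr) \cdot \psi_t(\vec n) \Bigr| \ge \rho/(MD').$$
Now bundle $g(\vec n) := f^{(a_1)}(\vec n)$ together with the rest; note $f^{(a_1)}(\vec n) \chi'^{(b)}(\vec n)$ is a coordinate of $f \otimes \chi'$ up to reindexing (one must be slightly careful: we want the conclusion phrased as $f \otimes \chi' \cdot \psi$, so we simply observe that the displayed quantity is bounded above by $\snorm{\mb{E}_{\vec n} f(\vec n) \otimes \chi'(\vec n) \cdot \psi_t(\vec n)}_\infty$, since that $\infty$-norm is a maximum over coordinates that includes the $(a_1, b)$ coordinate). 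Setting $\psi := \psi_t$ and absorbing constants, we get
$$\snorm{\mb{E}_{\vec n \in \Omega} f(\vec n) \otimes \chi'(\vec n) \cdot \psi(\vec n)}_\infty \ge \rho/(MD') \ge (\rho/(MD))^{O(1)},$$
which is the claimed bound, and $\psi$ has exactly the stated complexity, dimension, and Lipschitz control because it is one of the nilsequences furnished by the equivalence.

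The argument is essentially a double pigeonholing, so there is no deep obstacle; the only points requiring care are bookkeeping ones. First, one must make sure the ``insertion of $1$'' step is legitimate — this is exactly where the pointwise $\ell^2$-normalization $\snorm{\chi'(\vec n)}_2 = 1$ from Definition~\ref{def:nilcharacter} is used, and it is the reason that condition was flagged as useful there. Second, one must keep track of which tensor coordinate is which so that the final expression is genuinely of the form $f \otimes \chi' \cdot \psi$ with the \emph{same} $f$ as in the hypothesis (not a modified function); this works because multiplying by the scalar nilsequence $\psi$ and by the coordinate function $\chi'^{(b)}$ does not touch $f$, and passing from a single coordinate to the $\infty$-norm over all coordinates only helps. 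Finally, one should check the exponent: each pigeonhole costs a factor of $D'$ or $M$, both $\le \max(M, D)$, and we lose no powers of $\rho$, so the stated $(\rho/(MD))^{O(1)}$ is comfortably satisfied (indeed with $O(1) = 1$). I would present this as following \cite[Lemma 7.4]{LSS24b} essentially verbatim, noting only these three checkpoints.
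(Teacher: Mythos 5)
Your proof is correct and is essentially the argument given in \cite[Lemma 7.4]{LSS24b}, which the paper cites without reproducing: insert $1 = \sum_b |\chi'^{(b)}|^2$ using the pointwise normalization $\snorm{\chi'(\vec n)}_2 = 1$, pigeonhole over the output coordinate $b$, expand the resulting coordinate of $\chi \otimes \ol{\chi'}$ into at most $M$ nilsequences via the equivalence hypothesis, and pigeonhole once more. The bookkeeping you flag (tracking which tensor coordinate achieves the supremum, and observing that the final single-coordinate bound is dominated by the $\infty$-norm of $f \otimes \chi' \cdot \psi$) is exactly what is needed, and the loss is indeed linear in $MD$ with no power of $\rho$ lost.
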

\subsection{Multidimensional Taylor Expansions}
A key difference between the proofs of this multidimensional inverse theorem and the proof of \cite{LSS24b} is that we must use multi-dimensional Taylor coefficients of nilsequences.
\begin{definition}[Multidimensional Taylor coefficients]
Let $g$ be a $\mathbb{Z}^L$-polynomial sequence on $G$ equipped with a $(d, r)$ degree-rank filtration and $\vec{i} = (i_1, \dots, i_L)$ be an integer vector. We define
$$\mathrm{Taylor}_{\vec{i}}(g) = \partial^{i_1}_{e_1} \partial^{i_2}_{e_2} \cdots \partial^{i_L}_{e_L}(g) \mod G_{(|\vec{i}|, 2)}$$
where $|\vec{i}| = i_1 + i_2 + \cdots + i_L$.
\end{definition}
We have the following results of multidimensional Taylor coefficients. Our proof follows closely that of \cite[Lemma 2.12]{LSS24b}.
\begin{lemma}\label{lem:taylorexansion}
Let $G$ be given a $(d, r)$ degree-rank filtration and let $g$ be a $\mathbb{Z}^L$-polynomial sequence on $G$. Then we may write
$$g(n) = \prod_{|\vec{i}| < d} g_{\vec{i}}^{{n \choose \vec{i}}}$$
with $g_{\vec{i}} \equiv \mathrm{Taylor}_{\vec{i}}(g) \mod G_{(|\vec{i}|, 2)}$ (and the order of the product does not matter).
\end{lemma}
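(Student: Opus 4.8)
The plan is to adapt the standard Taylor-expansion argument for single-variable polynomial sequences (as in \cite[Appendix B]{GTZ12} and \cite[Lemma 2.12]{LSS24b}) to the multi-variable setting, proceeding by induction on the degree $d$ of the filtration. The key combinatorial fact I would use is that the functions $n \mapsto \binom{n}{\vec i} := \prod_{j=1}^L \binom{n_j}{i_j}$ form a basis for integer-valued polynomials on $\mathbb{Z}^L$, with the property that $\binom{n}{\vec i}$ vanishes unless $n_j \ge i_j$ for all $j$ and equals $1$ when $n = \vec i$; more importantly, the discrete derivative $\partial_{e_j}$ acts on this basis by $\partial_{e_j}\binom{n}{\vec i} = \binom{n}{\vec i - e_j}$, which is exactly the shift that makes the bookkeeping work.

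First I would set up the induction: write $g(n) = \prod_{\vec i} g_{\vec i}^{\binom{n}{\vec i}}$ as an ansatz, where the product is taken in a fixed order over $\{\vec i : |\vec i| < d\}$ refining $\preceq$ (say, lexicographic within each degree level), and observe that by Definition~\ref{def:I-polynomial} applied with the domain degree-rank filtration on $\mathbb{Z}^L$, each iterated derivative $\partial^{i_1}_{e_1}\cdots\partial^{i_L}_{e_L} g$ lands in $G_{(|\vec i|, 1)} = G_{(|\vec i|, 0)}$ when $|\vec i| \ge 1$ (using $G_{(i,0)} = G_{(i,1)}$), so that $\mathrm{Taylor}_{\vec i}(g)$ is a well-defined element of $G_{(|\vec i|,1)}/G_{(|\vec i|,2)}$. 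The base case ($d = 1$, or bounding the lowest-degree piece) is immediate: $g(0) = g_{\vec 0}$ so $g_{\vec 0} = \mathrm{Taylor}_{\vec 0}(g)$, and one peels off this constant. For the inductive step, suppose we have determined $g_{\vec i}$ for all $|\vec i| < m$ matching the claimed Taylor coefficients mod $G_{(|\vec i|,2)}$; set $g'(n) = \big(\prod_{|\vec i| < m} g_{\vec i}^{\binom{n}{\vec i}}\big)^{-1} g(n)$, which is again polynomial (products and inverses of polynomial sequences are polynomial, \cite[Appendix B]{GTZ12}) and, by the derivative computation above combined with the commutator relations in the $\mathrm{DR}$-filtration, actually lies in $G_{(m,1)} = G_{(m,0)}$. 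Then $g'$ restricted mod $G_{(m,2)}$ is, by the action of $\partial_{e_j}$ on binomial coefficients, an abelian (in fact linear-in-$\binom{n}{\vec i}$) polynomial sequence into the abelian group $G_{(m,1)}/G_{(m,2)}$, so one reads off $g_{\vec i} := \mathrm{Taylor}_{\vec i}(g') = \mathrm{Taylor}_{\vec i}(g)$ for $|\vec i| = m$ and continues. The claim that the order of the product does not matter modulo the filtration follows because reordering introduces commutators $[g_{\vec i}, g_{\vec j}]$ of total degree $|\vec i| + |\vec j|$, which can be reabsorbed into the higher-degree factors (this requires tracking that we only claim $g_{\vec i} \equiv \mathrm{Taylor}_{\vec i}(g) \bmod G_{(|\vec i|,2)}$, not on the nose).

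The main obstacle I anticipate is the careful interaction between the degree-rank filtration's extra "rank" coordinate and the multi-index derivatives: one must verify that $\partial^{i_1}_{e_1}\cdots\partial^{i_L}_{e_L} g(n)$ genuinely lies in $G_{(|\vec i|,1)}$ rather than merely $G_{(|\vec i|,0)}$ — equivalently, that taking at least one derivative pushes the rank up to $1$ — which uses the convention $G_{(i,0)} = G_{(i,1)}$ and the hypothesis $H_{(\vec 0,0)} = H_{(\vec e_i,0)} = H_{(\vec e_i, 1)} = \mathbb{Z}\vec e_i$ on the domain, so that differentiating in direction $e_i$ by an element of $H_{(\vec e_i, 1)}$ of degree-rank $\ge (1,1)$ forces the output into $G_{(\cdot, 1)}$. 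Mixing derivatives in different directions requires $(1,1) + (1,1) + \cdots$ in the $\mathrm{DR}$-ordering to collapse correctly to $(m, m)$ but then get truncated to the associated degree filtration piece $G_{(m, \ge 1)}$ at the level we care about; I would isolate this as a short sub-lemma about the domain $\mathrm{DR}$-filtration on $\mathbb{Z}^L$ before running the main induction. Everything else — the binomial identity, polynomiality of products/inverses, and the abelian extraction step — is routine and parallels \cite{GTZ12, LSS24b} essentially verbatim, modulo replacing single indices $i$ by multi-indices $\vec i$ and $\binom{n}{i}$ by $\binom{n}{\vec i}$.
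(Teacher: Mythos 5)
Your proof follows essentially the same approach as the paper's: both rest on the commutator vanishing $[G_{(j,1)},\,G_{(|\vec i|-j,1)}]\subseteq G_{(|\vec i|,2)}$ forced by the degree-rank commutator rule, which makes the quotient $G/G_{(|\vec i|,2)}$ effectively abelian at total degree $|\vec i|$ so that Baker--Campbell--Hausdorff cross-terms die, and the difference between inducting on the number of single derivatives $\partial_{e_\ell}$ (the paper) versus on degree by peeling off lower-degree factors (you) is purely organizational. The one step worth spelling out further is your ``reads off $\mathrm{Taylor}_{\vec i}(g')=\mathrm{Taylor}_{\vec i}(g)$'': since $\mathrm{Taylor}_{\vec i}$ is not a priori multiplicative on a noncommutative group, this equality $\bmod\, G_{(m,2)}$ is itself the BCH argument (the cross-terms arising when differentiating the product $p^{-1}g$ are iterated commutators of pieces whose degrees sum to $m$, hence land in $G_{(m,2)}$) --- exactly the observation the paper isolates as ``This follows from Baker--Campbell--Hausdorff.''
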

\begin{proof}
Consider the filtered group $\widetilde{G} = G/G_{(|\vec{i}|, 2)}$ equipped with $\widetilde{G}_j = G_{(j, 1)}/G_{(|\vec{i}|, 2)}$. We see that $[G_{(j, 1)}/G_{(|\vec{i}|, 2)}, G_{(|\vec{i}| - j, 1)}/G_{(|\vec{i}|, 2)}] = \mathrm{Id}$. Thus, $[\widetilde{G}_{j}, \widetilde{G}_{|\vec{i}| - j}] = \mathrm{Id}$. Thus, letting $\widetilde{g}(n) = g(n) \mod G_{(|\vec{i}|, 2)}$, we see that
$$\widetilde{g}(n) = \prod_{|\vec{i}| \ge |\vec{j}| > 0} \widetilde{g_{\vec{j}}}^{{n \choose \vec{j}}}$$
where $\widetilde{g}_{\vec{j}} = g_{\vec{j}} \mod G_{(|\vec{i}|, 2)}$. Thus, via induction, it suffices to show that
$$\partial_{e_\ell} g(n) = \prod_{|\vec{i}| - 1 \ge |\vec{j}| > 0} \widetilde{g_{\vec{j}}'}^{{n \choose \vec{j}}}$$
for some $\widetilde{g_{\vec{j}}'} \in \widetilde{G}_{|\vec{j}| + 1}$ with $\widetilde{g'}_{\vec{i} - e_\ell} = \widetilde{g}_{\vec{i}}$. This follows from Baker-Campbell-Hausdorff. The crucial reason that $\widetilde{g'}_{\vec{i} - e_\ell} = \widetilde{g}_{\vec{i}}$ is that any ``higher order'' terms which arise in the Baker--Campbell--Hausdorff formula and could contribute are in fact annhilated due to $[\wt{G}_{j},\wt{G}_{|\vec{i}|-j}]=\mr{Id}$, so one can effectively treat the top order term as lying in an abelian group in this proof.
\end{proof}

\begin{lemma}\label{lem:taylorhom}
We have $\mathrm{Taylor}_{\vec{i}}(gh) = \mathrm{Taylor}_{\vec{i}}(g)\mathrm{Taylor}_{\vec{i}}(h)$ and if $g(n) = \exp\left(\sum_{\vec{i}} g_{\vec{i}} {n \choose \vec{i}}\right)$ for $g_{\vec{i}} \in \log(G_{|\vec{i}|})$, then $\mathrm{Taylor}_{\vec{i}}(g) = g_{\vec{i}} \mod G_{(|\vec{i}|, 2)}$.    
\end{lemma}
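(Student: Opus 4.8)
The plan is to deduce both statements directly from the definition of the multidimensional Taylor coefficient together with Lemma~\ref{lem:taylorexansion}, working consistently modulo $G_{(|\vec{i}|,2)}$. For the homomorphism property, I would first observe that the operation $\vec{g}\mapsto\partial^{i_1}_{e_1}\cdots\partial^{i_L}_{e_L}\vec{g}$ is, in general, \emph{not} multiplicative on the nose; however, after reducing modulo $G_{(|\vec{i}|,2)}$, the obstruction terms vanish. Concretely, using Lemma~\ref{lem:taylorexansion} write $g(n)=\prod_{|\vec j|<d}g_{\vec j}^{\binom{n}{\vec j}}$ and $h(n)=\prod_{|\vec j|<d}h_{\vec j}^{\binom{n}{\vec j}}$, so that $\mathrm{Taylor}_{\vec i}(g)\equiv g_{\vec i}$ and $\mathrm{Taylor}_{\vec i}(h)\equiv h_{\vec i}$ modulo $G_{(|\vec i|,2)}$. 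Multiplying and collecting terms of multidegree $\vec i$ via Baker--Campbell--Hausdorff, every correction coming from commutators of a multidegree-$\vec j$ term with a multidegree-$(\vec i-\vec j)$ term lies in $[G_{(|\vec j|,1)},G_{(|\vec i|-|\vec j|,1)}]\le G_{(|\vec i|,2)}$ (here I use $|\vec j|\ge1$, $|\vec i-\vec j|\ge1$ and the degree-rank commutator rule), hence is annihilated. Therefore $(gh)(n)=\prod_{|\vec j|<d}(g_{\vec j}h_{\vec j})^{\binom n{\vec j}}\pmod{G_{(|\vec i|,2)}}$ up to reordering, and extracting the multidegree-$\vec i$ coefficient gives $\mathrm{Taylor}_{\vec i}(gh)\equiv g_{\vec i}h_{\vec i}\equiv\mathrm{Taylor}_{\vec i}(g)\mathrm{Taylor}_{\vec i}(h)$.

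For the second statement, suppose $g(n)=\exp\!\big(\sum_{\vec i}g_{\vec i}\binom n{\vec i}\big)$ with $g_{\vec i}\in\log(G_{|\vec i|})$. Applying Baker--Campbell--Hausdorff again to expand this single exponential into an ordered product $\prod_{\vec j}\exp(g_{\vec j}')^{\binom n{\vec j}}$, the same degree-rank annihilation shows that modulo $G_{(|\vec i|,2)}$ one has $g_{\vec j}'\equiv g_{\vec j}$ whenever $|\vec j|=|\vec i|$ — all Lie-bracket corrections to the top-multidegree term involve a commutator of two pieces of strictly smaller (but positive) multidegree summing to $|\vec i|$, and such a commutator sits in $G_{(|\vec i|,2)}$. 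Comparing with the expansion guaranteed by Lemma~\ref{lem:taylorexansion} and using uniqueness of the coefficients modulo $G_{(|\vec i|,2)}$ then yields $\mathrm{Taylor}_{\vec i}(g)\equiv g_{\vec i}\pmod{G_{(|\vec i|,2)}}$.

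The main obstacle I anticipate is purely bookkeeping: making precise the claim that, in the relevant Baker--Campbell--Hausdorff expansions, \emph{every} term that could perturb the multidegree-$\vec i$ coefficient is a commutator of pieces of positive multidegree summing to $\vec i$ (so that the degree-rank hypothesis $[G_{(j,1)},G_{(|\vec i|-j,1)}]=\mathrm{Id}$ in $\widetilde G=G/G_{(|\vec i|,2)}$ applies). This is the same phenomenon already exploited in the proof of Lemma~\ref{lem:taylorexansion} — one may pass to $\widetilde G$ and treat the top-order part as abelian — so the cleanest route is to reuse that reduction verbatim rather than re-deriving the combinatorics of iterated derivatives. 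Everything else (multiplicativity of $\binom n{\vec j}$-weighted products after the abelianization, extraction of a single coefficient) is routine.
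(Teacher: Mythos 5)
Your proof is correct and follows essentially the same route as the paper's. The paper's argument is terse (the first claim is dispatched as ``similar to the previous lemma,'' and the second by observing that the Baker--Campbell--Hausdorff corrections land in $G_{(|\vec{i}|,2)}$), and you make the same moves: invoke Lemma~\ref{lem:taylorexansion} to express both $g$ and $h$ as ordered products of exponentials of Taylor coefficients, multiply, and show that the reordering corrections coming from commutators of complementary-degree pieces lie in $G_{(|\vec{i}|,2)}$ because $[\wt{G}_j,\wt{G}_{|\vec{i}|-j}]=\mathrm{Id}$ in $\wt{G}=G/G_{(|\vec{i}|,2)}$. The one point worth tightening: your verbal argument only names two-fold commutators of pieces of degrees $j$ and $|\vec{i}|-j$, but iterated (three-fold and higher) commutators of lower-degree pieces whose degrees sum to $|\vec{i}|$ can also in principle contribute; they too lie in $G_{(|\vec{i}|,r)}\subseteq G_{(|\vec{i}|,2)}$ for $r\ge 2$ by the degree-rank commutator rule, so your conclusion is unaffected, and — as you yourself note — the cleanest way to absorb all such terms at once is to pass to $\wt{G}$ as in Lemma~\ref{lem:taylorexansion}, which is exactly what the paper does.
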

\begin{proof}
The first part of the lemma has a similar proof to the previous lemma, where we use the Baker-Campbell-Hausdorff theorem and crucially we observe that $[\wt{G}_{j},\wt{G}_{|\vec{i}|-j}]=\mr{Id}$ with $\widetilde{G}$ defined in the proof of Lemma~\ref{lem:taylorexansion}.

For the second claim, suppose that
\[g(n) = \exp\bigg(\sum_{\vec{i}} g_{\vec{i}} \binom{n}{\vec{i}}\bigg) = \prod_{\vec{i}}(g_{\vec{i}}')^{\binom{n}{\vec{i}}}.\]
Via iterated applications of the Baker--Campbell--Hausdorff formula and the commutator relationship that $[G_{(\ell,0)},G_{(j-\ell,0)}]=[G_{(i,1)},G_{(j-\ell,1)}]\subseteq G_{(j,2)}$, we see that $g_{\vec{i}}' = \exp(g_{\vec{i}}) \imod G_{(|\vec{i}|,2)}$ and the result follows. 
\end{proof}
\subsection{Miscellaneous nilsequence notation}
We will finally state the following miscellaneous nilsequence notation.
\begin{notation}
Given a function $f$ and a natural number $L > 0$ with domain $X^L$, we will write $f(n)$ for $n = (n_1, \dots, n_L) \in X^L$ instead of $\vec{n} = (n_1, \dots, n_L)$. 
\end{notation}
\begin{comment}
\begin{notation}[Convention regarding multidimensional input nilcharacters]\label{conv:multidimensionalsplitting}
Because of the splitting lemma (\cite[Lemma C.6]{LSS24b}), we take the convention that each $\mathbb{Z}^L$ input degree $k$ nilsequence is the product of multi-degree $(d_1, d_2, \dots, d_\ell)$ nilsequences with $d_1 + \cdots + d_L = k$. One word of caution is that at the end of the statement of our results, we must do a bit of extra work to ensure that our nilcharacters satisfy this convention.
\end{notation}
\end{comment}
\begin{definition}
We will define the following sets of nilsequences.
\begin{itemize}
    \item We denote $\mathrm{Nil}^d(M, m, L, D)$ the set of all $\le M$-Lipschitz $\mathbb{Z}^L$-nilsequences with output dimension $D$ on a nilpotent Lie group equipped with an adapted Mal'cev basis $\{X_1, \dots, X_m\}$ with complexity at most $M$, dimension at most $m$, and degree at most $d$. 
    \item We denote $\mathrm{Nil}^{(d, r)}(M, m, L, D)$ the set of all $\le M$-Lipschitz $\mathbb{Z}^L$-nilsequences with output dimension $D$ on a nilpotent Lie group with degree rank at most $(d, r)$ equipped with an adapted Mal'cev basis $\{X_1, \dots, X_m\}$ with complexity at most $M$, dimension at most $m$, and degree-rank at most $(d, r)$.
    \item We denote $\mathrm{Nil}^{(d_1, \dots, d_k)}(M, m, L, D)$ the set of all $\le M$-Lipschitz $\mathbb{Z}^L$-nilsequences with output dimension $D$ on a nilpotent Lie group with multidegree at most $(d_1, \dots, d_k)$ equipped with an adapted Mal'cev basis $\{X_1, \dots, X_m\}$.
    \item We denote $\mathrm{Nil}^{<J}(M, m, L, D) = \bigcup_{I < J} \mathrm{Nil}^{I}(M, m, L, D)$.
\end{itemize}
(By adapted Mal'cev basis, we require in the first two cases, the Mal'cev basis to have the degree $d$ nesting property and the third case to have the $d_1 + \cdots + d_k$-nesting property as in \cite[Definition B.2]{Len23b}.)
\end{definition}
\begin{remark}
This definition runs into the unfortunate issue that the degree-rank and ``double degree" set of nilsequences are indistinguishable from a symbolic point of view. We will make an effort to specify which one of these sets it is each time we use this notation.
\end{remark}
\begin{remark}
As the multidegree-rank filtration is rarely used in this paper, we will not define a specific notation for that notion.
\end{remark}
We require the following definition in the proof of Theorem~\ref{thm:maininversetheorem}.
\begin{definition}
A nilsequence $\chi : \mathbb{Z}^k \to \mathbb{C}$ is a $\mathbb{Z}$-multidegree nilsequence of degree $(d_1, \dots, d_k)$ if it can be written as a nilsequence $(h_1, \dots, h_k) \mapsto \chi'(h_1, \dots, h_k)$ where $\chi'$ is a multidegree $(d_1, \dots, d_k)$ and $h_1, \dots, h_k \in \mathbb{Z}$.
\end{definition}

\subsection{Box seminorms}
We will next require some notation and preliminaries on box norms.
\begin{definition}[Discrete multiplicative derivatives]
Let $G$ be a group. Given a function $f:G \to \mathbb{C}$ and $h, h_1, \dots, h_\ell \in G$, we define $\Delta_h f(x) = \overline{f(x + h)}f(x)$ nad $\Delta_{h_1, \dots, h_\ell}f(x) = \Delta_{h_1}(\Delta_{h_2} \cdots (\Delta_{h_\ell}f(x)))$. We also define $\Delta_{(h, h')} f(x) = \overline{f(x + h')}f(x + h)$ and analogously define $\Delta_{(h_1, h_1'), \dots, (h_\ell, h_\ell')}$.
\end{definition}
We note that $\Delta_{h_1, \dots, h_\ell}$ is invariant under the order of $(h_1, \dots, h_\ell)$ and similarly $\Delta_{(h_1, h_1'), \dots, (h_\ell, h_\ell')}$ are invariant under the order of $((h_1, h_1'), \dots, (h_\ell, h_\ell'))$.
\begin{definition}[Box (semi)norms]
Let $G$ be a finite group. Given $f: G \to \mathbb{C}$ and subsets $H_1, \dots, H_\ell$ of $G$, we define
$$\|f\|_{U(H_1, \dots, H_\ell)} = \|f\|_{H_1, \dots, H_\ell} = (\mathbb{E}_{x \in G}\mathbb{E}_{h_i, h_i' \in H_i} \Delta_{(h_1, h_1'), (h_2, h_2') \dots, (h_\ell, h_\ell')}f(x))^{1/2^\ell}.$$
\end{definition}
It is classical that $\|\cdot\|_{H_1, \dots, H_\ell}$ satisfies a \emph{Cauchy-Schwarz-Gowers}-type inequality and therefore are seminorms. We record the result in the following.
\begin{lemma}\label{lem:CauchySchwarzGowers}
Let $G$ be a finite group. For $\omega \in \{0, 1\}^\ell$ and $f_\omega: G \to \mathbb{C}$, let
$$\langle (f_\omega)_{\omega \in \{0, 1\}^\ell}\rangle_{H_1, \dots, H_\ell} = \mathbb{E}_{x \in G}\mathbb{E}_{h_i, h_i' \in H_i} \prod_{\omega \in \{0, 1\}^\ell} \mathcal{C}^{|\omega|}f(x + (\vec{h}_i - \vec{h}_i')\cdot \omega + h_1' + \cdots + h_\ell')$$
where $\mathcal{C}$ is the conjugation operator and $\vec{h} = (h_1, \dots, h_\ell)$. Then
$$\langle (f_\omega)_{\omega \in \{0, 1\}^\ell} \rangle \le \prod_{\omega \in \{0, 1\}^\ell} \|f_\omega\|_{H_1, \dots, H_\ell}.$$
In addition $\|\cdot\|_{H_1, \dots, H_\ell}$ is a seminorm.
\end{lemma}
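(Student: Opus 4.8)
The plan is to establish the Cauchy--Schwarz--Gowers inequality by iterated Cauchy--Schwarz, peeling off one box direction $H_i$ at a time, and then to deduce the seminorm property from it in the usual way. The first move is to put the shift occurring in $\langle (f_\omega)_{\omega\in\{0,1\}^\ell}\rangle_{H_1,\dots,H_\ell}$ into symmetric form: writing $\sigma_i(\omega):=h_i$ if $\omega_i=1$ and $\sigma_i(\omega):=h_i'$ if $\omega_i=0$, one has $x+(\vec h_i-\vec h_i')\cdot\omega+h_1'+\cdots+h_\ell'=x+\sum_{i=1}^\ell\sigma_i(\omega)$, so that for each fixed value of $x$ and of the pairs $(h_i,h_i')_{i<\ell}$ the product $\prod_\omega\mathcal{C}^{|\omega|}f_\omega(x+\sum_i\sigma_i(\omega))$ factors as the subproduct over $\{\omega:\omega_\ell=0\}$, which involves $h_\ell'$ but not $h_\ell$, times the subproduct over $\{\omega:\omega_\ell=1\}$, which involves $h_\ell$ but not $h_\ell'$.

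Since $h_\ell$ and $h_\ell'$ are averaged independently over $H_\ell$, this factorization lets me write $\langle(f_\omega)\rangle=\mathbb{E}_{x,(h_i,h_i')_{i<\ell}}A\,\overline{B}$, where $A$ is the $H_\ell$-average of the $\omega_\ell=0$ subproduct and $\overline{B}$ that of the $\omega_\ell=1$ subproduct, both functions of $x$ and $(h_i,h_i')_{i<\ell}$ only. Cauchy--Schwarz in these variables gives $|\langle(f_\omega)\rangle|\le(\mathbb{E}|A|^2)^{1/2}(\mathbb{E}|B|^2)^{1/2}$. Expanding $\mathbb{E}|A|^2$ reinstates two independent copies of $H_\ell$, which I relabel $h_\ell,h_\ell'$; after matching up the conjugation operators (using that $\mathbb{E}|A|^2$ is real and nonnegative) one recognizes $\mathbb{E}|A|^2=|\langle(g_\omega)\rangle_{H_1,\dots,H_\ell}|$, where $g_\omega:=f_{(\omega_1,\dots,\omega_{\ell-1},0)}$ is a new family that no longer depends on $\omega_\ell$, and likewise $\mathbb{E}|B|^2=|\langle(g'_\omega)\rangle_{H_1,\dots,H_\ell}|$ with $g'_\omega:=f_{(\omega_1,\dots,\omega_{\ell-1},1)}$. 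Iterating the identical manoeuvre for the directions $\ell,\ell-1,\dots,1$ pins down one coordinate of $\omega$ at each stage; after all $\ell$ steps one is left with $|\langle(f_\omega)\rangle|\le\prod_{\eta\in\{0,1\}^\ell}|\langle(f_\eta)_\omega\rangle_{H_1,\dots,H_\ell}|^{1/2^\ell}$, the $\eta$-th factor using the constant family $\omega\mapsto f_\eta$. For a constant family, $\langle\cdot\rangle_{H_1,\dots,H_\ell}$ unwinds by definition to $\|f_\eta\|_{H_1,\dots,H_\ell}^{2^\ell}$, which yields the claimed bound $|\langle(f_\omega)\rangle|\le\prod_\omega\|f_\omega\|_{H_1,\dots,H_\ell}$.

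For the seminorm claim, homogeneity under scalar multiplication is immediate. Nonnegativity of $\|f\|_{H_1,\dots,H_\ell}^{2^\ell}$ falls out of the same grouping with no Cauchy--Schwarz at all: collecting the $\omega_\ell=0$ and $\omega_\ell=1$ factors and averaging over $h_\ell,h_\ell'$ independently exhibits $\|f\|_{H_1,\dots,H_\ell}^{2^\ell}$ as $\mathbb{E}_{x,(h_i,h_i')_{i<\ell}}\bigl|\mathbb{E}_{h\in H_\ell}(\cdots)\bigr|^2\ge 0$; in particular $\|\cdot\|_{H_1,\dots,H_\ell}$ takes nonnegative real values, which also legitimizes identifying $\langle(f_\eta)_\omega\rangle$ with $\|f_\eta\|_{H_1,\dots,H_\ell}^{2^\ell}$ above. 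The triangle inequality is then the textbook consequence of Cauchy--Schwarz--Gowers: expand $\|f+g\|_{H_1,\dots,H_\ell}^{2^\ell}=\sum_{S\subseteq\{0,1\}^\ell}\langle(f_\omega)_\omega\rangle$ with $f_\omega=f$ for $\omega\in S$ and $f_\omega=g$ otherwise, bound each summand by $\|f\|_{H_1,\dots,H_\ell}^{|S|}\|g\|_{H_1,\dots,H_\ell}^{2^\ell-|S|}$, and sum to recover $(\|f\|_{H_1,\dots,H_\ell}+\|g\|_{H_1,\dots,H_\ell})^{2^\ell}$.

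The only genuinely delicate point — the main obstacle — is bookkeeping the conjugation operators $\mathcal{C}^{|\omega|}$ through the changes of summation variable: one must check that upon expanding $\mathbb{E}|A|^2$ the two copies of $f_{(\omega^-,0)}$ carry conjugation parities differing by exactly one and sit at the shifts $x+\sum_{i\le\ell}\sigma_i(\omega)$, so that they reassemble into precisely $\langle\cdot\rangle_{H_1,\dots,H_\ell}$ (or its complex conjugate, which is harmless since the quantity is real). I would also note that, because the argument never replaces a difference $h_\ell-h_\ell'$ by a single variable, it uses nothing about the sets $H_i$ beyond finiteness of $G$ and the independence of $h_i$ from $h_i'$ in the averages — the $H_i$ need not be subgroups.
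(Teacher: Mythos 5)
The paper does not actually supply a proof of this lemma -- it is stated with the remark that the Cauchy--Schwarz--Gowers inequality is classical -- so there is nothing to compare against literally. Your argument is the standard iterated Cauchy--Schwarz proof: rewrite the shift in the symmetric form $x+\sum_i\sigma_i(\omega)$, factor the integrand over $\omega_\ell=0$ versus $\omega_\ell=1$ (which decouples $h_\ell'$ from $h_\ell$), apply Cauchy--Schwarz in the remaining variables, reinterpret $\mathbb{E}|A|^2$ as the Gowers inner product of a family constant in the $\ell$-th coordinate, and iterate. I checked the conjugation bookkeeping: after expanding $\mathbb{E}|A|^2$, the conjugated copy of the $\omega_\ell=0$ subproduct carries $\mathcal{C}^{|\omega^-|+1}=\mathcal{C}^{|(\omega^-,1)|}$, so it reassembles exactly into the $\omega_\ell=1$ half of $\langle(g_\omega)\rangle$ with $g_\omega=f_{(\omega^-,0)}$, as you claim. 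The nonnegativity observation (the $\omega_\ell=0$ and $\omega_\ell=1$ halves are conjugate of one another when the family is constant, so the whole expression is $\mathbb{E}|\cdot|^2\geq 0$) and the binomial-expansion derivation of the triangle inequality are both correct. Your closing remark is also accurate and worth keeping: the $H_i$ appear only as index sets for independent averages, never via any group structure, so nothing beyond finiteness is used. One harmless cosmetic point: you could drop the absolute-value bars around $\langle(g_\omega)\rangle$ once you have noted it equals $\mathbb{E}|A|^2$, which is manifestly real and nonnegative.
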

We finally require some asymptotic notation that we shall use.
\begin{notation}\label{not:asymptoticnotation}
Given a parameter $K$, we denote $M(\delta), \epsilon^{-1}(\delta)$ to be any quantity $\ll \exp(\log(1/\delta)^{O_{K}(1)})$, and $m(\delta)$ to be any quantity $\ll \log(1/\delta)^{O_{K}(1)}$.
\end{notation}
\begin{remark}
    A word of caution to the reader. In various statements below, slightly different quantities will be used to denote the same quantity of $M(\delta)$, $m(\delta)$, and $\epsilon(\delta)$. The point of these notions is so that one can substitute $M(\delta), m(\delta), \epsilon(\delta)$ with the values of $\exp(\log(1/\delta)^{O_{K}(1)})$, $\log(1/\delta)^{O_{K}(1)}$, and $\exp(-\log(1/\delta)^{O_{K}(1)})$, respectively.
\end{remark}

\subsection{Measure theoretic notation}
We denote a probability space by a triple $\mathbf{X} = (X, \mathcal{X}, \mu)$ with $X$ the space, $\mu$ the measure, and $\mathcal{X}$ the $\sigma$-algebra. As is standard, we assume when convenient (e.g., when we need to use disintegration to define relatively independent joinings or Host-Kra factors) that our probability space is \emph{regular} in that $X$ is a compact metric space and $\mathcal{X}$ is the set of all Borel sets on $X$. One can find the (rather subtle) details of this reduction in Furstenberg's book \cite[Chapter 5]{Fur81}. We set some conventions regarding measurable functions.
\begin{notation}
Let $\mathbf{X} = (X, \mathcal{X}, \mu)$ be a measure space. We denote $L^p(\mathbf{X})$ as the set of all bounded measurable functions $f$ on $\mathbf{X}$ with finite $L^p$ norm, up to almost-everywhere equivalence. If it is clear from context, we shall also abusively use $L^p(\mu)$ as $L^p(\mathbf{X})$.
\end{notation}
We denote $\mathrm{Aut}(\mathbf{X})$ as the set of all automorphisms of $\mathcal{X}$, that is, measurable maps $T: X \to X$ such that $\mu(T^{-1}A) = \mu(A)$ for all $A \in \mathcal{X}$. For $G$ a group, a measure-preserving $G$-system denotes a quadruple $(\mathbf{X}, T) = (X, \mathcal{X}, \mu, T)$ with $T: G \to \mathrm{Aut}(X, \mathcal{X}, \mu)$ a $G$-action. We next need the notion of factors and isomorphisms.
\begin{definition}[Factors and isomorphisms]
A \textbf{factor} $\mathbf{Y} = (Y, \mathcal{Y}, \nu)$ of a measure space $\mathbf{X} = (X, \mathcal{X}, \mu)$ (resp. $(\mathbf{Y}, S)$ of a measure-preserving $G$-system $(\mathbf{X}, T)$) is a map $\phi: X \to Y$ with $\phi^{-1}(\mathcal{Y}) \subseteq \mathcal{X}$ and $\phi_* \mu := \mu \circ \phi^{-1} = \nu$ (and resp. $S \circ \phi = \phi \circ T$). We say $\mathbf{X}$ (resp. $(\mathbf{X}, T)$) is an extension of $\mathbf{Y}$ (resp. $(\mathbf{Y}, S)$). We say the factor is an \textbf{isomorphism} if $\phi$ is invertible almost everywhere. 
\end{definition}
We next need the notion of conditional expectation.
\begin{definition}[Conditional expectation]
Let $\mathbf{X} = (X, \mathcal{X}, \mu)$ be a measure space. Given a sub-$\sigma$-algebra $\mathcal{Y}$ of $\mathcal{X}$ and $f \in L^2(\mathbf{X})$, we denote $\mathbb{E}_\mu(f|\mathcal{Y})$ (or simply $\mathbb{E}(f|\mathcal{Y})$ if the measure $\mu$ is clear) as the projection from $f$ to $L^2(\mathbf{Y})$ with $\mathbf{Y} = (X, \mathcal{Y}, \mu)$.
\end{definition}
We also established the following conventions regarding the conditional expectation of factors.
\begin{notation}
Let $\mathbf{X} = (X, \mathcal{X}, \mu)$ be a measure space and $f \in L^\infty(\mathbf{X})$. If $\mathbf{Y} = (Y, \mathcal{Y}, \nu)$ is a factor of $\mathbf{X}$ via $\phi$, we denote $\mathbb{E}_\mu(f|\mathcal{Y})$ or $\mathbb{E}_\mu(f|\mathbf{Y})$ as $\mathbb{E}_\mu(f|\phi^{-1}(\mathcal{Y}))$.
\end{notation}
We next need the notion of a joining of two measure spaces (resp. measure-preserving $G$-systems).
\begin{definition}[Joining of measure spaces]
Let $\mathbf{X} = (X, \mathcal{X}, \mu)$ and $\mathbf{X}' = (X', \mathcal{X}', \mu')$ (resp. $(\mathbf{X}, T)$ and $(\mathbf{X}, T')$) be two measure spaces (resp. measure-preserving $G$-systems). We define a \textbf{joining} of $\mathbf{X}$ and $\mathbf{X}'$ to be a measure $\nu$ on $(X \times X', \mathcal{X} \otimes \mathcal{X}')$ such that the projection of $\nu$ to $X$ is equal to $\mu$ and the projection of $\nu$ to $X'$ is equal to $\mu'$. (Resp. in the case of $(\mathbf{X}, T)$ and $(\mathbf{X}', T')$, we require $\nu$ to be $T\times T'$ invariant.) 
\end{definition}
We need the notion of a relatively independent joining.
\begin{definition}[Relatively independent joining]
Let $\mathbf{X} = (X, \mathcal{X}, \mu)$ and $\mathbf{X}' = (X', \mathcal{X}', \mu')$ be extensions of a measure space $\mathbf{Y} = (Y, \mathcal{Y}, \nu)$. We denote $\mathbf{X} \times_\mathbf{Y} \mathbf{X}' = (X \times X', \mathcal{X} \otimes \mathcal{X}', \mu \times_{\mathbf{Y}} \mu)$ to be the \textbf{relatively independent joining} of $\mathbf{X}$ and $\mathbf{X}'$ over $\mathbf{Y}$ where $\mu \times_{\mathbf{Y}} \mu'$ is defined to be the unique measure such that for $f \in L^\infty(\mathbf{X})$ and $f' \in L^\infty(\mathbf{X}')$,
$$\int f \otimes f' d(\mu \times_{\mathbf{Y}} \mu) = \int \mathbb{E}_\mu(f|\mathcal{Y})\cdot \mathbb{E}_{\mu'}(f'|\mathcal{Y}) d\nu.$$
\end{definition}
It is clear that if the relatively independent joining exists, it is unique. To prove existence, one can use disintegration (as in e.g., \cite[6.3 Examples]{Gla03}): (using the notation above) by taking disintegrations $\mu = \int_{Y} \mu_y d\nu$ and $\mu' = \int_{Y} \mu_y' d\nu(y)$, we define
$$\mu \times_{\mathbf{Y}} \mu' = \int_Y \mu_y \times \mu_y' d\nu(y).$$
\begin{remark}
Note that if $T$ is a probability preserving action on $\mathbf{X}$ and $T'$ is a probability preserving action on $\mathbf{X}'$ that both factor to an action of $S$, then $\mu \times_{\mathbf{Y}} \mu'$ is $T \times T'$ invariant. This is because product sets generate the product sigma algebra.
\end{remark}
We next consider Host-Kra seminorms.
\begin{definition}[Discrete multiplicative derivatives of measurable functions]
Given a function $f \in L^\infty(\mu)$ and a measurable transformation $T: (X, \mathcal{X}, \mu) \to (X, \mathcal{X}, \mu)$, we denote $\Delta_T f := \overline{Tf}\cdot f \in L^\infty(\mu)$ and $\Delta_{T_1, \dots, T_k} f = \Delta_{T_1} \Delta_{T_2} \cdots \Delta_{T_k} f$. 
\end{definition}
\begin{definition}[Host-Kra seminorm]\label{def:hostkra}
If $\vec{T_1}, \dots, \vec{T_k}$ denotes commuting $\mathbb{Z}^{m_1}, \dots, \mathbb{Z}^{m_k}$ measure preserving actions on $\mathbf{X} = (X,\mathcal{X}, \mu)$, and $f \in L^\infty(\mathbf{X})$, we denote
$$\|f\|_{\vec{T_1}, \dots, \vec{T_k}}^{2^k} = \lim_{N_1 \to \infty} \cdots \lim_{N_k \to \infty} \mathbb{E}_{n_i \in [N_i]^{m_i}} \int \Delta_{\vec{T_1}^{n_1}, \vec{T_2}^{n_2}, \dots, \vec{T_k}^{n_k}} f d\mu.$$  
\end{definition}
We also require the following result of Austin \cite{A10}.
\begin{theorem}[Austin's ergodic theorem]\label{thm:austinergodictheorem}
Let $T_1, \dots, T_k$ be $\mathbb{Z}^m$ measure preserving tranformations on $\mathbf{X} = (X, \mathcal{X}, \mu)$ and let $I_N$ be a F\o lner sequence in $\mathbb{Z}^m$ and $a_N \in \mathbb{Z}^m$. Then for any $f_1, \dots, f_m \in L^\infty(\mathbf{X})$,
$$\lim_{N \to \infty} \frac{1}{|I_N|} \sum_{n \in I_N + a_N} T_1^nf_1 T_2^nf_2 \cdots T_m^nf_m$$
converges in $L^2(\mathbf{X})$.
\end{theorem}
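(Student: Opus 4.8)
For $T_1,\dots,T_m$ commuting $\mathbb Z^m$-actions on $\mathbf X$, a Følner sequence $I_N\subset\mathbb Z^m$, shifts $a_N\in\mathbb Z^m$, and $f_1,\dots,f_m\in L^\infty$, the averages $\frac{1}{|I_N|}\sum_{n\in I_N+a_N}T_1^n f_1\cdots T_m^n f_m$ converge in $L^2$.

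\textbf{Proof proposal (Austin's ergodic theorem).} The plan is to follow the standard two-step template for norm convergence of multiple ergodic averages: first reduce, via van der Corput together with a PET induction scheme, to the case where the functions lie in an appropriate characteristic factor, and then establish convergence directly on that factor; the extra generality of an arbitrary F\o lner sequence $I_N$ and shifts $a_N$ will be essentially free in both steps.

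First I would run the PET induction on the linear system $\{\vec T^{\,\vec w_1 n},\dots,\vec T^{\,\vec w_m n}\}$ (here $T_\ell = \vec T^{\,\vec w_\ell}$ with $\vec w_\ell\in\mb Z^m$). Repeatedly applying the van der Corput inequality, which is valid over any F\o lner sequence $I_N$ in $\mb Z^m$ and is unaffected by the shift $a_N$ since $I_N+a_N$ is again F\o lner, one bounds
$$\limsup_{N\to\infty}\Big\|\frac{1}{|I_N|}\sum_{n\in I_N+a_N} T_1^nf_1\cdots T_m^nf_m\Big\|_{L^2(\mathbf X)}\ll \|f_i\|_{\vec v_{i,1},\dots,\vec v_{i,m-1}}$$
for each $i$, where the directions $\vec v_{i,j}\in\mb Z^m$ are differences of the exponent vectors $\vec w_\ell$ and $\|\cdot\|$ is the Host--Kra box seminorm of Definition~\ref{def:hostkra}. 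Consequently it suffices to prove convergence when each $f_i$ is measurable with respect to the factor $\mathbf Z_i$ dual to the seminorm on the right. The second step is to pass to a structured extension: using Host's ``magic'' extension for commuting transformations (equivalently, Austin's theory of pleasant/isotropic, resp.\ sated, extensions; compare the extension constructed in Theorem~\ref{thm:ergodicinversetheorem}), replace $\mathbf X$ by an extension $\widetilde{\mathbf X}$ in which $\mathbf Z_i$ is generated by $\sigma$-algebras of the form $I(\widetilde{\vec T}^{\,\vec v})$. Since convergence in $\widetilde{\mathbf X}$ pushes down to $\mathbf X$, we may work there; by linearity and $L^2$-approximation it is then enough to treat $f_i=\prod_j g_{ij}$ with each $g_{ij}$ invariant under some $\vec T^{\,\vec v_{ij}}$. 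For such functions the multiple average collapses: the invariances let one eliminate a transformation from the product one at a time (after a change of variables $n\mapsto n+(\text{bounded shift})$ that only translates the F\o lner set), reducing $m$-fold averages to $(m-1)$-fold averages, and ultimately to a single von Neumann average $\frac{1}{|I_N|}\sum_{n\in I_N+a_N}\vec T^{\,\vec w n}g$, which converges in $L^2$ to $\mathbb E(g\mid I(\vec T^{\,\vec w}))$ by the mean ergodic theorem over the (still F\o lner) sequence $I_N+a_N$. Assembling these contributions yields the claim.

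I expect the second step to be the main obstacle: for commuting transformations, rather than powers of a single one, identifying the characteristic factor and producing the structured extension is genuinely delicate --- this is precisely the phenomenon discussed in the historical remarks above, where the honest characteristic factor is only a compact (direct-integral) extension of a join of invariant factors, and one must enlarge the system to rectify it. A route that sidesteps this entirely is Walsh's metastability argument: one combines the PET/van der Corput reduction above with a nested-sequences (Hahn--Banach type) argument on a ``reduced complexity'' ordering of the system to deduce metastable --- hence $L^2$ --- convergence, uniformly over all F\o lner sequences and shifts, and with no structural hypotheses. Either route proves the theorem; the extension-based argument has the advantage of fitting the structural framework used elsewhere in this paper, which is why the stronger structural statement is recorded separately in Theorem~\ref{thm:ergodicinversetheorem}.
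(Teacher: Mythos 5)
The paper does not prove this theorem: it is quoted verbatim (up to the harmless $a_N$ shift) from Austin \cite{A10}, and the surrounding text also cites Tao \cite{Tao08} as an alternative source for norm convergence, so there is no in-house proof for your sketch to be measured against. Taken as a description of how the cited result is actually established, your outline is faithful to the literature: the van der Corput / PET reduction to Host--Kra box seminorms, followed by passage to a structured extension (Host's ``magic'' extension, Austin's pleasant/isotropic/sated extensions) on which the characteristic factors become joins of invariant $\sigma$-algebras and the averages collapse, is Austin's route; Walsh's quantitative/metastable argument is indeed the alternative that dispenses with extensions altogether.

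Two cautions, though. First, your parameterization $T_\ell=\vec T^{\,\vec w_\ell}$ with $\vec w_\ell\in\mathbb Z^m$ does not match the hypothesis. The $T_\ell$ are $k$ \emph{distinct} commuting $\mathbb Z^m$-actions, not integer powers of a single one; the ambient action $\vec T$ they generate is a $\mathbb Z^{mk}$-action and $T_\ell^n=\vec T^{\,\iota_\ell(n)}$ for a linear embedding $\iota_\ell:\mathbb Z^m\hookrightarrow\mathbb Z^{mk}$, so the PET/van der Corput bookkeeping must be run against these linear maps (and their pairwise differences) rather than against vectors $\vec w_\ell n$. The argument still works, but the combinatorics is that of linear forms, not of scalar multiples. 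Second, and more importantly, the sated/magic extension step that you dispose of in a clause --- ``replace $\mathbf X$ by an extension in which $\mathbf Z_i$ is generated by $I(\widetilde{\vec T}^{\,\vec v})$'s'' --- is precisely the deep content of Austin's theorem; its existence, together with the fact that $L^2$ of a join of $\sigma$-algebras is densely spanned by products of the form you use, is what must be proved. As written, your sketch assumes the hard part. That is perfectly appropriate here, since the paper itself treats this theorem as a black box, but it should be flagged as a citation, not as a step.
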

\begin{remark}
Note that because the average converges for \emph{all} F\o lner sequences $I_N$ and base points $a_N$, the value the average converges to does not depend on the choice of the F\o lner sequence or the base points. A simple consequence of this is that we in fact have
$$\|f\|_{\vec{T_1}, \dots, \vec{T_k}}^{2^k} = \lim_{N \to \infty} \mathbb{E}_{n_i \in [N]} \int \Delta_{\vec{T_1}^{n_1}, \vec{T_2}^{n_2}, \dots, \vec{T_k}^{n_k}} f d\mu.$$
As the argument is implicitly used throughout Sections 9 and 10, it is worth elaborating on. This can be seen since one can extract the iterated limit in Definition~\ref{def:hostkra} a F\o lner sequence for $\mathbb{N}^k$ for which the ergodic average converges. On the other hand, the above average converges along the F\o lner sequence $\Phi_N = [N]^k$ for $\mathbb{N}^k$.
\end{remark}
Finally, we define the Host-Kra factor.
\begin{definition}\label{def:HostKra}[Host-Kra factor]
Let $\vec{T_1}, \dots, \vec{T_k}$ be commuting $\mathbb{Z}^{m_1}, \dots, \mathbb{Z}^{m_k}$ measure preserving transformations on $\mathbf{X} = (X, \mathcal{X}, \mu)$. We denote $\mathbf{Z}_{\vec{T}_1, \dots, \vec{T}_k} = (X, \mathcal{Z}_{\vec{T}_1, \dots, \vec{T}_k}, \mu)$ to be the factor of $\mathbf{X}$ such that $\|f\|_{\vec{T_1}, \dots, \vec{T_k}} = 0 \iff \mathbb{E}(f|\mathbf{Z}_{\vec{T}_1, \dots, \vec{T}_k}) = 0$.
\end{definition}
The existence of the Host-Kra factor (as well as various properties of them) is proven in \cite{Ho09}. As we gave a brief recap of these properties in our prequel paper \cite{Len24}, we will omit much of these properties in this document.

\section{Another finitary inverse theorem}
In this section, we state the following theorem which will be used to deduce Theorem~\ref{thm:maininversetheorem2}.
\begin{theorem}\label{thm:maininversetheorem}
Let $\delta \in (0, 1/10)$, $k, \ell, K$ positive integers with $\ell \le k \le K$, and $f: [N]^k \to S^1$ be such that 
$$\|f\|_{U([N]^k, e_1[N], \dots, e_\ell[N])} \ge \delta.$$
Then there exist one-bounded functions $f_1, \dots, f_\ell$ with $f_i$ not depending on coordinate $i$, and a nilsequence $\chi \in \mathrm{Nil}^\ell(M, m, k, 1)$ that only depend on the coordinates $1, \dots, \ell$ such that
$$|\mathbb{E}_{n \in [N]^k} f(n)\chi(n)f_1(n)\cdots f_k(n)| \ge \epsilon$$
where
$$\epsilon^{-1}, M \le \exp(\log(1/\delta)^{O(1)}), m \le \log(1/\delta)^{O(1)}.$$
\end{theorem}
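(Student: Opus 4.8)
The plan is to induct on $\ell$, the number of ``box'' directions $e_1[N],\dots,e_\ell[N]$. The base case $\ell=0$ is essentially the one-dimensional Gowers inverse theorem over $[N]^k$ (viewing $f\colon [N]^k\to S^1$ with a single copy of $[N]^k$ in the box norm, so the seminorm is $\|f\|_{U([N]^k)}$, a degree-one object): if $\|f\|_{U([N]^k)}\ge\delta$ then $f$ correlates with a linear phase, which is a degree-one nilsequence of bounded complexity and bounded dimension, with the stated quasi-polynomial bounds coming from the quantitative $U^2$-inverse theorem. (More precisely, since the box norm with one group $[N]^k$ at the outer position and no inner $e_i[N]$'s is the $U^2$-type norm, Fourier analysis suffices here.)

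For the inductive step, suppose $\|f\|_{U([N]^k,e_1[N],\dots,e_\ell[N])}\ge\delta$. Expanding the outermost $e_\ell$-derivative, by the definition of the box norm and Markov's inequality there is a density $\gg\delta^{O(1)}$ set of $h\in[N]$ (identifying $e_\ell h$ with the shift) such that the function $\Delta_{e_\ell h}f(x)=\overline{f(x+e_\ell h)}f(x)$ satisfies $\|\Delta_{e_\ell h}f\|_{U([N]^k,e_1[N],\dots,e_{\ell-1}[N])}\gg\delta^{O(1)}$. By the inductive hypothesis, for each such $h$ we obtain one-bounded functions $f_{1,h},\dots,f_{\ell-1,h}$ (with $f_{i,h}$ independent of coordinate $i$) and a nilsequence $\chi_h\in\mathrm{Nil}^{\ell-1}(M,m,k,1)$ depending only on coordinates $1,\dots,\ell-1$ so that $\Delta_{e_\ell h}f$ correlates with $\chi_h f_{1,h}\cdots f_{\ell-1,h}$. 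The heart of the argument is then to understand how the ``top-order'' data of $\chi_h$ (its vertical frequency / Taylor coefficients as organized via Lemma~\ref{lem:taylorexansion} and Lemma~\ref{lem:taylorhom}) varies with $h$: one shows this data behaves like a ``$1\%$ additive quadruple attached to a multidimensional nilsequence,'' so that by the structure theorem quoted as Theorem~\ref{thm:nilsequenceconstruction} (together with Lemma~\ref{lem:equiv} to pass between equivalent nilcharacters) the collection $\{\chi_h\}$ can be replaced by a single nilsequence whose polynomial sequence depends linearly (in a suitably filtered sense) on $h$; integrating this $h$-dependence up raises the degree by one, producing a degree-$\ell$ nilsequence $\chi$ on $[N]^k$ depending only on coordinates $1,\dots,\ell$. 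The lower-order and $h$-dependent pieces, as well as the functions $f_{i,h}$, get absorbed: the dependence on $h$ in the coordinate directions $e_1,\dots,e_{\ell-1}$ combines with a fresh function independent of coordinate $\ell$ to yield the new $f_1,\dots,f_\ell$, each $f_i$ independent of coordinate $i$ as required. Throughout, the quantitative bookkeeping is routine given Notation~\ref{not:asymptoticnotation}: each application of an inverse theorem or regularity step composes at most an $O_K(1)$-controlled number of times, keeping $\epsilon^{-1},M$ at $\exp(\log(1/\delta)^{O_K(1)})$ and $m$ at $\log(1/\delta)^{O_K(1)}$.

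The main obstacle is precisely the step of showing that the top-order symbol of $\chi_h$ is ``additively structured'' in $h$ and then synthesizing a single higher-degree nilsequence from this data — this is the multidimensional analogue of the central mechanism in \cite{GTZ12} and \cite{LSS24b}, and is where the hypothesis $\ell\le k$ and the careful treatment of multidimensional Taylor coefficients (Section 2.4) are used. One must be careful that the nilsequences $\chi_h$ returned by the inductive hypothesis are only approximately additive in $h$ (a ``$1\%$'' statement, not a ``$99\%$'' statement), so one cannot simply glue; instead one feeds the relevant frequency data into Theorem~\ref{thm:nilsequenceconstruction}, extracts a genuinely polynomial object, and pays a bounded loss in the correlation exponent. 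A secondary technical point is ensuring the output nilsequence depends only on coordinates $1,\dots,\ell$ and has output dimension $1$ (as opposed to some bounded $D$): this is handled by the symmetrization/tensor-power tricks encoded in Lemma~\ref{lem:equiv}, at the cost of only polynomial losses, exactly as in the one-dimensional treatment.
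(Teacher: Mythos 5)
Your high-level inductive architecture matches the paper's: take the $e_\ell$-derivative, apply the inductive hypothesis to $\Delta_{e_\ell h}f$ to obtain a family of nilcharacters $\chi_h\in\mathrm{Nil}^{\ell-1}$ and functions $f_{i,h}$, pass to additive-quadruple data via a Gowers Cauchy--Schwarz maneuver, and feed this into the structure theorem (Theorem~\ref{thm:nilsequenceconstruction}) to synthesize a multidegree-$(1,\ell-1)$ nilcharacter. This is precisely what Proposition~\ref{prop:initialmaneuvers} and the preliminaries to Theorem~\ref{thm:nilsequenceconstruction} accomplish, and your comments about Taylor coefficients and the hypothesis $\ell\le k$ are on point.

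However, there is a genuine gap in how you close the argument. Theorem~\ref{thm:nilsequenceconstruction} produces a single nilcharacter $\widetilde\chi(h,\cdot)$ that models $\chi_h(\cdot)$ only for $h$ lying in a subset $H'\subseteq H$ of density $\epsilon(\delta)$; this is a $1\%$ output, not a $99\%$ output. You acknowledge this, but then assert one ``pays a bounded loss in the correlation exponent,'' which is exactly the step that does not work: the starting correlation $\mathbb E_{h}\bigl|\mathbb E_n \Delta_{e_\ell h}f(n)\,\overline{\chi_h(n)}\,f_{1,h}(n)\cdots\bigr|\ge\delta$ is an average over all $h$, and since $|H'|/|H|$ can be far smaller than $\delta$, the correlation can live entirely off $H'$. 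Restricting to $H'$ is not a ``bounded loss''; in general it gives nothing. The paper circumvents this by a weak regularity decomposition $f=f_{\mathrm{str}}+f_{\mathrm{psd}}$ (Lemma~\ref{lem:weakregularity}) with structured pieces $v_i$ supplied by Proposition~\ref{prop:initialmaneuvers} and Lemma~\ref{lem:splitting}, followed by an iterative refinement: at each step one finds a piece $v_i^j$ with large $U([N]^k,e_1[N],\dots,e_\ell[N])$ norm, applies Cauchy--Schwarz--Gowers and Theorem~\ref{thm:nilsequenceconstruction} to improve $v_i^j$ on a set $Y$ of $x_\ell$'s whose size is lower-bounded \emph{independently of the iteration}, splits $1_{S_i^j}=1_{S_i^j\setminus Y}+1_Y$, moves the improved piece to a ``strong'' bucket, and either terminates when $f$ correlates with the strong bucket or iterates. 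The uniform lower bound on $|Y|$ is what forces termination in $\epsilon(\delta)^{-O(1)}$ steps. This bookkeeping is not cosmetic; it is the mechanism by which a $1\%$ structure theorem produces a global correlation, and your sketch is missing it entirely. A secondary, smaller issue: your purported base case $\ell=0$ is not a $U^2$-type statement here (with a single outer factor $[N]^k$ the box norm degenerates to an average), so the induction should bottom out at $\ell=1$.
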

Throughout the proof of this theorem, we work with the following conventions.
\begin{notation}\label{not:coordinization}
In context of the proof of Theorem~\ref{thm:maininversetheorem} (where parameters $k$ and $\ell$ are fixed), we denote $x \in [N]^k$ as $(x_\ell, x_*)$ with $x_\ell$ denoting the $\ell$th coordinate and $x_*$ denoting the rest of the coordinates. We also denote for $\chi \in \mathrm{Nil}^\ell(M, m, L, D)$ to be a nilsequence that depends only on the first $L$ coordinates.
\end{notation}

\section{Initial maneuvers}
In this section, we make initial maneuvers regarding the proofs of Theorem~\ref{thm:maininversetheorem}. We begin with the following.
\begin{prop}\label{prop:initialmaneuvers}
Assume Theorem~\ref{thm:maininversetheorem} holds for $(k, \ell - 1)$. Let $\delta \in (0, 1/10)$, $k, \ell, K$ be positive integers with $\ell \le k \le K$, and $f: [N]^k \to S^1$ be such that 
$$\|f\|_{U([N]^k, e_1[N], \dots, e_\ell[N])} \ge \delta.$$
Then there exist one-bounded functions $f_1, \dots, f_\ell$ with $f_i$ not depending on coordinate $i$, and a family of nilcharacters $F(g_{n_\ell}(n)\Gamma) \in \mathrm{Nil}^{\ell - 1}(M(\delta), m(\delta), k, M(\delta))$ which depend only on the coordinates $1, \dots, \ell - 1$ such that
$$\|\mathbb{E}_{n \in [N]^k} f(n)f_1(n)\cdots f_\ell(n) \otimes F(g_{n_\ell}(n)\Gamma)\|_\infty \ge \epsilon(\delta).$$
\end{prop}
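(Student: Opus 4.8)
The plan is to unfold the box seminorm in the $\ell$-th direction, isolating a single extra degree of differencing, and then feed the resulting ``sliced'' functions into the inverse theorem for the pair $(k,\ell-1)$. Writing $x=(x_\ell,x_*)$ as in Notation~\ref{not:coordinization}, expand $\|f\|_{U([N]^k,e_1[N],\dots,e_\ell[N])}^{2^{\ell+1}}$ by first taking the two $h_\ell,h_\ell'\in[N]$ shifts in coordinate $\ell$ out of the product. After a change of variables that lets us replace the two-variable derivative $\Delta_{(h_\ell,h_\ell')}$ by a single $\Delta_{h_\ell}$ together with a harmless translation (the standard reduction used to pass from $\Delta_{(h,h')}$ to $\Delta_h$ in box-norm manipulations, which costs us only a function invariant in coordinate $\ell$), we obtain that
$$\mathbb{E}_{h_\ell\in[N]}\ \|\Delta_{h_\ell e_\ell}f_{h_\ell}\|_{U([N]^k,e_1[N],\dots,e_{\ell-1}[N])}^{2^\ell}\ \gtrsim\ \delta^{2^{\ell+1}}$$
for suitable one-bounded $f_{h_\ell}$ depending only on coordinates $1,\dots,\ell-1$ (these absorb the translation and will ultimately be rolled into $f_\ell$). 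By pigeonhole there is a set $H\subseteq[N]$ of density $\gg\delta^{O(1)}$ such that $\|\Delta_{h_\ell e_\ell}f_{h_\ell}\|_{U([N]^k,e_1[N],\dots,e_{\ell-1}[N])}\gg\delta^{O(1)}$ for each $h_\ell\in H$.

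Next I apply the inductive hypothesis — Theorem~\ref{thm:maininversetheorem} for $(k,\ell-1)$ — to each function $g_{h_\ell}:=\Delta_{h_\ell e_\ell}f_{h_\ell}$ with $h_\ell\in H$. This yields, for each such $h_\ell$, one-bounded functions $f_1^{(h_\ell)},\dots,f_{\ell-1}^{(h_\ell)}$ (with $f_i^{(h_\ell)}$ independent of coordinate $i$) and a nilsequence $\chi_{h_\ell}\in\mathrm{Nil}^{\ell-1}(M(\delta),m(\delta),k,1)$ depending only on coordinates $1,\dots,\ell-1$ with
$$\big|\mathbb{E}_{n\in[N]^k}\,g_{h_\ell}(n)\,\chi_{h_\ell}(n)\,f_1^{(h_\ell)}(n)\cdots f_{\ell-1}^{(h_\ell)}(n)\big|\ \ge\ \epsilon(\delta).$$
Unwinding $g_{h_\ell}(n)=\overline{f(n+h_\ell e_\ell)}f(n)$ (modulo the translation/invariant-in-$e_\ell$ corrections), this says that for each $h_\ell\in H$ the function $n\mapsto f(n)\overline{f(n+h_\ell e_\ell)}$ correlates with a product of a degree $\ell-1$ nilsequence and functions omitting one coordinate each. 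The key device now is to view the family $\{\chi_{h_\ell}\}_{h_\ell\in H}$ as a \emph{single} object by stacking the nilmanifolds: using the standard trick (as in \cite{GTZ12,LSS24b}) of taking the product of the underlying nilmanifolds over a bounded-index net of parameters, one replaces the $h_\ell$-dependent family by one nilcharacter $F$ on a fixed nilmanifold $G/\Gamma$ of dimension $m(\delta)$ and complexity $M(\delta)$, with the $h_\ell$-dependence pushed into the polynomial sequence $g_{n_\ell}$; here one must quantize $h_\ell$ into $M(\delta)$-many buckets so that only boundedly many distinct nilmanifolds appear, at the cost of replacing correlation by tensoring (whence the $\otimes F(g_{n_\ell}(n)\Gamma)$ in the conclusion). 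The functions $f_i^{(h_\ell)}$, $i\le\ell-1$, together with $f_{h_\ell}$ and $\overline{f(n+h_\ell e_\ell)}$ viewed as a function of $n$ with $h_\ell$ a parameter, are absorbed into $f_1,\dots,f_\ell$: crucially $n\mapsto\overline{f(n+h_\ell e_\ell)}$, once we also average back over $h_\ell$, contributes to a function $f_\ell$ that does not depend on coordinate $\ell$ (this is exactly the Cauchy–Schwarz/dual-function manoeuvre that produces the ``missing coordinate $\ell$'' slot).

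The main obstacle is the bookkeeping in the stacking step: ensuring that after taking products of the boundedly many nilmanifolds $G^{(h_\ell)}/\Gamma^{(h_\ell)}$ one still has quasi-polynomial control on dimension and complexity (so $m\le m(\delta)$, $M\le M(\delta)$), and that the $h_\ell$-dependence genuinely fits into a single polynomial sequence $g_{n_\ell}(n)$ of the required degree-rank — this is where one invokes the quantitative structure of polynomial sequences from \cite[Appendix B]{GTZ12} and the equivalence machinery (Lemma~\ref{lem:equiv}) to normalize the nilcharacters. A secondary subtlety is tracking the translations introduced in the $\Delta_{(h_\ell,h_\ell')}\to\Delta_{h_\ell}$ reduction and verifying they are genuinely invariant under the coordinate-$\ell$ shift, so they can be legitimately placed in $f_\ell$; this is routine but must be done carefully to respect the ``$f_i$ does not depend on coordinate $i$'' constraint. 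Everything else — the pigeonholing, the application of the inductive hypothesis, and the final repackaging of the auxiliary one-bounded functions — is bounded-loss and combinatorially straightforward.
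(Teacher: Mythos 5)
Your high-level structure matches the paper's: peel off the $\ell$-th direction of the box norm to obtain a differenced function, apply the inductive instance of Theorem~\ref{thm:maininversetheorem} for $(k,\ell-1)$ to each slice, then remove the $h_\ell$-dependence from the nilmanifold and underlying function by pigeonholing/quantization (the paper does this with a count of bounded-complexity nilmanifolds and a partition-of-unity argument, which is essentially the same as your ``quantize $h_\ell$ into $M(\delta)$-many buckets''). Up to this point the routes are parallel.

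The gap is in how $\overline{f(n+h_\ell e_\ell)}$ becomes the function $f_\ell$ that is independent of coordinate $\ell$, and simultaneously how the nilcharacter acquires its $n_\ell$-dependence. You assert that ``once we also average back over $h_\ell$,'' the term $\overline{f(n+h_\ell e_\ell)}$ contributes to a function independent of $n_\ell$. This does not work: inside a single correlation the average over $h_\ell$ is outside, not inside; you cannot average away the $n_\ell$-dependence of $\overline{f(n+h_\ell e_\ell)}$ because the other factors ($\chi_{h_\ell}$, $f_i^{(h_\ell)}$) also depend on $h_\ell$, so the integral does not factor. What the paper actually does is a change of variables $h\mapsto h-x_\ell$, $x_\ell\to h$, $h\to x_\ell$, which turns $f(n+h_\ell e_\ell)$ into $f(x_*,h)$ --- a function whose $\ell$-th slot is the \emph{new} $h$, not $x_\ell$ --- and simultaneously turns the shift-indexed nilsequence $F(g_{h_\ell}(\cdot))$ into $F(g_{x_\ell-h}(x_*)\Gamma)$, whose index depends on $x_\ell$. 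After pigeonholing in the new $h$ to a single value $h_0$, the term $f(x_*,h_0)$ is genuinely independent of $x_\ell$, and the nilcharacter index becomes $g_{x_\ell-h_0}$, producing exactly the $n_\ell$-indexed family $F(g_{n_\ell}(n)\Gamma)$ in the conclusion. Without this swap, your stacking step has no mechanism to push the $h_\ell$-parameter into the coordinate $n_\ell$, and the proposed $f_\ell$ is not coordinate-$\ell$-independent. That change of variables (plus the one final Fourier-expansion step to make the resulting vertical character a component of a genuine nilcharacter) is the missing idea.
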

\begin{proof}
Let $N'$ be a prime lying between $100N$ and $200N$. By extending $f$ to be zero outside $[N]$, we may suppose that
$$\|f\|_{U(\mathbb{Z}_{N'}^k, e_1\mathbb{Z}_{N'}, \dots, e_\ell\mathbb{Z}_{N'})} \ge \delta.$$
From hereon, we work with $N$ in place of $N'$. As $N \asymp N'$, the conclusion for $N'$ implies the conclusion for $N$. Then
$$\mathbb{E}_{h \in \mathbb{Z}_{N}} \|\Delta_{he_\ell}f\|_{U(\mathbb{Z}_{N}^k, e_1\mathbb{Z}_{N}, \dots, e_{\ell - 1}\mathbb{Z}_{N})}^{2^{k - 1}} \ge \delta^{2^k}.$$
Applying the inductive argument, for $\delta^{O(1)}N$ many $h \in \mathbb{Z}_{N}$, we may find nilsequences $\chi_h = F_h(g_h(n)\Gamma_h) \in \mathrm{Nil}^{\ell - 1}(m(\delta), M(\delta), k, M(\delta))$ and one-bounded $f_{1, h}, \dots, f_{j - 1, h}$ such that
$$\mathbb{E}_{h \in \mathbb{Z}_{N}} |\mathbb{E}_{x \in \mathbb{Z}_{N}^k} \Delta_{he_j} f(x) f_{1, h}(x)f_{2, h}(x) \cdots f_{j - 1, h}(x) \cdot \chi_h(x)| \ge \epsilon(\delta)$$
with $f_{i, h}$ not depending on coordinate $i$ and $\chi_h$ depends only on the coordinates $e_1, \dots, e_{j - 1}$. Here, we denote the underlying nilmanifold of $\chi_h$ as $G_h/\Gamma_h$. We now use a standard argument to remove the $h$-dependence of $F_h$, $G_h$, and $\Gamma_h$. The removal of $h$-dependence of $G_h$ and $\Gamma_h$ follows from a pigeonholing argument on the number of nilmanifolds. Indeed, there are at most $M(\delta)$ nilmanifolds of complexity $M(\delta)$. This removes the $h$-dependence of $G_h$ and $\Gamma_h$ at the cost of shrinking our proportion of $h$'s from $\delta^{O(1)}$ to $\epsilon(\delta)$. To reduce the $h$-dependence of $F_h$, we use a partition of unity argument. By \cite[Lemma B.3]{LSS24b}, we may write
$$F_h(g\Gamma) = \sum_{j \in I} \tau_j(g\Gamma)^2 \cdot F_h(g\Gamma)$$
for $|I| \le (1/\epsilon)^{m(\delta)}$ with $\epsilon > 0$ to be chosen later and $\tau_j$ obeying the properties specified in the lemma. Choosing $\epsilon$ to be sufficiently small (say $\epsilon(\delta)$), we may specify that
$$\sup_{g \in G} \left|F_h(g\Gamma) - \sum_{i \in I} a_{i, h}\tau_i(g\Gamma)^2\right| \le \delta^{2^k}/4$$
for some appropriate values of $a_i$; the point is that we may, via the triangle inequality, replace $F_h$ with $\sum_{i \in I} a_{i, h}\tau_i(g\Gamma)^2$ and one may take $a_{i, h}$ to vary over $M(\delta)$ number of elements as $h$ varies. By pigeonholing over one of these choices of $a_{i, h}$, we have demonstrated independence of $F_h$ from $h$.

Now, by Fourier expanding as in Lemma~\ref{lem:nilcharacters}, we may assume that $\chi_h$ is a $G_k$-vertical character with frequency bounded by $M(\delta)$. Making a change of variables $h \mapsto h - x_\ell$, $x_\ell \to h$, and $h \to x_\ell$, we find that there exists a one-bounded function $c$ such that
$$\mathbb{E}_{h \in \mathbb{Z}_{N}} \mathbb{E}_{x \in \mathbb{Z}_{N}^k} c(x_\ell)f(x)f(x_*, h)\prod_{i = 1}^{\ell - 1}f_{i, x_\ell - h}(x + he_\ell - x_\ell e_\ell)  F(g_{x_\ell - h}(x_*)\Gamma) \ge \epsilon(\delta).$$
Pigeonholing in $h$, we find that there exists $h_0$ such that $f_{i}(x) = f_{i, x_\ell - h_0}(x + he_\ell - x_\ell e_\ell)$ for $i \neq 1$, $f_1(x) = f_{1, x_\ell - h_0}(x + he_\ell - x_\ell e_\ell)c(x_\ell)$, and $f_\ell(x) = f(x_*, h_0)$, we find that
$$|\mathbb{E}_{x \in \mathbb{Z}_{N}^k} f(x)F(g_{x_\ell}(x_*)\Gamma)\prod_{i = 1}^\ell f_i(x)| \ge \epsilon(\delta).$$
We finally wish to replace $F(g_{x_\ell}(x_*)\Gamma)$ with a component of a nilcharacter. By \cite[Lemma B.4]{LSS24b}, there exists a nilcharacter $\chi$ which has the same $G_k$-vertical frequency as $F$. Finally, by observing that $(F/(2\|F\|_\infty), \chi \cdot \sqrt{1 - |F/(2\|F\|_\infty)|^2})$ is a nilcharacter, we have the desired result.
\end{proof}

\section{Deduction of Theorem~\ref{thm:maininversetheorem}}
We shall now prove Theorem~\ref{thm:maininversetheorem} assuming the theorem holds for $(k, \ell - 1)$. We shall assume the following structure theorem.
\begin{theorem}\label{thm:nilsequenceconstruction}
Let $\delta \in (0, 1/10)$, $k, D, d, K \in \mathbb{N}$ with $k, d, D \le K$, $H \subseteq [N]^{D}$ of size at least $\delta N^{D}$ and for each $h \in H$ let $\chi_h(x) = F(g_h(x)\Gamma) \in \mathrm{Nil}^{d - 1}(M(\delta), m(\delta), k, M(\delta))$ with $G_{d}$-vertical frequency $\eta$ and $\psi_{\vec{h}} = \psi_{\vec{h}}(g_{\vec{h}}(n)\Gamma') \in \mathrm{Nil}^{d - 2}(M(\delta), m(\delta), k, M(\delta))$. Suppose for $\delta |H|^3$ many additive quadruples $(h_1, h_2, h_3, h_4) \in H^4$ we have
$$\|\mathbb{E}_{x \in [N]^{k}} \chi_{h_1}(x) \otimes \chi_{h_2}(x) \otimes \overline{\chi_{h_3}(x)\otimes \chi_{h_4}(x)} \cdot \psi_{\vec{h}}(x)\|_\infty \ge \epsilon(\delta).$$
Then there exists a set $H' \subseteq H$ of size at least $\epsilon(\delta)|H|$ and a multidegree nilcharacter 
$$\widetilde{\chi} \in \mathrm{Nil}^{(1, d - 1)}(M(\delta), m(\delta), D + k, M(\delta))$$
such that for each $h \in H$', $\chi_h(\cdot)$ is $(m(\delta), M(\delta))$-equivalent to $\chi(h, \cdot)$.
\end{theorem}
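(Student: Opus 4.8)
\textbf{Proof proposal for Theorem~\ref{thm:nilsequenceconstruction}.}

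The plan is to run the Green--Tao--Ziegler ``sunflower/$99\%$-to-$1\%$" strategy for additive quadruples of nilcharacters, following \cite{GTZ12, LSS24b} and adapting it to the multidimensional parameter space $[N]^D$ for the $h$-variable. First I would unpack the hypothesis: for each of the $\delta|H|^3$ additive quadruples $(h_1,h_2,h_3,h_4)$ the displayed correlation says that $\chi_{h_1}\otimes\chi_{h_2}\otimes\overline{\chi_{h_3}\otimes\chi_{h_4}}$ correlates with a lower-degree nilsequence $\psi_{\vec h}$. Since each $\chi_h$ is a $G_d$-vertical character with frequency $\eta$, the vertical frequencies of the tensor product cancel in the combination $\eta+\eta-\eta-\eta=0$, so (after passing to a vertical Fourier component via Lemma~\ref{lem:nilcharacters}-type reasoning) the product descends to a nilsequence of degree-rank $(d,1)$, i.e.\ it ``drops a degree of rank.'' This is exactly the regime in which one expects a bracket-polynomial/Freiman-type statement: the top-degree Taylor coefficients $\mathrm{Taylor}_{\vec i}(g_h)$ for $|\vec i|=d$ should behave like a Freiman homomorphism in $h$ on a large subset, up to lower-order corrections.

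The key steps, in order, would be: (1) Normalize $N$ to a prime (as in Proposition~\ref{prop:initialmaneuvers}) and embed everything in $\mathbb{Z}_N^k$ and $\mathbb{Z}_N^D$; pigeonhole so that the underlying nilmanifold $G/\Gamma$, the function $F$, and the vertical frequency $\eta$ are uniform in $h$ (losing $\epsilon(\delta)$ in $|H|$), using the partition-of-unity / bounded-complexity pigeonholing already rehearsed in the proof of Proposition~\ref{prop:initialmaneuvers}. (2) Apply the degree-rank analysis: the correlation of the tensor product with $\psi_{\vec h}\in\mathrm{Nil}^{d-2}$ forces, via the quantitative equidistribution theory on nilmanifolds (Leibman / Green--Tao, as packaged in \cite{Len23, LSS24b}), a relation among the Taylor coefficients of $g_{h_1}, g_{h_2}, g_{h_3}, g_{h_4}$ at the top degree $d$: roughly, $\mathrm{Taylor}_{\vec i}(g_{h_1})+\mathrm{Taylor}_{\vec i}(g_{h_2})-\mathrm{Taylor}_{\vec i}(g_{h_3})-\mathrm{Taylor}_{\vec i}(g_{h_4})$ is ``negligible" (lies in a bounded-index subgroup, or is a bracket-linear error) for $\delta|H|^3$ quadruples. (3) Invoke the multidimensional additive-combinatorial machinery (Balog--Szemerédi--Gowers plus Freiman in the relevant nilpotent/torus quotients, i.e.\ the abelianized top-degree layer $G_d/\Gamma_d$, cf.\ Appendix~B) to conclude that on a subset $H'\subseteq H$ with $|H'|\ge\epsilon(\delta)|H|$ the map $h\mapsto (\mathrm{Taylor}_{\vec i}(g_h))_{|\vec i|=d}$ agrees with a genuine (multidimensional) linear/polynomial map of $h$, up to bounded-complexity error. (4) Integrate this: build a single polynomial sequence $G_{(1,d-1)}$-filtered object on $G\times G'$ (or an appropriate product nilmanifold) in the joint variable $(h,x)\in\mathbb{Z}_N^D\times\mathbb{Z}_N^k$ whose restriction to each fixed $h\in H'$ reconstructs $g_h(x)$ up to a $(m(\delta),M(\delta))$-equivalence; here the multidegree $(1,d-1)$ reflects that we are degree $1$ in $h$ and degree $d-1$ in the new ``rank-dropped" part, matching the filtration bookkeeping of Definition~\ref{def:filt-prec} and Lemma~\ref{lem:splitdegreerank}. (5) Finally, clean up: replace the resulting function by a genuine nilcharacter using the same device as at the end of Proposition~\ref{prop:initialmaneuvers} (pairing $F$ with $\chi\sqrt{1-|F|^2}$), and verify the complexity/dimension bounds aggregate to $M(\delta), m(\delta)$.

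The main obstacle, I expect, is Step (3)--(4): extracting a \emph{globally consistent} polynomial dependence of the whole nilsequence data on $h$ from the approximate additive relation among only the \emph{top-degree} Taylor coefficients. The lower-degree coefficients $\mathrm{Taylor}_{\vec i}(g_h)$ with $|\vec i|<d$ are not controlled by the hypothesis and could vary wildly with $h$; one must iterate the argument down the degree-rank filtration (this is why the statement only asks for $(m(\delta),M(\delta))$-\emph{equivalence} rather than equality), and at each stage one needs the error terms from the nilmanifold equidistribution theory and from the Freiman-type step to not compound beyond $\epsilon(\delta)$. Managing this iteration in the multidimensional setting — where the ``frequencies" live in $\mathbb{Z}^D$-indexed families and one must run Balog--Szemerédi--Gowers uniformly — is precisely where the techniques of \cite{LSS24b} are needed essentially verbatim, and where the bulk of the real work (deferred to Section 7) will lie. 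A secondary technical nuisance is ensuring the product-nilmanifold construction in Step (4) respects the multidegree-rank filtration conventions of Section 2.2 so that the output genuinely lands in $\mathrm{Nil}^{(1,d-1)}$ and not merely in some larger degree class.
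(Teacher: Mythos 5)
Your proposal matches the paper's strategy in essence: the paper proves Theorem~\ref{thm:nilsequenceconstruction} by first observing (via the cancellation $\eta+\eta-\eta-\eta=0$ you note) that the quadruple tensor drops to degree $d-2$, reducing the statement to Proposition~\ref{prop:degreerankextraction}, and then running a degree-rank induction exactly as you outline in Step (3)--(4). Proposition~\ref{prop:degreerankextraction} in turn is proved (Section 7) by the sunflower/equidistribution step producing relations among the top Taylor coefficients $\mathrm{Taylor}_{\vec i}(g_h)$, followed by the approximate-homomorphism lemma (Lemma~\ref{lem:approximate}, which packages BSG + Freiman) to produce bracket-linear coefficients on a dense $H'\subseteq H$, and finally a construction of a joint nilcharacter in $(h,x)$ via a universal nilmanifold and a semidirect-product extension respecting the multidegree filtration --- the last of these being the "secondary technical nuisance" you flag about landing genuinely in $\mathrm{Nil}^{(1,d-1)}$, which is indeed where most of the multidimensional care is spent.
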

The reason we require this structure theorem will become clear in the proof of the inverse theorem. The structure theorem will be deduced by the following proposition via a degree-rank induction.
\begin{prop}\label{prop:degreerankextraction}
Let $\delta \in (0, 1/10)$, $k, D, r, d, K \in \mathbb{N}$ with $k, k', d, r, D \le K$, $H \subseteq [N]^D$ of size at least $\delta N^D$ and for each $h \in H$, let $\chi_h(x) = F(g_h(x)\Gamma)$ be a family of degree-rank $(d - 1, r)$ nilcharacters in $\mathrm{Nil}^{(d - 1, r)}(M(\delta), m(\delta), k, M(\delta))$ with $G_{(d, r)}$-vertical frequency $\eta$ and $\psi_{\vec{h}} = \psi_{\vec{h}}(g_{\vec{h}}(n)\Gamma') \in \mathrm{Nil}^{d - 2}(M(\delta), m(\delta), k, M(\delta))$. Suppose for $\epsilon(\delta) N^{3D}$ many additive quadruples $(h_1, h_2, h_3, h_4) \in H^4$ we have
$$\|\mathbb{E}_{x \in [N]^{D}} \chi_{h_1}(x) \otimes \chi_{h_2}(x) \otimes \overline{\chi_{h_3}(x)\otimes \chi_{h_4}(x)} \cdot \psi_{\vec{h}}(x)\|_\infty \ge \epsilon(\delta).$$
Then there exists a set $H' \subseteq H$ of size at least $\epsilon(\delta)|H|$ and a multi-degree nilcharacter $\widetilde{\chi}$ in $\mathrm{Nil}^{(1, d - 1)}(M(\delta), m(\delta), k, M(\delta))$ such that for each $h \in H'$, $\chi_h(\cdot)$ is $(m(\delta), M(\delta))$-equivalent to $\chi(h, \cdot)$ up to a degree-rank $< (d - 1, r)$ nilsequence.
\end{prop}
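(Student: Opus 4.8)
The plan is to run the standard Green--Tao--Ziegler-type symmetry argument adapted to the degree-rank setting, following closely the corresponding step in \cite{GTZ12} and \cite{LSS24b} with the multidimensional modifications of Theorem~\ref{thm:nilsequenceconstruction}. First I would unpack the hypothesis: for $\epsilon(\delta) N^{3D}$ additive quadruples $(h_1,h_2,h_3,h_4)$ with $h_1+h_2=h_3+h_4$ we have a nonvanishing correlation of $\chi_{h_1}\otimes\chi_{h_2}\otimes\ol{\chi_{h_3}\otimes\chi_{h_4}}$ with a lower-degree-rank nilsequence $\psi_{\vec h}$. The key structural fact, via Lemma~\ref{lem:taylorexansion} and Lemma~\ref{lem:taylorhom}, is that since all $\chi_h$ share the same $G_{(d,r)}$-vertical frequency $\eta$, the top-order (degree-rank $(d-1,r)$) data of $\chi_h$ is captured by a collection of ``Taylor coefficients'' living in an abelian quotient, and the quadruple relation forces these top-order coefficients to behave like a (approximate, $1\%$) additive homomorphism in $h$. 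Concretely I would pass from the $\infty$-norm correlation to an honest $L^2$-type statement, Fourier-expand $\psi_{\vec h}$ into vertical characters (as in Lemma~\ref{lem:nilcharacters}-type reductions and the partition-of-unity maneuver used in Proposition~\ref{prop:initialmaneuvers}) to make the lower-order term have bounded vertical frequency, and thereby reduce to a statement about the symmetry/additivity of the frequency data attached to the family $\{\chi_h\}$.

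Next I would invoke the core bracket-polynomial/nilcharacter machinery. The correlation over $x\in[N]^D$ of the four nilcharacters, modulo a degree-rank $<(d-1,r)$ object, means (after quotienting by the top group $G_{(d,r)}$ and using that $[\wt G_j,\wt G_{|\vec i|-j}]=\mr{Id}$, so the top layer is effectively abelian) that the degree-rank $(d-1,r)$ Taylor data $T(h):=\mathrm{Taylor}_{\bullet}(g_h)$ satisfies $T(h_1)+T(h_2)-T(h_3)-T(h_4)$ is ``negligible'' (lies in a lower layer, up to bounded-complexity corrections) for $\epsilon(\delta)N^{3D}$ quadruples. By the Balog--Szemer\'edi--Gowers theorem and Freiman-type results in Appendix B, one upgrades this $1\%$ additive-quadruple statement on $H$ to a $99\%$ statement on a large subset $H'\subseteq H$ of size $\epsilon(\delta)|H|$: on $H'$, $T$ agrees with a genuine (bounded-complexity) affine-linear map $h\mapsto L(h)$ into the relevant quotient group, up to lower-degree-rank error. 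This is exactly where Theorem~\ref{thm:nilsequenceconstruction} is designed to be applied; more precisely, I would cite/adapt the argument of \cite{LSS24b} that turns such linearized Taylor data into a single multidegree $(1,d-1)$ nilcharacter $\wt\chi$ on $\mb Z^{D}\times\mb Z^{k}$ by building $\wt\chi$ from the group generated by the $g_h$'s together with the new $h$-direction, with $h$ entering polynomially of degree one and $x$ of degree $d-1$, arranged so that restricting the $h$-coordinate to a fixed value recovers $\chi_h(\cdot)$ up to degree-rank $<(d-1,r)$ and up to $(m(\delta),M(\delta))$-equivalence.

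Finally, I would verify the quantitative bookkeeping: dimensions add, complexities multiply, and every application of Balog--Szemer\'edi--Gowers or the pigeonhole/partition-of-unity step costs only a quasi-polynomial-in-$\delta$ loss, so the output parameters remain of the form $M(\delta),m(\delta),\epsilon(\delta)$ in the sense of Notation~\ref{not:asymptoticnotation}. The main obstacle I anticipate is the last structural step: passing from ``the linearized top-order Taylor data is an affine map on $H'$'' to an \emph{actual} multidegree-$(1,d-1)$ nilcharacter $\wt\chi$ with $\chi_h(\cdot)\sim\wt\chi(h,\cdot)$ in the $(m(\delta),M(\delta))$-equivalence sense. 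This requires correctly assembling the nilmanifold in the joint variables, controlling how the lower layers (degree-rank $<(d-1,r)$) of the original $\chi_h$'s vary with $h$ --- they need not be linear, but they are absorbed into the ``up to degree-rank $<(d-1,r)$ nilsequence'' slack in the conclusion --- and checking cocycle/Mal'cev-basis compatibility so that $\wt\chi$ is genuinely a nilcharacter of the claimed type; this is the technical heart and is where the verbatim-style borrowing from \cite{GTZ12,LSS24b} does most of the work, with the multidimensional Taylor expansion lemmas of Section 2.4 replacing their one-dimensional counterparts.
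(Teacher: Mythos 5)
Your high-level skeleton is on the right track: extract structured Taylor-coefficient data from the four-fold correlation, linearize it in $h$ over a dense subset, and then reassemble the result into a multidegree $(1,d-1)$ nilcharacter. That matches the paper's architecture, which runs through Lemma~\ref{lem:sunflower} (the sunflower/Furstenberg--Weiss commutator argument producing the spaces $V_i, V_{i,\mathrm{Dep}}$), then a linearization lemma (producing $W_{i,\ast}, W_{i,\mathrm{Lin}}, W_{i,\mathrm{Pet}}$ together with the bracket-linear form of the Taylor projections), and finally the universal-nilmanifold/semidirect-product construction. However there are two genuine problems.

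First, you write that ``this is exactly where Theorem~\ref{thm:nilsequenceconstruction} is designed to be applied.'' That would be circular: in the paper, Theorem~\ref{thm:nilsequenceconstruction} is \emph{deduced from} Proposition~\ref{prop:degreerankextraction} by a degree-rank induction (plus Lemma~\ref{lem:equiv}), not the other way around. You must prove Proposition~\ref{prop:degreerankextraction} from scratch, with only Lemma~\ref{lem:sunflower}, the factorization and equidistribution inputs from \cite{LSS24b}, and Lemma~\ref{lem:approximate} at your disposal.

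Second, and more substantively, you describe the linearized Taylor data as ``a genuine (bounded-complexity) affine-linear map $h\mapsto L(h)$.'' This is not what the argument delivers and not what you can assemble into a nilcharacter. Lemma~\ref{lem:approximate} produces a \emph{bracket-linear} form, $\mathrm{Proj}_{W_{|\vec i|,\mathrm{Lin}}}(\mathrm{Taylor}_{\vec i}(g_h))=\sum_j(\gamma_{\vec i,j}+\sum_k\alpha_{\vec i,j,k}\{\beta_k\cdot h\})Z_{|\vec i|,j}$, and the fractional parts $\{\beta_k\cdot h\}$ are essential --- they cannot be upgraded to honest linear maps without losing the density $\epsilon(\delta)|H|$ conclusion. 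Precisely because the data is bracket-linear rather than affine-linear, one cannot simply ``restrict the $h$-coordinate'' of a product nilmanifold; instead one must construct the semidirect product $G_{\mathrm{Multi}}=R\rtimes_\rho(G_{\mathrm{MultiQuot}}\rtimes G_{\mathrm{Lin}})$ (with $R$ a Euclidean group encoding the $\{\beta_k\cdot h\}$ parameters and $\rho$ the conjugation action raising $G_{\mathrm{Lin}}$ to $t$-fractional powers), verify it carries a valid multidegree filtration (Claim~\ref{clm:filtration}), cut off near half-integers to make the resulting Lipschitz function well defined on the quotient, and check the identity $F_{\mathrm{Multi}}(g_{\mathrm{Final}}(h,n)\Gamma_{\mathrm{Multi}})=F^\ast(g_h^{\mathrm{MultiQuot}}(n)\Gamma_{\mathrm{MultiQuot}})$ on the surviving set $H^\ast$. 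You flag this final step as the ``main obstacle'' and correctly locate it as the technical heart, but the proposal does not actually carry it out, and the affine-linear description obscures exactly the feature (fractional brackets) that makes the semidirect product necessary. You would also need the multidimensional version of the Taylor representation (Lemma~\ref{lem:Taylor-mod}) in order to pass between $g_h$ and the universal nilmanifold coordinates; in the multidimensional setting this requires lifting generators $e_{x,j}$ indexed by multi-indices $x$, not just by $|x|$, which is where the additional bookkeeping relative to \cite{LSS24b} lives.
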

Additionally, we need the following regularity lemmas from \cite{Tao07}. In the two lemmas below, $H$ will denote a Hilbert space with norm $\|\cdot \|_H$ and $S \subset H$ will denote a class of \emph{structured} functions. We say that a function $f$ is $(M, K)$-structured if it can be written as
$$f = \sum_{i = 1}^M c_i v_i$$
where $|c_i| \le K$ and $v_i \in S$. We say that $f$ is $\epsilon$-pseudorandom if $|\langle f, v \rangle| \le \epsilon$ for all $v \in S$. The first lemma is the weak regularity lemma, following \cite[Corollary 2.5]{Tao07}.
\begin{lemma}[Weak regularity lemma]\label{lem:weakregularity}
Let $(H, \langle \cdot, \cdot \rangle_H)$ be a Hilbert space and $f \in H$ with $\|f\|_H \le 1$ and $0 \le \epsilon \le 1$. Then we may write $f = f_{\mathrm{str}} + f_{\mathrm{psd}}$ with $f_{\mathrm{str}}$ $(1/\epsilon^2, 1/\epsilon)$-structured, and $f_{\mathrm{psd}}$ is $\epsilon$-pseudorandom and having norm at most $\|f\|_H$.
\end{lemma}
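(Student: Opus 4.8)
The plan is to run the classical \emph{energy increment} iteration in the Hilbert space $H$ (this is the Hilbert-space regularity lemma from \cite{Tao07}). We shall build $f_{\mathrm{str}}$ as a growing finite linear combination of elements of $S$, set $f_{\mathrm{psd}} := f - f_{\mathrm{str}}$, and monitor the ``energy'' $\|f_{\mathrm{psd}}\|_H^2$, which will be made to drop by a fixed amount at every step.

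First I would set up the recursion. Begin with $f_{\mathrm{str}} = 0$, so that $f_{\mathrm{psd}} = f$ and the energy is $\|f\|_H^2 \le 1$. At a given stage: if $f_{\mathrm{psd}}$ is already $\epsilon$-pseudorandom, stop; otherwise pick $v \in S$ with $|\langle f_{\mathrm{psd}}, v\rangle_H| > \epsilon$ and replace $f_{\mathrm{str}}$ by $f_{\mathrm{str}} + cv$, where $c := \langle f_{\mathrm{psd}}, v\rangle_H / \|v\|_H^2$ is the coefficient minimizing the new energy; a one-line computation gives $\|f - (f_{\mathrm{str}} + cv)\|_H^2 = \|f_{\mathrm{psd}}\|_H^2 - |\langle f_{\mathrm{psd}}, v\rangle_H|^2/\|v\|_H^2$.

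Next I would extract the bounds from three elementary observations, using that the structured functions may be assumed to satisfy $\|v\|_H \le 1$ (automatic in the intended application, where $S$ consists of one-bounded functions on a probability space). Since the energy is non-increasing, $\|f_{\mathrm{psd}}\|_H \le \|f\|_H \le 1$ throughout, so by Cauchy--Schwarz any $v$ triggering a step obeys $\epsilon < |\langle f_{\mathrm{psd}}, v\rangle_H| \le \|f_{\mathrm{psd}}\|_H\,\|v\|_H \le \|v\|_H \le 1$. Hence: (i) each step decreases the energy by $|\langle f_{\mathrm{psd}}, v\rangle_H|^2/\|v\|_H^2 > \epsilon^2$, so the iteration halts after at most $\epsilon^{-2}$ steps (the energy starts $\le 1$ and stays $\ge 0$); and (ii) the coefficient at each step satisfies $|c| = |\langle f_{\mathrm{psd}}, v\rangle_H|/\|v\|_H^2 \le 1/\|v\|_H < 1/\epsilon$. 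At termination, therefore, $f_{\mathrm{str}}$ is a sum of at most $\epsilon^{-2}$ elements of $S$ with coefficients of modulus $< 1/\epsilon$, i.e. $(1/\epsilon^2, 1/\epsilon)$-structured; $f_{\mathrm{psd}}$ is $\epsilon$-pseudorandom by the stopping rule; and $\|f_{\mathrm{psd}}\|_H \le \|f\|_H$ since the energy only went down. This is exactly the asserted decomposition $f = f_{\mathrm{str}} + f_{\mathrm{psd}}$.

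There is no genuine difficulty here; the lemma is routine once the iteration is in place. The only points that merit attention — and the ``main obstacle'' such as it is — are: verifying the coefficient bound $|c| < 1/\epsilon$, which is why it pays to observe that any structured function witnessing a failure of $\epsilon$-pseudorandomness automatically has $H$-norm bounded below by $\epsilon$; and checking that the process is well-defined and terminates, which is immediate since the energies form a non-increasing sequence in $[0,1]$ that drops by more than $\epsilon^2$ at each step.
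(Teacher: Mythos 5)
Your energy-increment argument is correct and is the standard proof lying behind the cited \cite[Corollary 2.5]{Tao07}; the paper itself does not reproduce the argument, it only invokes that reference. You also correctly flag the one point that needs care: the count $1/\epsilon^2$ on the number of steps (energy drop $>\epsilon^2$) and the coefficient bound $1/\epsilon$ both rely on $\|v\|_H\le 1$ for $v\in S$, which is not written into the statement of Lemma~\ref{lem:weakregularity} but is automatic in the intended application where $S$ consists of one-bounded functions on a probability space, exactly as you observe.
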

We will also need the strong regularity lemma later in the argument in Section 9. We state it now for convenience.
\begin{lemma}[Strong regularity lemma]\label{lem:strongregularity}
Let $f \in H$ with $\|f\|_H \le 1$, $\epsilon > 0$, and $\mathcal{F}: \mathbb{Z}_{> 0} \to \mathbb{R}_{>0}$ be any function. Then we may find some $M = O_{\mathcal{F}, \epsilon}(1)$ and a decomposition
$$f = f_{\mathrm{str}} + f_{\mathrm{psd}} + f_{\mathrm{sml}}$$
with $f_{\mathrm{str}}$ being $(M, M)$-structured, $f_{\mathrm{sml}}$ having norm at most $\epsilon$, and $f_{\mathrm{psd}}$ being $1/\mathcal{F}(M)$-pseudorandom and having norm at most $\|f\|_H$.
\end{lemma}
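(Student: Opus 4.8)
The plan is to deduce the strong regularity lemma from the weak regularity lemma (Lemma~\ref{lem:weakregularity}) by an energy-increment iteration, in the style of \cite{Tao07}. The starting observation is that the proof of Lemma~\ref{lem:weakregularity} is a greedy algorithm: one maintains a finite-dimensional subspace $V\subseteq H$ spanned by elements of $S$, takes the structured part to be the orthogonal projection $\pi_Vf$, and as long as $f-\pi_Vf$ fails to be $\eta$-pseudorandom one adjoins to $V$ an element $v\in S$ witnessing this; since $\|f\|_H\le 1$, each such step increases $\|\pi_Vf\|_H^2$ by more than $\eta^2$, so the process halts after at most $\eta^{-2}$ steps. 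The feature I would exploit is that the procedure may be \emph{continued} from any initial subspace $V_0$ spanned by elements of $S$: running it with parameter $\eta$ produces a subspace $V\supseteq V_0$ with $\dim V\le\dim V_0+\eta^{-2}$ such that $f-\pi_Vf$ is $\eta$-pseudorandom and $\|f-\pi_Vf\|_H\le\|f\|_H$; moreover the greedy selection rule forces each adjoined vector to have a component of norm $>\eta$ orthogonal to the previously chosen ones, which bounds the relevant change-of-basis matrix and hence lets one express $\pi_Vf$ as a combination of the spanning elements of $V$ with coefficients bounded in terms of $\dim V$ and $\eta$.

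Using this, I would build by induction a nested chain $\{0\}=V_0\subseteq V_1\subseteq V_2\subseteq\cdots$ together with parameters $\eta_1\ge\eta_2\ge\cdots$. Let $M_j$ be a quantity dominating both $\dim V_j$ and the coefficient bound just described for $\pi_{V_j}f$ (so $M_0:=1$), set $\eta_1:=\min(\epsilon,1/\mathcal F(1))$, and, having built $V_j$, put $\eta_{j+1}:=\min(\eta_j,1/\mathcal F(M_j))$ and let $V_{j+1}\supseteq V_j$ be the subspace produced by continuing the greedy procedure with parameter $\eta_{j+1}$. Then $g_{j+1}:=f-\pi_{V_{j+1}}f$ is $\eta_{j+1}$-pseudorandom of norm at most $\|f\|_H$, and $M_{j+1}$ is bounded by a function of $M_j$ and $\eta_j$ alone; since $\eta_1$ depends only on $\epsilon$ and $\mathcal F$, an induction shows $M_j=O_{\mathcal F,\epsilon}(1)$ for every fixed $j$.

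Now the energies $e_j:=\|\pi_{V_j}f\|_H^2$ form a non-decreasing sequence in $[0,1]$, since the $V_j$ are nested. Taking $J:=\lceil\epsilon^{-2}\rceil$, the telescoping identity $\sum_{j=0}^{J-1}(e_{j+1}-e_j)=e_J-e_0\le 1$ forces some $n\in\{0,\dots,J-1\}$ with $e_{n+1}-e_n\le\epsilon^2$, and for this $n$ I would output $f_{\mathrm{str}}:=\pi_{V_n}f$, $f_{\mathrm{sml}}:=\pi_{V_{n+1}}f-\pi_{V_n}f$, $f_{\mathrm{psd}}:=g_{n+1}=f-\pi_{V_{n+1}}f$, and $M:=M_n$. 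By orthogonality of the nested projections, $\|f_{\mathrm{sml}}\|_H^2=e_{n+1}-e_n\le\epsilon^2$; by the definition of $M_n$, $f_{\mathrm{str}}$ is $(M,M)$-structured; and $f_{\mathrm{psd}}$ is $\eta_{n+1}$-pseudorandom with $\eta_{n+1}\le 1/\mathcal F(M_n)=1/\mathcal F(M)$, of norm at most $\|f\|_H$. Since $n\le J=O_\epsilon(1)$, we get $M=M_n=O_{\mathcal F,\epsilon}(1)$, completing the proof.

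The only point that is more than bookkeeping is the self-referential tension between the complexity of the structured part and the required pseudorandomness threshold: a single application of Lemma~\ref{lem:weakregularity} cannot guarantee a remainder that is $1/\mathcal F(M)$-pseudorandom against its own output complexity $M$, because $\mathcal F$ may grow arbitrarily fast. This is precisely what the third, ``small'', term absorbs: the monotone, bounded energy must stabilize to within $\epsilon^2$ over two consecutive rounds, after which one reads $f_{\mathrm{str}}$ off the earlier, lower-complexity approximation while reading $f_{\mathrm{psd}}$ off the later one, whose pseudorandomness has already been pushed past $\mathcal F$ of that lower complexity. Two subsidiary technical matters must be checked: that Lemma~\ref{lem:weakregularity} can indeed be run in the ``continuable'' nested form above (equivalently, at stage $j+1$ one may apply it to $f$ with the finitely many spanning vectors of $V_j$ temporarily adjoined to $S$), and that projections onto the greedily-built subspaces have coefficients controlled in terms of $M_j$ and $\eta_j$, so that calling $f_{\mathrm{str}}$ ``$(M,M)$-structured'' is legitimate.
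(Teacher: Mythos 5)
The paper does not actually prove Lemma~\ref{lem:strongregularity}; it states both the weak and strong regularity lemmas as results imported from \cite{Tao07}. Your argument is precisely the standard energy-increment derivation of the strong form from the weak one used there (and in Tao's ``structure and randomness'' notes more generally): iterate the greedy projection construction with a pseudorandomness threshold $\eta_{j+1}=\min(\eta_j,1/\mathcal F(M_j))$ that chases the complexity of the current structured part, observe that the monotone bounded energy $\|\pi_{V_j}f\|_H^2$ must have a small increment within $O(\epsilon^{-2})$ steps, and split $f$ at that step into $\pi_{V_n}f$, the $\epsilon$-small increment $\pi_{V_{n+1}}f-\pi_{V_n}f$, and the residual, which is already $1/\mathcal F(M_n)$-pseudorandom. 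The two ``subsidiary technical matters'' you flag are the right ones to check and both go through: the greedy step forces $|\langle f-\pi_{V_{j-1}}f,v_j\rangle|>\eta_j$, which (since $f-\pi_{V_{j-1}}f\perp V_{j-1}$) shows the new basis vector has an orthogonal component of norm $>\eta_j$, and that lower bound is exactly what controls the Gram matrix and hence the coefficients of $\pi_{V_n}f$ in terms of $\dim V_n$ and $\eta_n$. Your proof is correct and matches the intended (cited) argument.
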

We also require the following lemma. 
\begin{lemma}[Corollary of the splitting Lemma]\label{lem:splitting}
Let $\delta \in (0, 1/10)$, $\ell \le k \in \mathbb{N}$, $f: [N]^k \to \mathbb{C}$, and $\chi \in \mathrm{Nil}^\ell(M, m, \ell, 1)$ be such that
$$|\mathbb{E}_{x \in [N]^k} f(x)\chi(x)| \ge \delta.$$
Then there exists $\chi' \in \mathrm{Nil}^{(1, 1, \dots, 1)}(M, m, \ell, 1)$ and one-bounded functions $f_1, \dots, f_\ell:[N] \to \mathbb{C}$ which depend only on the first $\ell$ coordinates with $f_i$ not depending on coordinate $i$ such that
$$|\mathbb{E}_{x \in [N]^k} f(x)\chi'(x)f_1(x) \cdots f_\ell(x)| \ge (\delta/M)^{m^{O_k(1)}}.$$
\end{lemma}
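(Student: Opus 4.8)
The plan is to decompose the $\mathbb{Z}^\ell$-nilsequence $\chi\in\mathrm{Nil}^\ell(M,m,\ell,1)$ into a bounded sum of nilsequences of pure multidegree $(1,1,\dots,1)$ times products of one-variable functions, using the splitting lemma of \cite{LSS24b} (the lemma cited as \cite[Lemma C.6]{LSS24b} in the commented-out convention above). Concretely, for a degree-$\ell$ $\mathbb{Z}^\ell$-nilsequence, the splitting lemma allows us to write $\chi$ as a sum of $O_{M,m}(1)$ many nilsequences each of which is a product of nilsequences of multidegree $\vec d$ with $|\vec d|\le \ell$, and — since $\chi$ has $\ell$ input variables and degree $\le \ell$ — we can further group the multidegree components. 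The key observation is that a multidegree-$\vec d$ nilsequence on $\mathbb{Z}^\ell$ with $\vec d\neq (1,1,\dots,1)$ must have some coordinate $i$ with $d_i=0$, hence does not depend on coordinate $i$, and thus can be absorbed into the function $f_i$. After this absorption, each surviving top-order term is a genuine multidegree $(1,1,\dots,1)$ nilsequence, which is $\chi'$; the other terms of lower total degree are handled by a downward induction on the degree $\ell$, noting that a lower-degree nilsequence is again covered either by the inductive hypothesis (after the same multidegree splitting and absorption) or directly absorbed into the $f_i$'s.

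First I would apply the splitting lemma to $\chi$, obtaining $\chi = \sum_{i=1}^{D} c_i \prod_{\vec d} \chi_{i,\vec d}$ with $D\le M^{m^{O_k(1)}}$, each $c_i$ one-bounded (after rescaling, absorbing constants into the $f_j$'s), and each $\chi_{i,\vec d}$ a one-bounded multidegree-$\vec d$ nilsequence of complexity $M^{m^{O_k(1)}}$ and dimension $m^{O_k(1)}$. Second, by pigeonholing among the $D$ terms, I reduce to a single term: some $i_0$ with $|\mathbb{E}_{x}f(x)\,c_{i_0}\prod_{\vec d}\chi_{i_0,\vec d}(x)| \ge \delta/D$. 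Third, for each $\vec d$ appearing in that term with $\vec d\neq(1,\dots,1)$, pick a coordinate $i$ with $d_i=0$; then $\chi_{i_0,\vec d}$ does not depend on $x_i$, so I fold it into $f_i$ (doing this carefully so that each coordinate $i$ receives at most all the $\vec d$-components with $d_i=0$, keeping each $f_i$ one-bounded — this is fine since we are multiplying together at most $O_k(1)$ one-bounded functions and can rescale by a constant absorbed elsewhere). Fourth, what remains is $c_{i_0}\chi_{i_0,(1,\dots,1)}(x)$ (or, if no $(1,\dots,1)$ component appears, nothing — in which case we may take $\chi'$ trivial), giving $|\mathbb{E}_x f(x)\,\chi'(x)\,f_1(x)\cdots f_\ell(x)|\ge \delta/D \ge (\delta/M)^{m^{O_k(1)}}$ as required.

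The main obstacle I anticipate is bookkeeping rather than conceptual: ensuring that after absorbing the lower-multidegree components into the $f_i$'s we genuinely get functions $f_i$ that (a) are one-bounded, (b) depend only on the first $\ell$ coordinates, and (c) have $f_i$ not depending on coordinate $i$. Point (c) is the delicate one: a multidegree-$\vec d$ component with, say, $d_1=d_2=0$ could be assigned to either $f_1$ or $f_2$, and one must make a consistent choice (e.g.\ always assign to the least index $i$ with $d_i=0$) so that no single $f_i$ ends up depending on coordinate $i$; since each assigned component is independent of the chosen coordinate by construction, this works. A secondary technical point is that the splitting lemma as stated in \cite{LSS24b} produces nilsequences with possibly enlarged complexity and dimension, so I must check the output still lies in $\mathrm{Nil}^{(1,\dots,1)}(M,m,\ell,1)$ after the (quasi-polynomially bounded) blow-up — but this is exactly the content of the $(\delta/M)^{m^{O_k(1)}}$ loss recorded in the statement, and the dimension/complexity bounds $M,m$ in the conclusion are understood in the sense of Notation~\ref{not:asymptoticnotation}, so no genuine difficulty arises.
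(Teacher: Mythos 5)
Your plan is correct and follows essentially the same route as the paper: view $\chi$ as a multidegree nilsequence via $G_{(j_1,\ldots,j_\ell)}=G_{j_1+\cdots+j_\ell}$, iterate \cite[Lemma C.6]{LSS24b} to split into pure multidegree components, pigeonhole, and observe that any component with multidegree $\vec d\ne(1,\ldots,1)$ and $|\vec d|\le\ell$ must have some $d_i=0$ and so can be absorbed into $f_i$. The paper gives this as a one-line sketch; your write-up fills in the pigeonholing, the coordinate-assignment bookkeeping, and the quasi-polynomial loss, and these details are consistent (the only small imprecision is that the splitting lemma yields an $\epsilon$-approximation rather than an exact identity, so one should set $\epsilon<\delta/2$ and invoke the triangle inequality before pigeonholing, which you implicitly account for in the final bound).
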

\begin{proof}
This follows by the fact that a $\mathbb{Z}^\ell$ input nilsequence of degree $d$ is multidegree $J = \{(j_1, \dots, j_\ell): j_1 + \cdots + j_\ell \le d\}$ (via a filtration $G_{(j_1, \dots, j_\ell)} = G_{j_1 + \cdots + j_\ell}$) and then iteratively applying \cite[Lemma C.6]{LSS24b} and noting that multidegree $(d_1, \dots, d_\ell)$ with $d_j$ zero does not depend on coordinate $j$.
\end{proof}
We are now ready to prove Theorem~\ref{thm:maininversetheorem} assuming Theorem~\ref{thm:nilsequenceconstruction}. As the proof is iterative in nature, we have decided to keep track of some of the nature of quantifier dependence. In particular, ``$\epsilon(\delta)^C$" for an absolute constant $C$ will not be suppressed as ``$\epsilon(\delta)$" as we have done in the rest of the paper.

\begin{proof}[Proof of Theorem~\ref{thm:maininversetheorem} assuming Theorem~\ref{thm:nilsequenceconstruction}.]
Let $H$ be the Hilbert space $L^2((\mathbb{Z}/N\mathbb{Z})^k)$ and $S$ denote the set of all functions that are a component of $F(g_{x_j}(x_*)\Gamma) \otimes f_1 \cdots f_j$ with $f_1, \dots, f_j$ one-bounded and $f_i$ not depending on the $i$th coordinate with $F(g_{x_j}(x_*)\Gamma) \in \mathrm{Nil}^{(1, 1, \dots, 1)}(M(\delta), m(\delta), \ell - 1, M(\delta))$. Invoking Lemma~\ref{lem:weakregularity}, Proposition~\ref{prop:initialmaneuvers}, and Lemma~\ref{lem:splitting}, we may write
$$f = f_{\mathrm{str}} + f_{\mathrm{psd}}$$
where $f_{\mathrm{str}}$ is $(1/\epsilon(\delta)^C, 1/\epsilon(\delta)^C)$-structured for some $C \ge 10$ and can be written as
$$\sum_{i = 1}^M v_i$$
with $v_i \in S$, , and $\|f_{\mathrm{psd}}\|_{[N]^k, e_1, \dots, e_\ell} \le \epsilon(\delta)^{10}$. Note that if $\|f - f_{\mathrm{psd}}\|_{L^2} \le \epsilon(\delta)^{2^{\ell + 1}C}$, then by the Cauchy-Schwarz inequality, we have $\|f - f_{\mathrm{psd}}\|_{[N]^k, e_1, \dots, e_\ell} \le c(\delta)^C$ and hence $\|f_{\mathrm{psd}}\|_{[N]^k, e_1, \dots, e_\ell} \ge \epsilon(\delta)^{2}$, a contradiction. It follows that
$$|\langle f, f_{\mathrm{str}} \rangle| \ge \epsilon(\delta)^{2^{\ell + 2}C}.$$
%By the pigeonhole principle and the triangle inequality, it follows that there exists some $v_i$ such that $\|v_i\|_{e_1, e_2, (e_1, e_2)} \ge \delta^{C}$. %We may further assume that $|\langle f, v_i \rangle| \ge \delta^{O(1)}$ as otherwise, we may subtract $v_i$ from $f$ without much cost. 

Let $v_i = f_{i, 1}(\vec{x})f_{i, 2}(\vec{x}) \cdots f_{i, \ell}(\vec{x}) F(g_{x_\ell}(\vec{x})\Gamma)$ where:
\begin{itemize}
    \item $f_{i, i'}$ does not depend on the $i'$th coordinate.
    \item A family of nilsequences $F(g_{x_\ell}(x)\Gamma) \in \mathrm{Nil}^{(1, 1, \dots, 1)}(M(\delta), m(\delta), \ell - 1, 1)$ whose components depend on $x_1, \dots, x_{\ell - 1}$ and;
    \item A family of nilcharacters $\chi_{x_\ell}(x_*)  \in \mathrm{Nil}^{(1, 1, \dots, 1)}(M(\delta), m(\delta), \ell - 1, M(\delta))$ for which $F(g_{x_\ell}(x)\Gamma)$ is a component of one of these nilcharacters.
\end{itemize}
We shall iteratively refine $v_i = v_i^1$, until $v_i^P = f_{i, 1}^P(\vec{x})f_{i, 2}^P(\vec{x}) \cdots f_{i, j}^P(\vec{x}) F(g_{x_\ell}^P(\vec{x})\Gamma)$ with $P = \exp(\log(1/\delta)^{O_{k, C}(1)})$, $F(g_{x_\ell}(\vec{x}))$ equal to a degree $\ell$ nilsequence $\widetilde{F}(g(x_1, \dots, x_\ell)\Gamma)$, and $|\langle v_i^L, f \rangle| \ge \epsilon(\delta)$. Given this, Theorem~\ref{thm:maininversetheorem} follows. At each stage, we will have
$$f = f_{\mathrm{str},j} + f_{\mathrm{psd}} + f_{\mathrm{strong}, j}$$
with 
$$f_{\mathrm{strong}, j} = \sum_{i = 1}^{M'} v_{i, \mathrm{strong}}^j$$
with $M' \le (\epsilon(\delta)/M(\delta))^{-m(\delta)^{O_C(1)}}$, and $v_{i, \mathrm{strong}}^j$ is strongly structured in the sense that $F(g_{x_\ell}(x_*)\Gamma)$ (as a function of $(x_*, x_\ell)$) is $(M(\delta), m(\delta))$-equivalent to a nilcharacter in $\mathrm{Nil}^{(1, 1, \dots, 1)}(M(\delta), m(\delta), \ell, M(\delta))$ for $x_\ell$ lying in a set of size at least $\epsilon(\delta) N$. $f_{\mathrm{str}, j}$ consists of structured parts as above. We will further assume that the $x_\ell$ coordinates of $f_{\mathrm{str}, j}$ are supported on a set $S_i^j$.

Now assume for the sake of argument that $\sum_i |\langle f, v_{i, \mathrm{strong}}^j \rangle| \le \epsilon(\delta)^2$. If this does not occur, then $f$ correlates with a strongly structured function. Given this, we see by the pigeonhole principle (and given that $C$ is chosen to be sufficiently large) that there exists some $v_i^j$ such that
$$\|v_i^j\|_{e_1[N], e_2[N], \dots, e_\ell[N], [N]^k} \ge (\epsilon(\delta)/M(\delta))^{m(\delta)^{O_C(1)}}.$$
Hence, applying Lemma~\ref{lem:splitting} and absorbing lower order terms to $f_i$, and applying the Cauchy-Schwarz-Gowers inequality while absorbing $f_i$ into other terms in the Gowers norm inner product, we have
$$\|\mathbb{E}_{\vec{x}, \vec{h}, \vec{h'} \in [N]^k} \Delta^{\otimes}_{h_1e_1, \dots, h_\ell e_\ell, (h_1', h_2', \dots, h_\ell')}\chi^j_{x_\ell}(x_*)\Delta_{h_\ell, h_\ell'}1_{S_i^j}(x_\ell)\|_{\infty} \ge (\epsilon(\delta)/M(\delta))^{m(\delta)^{O_C(1)}}.$$
By \cite[Lemma C.4]{LSS24b}, we see that $\Delta^\otimes_{h_ie_i}\chi^j_{x_\ell}(x_*)$ is $((\epsilon(\delta)/M(\delta))^{-m(\delta)^{O_C(1)}}, m(\delta)^{O_C(1)})$-equivalent to
$$\chi^j_{x_\ell}(x_1, \dots, x_{i - 1}, h_i, x_{i + 1}, \dots, x_{\ell - 1}).$$
Hence, for some $\psi_{x_\ell, h_\ell, h_\ell'}$ in $\mathrm{Nil}^{< \ell - 1}((\epsilon(\delta)/M(\delta))^{-m(\delta)^{O_C(1)}}, m(\delta)^{O_C(1)}, \ell - 1, (\epsilon(\delta)/M(\delta))^{-m(\delta)^{O_C(1)}})$, we see that
$$\|\mathbb{E}_{\vec{x}, \vec{h}, \vec{h'} \in [N]^k} \Delta_{h_\ell e_\ell, h_\ell' e_\ell}^{\otimes}\chi^j_{x_\ell}(h'_*)\Delta_{h_\ell, h_\ell'}1_{S_i^j}(x_\ell) \cdot \psi_{x_\ell, h_\ell, h_\ell'}(x_*, \vec{h}, \vec{h'})\|_{\infty} \ge (\epsilon(\delta)/M(\delta))^{m(\delta)^{O_C(1)}}.$$
Here, we define $\Delta^{\otimes}$ as the same as $\Delta$ except with multiplication replaced by tensor product. Applying Theorem~\ref{thm:nilsequenceconstruction}, it follows that there exists a degree $\ell$ nilsequence $\widetilde{\chi}^j$ and a subset $Y \subseteq S_i^j$ of size $(\epsilon(\delta)/M(\delta))^{m(\delta)^{O_C(1)}}$ such that for any $h \in Y$, we see that $\chi^j(h, x_*)$ is equivalent to $\widetilde{\chi}^j(h, x_*)$ up to a degree $\ell - 2$ ($h$-dependent) nilsequence. We now write $1_{S_i^j} = 1_{S_i^j \setminus Y} + 1_Y$, so $v_i^j(x) = v_i^j(x) 1_{S_i^j \setminus Y}(x_\ell) + v_i^j(x) 1_Y(x_\ell)$. If $|\langle f, v_i^j 1_Y \rangle| \ge (\epsilon(\delta)/M(\delta))^{m(\delta)^{O_C(1)}}$, then we terminate the procedure, as $v_i^j 1_Y$ is equivalent to a degree $\ell$ nilsequence; this is enough to conclude the main theorem. Otherwise, we take $f_{j + 1, \mathrm{strong}} = f_{j, \mathrm{strong}} + v_i^j 1_Y$ and $f_{j + 1, \mathrm{str}} = f_{j, \mathrm{str}} - v_i^j 1_Y$. We then set $v_i^{j + 1} = 1_Y v_i^j$ and for all other $i'$, we set $v_{i'}^{j + 1} = v_{i'}^j$ and $v_{i', \mathrm{strong}}^{j + 1} = v_{i', \mathrm{strong}}^j$, and then finally set $S_i^{j + 1} = S_i^j \setminus Y$. This procedure terminates after at most $(\epsilon(\delta)/M(\delta))^{-m(\delta)^{O_C(1)}}$ many iterations since $|Y|$ has a lower bound \textbf{independent} of the step of the iteration, after which we have a strongly structured function which correlates with $f$. This implies that for some $i$,
$$|\langle f, v_{i, \mathrm{strong}}^P \rangle| \ge (\epsilon(\delta)/M(\delta))^{m(\delta)^{O_C(1)}}$$
which concludes the proof of Theorem~\ref{thm:maininversetheorem} by Lemma~\ref{lem:splitting} since $C$ is constant.
\end{proof}
We will now deduce Theorem~\ref{thm:nilsequenceconstruction} from Proposition~\ref{prop:degreerankextraction}.
\begin{proof}[Proof of Theorem~\ref{thm:nilsequenceconstruction} assuming Proposition~\ref{prop:degreerankextraction}.]
The proof is little more than observing that for additive quadruples $(h_1, h_2, h_3, h_4)$ with $h_1 + h_2 = h_3 + h_4$, $\chi(h_1, x) \otimes \chi(h_2, x) \otimes \overline{\chi(h_3, x) \otimes \chi(h_4, x)}$ lies inside $\mathrm{Nil}^{d - 2}(M(\delta), d(\delta), \ell, M(\delta))$. The theorem then follows from Lemma~\ref{lem:equiv}.
\end{proof}
\section{Initial Maneuvers for Theorem~\ref{thm:maininversetheorem2}}
Assume that $|f| \le 1$ and
$$\|f\|_{U([N]^k, \dots, [N]^k, e_1[N], \dots, e_\ell [N])} \ge \delta$$
for $\ell' + 1$ copies of $[N]^k$. We thus have (for $\ell'$ copies of $[N]^k$)
$$\mathbb{E}_{h \in [N]^k} \|\Delta_h f\|_{U([N]^k, \dots, [N]^k, e_1 [N], \dots, e_\ell [N])}^{2^{\ell + \ell' - 1}} \ge \delta^{2^{\ell + \ell'}}$$
Then, by induction on $\ell'$ there exists a family of nilcharacters $\chi_h \in \mathrm{Nil}^{\ell + \ell' - 1}(M(\delta), d(\delta), \ell, M(\delta))$ and functions $f_{1, h}, \dots, f_{\ell, h}$ such that $f_{i, h}$ does not depend on coordinate $i$, and
$$\mathbb{E}_{h \in [N]^k} \bigg\|\mathbb{E}_{n \in [N]^k} \Delta_h f(n) \chi_h(n)\prod_{i = 1}^{\ell} f_{i, h}(n)\bigg\|_\infty^2 \ge \delta.$$
We will next use a Cauchy-Schwarz argument of Gowers. This is the analogue of \cite[Lemma 7.2]{LSS24b} and its proof is essentially identical so we omit it.
\begin{lemma}\label{lem:CS-basic}
Suppose $\delta\in(0,1/2)$, $f_1,f_2\colon[N]^k\to\mb{C}$ are $1$-bounded, and $\chi_h\colon\mb{Z}\to\mb{C}$ are all $1$-bounded. Suppose that 
\[\mb{E}_{h\in[N]^k}|\mb{E}_{n\in[N]^k}f_2(n)\Delta_hf_1(n)\ol{\chi_h(n)}|\ge\delta.\]
Then there exists $\Theta$ such that 
$\snorm{\Theta}_{\mb{R}/\mb{Z}}\le\delta^{-O_k(1)}/N$ and 
\[\mb{E}_{\substack{h_1 + h_2 = h_3 + h_4\\ h_i\in[N]^k}}\bigg|\mb{E}_{n\in[N]^k}\chi_{h_1}(n)\chi_{h_2}(n + h_1 - h_4)\ol{\chi_{h_3}(n)} \ol{\chi_{h_4}(n+ h_1 - h_4)} \cdot e\big(\Theta \cdot n \big)\bigg|\ge\delta^{O_k(1)}.\]
\end{lemma}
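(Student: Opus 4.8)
The plan is to run the standard Gowers-type Cauchy--Schwarz argument producing additive quadruples, exactly as in \cite[Lemma 7.2]{LSS24b}. First, recalling $\Delta_h f_1(n)=\ol{f_1(n+h)}f_1(n)$ and extracting a $1$-bounded phase $c(h)$ from the inner absolute value (triangle inequality / pigeonhole over the argument of the inner average), the hypothesis becomes
\[\mb{E}_{h\in[N]^k}\mb{E}_{n\in[N]^k} c(h)\,g(n)\,\ol{f_1(n+h)}\,\ol{\chi_h(n)}\ge\delta,\]
where $g:=f_1 f_2$ is $1$-bounded. It is convenient here to extend all functions by zero and pass to $\mb{Z}_{N'}^k$ for a prime $N'\asymp N$, so that the affine changes of variables below are over all of $\mb{Z}_{N'}^k$; this costs only $O_k(1)$ factors, and (together with the transfer back at the end) is the source of the small linear phase in the conclusion.

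Next come the two Cauchy--Schwarz steps, which eliminate the two ``function slots'' $g(n)$ and $f_1(n+h)$ while quadrupling the $\chi$-factors. Substituting $m=n+h$ and applying Cauchy--Schwarz in $m$ (using $|f_1|\le 1$) kills $f_1(n+h)$ and introduces a mirror copy of $h$; writing $s$ for the difference of the two copies and reverting to $n=m-h$ gives $\delta^2\le\mb{E}_{n,h,s} c(h)\ol{c(h-s)}\,g(n)\,\ol{g(n+s)}\,\ol{\chi_h(n)}\,\chi_{h-s}(n+s)$. The factor $g(n)\ol{g(n+s)}$ depends only on the pair $(n,n+s)=:(u,v)$, so a second Cauchy--Schwarz in $(u,v)$ (using $|g|\le 1$) kills $g$ and introduces a second mirror copy $h'$ of $h$; setting $w:=v-u$ this yields
\[\delta^4\le\mb{E}_{u,w}\mb{E}_{h,h'} c(h)\ol{c(h-w)}\,\ol{c(h')}\,c(h'-w)\cdot\ol{\chi_h(u)}\,\chi_{h-w}(u+w)\,\chi_{h'}(u)\,\ol{\chi_{h'-w}(u+w)}.\]
The four indices $h,h-w,h',h'-w$ form an additive quadruple, and $(h,h',w)\mapsto(h_1,h_2,h_3,h_4):=(h',h-w,h,h'-w)$ is a bijection onto the set of quadruples with $h_1+h_2=h_3+h_4$; under it $w=h_1-h_4$, the $\chi$-arguments become $u$ and $u+h_1-h_4$, and the four $c$-phases depend only on $(h_1,h_2,h_3,h_4)$. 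Pulling these phases out of the $u$-average and inserting absolute values gives exactly
\[\delta^4\le\mb{E}_{\substack{h_1+h_2=h_3+h_4}}\bigg|\mb{E}_{u}\chi_{h_1}(u)\,\chi_{h_2}(u+h_1-h_4)\,\ol{\chi_{h_3}(u)}\,\ol{\chi_{h_4}(u+h_1-h_4)}\bigg|,\]
which is the claimed estimate (with $\Theta=0$) over $\mb{Z}_{N'}^k$.

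The one genuinely fiddly point, which I expect to consume essentially all of the bookkeeping, is transferring this back to $[N]^k$: one restricts the inner average to $n\in[N]^k$ and the outer average to quadruples with all four entries in $[N]^k$, each of which has density $\gg_k 1$, so only constant factors are lost; the discrepancy between evaluating the $\chi_h$'s at the $\mb{Z}_{N'}^k$-reductions of the shifted arguments versus at honest integers is a boundary/wraparound effect which, after the standard tidying, is absorbed into a single linear phase $e(\Theta\cdot n)$ with $\snorm{\Theta}_{\mb{R}/\mb{Z}}\ll\delta^{-O_k(1)}/N$. Since all of this is carried out verbatim as in \cite[Lemma 7.2]{LSS24b}, the details are omitted.
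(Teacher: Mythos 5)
The overall approach — extract a $1$-bounded phase $c(h)$ from the absolute value, apply Cauchy--Schwarz twice to eliminate the function slots $\ol{f_1(n+h)}$ and $g=f_1f_2$, and relabel the resulting four-fold expression over additive quadruples — is exactly what the paper intends: the paper's ``proof'' of this lemma is simply to cite \cite[Lemma~7.2]{LSS24b} as essentially identical, and that lemma is established by precisely this argument. Your two Cauchy--Schwarz steps and the bijection $(h,h',w)\mapsto(h_1,h_2,h_3,h_4)=(h',h-w,h,h'-w)$ (with $w=h_1-h_4$) onto the set of additive quadruples are carried out correctly, and the exponent bookkeeping $\delta\to\delta^2\to\delta^4$ matches.

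The one place that needs care, and where your description is not a literal justification, is the transfer step. The intermediate claim that the estimate holds ``with $\Theta=0$ over $\mb{Z}_{N'}^k$'' does not quite type-check: $\chi_h\colon\mb{Z}^k\to\mb{C}$ is not $N'$-periodic, so $\chi_{h_2}(u+h_1-h_4)$ for $u\in\mb{Z}_{N'}^k$ requires a choice of lift, and the wraparound cannot simply be declared away. Likewise, ``one restricts the inner average to $n\in[N]^k$ \dots\ so only constant factors are lost'' is not a valid inference for an average of sign-changing terms: restricting a large signed average to a positive-density subset does not in general preserve largeness. What actually happens (and this is what \cite[Lemma~7.2]{LSS24b} does) is that if one keeps the zero-extensions and runs both Cauchy--Schwarz steps over $\mb{Z}^k$, then after the second CS the dual sum in $(u,v)$ is legitimately confined to the support of $g\otimes g$, so the constraints $u\in[N]^k$ and $u+h_1-h_4\in[N]^k$ appear as honest indicators attached to the inner average; the indicator $\mbm{1}_{[N]^k}(u+h_1-h_4)$ then gets Fourier-expanded (on $\mb{Z}_{N'}^k$, whence the frequency $\Theta\in\frac{1}{N'}\mb{Z}^k$), and pigeonhole over the surviving modes produces the phase $e(\Theta\cdot n)$ with the stated bound $\snorm{\Theta}_{\mb{R}/\mb{Z}}\le\delta^{-O_k(1)}/N$. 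Since you explicitly identify this as the fiddly bookkeeping and defer the details to \cite{LSS24b} — exactly as the paper does — I would not call this a gap; but the two quoted sentences should be read as a heuristic for where $\Theta$ comes from, not as a derivation.
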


We next require the following lemma which is the analogue of \cite[Lemma 7.5]{LSS24b}.
\begin{lemma}\label{lem:four-fold-biased}
Fix $\delta > 0$, $d, k, k' \le K$ be positive integers, and $H \subset [N]^{k'}$. By letting the last $k - k'$ coordinates zero, we see that $H$ embeds in $[N]^k$. Let $f\colon[N]^k\to\mb{C}$ be a $1$-bounded function. Suppose that there exist nilcharacters $\chi_h \in \mathrm{Nil}^{d - 1}(M(\delta), m(\delta), k, M(\delta))$ and $\chi(h, n) \in \mathrm{Nil}^d(M(\delta), m(\delta), k + k', M(\delta))$, and a nilsequence $\psi_h \in \mathrm{Nil}^{d - 2}(M(\delta), m(\delta), k, M(\delta))$ and functions $f_{i, h}$ not depending on coordinate $i$ such that
$$\left\|\mathbb{E}_{n \in [N]^k} \Delta_h f(n) \otimes \chi(h, n) \otimes \chi_h(n) \otimes \prod_{i = 1}^\ell f_{i, h}(n) \cdot \psi_h(n)\right\|_\infty \ge \epsilon(\delta).$$
Furthermore, suppose $\chi_h$ and $\chi$ satisfy the additional property that they are a product of $\mathbb{Z}$-multidegree nilcharacters that are not equivalent to the zero nilcharacter and are not degree zero on coordinates $e_1, \dots, e_\ell$. Then for at least $(\epsilon(\delta)/M(\delta))^{m(\delta)^{O_k(1)}} N^{3k'}$ quadruples $h_1,h_2,h_3,h_4\in H$ with $h_1 + h_2 = h_3 + h_4$, there exists $\psi_{h_1, h_2, h_3, h_4} \in \mathrm{Nil}^{d - 2}(M(\delta), m(\delta), k, M(\delta))$ such that
$$\|\mathbb{E}_{n \in [N]^k} \chi_{h_1}(n) \otimes \chi_{h_2}(n + h_1-h_4)\otimes \ol{\chi_{h_3}(n)}\otimes \ol{\chi_{h_4}(n + h_1-h_4)} \cdot \psi_{h_1, \dots, h_4}\|_\infty \ge \epsilon(\delta).$$
\end{lemma}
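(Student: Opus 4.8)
The plan is to run the usual Gowers--Cauchy--Schwarz scheme that converts a linear-in-$h$ correlation into an additive-quadruple bias, following \cite[Lemma 7.5]{LSS24b} essentially verbatim; the one genuinely new feature is the joint nilcharacter $\chi(h,n)$, which will be seen to contribute, after passing to quadruples, only a nilsequence of degree $\le d-2$. We read the hypothesis as asserting the displayed bound for each $h$ in a set $H\subseteq[N]^{k'}$ with $|H|\ge\delta N^{k'}$ (as is needed for the conclusion to be non-vacuous and as holds in the application). Since $\snorm{\cdot}_\infty\le\snorm{\cdot}_2$, the hypothesis also bounds below the $\ell^2$ norm of the vector $\mathbb{E}_{n\in[N]^k}\Delta_hf(n)\otimes\chi(h,n)\otimes\chi_h(n)\otimes\prod_{i=1}^{\ell}f_{i,h}(n)\cdot\psi_h(n)$ for each $h\in H$; I would apply every Cauchy--Schwarz step below to this vector quantity coordinatewise, so that the tensor factors $\chi(h,\cdot)$ and $\chi_h$ are never collapsed and their degrees ($\le d$ and $\le d-1$) stay visible, converting $\ell^2$ back to $\ell^\infty$ only at the end at the cost of an $M(\delta)^{O(1)}$ factor.

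First I would remove the unstructured factors $f_{i,h}$. As $f_{i,h}$ does not depend on coordinate $i$, a box-type Cauchy--Schwarz duplicating coordinate $i$ kills it; carrying this out for $i=1,\dots,\ell$ in turn removes all the $f_{i,h}$ and replaces $\Delta_hf$, $\chi(h,\cdot)$, $\chi_h$, $\psi_h$ by their box derivatives $\Delta_{(w_1e_1,w_1'e_1),\dots,(w_\ell e_\ell,w_\ell'e_\ell)}$, introducing an average over new shifts $w_i,w_i'\in[N]$. By \cite[Lemma C.4]{LSS24b} (and Lemma~\ref{lem:equiv}) each such box derivative of a nilcharacter equals, up to $(m(\delta),M(\delta))$-equivalence and a lower-degree nilsequence factor, the nilcharacter with the corresponding coordinate frozen to the relevant shift; I absorb all these errors into a single nilsequence factor of degree $\le d-2$. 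Treating the resulting box-differenced $\Delta_hf$ as a generic $1$-bounded function, I then argue exactly as in Lemma~\ref{lem:CS-basic} --- a $U^2$-type Cauchy--Schwarz in the variable $h$, i.e.\ three further Cauchy--Schwarz steps with the same changes of variables --- which eliminates $f$ and the remaining $1$-bounded debris and produces, for at least $(\epsilon(\delta)/M(\delta))^{m(\delta)^{O_k(1)}}N^{3k'}$ additive quadruples $h_1+h_2=h_3+h_4$ in $H$, a bound
\[
\Bigl\|\mathbb{E}_{n\in[N]^k}\chi_{h_1}(n)\otimes\chi_{h_2}(n+h_1-h_4)\otimes\overline{\chi_{h_3}(n)\otimes\chi_{h_4}(n+h_1-h_4)}\otimes\Xi_{\vec{h}}(n)\Bigr\|_\infty\ge\epsilon(\delta),
\]
the shifts $n+h_1-h_4$ arising exactly as in Lemma~\ref{lem:CS-basic}, and $\Xi_{\vec{h}}$ collecting the four $\chi(h_i,\cdot)$-contributions together with the lower-degree box-derivative errors and the leftover $1$-bounded contraction factors.

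The crucial point is that $\Xi_{\vec{h}}$ is ($(m(\delta),M(\delta))$-equivalent to) a nilsequence of degree $\le d-2$ in $n$; given this, a final pigeonhole over the $\le m(\delta)^{O_K(1)}$ output coordinates, the $\le M(\delta)$ possible underlying nilmanifolds, and the frozen-coordinate data replaces $\Xi_{\vec{h}}$ by a single $\psi_{h_1,\dots,h_4}\in\mathrm{Nil}^{d-2}(M(\delta),m(\delta),k,M(\delta))$, at the cost of an $\epsilon(\delta)$ factor in the count of good quadruples, which is the claim. The degree bound on $\Xi_{\vec{h}}$ is where the structural hypothesis on $\chi$ enters and is verified, as in \cite{GTZ12,LSS24b}, by tracking vertical frequencies: matching the two shifts ($s=h_1-h_4$ in positions $2$ and $4$), the top-order-in-$n$ phase of the four-copy combination of $\chi(h_i,\cdot)$ receives no contribution from any component of $\chi$ independent of $h$ (and in any case such components are excluded by the hypothesis), while on a component of $h$-degree $1$ and $n$-degree $d-1$ it equals, using $h_1-h_3=h_4-h_2$, the $n$-constant $(h_1-h_3)$ times a first difference in $s$ of a degree-$(d-1)$-in-$n$ quantity, hence of $n$-degree $\le d-2$; components of $h$-degree $\ge 2$ already have $n$-degree $\le d-2$. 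The box-derivative errors and contraction factors are of degree $\le d-2$ by the same bookkeeping.

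I expect the main obstacle to be this degree bookkeeping, together with keeping the structured factor $\chi_h$ cleanly disentangled from the unstructured $f$ and $f_{i,h}$ through the successive Cauchy--Schwarz steps: one must check that the box-norm steps perturb the nilcharacters only by genuinely lower-degree errors (this is where \cite[Lemma C.4]{LSS24b} and Lemma~\ref{lem:equiv} do the work, and where the convention that the nilcharacters are products of ``genuinely $k$-dimensional'' multidegree pieces is used), and that the quadruple combination of $\chi$ really does drop two degrees in $n$. Once the additive-quadruple parametrization is fixed, the latter is the elementary ``second difference in $h$'' phenomenon sketched above; and since the whole argument runs parallel to \cite[Lemma 7.5]{LSS24b}, I would, as the authors do for Lemma~\ref{lem:CS-basic}, present it compactly by reference.
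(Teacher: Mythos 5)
Your overall plan (Gowers--Cauchy--Schwarz scheme, removal of the $f_{i,h}$, degree bookkeeping for the joint nilcharacter $\chi(h,n)$) is in the right spirit, and your second-difference computation showing the quadruple combination of $\chi(h_i,\cdot)$ drops to degree $\le d-2$ in $n$ is essentially the correct degree bookkeeping. However, there is a genuine gap in the \emph{order} of your steps.

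You propose to box-norm away the $f_{i,h}$ \emph{before} running the Cauchy--Schwarz-in-$h$ scheme. The problem is that a box Cauchy--Schwarz in the $e_i$ direction does not only kill $f_{i,h}$; it necessarily differences every other factor, including $\chi_h$. By the very lemma you cite (\cite[Lemma C.4]{LSS24b}), the box derivative of the multilinear $\chi_h$ in direction $e_i$ is equivalent, up to errors, to $\chi_h$ with coordinate $i$ frozen to the box-norm shift --- and \emph{that frozen object has $n$-degree one lower than $\chi_h$} (it is not ``$\chi_h$ plus a lower-degree correction''). After $\ell$ such steps you have a nilcharacter of $n$-degree $\le d-1-\ell$, and the subsequent quadruple argument then produces a bound involving these degraded, frozen-coordinate objects, not $\chi_{h_i}(n)$. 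Your claimed display with $\chi_{h_1}(n)\otimes\chi_{h_2}(n+h_1-h_4)\otimes\cdots\otimes\Xi_{\vec h}(n)$ therefore does not follow; the frozen-coordinate correction to $\chi_{h_i}$ cannot be hidden inside a degree $\le d-2$ factor $\Xi_{\vec h}$, because it is precisely what carries the top-degree information the conclusion is about.

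The paper sidesteps this by never differencing $\chi_h$ before the quadruple step: it sets $\widetilde{\chi_h} := \chi_h\prod_{i}f_{i,h}$, a merely one-bounded function, and applies the argument of \cite[Lemma 7.5]{LSS24b} directly with $\widetilde{\chi_h}$ in the structured slot, observing that that argument never uses equidistribution or nilcharacter properties of the structured factor --- only one-boundedness. This yields the additive-quadruple correlation with $\widetilde{\chi_{h_i}}$ and a degree $\le d-2$ nilsequence $\psi_{\vec h}$ (the $\chi(h,n)$-contributions having been absorbed there, exactly as you describe). The box-norm step to remove the $f_{i,h}$ is then performed \emph{afterwards}, on the quadruple expression, and at that point the goal has changed: one is no longer trying to preserve a correlation of the stated shape but to show, after multilinearizing and taking further differences and invoking \cite[Lemma C.3]{LSS24b}, that $\chi_{h_1}'\otimes\chi_{h_2}'\otimes\overline{\chi_{h_3}'\otimes\chi_{h_4}'}$ itself has degree $\le d-2$, which immediately implies the stated conclusion. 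So: right ingredients, but the box-norm removal of $f_{i,h}$ must come after the quadruple CS step (with the $f_{i,h}$ riding inside a generic one-bounded structured factor through that step), not before it.
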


\begin{proof}
Since we are assuming that a multidimensional input nilcharacter is a product of multidegree $\mathbb{Z}$-nilcharacters (rather than multidegree $\mathbb{Z}^k$-nilcharacters), by \cite[Lemma C.5]{LSS24b}, these multidimensional nilcharacters are $(M(\delta), m(\delta))$-equivalent to multilinear nilcharacters $\chi'_h$ and $\chi'$ which are symmetric. (There are potentially other ways to proceed without needing this rather strange-looking hypothesis, such as proving a multidimensional input multilinearization result as in \cite[Lemma C.5]{LSS24b} but we found this approach to be most convenient.)

We first show that for some collection $\psi_{h_1, h_2, h_3, h_4} \in \mathrm{Nil}^{d - 2}(M(\delta), m(\delta), k, M(\delta))$,
$$\|\mathbb{E}_{n \in [N]^k} \widetilde{\chi}_{h_1}(n) \otimes \widetilde{\chi}_{h_2}(n + h_1-h_4)\otimes \ol{\widetilde{\chi}_{h_3}(n)}\otimes \ol{\widetilde{\chi}_{h_4}(n + h_1-h_4)} \cdot \psi_{h_1, \dots, h_4}\|_\infty \ge \epsilon(\delta)$$
with $\widetilde{\chi}_h = \chi_h \prod_{i = 1}^\ell f_{i, h}$. 

To show this, we follow the procedure of \cite[Lemma 7.5]{LSS24b}. By \cite[Lemma C.5]{LSS24b}, we see that $\chi(h, n)$ is $(M(\delta), m(\delta))$-equivalent to some $\chi'(h, n, \dots, n) \in \mathrm{Nil}^{(1, 1, \dots, 1)}(M(\delta), m(\delta), k + k', M(\delta))$ with frequency also bounded by $M(\delta)$. Applying Lemma~\ref{lem:equiv}, we have that
\begin{align*}
\snorm{\mb{E}_{n\in[N]^k, h \in [N]^{k'}}f(n) \ol{f(n+h)} \otimes \ol{\chi'(h,n,\ldots,n)} \otimes \ol{\widetilde{\chi}_h(n)} \cdot\ol{\psi_h(n)} \cdot \wt{\psi}(h,n)}_{\infty} \ge (MD/\rho)^{-O_s(d^{O_s(1)})},
\end{align*}
where $\wt{\psi}(h,n)$ is a nilsequence in $\mathrm{Nil}^{d - 1}(M(\delta), m(\delta), k + k', M(\delta))$. Applying the splitting lemma (\cite[Lemma C.6]{LSS24b}), we see that
\begin{align*}
\snorm{\mb{E}_{n\in[N]^k, h \in [N]^{k'}}f(n) \ol{f(n+h)} \otimes \ol{\chi'(h,n,\ldots,n)} \otimes \ol{\widetilde{\chi}_h(n)} \cdot\ol{\wt{\psi}_h(n)} \cdot b(n)}_{\infty} \ge \epsilon(\delta)
\end{align*}
where $\wt{\psi}_h \in \mathrm{Nil}^{d - 2}(M(\delta), m(\delta), k, M(\delta))$. Applying a variant of \cite[Lemma 7.2]{LSS24b}, we have
\begin{align*}
&\mb{E}_{\substack{h_1 + h_2 = h_3 + h_4\\ h_i\in[N]^{k'}}}\snorm{\mb{E}_{n\in[N]^k}\chi'(h_1,n,\ldots,n) \otimes \chi'(h_2,n + h_1 -h_4,\ldots,n + h_1 -h_4) \otimes \ol{\chi'(h_3,n,\ldots,n)} \\
&\otimes \ol{\chi'(h_4,n + h_1 -h_4,\ldots,n + h_1 -h_4)} \otimes \widetilde{\chi}_{h_1}(n) \otimes \widetilde{\chi}_{h_2}(n+h_1-h_4) \otimes \ol{\widetilde{\chi}_{h_3}(n)} \otimes \ol{\widetilde{\chi}_{h_4}(n+h_1-h_4)}\\
&\cdot \wt{\psi}_{h_1}(n)\wt{\psi}_{h_2}(n)\ol{\wt{\psi}_{h_3}(n)}\ol{\wt{\psi}_{h_4}(n)}e(\Theta n)}_{\infty} \ge \epsilon(\delta).
\end{align*}
We may combine $\wt{\psi}_{h_1}(n)\wt{\psi}_{h_2}(n)\ol{\wt{\psi}_{h_3}(n)}\ol{\wt{\psi}_{h_4}(n)}e(\Theta n)$ to form $\psi_{h_1,h_2,h_3,h_4}^\ast(n) \in \mathrm{Nil}^{d - 2}(M(\delta), m(\delta), k, M(\delta))$. Additionally, we may twist $\psi_{h_1,h_2,h_3,h_4}^\ast$ by an $(h_1,h_2,h_3,h_4)$-dependent complex phase to bring the outer expectation inside the norm. Thus we have
\begin{align*}
&\snorm{\mb{E}_{\substack{h_1 + h_2 = h_3 + h_4\\ h_i\in[N]}}\mb{E}_{n\in[N]}\chi'(h_1,n,\ldots,n) \otimes \chi'(h_2,n + h_1 -h_4,\ldots,n + h_1 -h_4) \otimes \ol{\chi'(h_3,n,\ldots,n)} \\
&\qquad\otimes \ol{\chi'(h_4,n + h_1 -h_4,\ldots,n + h_1 -h_4)}\otimes \widetilde{\chi}_{h_1}(n) \otimes \widetilde{\chi}_{h_2}(n+h_1-h_4) \otimes \ol{\widetilde{\chi}_{h_3}(n)}\\
&\qquad\otimes \ol{\widetilde{\chi}_{h_4}(n+h_1-h_4)} \cdot \psi^{\ast}_{h_1,h_2,h_3,h_4}(n)}_{\infty} \ge \epsilon(\delta).
\end{align*}

By \cite[Lemma C.5]{LSS24b}, $\chi'(h_2,n+h_1-h_4,\ldots,n+h_1-h_4)$ is $(M(\delta), m(\delta))$-equivalent to
\[\bigotimes_{\ell=0}^{s-1}\chi(h_2,n,\ldots,n,h_1-h_4,\ldots,h_1-h_4)\]
where there are $d-k-1$ copies of $n$ and $k$ copies of $h_1-h_4$ and we have a similar expansion for $\chi(h_4,n+h_1-h_4,\ldots,n+h_1-h_4)$. Note that all terms in this expansion except for $\ell = 0$ may be absorbed into $\psi^{\ast}$. Therefore applying Lemma~\ref{lem:equiv}, we have that 
\begin{align*}
&\snorm{\mb{E}_{\substack{h_1 + h_2 = h_3 + h_4\\ h_i\in[N]}}\mb{E}_{n\in[N]}\chi'(h_1,n,\ldots,n) \otimes \chi'(h_2,n,\ldots,n) \otimes \ol{\chi'(h_3,n,\ldots,n)} \\
&\qquad\otimes \ol{\chi'(h_4,n,\ldots,n)} \otimes \wt{\chi}_{h_1}(n) \otimes \wt{\chi}_{h_2}(n+h_1-h_4) \otimes \ol{\wt{\chi}_{h_3}(n)}\\
&\qquad\otimes \ol{\wt{\chi}_{h_4}(n+h_1-h_4)} \cdot \psi^{\ast}_{h_1,h_2,h_3,h_4}(n) \cdot \tau(n,h_1,h_2,h_3,h_4)}_{\infty} \ge \epsilon(\delta);
\end{align*}
here $\tau(n,h_1,h_2,h_3,h_4) \in \mathrm{Nil}^{d - 1}(M(\delta), m(\delta), 4k' + k, M(\delta))$. By \cite[Lemma C.5]{LSS24b}, we have that
\[\chi'(h_1,n,\ldots,n) \otimes \chi'(h_2,n,\ldots,n) \otimes \ol{\chi'(h_3,n,\ldots,n)}\otimes \ol{\chi'(h_4,n,\ldots,n)}\]
and $\chi'(h_1+h_2-h_3-h_4,n,\ldots,n)$ are $(M(\delta), m(\delta))$-equivalent. Thus applying Lemma~\ref{lem:equiv}, we have
\begin{align*}
&\mb{E}_{\substack{h_1 + h_2 = h_3 + h_4\\ h_i\in[N]^{k'}}}\snorm{\mb{E}_{n\in[N]^{k}}\wt{\chi}_{h_1}(n) \otimes \wt{\chi}_{h_2}(n+h_1-h_4) \otimes \ol{\wt{\chi}_{h_3}(n)} \otimes \ol{\wt{\chi}_{h_4}(n+h_1-h_4)}\\
&\qquad\qquad\cdot\wt{\chi}(0,n,\ldots,n) \psi^{\ast}_{h_1,h_2,h_3,h_4}(n) \tau(n,h_1,h_2,h_3,h_4)}_{\infty} \ge \epsilon(\delta);
\end{align*}
here we have folded in various terms into $\tau(n,h_1,\ldots,h_4)$ and the complexity bounds have not changed modulo implicit constants. Note that by \cite[Lemma C.2]{LSS24b}, $\wt{\chi}(0,n,\ldots,n)$ is a degree $(d-1)$ nilsequence in $n$ and thus may abusively also be absorbed into $\tau$. Finally noting that a degree $(d-1)$ nilsequence may also be viewed as a multidegree $(d-1,0,\ldots,0) \cup (d-2,d-1,\ldots,d-1)$ nilsequence and thus applying \cite[Lemma C.6]{LSS24b}, we have 
\begin{align*}
&\snorm{\mb{E}_{\substack{h_1 + h_2 = h_3 + h_4\\ h_i\in[N]^{k'}}}\mb{E}_{n\in[N]^{k}} \wt{\chi}_{h_1}(n) \otimes \wt{\chi}_{h_2}(n+h_1-h_4) \otimes \ol{\wt{\chi}_{h_3}(n)} \otimes \ol{\wt{\chi}_{h_4}(n+h_1-h_4)}\cdot \psi^{\ast}_{h_1,h_2,h_3,h_4}(n)b(n)}_{\infty}\\
&\qquad\ge \epsilon(\delta),
\end{align*}
where $b(n)$ is an $M(\delta)$-bounded function and $\psi_{h_1, h_2, h_3, h_4}^{\ast} \in \mathrm{Nil}^{d - 2}(M(\delta), m(\delta), k, M(\delta))$ for each additive quadruple $h_1 + h_2 = h_3 + h_4$.

Splitting $n = (n_{k'}, n_{k - k'})$ where $n_{k'}$ represents the first $k'$ coordinates of $n$ and $n_{k - k'}$ represents the last $k - k'$ coordinates of $n$, we now reparameterize with
\[h_1 = m-n_{k'}, h_2 = m'-n', h_3 = m' - n_{k'}, h_4 = m-n'.\]
By approximating with regions where we take $m,m',n_{k'},n'$ to live in short intervals, there exist intervals $I_1,\ldots,I_4$ each of density $\epsilon(\delta)$ in $[\pm 2N]^{k'}$ such that 
\begin{align*}
\norm{\mb{E}_{m\in I_1,m'\in I_2, n_{k'}\in I_3,n'\in I_4, n_{k - k'} \in [N]^{k - k'}}&\wt{\chi}_{m-n_{k'}}(n)\otimes\wt{\chi}_{m'-n'}(n')\otimes\ol{\wt{\chi}_{m'-n_{k'}}(n)}\otimes\ol{\wt{\chi}_{m-n'}(n')}\\
&\qquad\cdot\psi^{\ast}_{m-n_{k'},m'-n',m'-n_{k'},m-n'}(n)b(n)}_\infty\ge \epsilon(\delta)
\end{align*}
where $\psi_{m-n_{k'},m'-n',m'-n_{k'},m-n'}$ is a degree $(d-2)$ nilsequence. Now by Cauchy--Schwarz, duplicating the variable $m$ and denoting the copies by $m,m''$, we obtain
\begin{align*}
\norm{\mb{E}_{m,m''\in I_1,m'\in I_2,n_{k'}\in I_3,n'\in I_4, n_{k - k'} \in [N]^{k - k'}}&\wt{\chi}_{m-n_{k'}}(n)\otimes\ol{\wt{\chi}_{m''-n_{k'}}(n)}\otimes\ol{\wt{\chi}_{m-n'}(n')}\otimes\wt{\chi}_{m''-n'}(n')\\
&\cdot\psi^{\ast}_{m-n_{k'},m'-n',m'-n_{k'},m-n',m''-n,m''-n'}(n)}_\infty\ge \epsilon(\delta).
\end{align*}
Note that every term not involving $m$ was removed using appropriate boundedness. Now we may Pigeonhole on $m'-n_{k'} = t$ and deduce
\begin{align*}
\norm{\mb{E}_{m,m''\in I_1,n_{k'}\in I_3,n'\in I_4, n_{k - k'} \in [N]^{k - k'}}&\wt{\chi}_{m-n_{k'}}(n)\otimes\ol{\wt{\chi}_{m''-n_{k'}}(n)}\otimes\ol{\wt{\chi}_{m-n'}(n')}\otimes\wt{\chi}_{m''-n'}(n')\\
&\qquad\cdot\psi^{\ast}_{m-n_{k'},m-n',m''-n_{k'},m''-n'}(n) \cdot \mbm{1}[n_{k'}+t\in I_2]}_\infty\ge \epsilon(\delta).
\end{align*}
Let $m''-n_{k'} = h_1$, $m-n' = h_2$, $m-n_{k'} = h_3$, and $m''-n' = h_4$ (abusively). We have
\begin{align*}
\norm{&\mb{E}_{n\in[N]^k}\mb{E}_{\substack{h_1 + h_2 = h_3 + h_4\\h_i\in[\pm N]^{k'}}} \wt{\chi}_{h_1}(n) \otimes \wt{\chi}_{h_2}(n + h_1 - h_4) \otimes \ol{\wt{\chi}_{h_3}(n)} \otimes \ol{\wt{\chi}_{h_4}(n + h_1 - h_4)}\\
&\quad\cdot\chi_{h_1,h_2,h_3,h_4}(n) \cdot\mbm{1}[n_{k'} + h_3,n_{k'}+h_1\in I_1, n_{k'}\in I_3,n_{k'}+h_1-h_4\in I_4, n_{k'}+t\in I_2]}
_{\infty} \ge \epsilon(\delta)
\end{align*}
where $\chi_{h_1,h_2,h_3,h_4}(n) \in \mathrm{Nil}^{d - 2}(M(\delta), m(\delta), k, M(\delta))$.

Therefore, by the triangle inequality we have that 
\begin{align*}
\mb{E}&_{\substack{h_1 + h_2 = h_3 + h_4\\h_i\in[\pm N]^{k'}}}\norm{\mb{E}_{n\in[N]^k} \wt{\chi}_{h_1}(n) \otimes \wt{\chi}_{h_2}(n + h_1 - h_4) \otimes \ol{\wt{\chi}_{h_3}(n)} \otimes \ol{\wt{\chi}_{h_4}(n + h_1 - h_4)}\\
&\qquad\chi_{h_1,h_2,h_3,h_4}(n) \cdot\mbm{1}[n_{k'} + h_3,n_{k'}+h_1\in I_1, n_{k'}\in I_3,n_{k'}+h_1-h_4\in I_4, n_{k'}+t\in I_2]}
_{\infty} \gtrsim \epsilon(\delta).
\end{align*}
As the last term is a $\vec{h}$-dependent indicator, this gives the desired estimate.

Next, by \cite[Lemma C.5]{LSS24b}, there exist $\chi_h'' \in \mathrm{Nil}^{(1, 1, \dots, 1)}(M(\delta), m(\delta), k, M(\delta))$ for which $\chi_h(n)$ is $(M(\delta), m(\delta))$-equivalent to $\chi_h''(n, n, \dots, n)$. We invoking Lemma~\ref{lem:equiv} we may replace $\chi_h$ with $\chi_h''$ at the cost of a lower degree nilsequence with appropriate complexity parameters and by the Cauchy-Schwarz-Gowers inequality (in particular Cauchy-Schwarzing away the factors of $f_{i, h}$), we see that
$$\|\|\chi_{h_1}''(n, \dots, n) \otimes \chi_{h_2}''(n, \dots, n)\otimes \ol{\chi_{h_3}''(n, \dots, n)}\otimes \ol{\chi_{h_4}''(n, \dots, n)} \cdot \psi_{h_1, \dots, h_4}\|_{U(e_1[N], \dots, e_\ell[N])}\|_\infty \ge \epsilon(\delta).$$
Unfortunately, $\chi'_h(n)$ is not approximately linear in $n$; rather, it is only approximately \emph{multilinear} in $n$. (Note that if $\chi_h$ is degree zero in $e_1, \dots, e_\ell$, then taking a difference in the Box norm yields a strictly lower degree nilsequence, which is undesirable; this is why this slightly strange condition of the lemma is present.) This can be remedied by taking more and more differences. Hence,
$$\|\|\chi_{h_1}''(n, \dots, n) \otimes \chi_{h_2}''(n, \dots, n)\otimes \ol{\chi_{h_3}''(n, \dots, n)}\otimes \ol{\chi_{h_4}''(n, \dots, n)} \cdot \psi_{h_1, \dots, h_4}\|_{U(e_1[N], \dots, e_\ell[N], [N]^k, \dots, [N]^k)}\|_\infty \ge \epsilon(\delta)$$
where we add sufficiently many differences so that the number of differences is $d - 1$. By expanding out the box norm, using multilinear properties of nilcharacters, and pigeonholing in the sufficient variables, we find that
$$\|\mathbb{E}_{n \in ([N]^k)^d} (\chi_{h_1}''(n) \otimes \chi_{h_2}''(n) \otimes \overline{\chi_{h_3}''(n)\otimes \chi_{h_4}''(n)})^{\otimes q} \cdot \psi_{\vec{h}}(n)\|_\infty \ge \epsilon(\delta)$$
for some $q = O_K(1)$. This, combined with \cite[Lemma C.3]{LSS24b}, implies that $\chi_{h_1}'' \otimes \chi_{h_2}'' \otimes \overline{\chi_{h_3}'' \otimes \chi_{h_4}''}$ is degree $\le d - 2$, and since $\chi_h$ is equivalent to $n \mapsto \chi_h''(n, n, \dots, n)$, we find that $\chi_{h_1} \otimes \chi_{h_2} \otimes \overline{\chi_{h_3} \otimes \chi_{h_4}}$ is degree $\le d - 2$. This gives the desired conclusion.
\end{proof}

\section{Deduction of Proposition~\ref{prop:degreerankextraction}}
We will now deduce Proposition~\ref{prop:degreerankextraction}. The individual steps will follow \cite{LSS24b} rather closely, with the biggest difference is that of the construction of the nilmanifold in the proof of Proposition~\ref{prop:degreerankextraction}.
\subsection{A sunflower-type argument} We prove the following lemma, which is essentially the same as \cite[Lemma 6.1]{LSS24b}.
\begin{lemma}\label{lem:sunflower}
Let $\epsilon > 0$, $(r \le d), D, D' \le K$ be positive integers, and $d > 1$. Let $\chi_h(x) = F(g_h(x)\Gamma) \in \mathrm{Nil}^{(d, r)}(M(\delta), m(\delta), D, M(\delta))$ with $G_{(d, r)}$ frequency $\eta$ and $\psi_{\vec{h}} = \psi_{\vec{h}}(g_{\vec{h}}(n)\Gamma') \in \mathrm{Nil}^{d - 1}(M(\delta), m(\delta), D, 1)$. Let $H \subseteq [N]^{D'}$ be a subset of size at least $\delta N^{D'}$ and suppose for at least $\delta |H|^3$ many additive quadruples $(h_1, h_2, h_3, h_4) \in H^4$, we have
$$\|\mathbb{E}_{x \in [N]^{D}} \chi_{h_1}(x) \otimes \chi_{h_2}(x) \otimes \overline{\chi_{h_3}(x)\otimes \chi_{h_4}(x)} \cdot \psi_{\vec{h}}(x)\|_\infty \ge \epsilon(\delta).$$
Then there exists a set $H' \subseteq H$ of size at least $\epsilon(\delta)|H|$ such that for any $h \in H'$, we have the following.
\begin{itemize}
\item There exists a collection of $\mathbb{R}$-vector spaces $V_{i, \mathrm{Dep}} \le V_i \le G_{(i, 1)}/G_{(i, 2)}$ which are $M(\delta)$-rational with respect to $\exp(\mathcal{X}_i)$ for each $i$;
\item We have a factorization $g_h = \epsilon_hg_h'\gamma_h$ with $\epsilon_h$ $(M(\delta), \vec{N})$-smooth and $\gamma_h$ $M(\delta)$-rational;
\item For $\vec{i}$ with $1 \le |\vec{i}| \le \ell' - 1$ and $h, h_1, h_2 \in H'$, we have
$$\mathrm{Taylor}_{\vec{i}}(g_h) \in V_{|\vec{i}|}, \mathrm{Taylor}_{\vec{i}}(g_{h_1}) - \mathrm{Taylor}_{\vec{i}}(g_{h_2}) \in V_{|\vec{i}|, \mathrm{Dep}};$$
\item For integers $i_1 + i_2 + \cdots + i_{r} = \ell' - 1$, and $v_{i_\ell} \in V_{i_\ell}$ such that for at least two distinct indices $\ell_1, \ell_2$ we have $v_{i_{\ell_1}} \in V_{i_{\ell_1}, \mathrm{Dep}}$ and $v_{i_{\ell_2}} \in V_{i_{\ell_2}, \mathrm{Dep}}$, we have for any $r - 1$-fold commutator,
$$\eta([v_1, v_2, \dots, v_{i_{r}}]) = 0.$$
\end{itemize}
\end{lemma}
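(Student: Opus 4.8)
The plan is to follow the strategy of \cite[Lemma 6.1]{LSS24b}, adapted to the degree-rank filtration, organizing the argument around a sunflower-type pigeonholing on the Taylor coefficients of the $g_h$ together with an equidistribution analysis. First I would apply a pigeonhole to pass from the set of $\delta|H|^3$ additive quadruples to a large subset $H_0\subseteq H$ such that every $h\in H_0$ participates in many quadruples, so that the hypothesis $\|\mathbb{E}_{x\in[N]^D}\chi_{h_1}(x)\otimes\chi_{h_2}(x)\otimes\overline{\chi_{h_3}(x)\otimes\chi_{h_4}(x)}\cdot\psi_{\vec{h}}(x)\|_\infty\ge\epsilon(\delta)$ holds robustly. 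Then, for each fixed tuple, I would invoke the quantitative equidistribution theory for nilsequences (the Green--Tao type equidistribution theorem, as packaged in \cite{LSS24b, Len23b}): the correlation lower bound forces the polynomial sequence $g_{h_1}g_{h_2}\overline{g_{h_3}}\,\overline{g_{h_4}}$ (on the product nilmanifold), twisted by the lower-degree correction $\psi_{\vec h}$, to fail to be equidistributed, hence to have a nontrivial horizontal/vertical frequency relation of controlled height. This yields, after unwinding the Taylor expansion via Lemma~\ref{lem:taylorexansion} and Lemma~\ref{lem:taylorhom}, linear relations with bounded-height integer coefficients among the quantities $\mathrm{Taylor}_{\vec i}(g_{h_j})$ modulo $G_{(|\vec i|,2)}$, valid for many quadruples.

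Next I would run the sunflower/pigeonhole step. For each $\vec i$ with $1\le|\vec i|\le\ell'-1$, the coefficients $\mathrm{Taylor}_{\vec i}(g_h)$ live in the finite-dimensional $\mathbb{R}$-vector space $G_{(|\vec i|,1)}/G_{(|\vec i|,2)}$, identified with a Euclidean space via $\exp(\mathcal{X}_{|\vec i|})$. Using the bounded-height relations obtained above together with a Ramsey/Bogolyubov-type argument (exactly as in \cite[Lemma 6.1]{LSS24b}), I would extract a subset $H'\subseteq H_0$ of size $\ge\epsilon(\delta)|H|$ and define $V_{|\vec i|}$ to be the ($M(\delta)$-rational) span of the $\mathrm{Taylor}_{\vec i}(g_h)$ over $h\in H'$, and $V_{|\vec i|,\mathrm{Dep}}$ to be the span of the differences $\mathrm{Taylor}_{\vec i}(g_{h_1})-\mathrm{Taylor}_{\vec i}(g_{h_2})$; rationality and the containment $V_{i,\mathrm{Dep}}\le V_i\le G_{(i,1)}/G_{(i,2)}$ are immediate from the construction and the bounded heights. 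The smooth/rational factorization $g_h=\epsilon_h g_h'\gamma_h$ is produced by the standard factorization theorem for polynomial sequences (applied to the individual $g_h$ with the equidistribution data from the previous paragraph) so that $g_h'$ is highly equidistributed on its subnilmanifold; this is used to guarantee that the extracted Taylor relations are genuine algebraic relations rather than arithmetic accidents.

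The crux is the last bullet: the vanishing of $\eta$ on $(r-1)$-fold commutators $[v_1,\dots,v_{i_r}]$ whenever at least two of the arguments lie in the ``dependent'' subspaces, for $i_1+\cdots+i_r=\ell'-1$. Here I would argue as in \cite{LSS24b}: feed back the linear relations among Taylor coefficients for \emph{two independent quadruples sharing parameters} into the correlation estimate; the Baker--Campbell--Hausdorff expansion of $g_{h_1}g_{h_2}\overline{g_{h_3}}\,\overline{g_{h_4}}$ in the top layer $G_{(d,r)}/G_{(d,2)}$, where by the degree-rank commutator relations $[\widetilde G_j,\widetilde G_{|\vec i|-j}]=\mathrm{Id}$ so the top layer is effectively abelian (cf. the proof of Lemma~\ref{lem:taylorexansion}), produces precisely the commutator terms $\eta([v_1,\dots,v_{i_r}])$ as the surviving obstruction. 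Requiring the correlation to persist for a positive proportion of quadruples — rather than to oscillate — forces these commutator-frequency pairings to vanish; if one were nonzero, a Weyl-sum/van der Corput estimate would contradict the $\epsilon(\delta)$ lower bound. I expect this commutator-vanishing step to be the main obstacle, since tracking which BCH terms survive in the degree-rank top layer, and verifying that the ``two dependent arguments'' hypothesis is exactly what kills the cross terms, requires care with the filtration bookkeeping; the rest is a faithful transcription of the one-parameter argument in \cite[Lemma 6.1]{LSS24b} to the multidimensional Taylor-coefficient setting developed in Section 2.4.
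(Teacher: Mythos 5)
Your high-level plan (equidistribution on the four-fold product, sunflower pigeonholing, extract rational vector space data) matches the paper, but your account of the crucial last bullet point — the vanishing of $\eta$ on $(r-1)$-fold commutators with two arguments in the dependent subspaces — is not the paper's mechanism and would not work as sketched. The paper applies equidistribution theory exactly once, to the joint sequence $g_{\vec h}^\ast=(g_{h_1},g_{h_2},g_{h_3},g_{h_4},g_{\vec h})$ on $\wt{G}=G\times G\times G\times G\times G_{\mr{Error}}$, obtaining (after pigeonholing in $\vec h$) a single $M(\delta)$-rational subgroup $J\leqslant\wt G$ with $\eta_{\mr{Prod}}(J\cap\wt G_{(s-1,r)})=0$, where $\eta_{\mr{Prod}}=(\eta,\eta,-\eta,-\eta,0)$. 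The commutator vanishing is then a purely algebraic deduction, the Furstenberg--Weiss argument: one fixes two pairs $(h_2,h_3)$ and $(h_2',h_4')$ (this is the sunflower core) so that many $h_1$ sit simultaneously in additive quadruples of both types; one observes that an element of $\pi_{123}(J_i)^\ast$ lifts in $J$ to something of the form $(v,0,0,\cdot)$ while an element of $\pi_{124}(J_i)^\ast$ lifts to $(v,0,\cdot,0)$; and one uses that any iterated commutator mixing elements of $G\times\mr{Id}_G\times\mr{Id}_G\times G$ and $G\times\mr{Id}_G\times G\times\mr{Id}_G$ lands in $G\times\mr{Id}_G\times\mr{Id}_G\times\mr{Id}_G$, where $\eta_{\mr{Prod}}$ descends to $\eta$. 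There is no further Weyl-sum or van der Corput estimate at this stage; the ``oscillation forces vanishing'' reasoning you invoke is already absorbed into the single application of the equidistribution theorem, and reprising it in the BCH step would be circular and, more importantly, would not by itself exploit the ``two dependent arguments'' hypothesis — you would have no way to distinguish commutators with two dependent entries from those with one. You gesture at ``two independent quadruples sharing parameters'' but never identify the specific coordinate-lifting trick that makes the hypothesis bite.

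A secondary issue is your choice of $V_i$ and $V_{i,\mathrm{Dep}}$ as spans of the Taylor coefficients $\on{Taylor}_{\vec i}(g_h)$ and of their differences. These spans need not be $M(\delta)$-rational (the Taylor coefficients are arbitrary reals) and, more to the point, the commutator-vanishing property would then have to be proved directly for those spans, which you do not do. The paper instead sets $V_i=\pi_1(J_i)$ and $V_{i,\mathrm{Dep}}=\pi_1(J_{i,123})\cap\pi_1(J_{i,124})$, inheriting rationality from the rational subgroup $J$ produced by equidistribution theory; the containments $\on{Taylor}_{\vec i}(g_h)\in V_{|\vec i|}$ and $\on{Taylor}_{\vec i}(g_{h_1})-\on{Taylor}_{\vec i}(g_{h_2})\in V_{|\vec i|,\mathrm{Dep}}$ are then deduced (after the factorization $g_h=\eps_h g_h'\gamma_h$ and a pigeonhole over the period $T$), while the commutator vanishing is established on these subgroup projections, which may strictly contain the spans.
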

\begin{proof}
%The proof is essentially identical to that of \cite[Lemma 6.1]{LSS24b} with a few small differences. Firstly, one must work with multidimensional Taylor expansions rather than single dimensional Taylor expansions, in which case we use Lemma~\ref{lem:taylorexansion} and Lemma~\ref{lem:taylorhom} in place of \cite[Lemma 2.12, Lemma 2.13]{LSS24b}. Another difference is that one works with $\chi_{h_1}(x) \otimes \chi_{h_2}(x) \otimes \overline{\chi_{h_3}(x)\otimes \chi_{h_4}(x)}$ instead of $\chi_{h_1}(x) \otimes \chi_{h_2}(x + h_1 - h_4) \otimes \overline{\chi_{h_3}(x)\otimes \chi_{h_4}(x + h_1 - h_4)}$, so one does not have to make the initial maneuvers to reduce to the latter equidistribution statement. The very important point is that our Taylor expansion respects the standard degree-rank filtration with the $\vec{i}$th Taylor coefficient lying in $G_{(|\vec{i}|, 1)}$. This makes it so that we do not need to develop a notion of ``multidegree-rank filtration."

The proof is essentially the same as in \cite[Lemma 6.1]{LSS24b} and we repeat large portions of it here essentially verbatim. (The only meaningful difference is in step 5 of this proof.) First, we note that the statement of the lemma is trivial for $r = 1$ as the only content of the lemma is the third bullet point, and there we may simply take $V_i = V_{i, \mathrm{Dep}} = G_{(i, 1)}/G_{(i, 2)}$. 

\noindent\textbf{Step 1: Setup for invoking equidistribution theory.}
By hypothesis, we have
\begin{align*}
\norm{\mb{E}\bigg[\chi_{h_1}(n)\otimes \chi_{h_2}(n + h_1-h_4)\otimes \ol{\chi_{h_3}(n)}\otimes \ol{\chi_{h_4}(n + h_1-h_4)}\cdot\psi_{\vec{h}}(g_{\vec{h}}(n)\Gamma')\bigg]}_{\infty} \ge \delta.
\end{align*}
for at least $\rho$ fraction of additive quadruples $h_1+h_2=h_3+h_4$. Let $\chi_h(n) = F_h(g_h(n)\Gamma)$. By pigeonholing in the nilmanifolds of complexity $M(\delta)$ and using a similar partition of unity argument as in Proposition~\ref{prop:initialmaneuvers}, we may assume that $G_{\mathrm{Error}}/\Gamma_{\mathrm{Error}}$, $G_h/\Gamma_h$, and $F_h$ are independent of $h$. We may additionally assume that $g_{\vec{h}}(0) = \mr{id}_{G_{\mr{Error}}}$ via \cite[Lemma 2.1]{Len23b}. 

We now consider the group $\wt{G} = G\times G \times G \times G \times G_{\mr{Error}}$. $\wt{G}$ may naturally be given a degree-rank $(d,r)$ product filtration and Mal'cev basis, with $G_{\mathrm{Error}}$ may be given a degree-rank $(d - 1, d - 1)$ filtration (note that in our definition, we have $(d - 1, d - 1) < (d, r)$). Note that $F(x_1\Gamma)\otimes \ol{F(x_2\Gamma)\otimes F(x_3\Gamma)}\otimes F(x_4\Gamma)\cdot\psi_{\vec{h}}(x_5\Gamma_{\mr{Error}})$ has a vertical frequency $\eta_{\mr{Prod}}=(\eta,\eta,-\eta,-\eta,0)$.

For the sake of convenience, we set \[g_{\vec{h}}^\ast(n) = (g_{h_1}(n),g_{h_2}(n),g_{h_3}(n),g_{h_4}(n),g_{\vec{h}}(n)).\]

\noindent\textbf{Step 2: Invoking equidistribution theory.}
By applying \cite[Corollary 5.2]{LSS24b} (since $\eta$ is nonzero), there exists a $M(\delta)$-rational subgroup $J=J_{\vec{h}}$ of $\wt{G}$ such that $\eta_{\mr{Prod}}(J\cap\wt{G}_{(s-1,r)}) = 0$ and such that 
\[g_{\vec{h}}^\ast = \eps_{\vec{h}}\cdot\wt{g_{\vec{h}}} \cdot\gamma_{\vec{h}}\]
where:
\begin{itemize}
    \item $\eps_{\vec{h}}(0) = \wt{g_{\vec{h}}}(0) = \gamma_{\vec{h}}(0) = \mr{id}_{\wt{G}}$;
    \item $\wt{g_{\vec{h}}}$ takes values in $J$;
    \item $\gamma_{\vec{h}}$ is $M(\delta)$-rational (with respect to the lattice $\Gamma\times\Gamma\times\Gamma\times\Gamma\times\Gamma_{\mr{Error}}$);
    \item $\eps$ is $(M(\delta), \vec{N})$-smooth.
\end{itemize}
By pigeonholing to a subset of additive quadruples of density $M(\delta)$ we may in fact assume that the group $J$ is independent of $\vec{h}$ under consideration. Recall the definition of $\mathrm{Horiz}_i$, which we define below.
\begin{definition}[$i$th horizontal component]
Let $G$ be given a degree-rank $(d', r')$ filtration. We define $\mathrm{Horiz}_i(G) = G_{(i, 1)}/G_{(i, 2)}$.
\end{definition}

We define
\[J_i' := (J\cap\wt{G}_{(i,1)})/(J\cap\wt{G}_{(i,2)}),\qquad J_i := \tau_i(J_i')\]
where $\tau_i\colon\on{Horiz}_i(G)^{\otimes 4}\times\on{Horiz}_i(G_{\mr{Error}})\to\on{Horiz}_i(G)^{\otimes 4}$ is the natural projection map to the four-fold product. Since $\eta_{\mr{Prod}}(J\cap\wt{G}_{(s-1,r)}) = 0$, we have 
\[\eta_{\mr{Prod}}([J_{i_1}',\ldots,J_{i_{r}}']) = 0\]
for $i_1' + \ldots + i_{r}' = d-1$ where the commutator bracket is taken with respect to $\wt{G}$ and $[\cdot,\ldots,\cdot]$ denotes any possible $(r-1)$-fold commutator bracket. 

Note that since $\eta_{\mathrm{Prod}}$ vanishes on $(G_{\mathrm{Error}})_{(d, r)}$ we in fact have
\[\eta_{\mr{Prod}}([J_{i_1},\ldots,J_{i_{r}}]) = 0\]
where we abusively descend $\eta_{\mr{Prod}}$ to $G^{\otimes 4}$.

\noindent\textbf{Step 3: Furstenberg--Weiss commutator argument.}
We now perform the crucial Furstenberg--Weiss commutator argument to find $V_i$ and $V_{i, \mathrm{Dep}}$. Given $T\subseteq[4]$, we define $\pi_T((v_1,\ldots,v_4)) = (v_i)_{i\in T}$ with the coordinates represented in increasing order of index.

We define
\begin{align*}
\pi_{123}(J_i)^\ast &= \pi_{123}(J_i) \cap\{(v,0,0)\colon v\in\on{Horiz}_i(G)\},\\
\pi_{124}(J_i)^\ast &= \pi_{124}(J_i) \cap\{(v,0,0)\colon v\in\on{Horiz}_i(G)\}.
\end{align*}
Note that $\pi_{123}(J_i)^\ast$ and $\pi_{124}(J_i)^\ast$ may (abusively) be viewed as subspaces of $\on{Horiz}_i(G)$. The crucial claim is that 
\[\eta([v_{i_1},\ldots, v_{i_{r}}]) = 0\]
if $i_1+\cdots+i_{r} = s-1$, each $v_{i_\ell}\in\pi_1(J_{i_\ell})$, and for two distinct indices $\ell_1,\ell_2$ we have that $v_{i_{\ell_1}}\in\pi_{123}(J_{i_{\ell_1}})^\ast$ and $v_{i_{\ell_2}}\in\pi_{124}(J_{i_{\ell_2}})^\ast$. Note that $\eta$ lives on $G$ and the commutator brackets are taken with respect to $G$, not $G^{\otimes 4}$. 

Note that an element $v_i\in\pi_1(J_{i_\ell})$ lifts to an element $\wt{v_{i_\ell}}$ of the form $(v_{i_\ell},\cdot,\cdot,\cdot)\in\on{Horiz}_{i_\ell}(G)^{\otimes 4}$. Furthermore note that $v_{i_\ell}\in\pi_{123}(J_{i_\ell})$ ``lifts'' to an element $\wt{v_{i_\ell}}$ of the form $(v_{i_\ell},0,0,\cdot)\in\on{Horiz}_{i_\ell}(G)^{\otimes 4}$ while $v_{i_\ell}\in\pi_{124}(J_{i_\ell})$ lifts to an element $\wt{v_{i_\ell}}$ of the form $(v_{i_\ell},0,\cdot,0)\in\on{Horiz}_{i_\ell}(G)^{\otimes 4}$.

Given the above setup, we have 
\[[\wt{v_{i_{1}}},\ldots,\wt{v_{i_{{r}}}}] = ([v_{i_{1}},\ldots,v_{i_{{r}}}],\mr{id}_G,\mr{id}_G,\mr{id}_G).\]
To see this note that the iterated commutator of elements in $G\times\mr{Id}_G\times\mr{Id}_G \times G$ (with any elements in $G^{\otimes 4}$) remains in the subgroup $G\times\mr{Id}_G\times\mr{Id}_G \times G$; an analogous fact holds true for $G\times\mr{Id}_G\times G \times\mr{Id}_G$. Since we assumed that our commutator contains elements in both $G\times\mr{Id}_G\times\mr{Id}_G \times G$ and $G\times\mr{Id}_G\times G \times\mr{Id}_G$, the commutator must in fact live in $G\times\mr{Id}_G\times\mr{Id}_G \times\mr{Id}_G$, and the first coordinates of the desired commutators is trivially seen to match. 

Recalling that we have
\[\eta_{\mr{Prod}}([J_{i_1},\ldots,J_{i_{r}}]) = 0,\]
and noting that $\eta_{\mr{Prod}}$ descends to $\eta$ on the subgroup $G_{(s-1,r)}\times\mr{Id}_{G}^{\otimes 3}$, we have 
\[\eta([v_{i_{1}},\ldots,v_{i_{{r}}}]) = 0\]
as claimed.

\noindent\textbf{Step 4: Finding $(h_2,h_3)$ and $(h_2',h_4')$ which extend to many ``good'' $h_1$.}
Recall that we are looking at the at least $\epsilon(\delta)$ fraction of additive quadruples $(h_1,h_2,h_3,h_4)\in H^4\subseteq[N]^{4D'}$ which are such that $\wt{g_{\vec{h}}}$ lives on a specified subgroup $J$. Call this set of quadruples $\mc{S}$.
By the pigeonhole principle, there are at least $\epsilon(\delta)N^{D'}$ many $h_1\in[N]^{D'}$ which extend to at least $\epsilon(\delta)N^{2D'}$ quadruples in $\mc{S}$. Thus there are at least $\epsilon(\delta)N^{5D'}$ pairs of additive tuples of the form
\[(h_1,h_2,h_3,h_1+h_2-h_3),~(h_1,h_2',h_1 + h_2'-h_4',h_4')\in\mc{S}.\] 

By averaging, there exists a pair of pairs $(h_2,h_3)$
and $(h_2',h_4')$ such that there are at least $\epsilon(\delta)N^{D'}$ many $h_1\in[N]^{D'}$ which live in such additive tuples. We fix such a pair of pairs and define $\mc{T}$ to denote the set of $h_1\in[N]^{D'}$ such that $(h_1,h_2,h_3,h_1+h_2-h_3)\in\mc{S}$ and $(h_1,h_2',h_1+h_2'-h_4',h_4')\in\mc{S}$.

\noindent\textbf{Step 5: Extracting coefficient data.}
Consider $h_1\in\mc{T}$ and define
\[h^{123} = (h_1,h_2,h_3,h_1+h_2-h_3),\quad h^{124} = (h_1,h_2',h_1+h_2'-h_4',h_4').\]
Recall $\mc{X}_i = (\mc{X}\cap\log(G_{(i,1)}))/\log(G_{(i,2)})$ and assign the basis $\exp(\mc{X}_i)$ to $G_{(i,1)}/G_{(i,2)}$ (viewed as a vector space). 
Finally we assign the basis $\mc{Z}_i=\bigcup_{X_i\in\exp(\mc{X}_i)}\{(X_i,0,0), (0,X_i,0),(0,0,X_i)\}$ to $(G_{(i,1)}/G_{(i,2)})^{\otimes 3}$. Write
$$J_{i, 123} = \{(g_1, g_2, g_3): v_1^i(g_1) + v^i_2(g_2) + v^i_3(g_3) = 0 \forall (v_1, v_2, v_3) \in S_{i, 123}\}$$
$$J_{i, 124} = \{(g_1, g_2, g_4): v_1^i(g_1) + v_i^2(g_2) + v^i_4(g_4) = 0 \forall (v_1, v_2, v_4) \in S_{i, 124}\}.$$
Fix $i$ and let the projection to $h_1$ component of elements in $S_{i, 123} \cup S_{i, 124}$ be $(\tilde{v}^{1}, \dots, \tilde{v}^{r})$ and suppose without a loss of generality that $\tilde{v}^i$ are linearly independent. We take $V_{i, \mathrm{Dep}} = \pi_1(J_{i, 123}) \cap \pi_2(J_{i, 124})$, and $V_i = \pi_1(J_i)$. By our analysis in Step 3, it follows that for this choice of vector spaces satisfies the fourth bullet point of the lemma.

Let us also assume without a loss of generality that $\tilde{v}^1, \dots, \tilde{v}^r$ are Gaussian eliminated and (after possibly reordering the variables) are in row-reduced echelon form. This can be done while keeping similar bounds for these elements. Now let $\vec{i}$ be a $D$-dimensional integer vector. Noticing that $h_2, h_3, h_2', h_4'$ are fixed, there exists fixed $h$-independent phases $\alpha_1, \dots, \alpha_r$ such that for each $h \in \mathcal{T}$, we have for some integer $T_h \le M(\delta)$,
$$\tilde{v}^j(\mathrm{Taylor}_{\vec{i}}(g_h)) + \alpha_j \in T_h^{-1}\mathbb{Z} + M(\delta) N^{-|\vec{i}|}.$$
Pigeonholing in $T$, we have for some $H' \subseteq \mathcal{T}$ of size at least $\epsilon(\delta)|H|$ such that
$$\tilde{v}^j(\mathrm{Taylor}_{\vec{i}}(g_h)) + \alpha_j \in T^{-1}\mathbb{Z} + M(\delta) N^{-|\vec{i}|}.$$
This shows that for $h, h' \in \mc{T}$,
$$\tilde{v}^j(\mathrm{Taylor}_{\vec{i}}(g_hg_{h'}^{-1})) \in T^{-1}\mathbb{Z} + M(\delta) N^{-|\vec{i}|}.$$
In addition, since $\mathrm{Taylor}_{\vec{i}}(g_{h_1}, g_{h_2}, g_{h_3}, g_{h_4}) \in J_{|\vec{i}|}$, it follows that $\mathrm{Taylor}_{\vec{i}}(g_{h_1}) \in \pi_1(J_{|\vec{i}|}) = V_{|\vec{i}|}$. Applying the factorization theorem to $g_hg_{h'}^{-1}$, we have obtained the third bullet point of the lemma. This completes the proof of the lemma.
\end{proof}

\subsection{Linearization}
We will now refine the nilsequence so that it admits bracket linear behavior in $h$. This is essentially \cite[Lemma 9.1]{LSS24b}. 
\begin{lemma}
Let $\delta > 0$ and $d, r, D, D', K$ be positive integers with $d, r, D, D', \le K$. For $h \in [N]^D$, let $\chi_h(x) = F(g_h(x)\Gamma) \in \mathrm{Nil}^d(M(\delta),m(\delta), D, M(\delta))$ be a degree-rank $(d, r)$ nilcharacter $G_{(d, r)}$-vertical frequency $\eta$ and $\psi_{\vec{h}} = \psi_{\vec{h}}(g_{\vec{h}}(n)\Gamma') \in \mathrm{Nil}^{d - 1}(M(\delta), m(\delta), D, M(\delta))$ a nilsequence on $G_{\mathrm{Error}}/\Gamma_{\mathrm{Error}}$. Let $H \subseteq [N]^{D'}$ be a subset of size at least $\delta N^{D'}$ and suppose for $\delta |H|^3$ many additive quadruples $(h_1, h_2, h_3, h_4) \in H^4$ we have
$$\|\mathbb{E}_{x \in [N]^{D}} \chi_{h_1}(x) \otimes \overline{\chi_{h_2}(x) \otimes \chi_{h_3}(x)}\otimes \chi_{h_4}(x) \cdot \psi_{\vec{h}}(x)\|_\infty \ge \epsilon(\delta).$$
Then there exists a set $H' \subseteq H$ with $|H'| \ge \epsilon(\delta)|H|$ such that for any $h \in H'$, we have the following.
\begin{itemize}
\item For each $h \in H'$, there exists a factorization $g_h = \epsilon_h g_h' \gamma_h$ with $\epsilon_h$ $(M(\delta), N^D)$-smooth, $\gamma_h$ is $M(\delta)$-rational; 
\item There exists a collection $\mathcal{X}_{i, *}$, $\mathcal{X}_{i, \mathrm{lin}}$, and $\mathcal{X}_{i, \mathrm{pet}}$ which are $M(\delta)$-rational combinations of elements in $(\mathcal{X} \cap G_{(i, 1)})/G_{(i, 2)}$. 
\item There is a collection of $\mb{R}$-vector spaces $W_{i,\ast}, W_{i,\mr{Lin}}, W_{i,\mr{Pet}}\leqslant G_{(i,1)}/G_{(i,2)}$ for each $i$;
\item If $W_i := W_{i,\ast} +  W_{i,\mr{Lin}} + W_{i,\mr{Pet}}$ then $\dim(W_i) = \dim(W_{i,\ast}) +  \dim(W_{i,\mr{Lin}}) + \dim(W_{i,\mr{Pet}})$, i.e., the three spaces are linearly disjoint;
\item There exist bases $\mc{X}_{i,\ast}$, $\mc{X}_{i,\mr{Lin}}$, and $\mc{X}_{i,\mr{Pet}}$ of the corresponding spaces which are composed of $M(\delta)$-rational combinations of elements of $(\mc{X}\cap G_{(i,1)})/G_{(i,2)}$;
\item For $1\le |\vec{i}| \le d$ and $h,h_1,h_2\in H'$ we have
    \begin{align*}
    \on{Taylor}_{\vec{i}}(g_h') &\in W_{|\vec{i}|,\ast} + W_{|\vec{i}|,\mr{Lin}} + W_{|\vec{i}|,\mr{Pet}} = W_{|\vec{i}|},\\
    \on{Taylor}_{\vec{i}}(g_{h_1}') - \on{Taylor}_{\vec{i}}(g_{h_2}')&\in W_{|\vec{i}|,\mr{Lin}} + W_{|\vec{i}|,\mr{Pet}},\\
    \on{Proj}_{W_{|\vec{i}|,\mr{Lin}}}(\on{Taylor}_{\vec{i}}(g_h')) &= \sum_{Z_{|\vec{i}|,j}\in\mc{X}_{|\vec{i}|,\mr{Lin}}}\bigg(\gamma_{\vec{i},j} + \sum_{k=1}^{m^\ast}\alpha_{\vec{i},j,k}\{\beta_k\cdot h\}\bigg)Z_{|\vec{i}|,j},
    \end{align*}
    with $m^\ast\le m(\delta)$ and $\beta_k\in(1/N')\mb{Z}^{D'}$ where $N'$ is a prime in $[100N,200N]$;
\item For any integers $i_1 + \cdots + i_{r} = d$, suppose that $v_{i_j}\in V_{i_j}$ for all $j$. If for at least one index $\ell_0$ we have $v_{i_{\ell_0}}\in W_{i_{\ell_0},\mr{Pet}}$, then if $w$ is any $(r-1)$-fold commutator of $v_{i_1},\ldots,v_{i_{r}}$ we have 
    \[\eta(w) = 0.\]
    Furthermore, if instead for at least two indices $\ell_1,\ell_2$ we have $v_{i_{\ell_1}}\in W_{i_{\ell_1},\mr{Lin}}$ and $v_{i_{\ell_2}}\in W_{i_{\ell_2},\mr{Lin}}$, then if $w$ is any $(r-1)$-fold commutator of $v_{i_1},\ldots,v_{i_{r}}$ we have
    \[\eta(w) = 0.\]
\end{itemize}
\end{lemma}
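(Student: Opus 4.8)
The plan is to bootstrap from the sunflower-type lemma and then linearize. First I would apply Lemma~\ref{lem:sunflower} which, after passing to a density-$\epsilon(\delta)$ subset of $H$ (again called $H$), supplies the factorization $g_h = \epsilon_h g_h'\gamma_h$ with $\epsilon_h$ being $(M(\delta),N^D)$-smooth and $\gamma_h$ being $M(\delta)$-rational, $M(\delta)$-rational subspaces $V_{i,\mathrm{Dep}}\le V_i\le G_{(i,1)}/G_{(i,2)}$ with $\mathrm{Taylor}_{\vec i}(g_h')\in V_{|\vec i|}$ and $\mathrm{Taylor}_{\vec i}(g_{h_1}')-\mathrm{Taylor}_{\vec i}(g_{h_2}')\in V_{|\vec i|,\mathrm{Dep}}$, and the commutator vanishing statement: $\eta(w)=0$ for every $(r-1)$-fold commutator $w$ of $v_{i_1},\dots,v_{i_r}$ with $i_1+\cdots+i_r=d$ as soon as at least two of the $v_{i_\ell}$ lie in the respective $V_{\cdot,\mathrm{Dep}}$. (Exactly as in Lemma~\ref{lem:sunflower}, the error nilsequence $\psi_{\vec h}$ is absorbed by equipping $G_{\mathrm{Error}}$ with a degree-rank $(d-1,d-1)<(d,r)$ filtration, on which the product vertical frequency vanishes, so it plays no further role.) It then remains to split $V_{i,\mathrm{Dep}}$ into a bracket-linear piece $W_{i,\mathrm{Lin}}$ and a ``petal'' piece $W_{i,\mathrm{Pet}}$, take $W_{i,\ast}$ to be a complement of $V_{i,\mathrm{Dep}}$ in $V_i$, and upgrade the commutator bound on the petal directions.

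For the linearization, I would fix $\vec i$ with $1\le |\vec i|\le d$ and use that $h\mapsto\mathrm{Taylor}_{\vec i}(g_h')$ is a homomorphism into the abelian group $G_{(|\vec i|,1)}/G_{(|\vec i|,2)}$ (Lemma~\ref{lem:taylorhom}), while the containment $\mathrm{Taylor}_{\vec i}(g_{h_1},g_{h_2},g_{h_3},g_{h_4})\in J_{|\vec i|}$ coming from the equidistribution step of Lemma~\ref{lem:sunflower}, combined with $h_1+h_2=h_3+h_4$, pins down linear relations among $\mathrm{Taylor}_{\vec i}(g_{h_j}')$ on the $\gg_\delta N^{3D'}$ good additive quadruples. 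Fixing an $M(\delta)$-rational basis of $V_{|\vec i|,\mathrm{Dep}}$ composed of $M(\delta)$-rational combinations of $(\mathcal{X}\cap G_{(|\vec i|,1)})/G_{(|\vec i|,2)}$ and writing the $V_{|\vec i|,\mathrm{Dep}}$-component of $\mathrm{Taylor}_{\vec i}(g_h')$ in coordinates, each coordinate becomes a map $c\colon H\to\mathbb{R}/\mathbb{Z}$ with $c(h_1)+c(h_2)-c(h_3)-c(h_4)\in\tfrac1{N'}\mathbb{Z}+O(M(\delta)N^{-|\vec i|})$ for $\gg_\delta N^{3D'}$ additive quadruples, where $N'$ is the prime in $[100N,200N]$. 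Applying a multidimensional version of the quantitative linearization lemma of \cite{LSS24b} (which reduces to the one-dimensional case via the quantitative Bogolyubov--Ruzsa theorem recorded in Appendix~B), I would find a dense subset of $H$ on which $c(h)=\gamma+\sum_{k=1}^{m^\ast}\alpha_k\{\beta_k\cdot h\}+(\text{a bounded-denominator term})$ with $m^\ast\le m(\delta)$ and $\beta_k\in\tfrac1{N'}\mathbb{Z}^{D'}$, and then pigeonhole the $\le M(\delta)$ possible values of the bounded-denominator term to make it $h$-independent. Calling a basis direction \emph{linear} when this representation genuinely depends on $h$ and \emph{petal} otherwise, I would collect the linear directions into $\mathcal{X}_{|\vec i|,\mathrm{Lin}}$, the petal directions into $\mathcal{X}_{|\vec i|,\mathrm{Pet}}$, and choose $\mathcal{X}_{|\vec i|,\ast}$ spanning a complement of $V_{|\vec i|,\mathrm{Dep}}$ in $V_{|\vec i|}$; this gives linearly disjoint spaces $W_{|\vec i|,\ast},W_{|\vec i|,\mathrm{Lin}},W_{|\vec i|,\mathrm{Pet}}$ with $W_{|\vec i|}:=W_{|\vec i|,\ast}+W_{|\vec i|,\mathrm{Lin}}+W_{|\vec i|,\mathrm{Pet}}=V_{|\vec i|}$ and the stated form of $\mathrm{Proj}_{W_{|\vec i|,\mathrm{Lin}}}(\mathrm{Taylor}_{\vec i}(g_h'))$. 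Running this over the $O_K(1)$ multi-indices $\vec i$ with $1\le|\vec i|\le d$ and intersecting the resulting dense subsets establishes all but the last bullet point.

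The last bullet has two cases. If two indices $\ell_1,\ell_2$ satisfy $v_{i_{\ell_1}}\in W_{i_{\ell_1},\mathrm{Lin}}$ and $v_{i_{\ell_2}}\in W_{i_{\ell_2},\mathrm{Lin}}$, then since $W_{j,\mathrm{Lin}}\le V_{j,\mathrm{Dep}}$ for each $j$, this is exactly the two-index case already provided by Lemma~\ref{lem:sunflower}, so $\eta(w)=0$. The one-petal-index case is the crux, and here I would revisit the Furstenberg--Weiss commutator computation in Step~3 of the proof of Lemma~\ref{lem:sunflower}. A petal direction $v_{i_{\ell_0}}\in W_{i_{\ell_0},\mathrm{Pet}}$ is by construction realized as a difference $\mathrm{Taylor}_{\vec i}(g_h')-\mathrm{Taylor}_{\vec i}(g_{h'}')$ with $h,h'\in H$; because the pigeonholing in Step~4 of Lemma~\ref{lem:sunflower} produces such $h,h'$ extending to additive quadruples with the \emph{same} partners $(h_2,h_3)$ and $(h_2',h_4')$, and because on a petal direction $c(\cdot)$ is not bracket-linear so that additional differences in $h$ remain available, a single petal index can be made to furnish both lifts of the form $(v,0,0,\cdot)$ and $(v,0,\cdot,0)$ that the argument requires. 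Feeding this into the identity $[\wt{v_{i_1}},\dots,\wt{v_{i_r}}]=([v_{i_1},\dots,v_{i_r}],\mathrm{id}_G,\mathrm{id}_G,\mathrm{id}_G)$ together with $\eta_{\mathrm{Prod}}([J_{i_1},\dots,J_{i_r}])=0$ yields $\eta(w)=0$. I expect this last step---arranging that every petal direction genuinely admits such a ``doubled'' realization---to be the main obstacle; a secondary technical point is proving the multidimensional linearization lemma with quasi-polynomial bounds and propagating the $M(\delta),m(\delta)$ bounds uniformly over all $\vec i$ and all the pigeonholings, both of which should be modest extensions of arguments in \cite{GTZ12,LSS24b}.
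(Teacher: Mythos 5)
The proposal correctly reads Lemma~\ref{lem:sunflower} as a black box and correctly guesses that some version of the approximate-homomorphism Lemma~\ref{lem:approximate} will supply the bracket-linear form of the $W_{i,\mathrm{Lin}}$-component. However, there is a genuine conceptual gap in how you obtain the petal space $W_{i,\mathrm{Pet}}$ and its enhanced commutator vanishing, and this gap does not appear to be repairable by the route you propose.

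The paper's proof does \emph{not} split $V_{i,\mathrm{Dep}}$ into ``linear'' and ``petal'' directions according to whether the coordinate ``genuinely depends on $h$.'' Instead it invokes equidistribution theory a \emph{second} time, applied to the additive-quadruple correlation directly, to produce a new $M(\delta)$-rational subgroup $K \leqslant G^{\times 4}\times G_{\mathrm{Error}}$ with $\eta_{\mathrm{Prod}}(K\cap\widetilde{G}_{(d-1,r)})=0$. From $K$ one manufactures $\widetilde{K}_i$, the annihilating functionals $\xi_j^{Q_i}$, and then \emph{defines} $W_{i,\mathrm{Pet}} := L_i^\ast = \{v\in V_i : P_iv=0,\ \xi_j^{Q_i}(Q_iv)=0\ \forall j\}$, with $W_{i,\mathrm{Lin}}$ a $\xi_j^{Q_i}$-dual complement inside $V_{i,\mathrm{Dep}}$. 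The improved commutator vanishing for a \emph{single} petal entry then comes precisely from the structure of $K$: the petal lift $(Q_{i_j}v_{i_j},0,0,0)$ lands in $\widetilde{K}_{i_j}$ by construction, the non-petal entries lift into $\widetilde{K}_{i_\ell}$ as $(P_{i_\ell}v_{i_\ell}+w_{i_\ell},P_{i_\ell}v_{i_\ell},P_{i_\ell}v_{i_\ell},P_{i_\ell}v_{i_\ell})$, and one evaluates $\eta_{\mathrm{Prod}}$ on the iterated commutator of these lifts and uses $\eta_{\mathrm{Prod}}([\widetilde K_{i_1},\ldots,\widetilde K_{i_r}])=0$. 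None of this is available to you if you only keep the data from Lemma~\ref{lem:sunflower} plus approximate linearity; the sunflower lemma gives vanishing only when \emph{two} entries lie in $V_{\cdot,\mathrm{Dep}}$, and a single petal entry does not produce the two complementary lifts $(v,0,0,\cdot)$ and $(v,0,\cdot,0)$ of the Furstenberg--Weiss argument. Your suggestion that ``additional differences in $h$ remain available'' on petal directions is a heuristic, not an argument, and in any case the other $v_{i_j}$ in the claim are only assumed in $V_{i_j}$, so the four-fold commutator need not land in $G\times\mathrm{Id}^3$. The approximate-homomorphism step is also applied differently than you suggest: it is applied to the functionals $T_1\xi_j^{Q_{|\vec i|}}(\mathrm{Taylor}_{\vec i}(g_h))$ cutting out $L_{|\vec i|}^\ast$, not to arbitrary $V_{i,\mathrm{Dep}}$-coordinates, and the outcome forces $W_{i,\mathrm{Lin}}$ rather than defining it post hoc.

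Two secondary points. First, in the paper $W_{i,\ast} = P_i(L_i)$ is in general a proper subspace of a complement of $V_{i,\mathrm{Dep}}$ in $V_i$, and $W_i = L_i$ need not equal $V_i$; after the second equidistribution one performs a further factorization $g_{h_1}'=(\epsilon_{h_1}^\ast\epsilon^\ast)\,g_{h_1}^{\triangle}\,(\gamma_{h_1}^\ast\gamma^\ast)$ so that the Taylor coefficients land in $L_{|\vec i|}$; your description ``take $W_{i,\ast}$ to be a complement of $V_{i,\mathrm{Dep}}$ in $V_i$'' omits this and would be false as stated. Second, the case $d=2$ (equivalently $r=1$) is degenerate and the paper handles it separately by reducing to a one-dimensional nilmanifold and applying Lemma~\ref{lem:approximate} directly; your outline does not address this edge case.
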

\begin{proof}
\begin{comment}
Again, the proof is essentially identical to that of \cite[Lemma 9.1]{LSS24b}, except for three differences. The first is that we work with multidimensional Taylor expansions. Secondly, we also work with $\chi_{h_1}(x) \otimes \chi_{h_2}(x) \otimes \overline{\chi_{h_3}(x)\otimes \chi_{h_4}(x)}$ instead of $\chi_{h_1}(x) \otimes \chi_{h_2}(x + h_1 - h_4) \otimes \overline{\chi_{h_3}(x)\otimes \chi_{h_4}(x + h_1 - h_4)}$, so again, one does not need to make initial maneuvers. Lastly, approximate linear objects in $[N]^D$ are slightly different than in $[N]$ in that they are of the form
$$\gamma_{\vec{i},j} + \sum_{k=1}^{m^\ast}\alpha_{\vec{i},j,k}\{\beta_k h\}$$
with $\beta_k$ lie in $\mathbb{R}^D$ whereas in their setting they lie in $\mathbb{R}$ (and $\vec{i}$ is one-dimensional). 
\end{comment}
Again, much of this proof will essentially repeat the proof of \cite[Lemma 9.1]{LSS24b} essentially verbatim. For the majority of the proof we will assume $d\ge 2$; we indicate the minor changes required for $d = 1$ for the end of the proof. Note that the case when $\eta$ is trivial follows via taking $W_{i,\mr{Pet}} = G_{(i,1)}/G_{(i,2)}$, $W_{i,\mr{Lin}}$ and $W_{i,\ast}$ to be trivial; therefore we may assume that $\eta$ is nontrivial for the remainder of the proof.

\noindent\textbf{Step 1: Applying Lemma~\ref{lem:sunflower} and linear-algebraic setup.}
Applying Lemma~\ref{lem:sunflower}, we obtain a factorization $g_h = \epsilon_h g'_h \gamma_h$ for $h \in H'$ of size at least $\epsilon(\delta)|H|$ such that $\epsilon_h$ is $(M(\delta), \vec{N})$-smooth, $\gamma_h$ is $M(\delta)$-rational, and $g_h'$ satisfies the conclusions of the lemma. By restricting in a progression of the period of $\gamma$, Fourier expanding the progression (via e.g., \cite[Lemma 7.1]{LSS24b}), pigeonholing in a phase $e(\Theta_h n)$, and absorbing the phase in $\psi_h$ (and here we are using the fact that $d > 1$), we may assume that (after possibly also conjugating $g_h'$ by a constant $\epsilon_0$)
\[\|\mathbb{E}_{x \in [N]^{D}} \chi_{h_1}'(x) \otimes \overline{\chi_{h_2}'(x) \otimes \chi_{h_3}'(x)}\otimes \chi_{h_4}'(x) \cdot \psi_{\vec{h}}(x)\|_\infty \ge \epsilon(\delta)\]
where if $\chi_h(n) = F(g_h(n)\Gamma)$, $\chi_h'(n) = \widetilde{F}(g_h'(n)\Gamma)$ with $\widetilde{F} = F(\epsilon_0 \cdot )$ for some choice of $\epsilon_0$ with $|\psi_{\mathcal{X}}(\epsilon_0)| \le 1$. We shall now abusively relabel $g_h'$ as $g_h$, $H'$ as $H$, and $\chi_h'$ as $\chi_h$.

We now require some additional linear algebraic setup for the proof. We define
\begin{align*}
V_{i,\mr{Dep}} &= \on{span}_{\mb{R}}(w_{i,1},\ldots,w_{i,\dim(V_{i,\mr{Dep}})})\\
V_i &= \on{span}_{\mb{R}}(w_{i,1},\ldots,w_{i,\dim(V_{i,\mr{Dep}})},w_{i,\dim(V_{i,\mr{Dep}})+1},\ldots,w_{i,\dim(V_i)}),\\
 G_{(i,1)}/G_{(i,2)}&= \on{span}_{\mb{R}}(w_{i,1},\ldots,w_{i,\dim(\on{Horiz}_i(G))}).
\end{align*}
Given $v\in V_i$, there is a unique linear combination 
\[v = \sum_{j=1}^{\dim(V_{i})}\alpha_jw_{i,j}.\]
We define 
\[P_i v = \sum_{j=\dim(V_{i,\mr{Dep}})+1}^{\dim(V_{i})}\alpha_jw_{i,j},\qquad Q_i v = \sum_{j=1}^{\dim(V_{i,\mr{Dep}})}\alpha_jw_{i,j}.\]
By construction $P_i^2 = P_i$, $Q_i^2 = Q_i$, $Q_i(V_i) \cap P_i(V_i) = 0$, and $P_i v + Q_i v = v$ for $v\in V_i$. We also (abusively) extend the operator $P_i$ to $V_i^{\otimes \ell}$ and $(G_{(i,1)}/G_{(i,2)})^{\otimes 4}$ in the obvious manners by acting on each copy of $V_i$ separately (and zeroing out basis elements $w_{i,\dim(V_i)+1},\ldots,w_{i,\dim(\on{Horiz}_i(G))}$).

\noindent\textbf{Step 2: Invoking equidistribution theory.}
We thus have
\begin{align*}
\snorm{\mb{E}[\chi_{h_1}(n)\otimes \chi_{h_2}(n)\otimes \ol{\chi_{h_3}(n)}\otimes \ol{\chi_{h_4}(n)}\cdot\psi_{\vec{h}}(g_{\vec{h}}(n)\Gamma')]}_{\infty} \ge \epsilon(\delta)
\end{align*}
for a $\epsilon(\delta)$ density of additive tuples. We define $G_{\mr{Error}}$, $\wt{G}$, and $\eta_{\mr{Prod}}$ as in the proof of Lemma~\ref{lem:sunflower} and as before we may assume that $g_{\vec{h}}(0) = \mr{id}_{G_{\mr{Error}}}$. Define 
\[g_{\vec{h}}^\ast(n) = (g_{h_1}(n),g_{h_2}(n ),g_{h_3}(n),g_{h_4}(n ),g_{\vec{h}}(n)).\]
By applying \cite[Corollary 5.5]{LSS24b}, we have
\[g_{\vec{h}}^\ast =\eps_{\vec{h}} \cdot\wt{g_{\vec{h}}}\cdot\gamma_{\vec{h}}\]
with
\begin{itemize}
    \item $\eps_{\vec{h}}(0) = \wt{g_{\vec{h}}}(0) = \gamma_{\vec{h}}(0) = \mr{id}_{\wt{G}}$;
    \item $\wt{g_{\vec{h}}}$ takes values in $K$;
    \item $\gamma_{\vec{h}}$ is $M(\delta)$-rational;
    \item $\eps$ is $(M(\delta), \vec{N})$-smooth
\end{itemize}
where $\eta_{\mr{Prod}}(K\cap\wt{G}_{(d-1,r)}) = 0$ and $K$ is a $M(\delta)$-rational subgroup of $\wt{G}$. By passing to a subset of additive quadruples of density $\epsilon(\delta)$ we may in fact assume that the group $K$ is independent of $\vec{h}$ under consideration.

\noindent\textbf{Step 3: Linear algebra deductions from equidistribution theory.}
Note that at present the subgroup $K$ does not account for the deductions given in Lemma~\ref{lem:sunflower}; these initial deductions are designed essentially to account for this. Let $\tau_i\colon\on{Horiz}_i(G)^{\otimes 4}\times\on{Horiz}_i(G_{\mr{Error}})\to\on{Horiz}_i(G)^{\otimes 4}$ be the natural projection to the four-fold product. We define the following set of vector spaces:
\begin{align*}
R_i &:= \{(Q_iv_1,Q_iv_2,Q_iv_3,Q_iv_4)\in V_i^{\otimes 4}\colon Q_iv_1 + Q_iv_2 - Q_iv_3 -Q_iv_4 = 0\},\\
K_i &:= \tau_i(K\cap\wt{G}_{(i,1)}\imod\wt{G}_{(i,2)}),\\
S_i &:= \{(v_1,v_2,v_3,v_4)\in V_{i}^{\otimes 4}\colon P_{i} v_1 =  P_{i} v_2 = P_{i} v_3 = P_{i} v_4\},\\
K_{i,1} &:= K_i \cap S_i,\\
\wt{K_i} &:= K_{i,1} + R_i,\\
L_i &:= \pi_1(\wt{K_i} \cap\{(v_1,v_2,v_3,v_4)\in V_i^{\otimes 4}\colon Q_i v_2 = Q_iv_3 = Q_i v_4 = 0\}) + Q_i(V_i).
\end{align*}
By inspection, we have $R_i\leqslant S_i$ hence $\wt{K_i}\leqslant S_i$. Note that 
\[\eta_{\mr{Prod}}([K_{i_1,1},\ldots,K_{i_r^{\ast},1}]) = 0\]
whenever one has that $i_1 + \cdots + i_r = d-1$ and $[\cdot,\ldots,\cdot]$ denotes any possible $(d -1)$-fold commutator bracket. This is a consequence of the fact that $\eta_{\mr{Prod}}(K\cap\wt{G}_{(d-1,r)}) = 0$ and noting that $K_{i,1}\leqslant K_i$. Note that we are implicitly using that $\eta_{\mr{Prod}}$ is trivial on $G_{\mr{Error}}$ as well, and we abusively descend $\eta_{\mr{Prod}}$ to $G^{\otimes 4}$.

We now claim that 
\[\eta_{\mr{Prod}}([v_{i_1},\ldots,v_{i_{r}}]) = 0\]
if $v_{i_\ell}\in S_{i_\ell}$ for all $\ell$ and there is at least one index $j$ such that $v_{i_j}\in R_{i_j}$.

To prove this, note by the final bullet point of Lemma~\ref{lem:sunflower} and multilinearity that 
\begin{align*}
\eta_{\mr{Prod}}([v_{i_1},\ldots,v_{i_{r}}]) &= \eta_{\mr{Prod}}([P_{i_1}v_{i_1},\ldots,P_{i_{r}}v_{i_{r}}]) + \sum_{k=1}^{r}\eta_{\mr{Prod}}([P_{i_1}v_{i_1},\ldots, Q_{i_k}v_{i_k},\ldots,P_{i_{r}}v_{i_{r}}])\\
&= \eta_{\mr{Prod}}([P_{i_1}v_{i_1},\ldots, Q_{i_j}v_{i_j},\ldots,P_{i_{r}}v_{i_{r}}])=0.
\end{align*}
The first equality uses that every bracket with at least two $Q_{i_k}v_{i_k}$ has two $V_{i,\mr{Dep}}$ terms so is $0$, the second equality uses $P_{i_j}v_{i_j} = 0$, and the third equality follows by noting that 
\[P_{i}v_{i} \in\{(v_1,v_2,v_3,v_4)\in V_i^{\otimes 4}\colon P_i v_1 =  P_i v_2 = P_i v_3 = P_i v_4, Q_i v_1 =  Q_i v_2 = Q_i v_3 = Q_i v_4 = 0\}\]
and $\eta_{\mr{Prod}} = (\eta,\eta,-\eta,-\eta)$.
Now, we may ultimately deduce
\[\eta_{\mr{Prod}}([\wt{K_{i_1}},\ldots,\wt{K_{i_{r}}}])=0\]
because $\wt{K_i} = K_{i,1} + R_i$ and $R_i,K_{i,1}\leqslant S_i$.

Finally, let $\pi_T$ for $T\subseteq [4]$ is as in the proof of Lemma~\ref{lem:sunflower} (namely, an appropriate projection map). We have
\[\pi_1(\wt{K_i})\leqslant L_i.\]
This follows because if
\[((Qv_1,Pv_1),(Qv_2,Pv_2),(Qv_3,Pv_3),(Qv_4,Pv_4))\in\wt{K_i} \]
then
\[((Q(v_1+v_2-v_3-v_4),Pv_1),(0,Pv_2),(0,Pv_3),(0,Pv_4))\in\wt{K_i}.\]

\noindent\textbf{Step 4: Constructing a decomposition of $Q_i(V_i)$.}
We will now decompose $Q_i(V_i) = V_{i,\mr{Dep}}$ into a pair of subspaces. On one of these subspaces we will deduce an improved vanishing for the commutator while on the other subspace we will deduce an approximate linearity for $\on{Taylor}_i(g_h)$. Let 
\[L_i^\ast = \{(v_1,v_2,v_3,v_4)\in S_i\colon Pv_1 = 0, v_2 = v_3 = v_4 = 0\} \cap\wt{K_i}.\]
Note that $L_i^\ast$ may abusively be viewed as a subspace of $V_i$ (instead of $V_i^{\otimes 4}$) and under this identification $L_i^\ast\leqslant Q_i(V_i)=V_{i,\mr{Dep}}\leqslant L_i$.

The key claim in our analysis is if $i_1 + \cdots + i_{r} = d-1$, $v_{i_\ell}\in L_{i_\ell}$ for all indices $\ell$, and $v_{i_j}\in L_{i_{j}}^\ast$ for at least one index $j$ we have 
\[\eta([v_{i_1},\ldots,v_{i_\ell}]) = 0.\]

To prove this, note that $Q_{i_j}v_{i_j} = v_{i_j}$ and $P_{i_j}v_{i_j} = 0$ and using the last bullet point of Lemma~\ref{lem:sunflower}, we have 
\[\eta([v_{i_1},\ldots,v_{i_\ell}]) = \eta([P_{i_1}v_{i_1},\ldots,Q_{i_j}v_{i_j},\ldots,P_{i_\ell}v_{i_\ell}]),\]
similar to the argument in Step 3.

Next note that $P_iQ_iv = 0$ for all $v\in V_i$ and therefore 
\[P_i(L_i) \leqslant P_i(\pi_1(\wt{K_i}\cap\{(v_1,v_2,v_3,v_4)\in V_i^{\otimes 4}\colon Q_iv_2 = Q_iv_3 = Q_iv_4 = 0\})).\]
Therefore we may lift $P_{i_\ell}v_{i_\ell}$ for $\ell\neq j$ to $\wt{v_{i_\ell}} = (P_{i_\ell}v_{i_\ell} + w_{i_\ell},P_{i_\ell}v_{i_\ell},P_{i_\ell}v_{i_\ell},P_{i_\ell}v_{i_\ell})\in\wt{K_{i_\ell}}$ where $w_{i_\ell}\in Q_{i_\ell}(V_{i_\ell})$. We lift $v_{i_j}$ to $\wt{v_{i_{j}}}$ which has the form $(Q_{i_j}v_{i_j},0,0,0)\in\wt{K_{i_j}}$.

Note that we have 
\begin{align*}
 0 &= \eta_{\mr{Prod}}([\wt{v_{i_{1}}},\ldots,\wt{v_{i_{r}}}]) = \eta([P_{i_{1}}v_{i_{1}} + w_{i_{1}},\ldots, Q_{i_j}v_{i_{j}},\ldots, P_{i_{r}}v_{i_{r}} + w_{i_{r}}])\\
&= \eta([P_{i_{1}}v_{i_{1}},\ldots, Q_{i_j}v_{i_{j}},\ldots, P_{i_{r}}v_{i_{r}}])
\end{align*}
where in the first equality we have used for all $\ell$ that $\wt{v_{i_\ell}}\in\wt{K_{i_\ell}}$ and the result from Step 3, in the second equality that $\wt{v_{i_{j}}}$ has the final three coordinates identically zero, and in the final equality that $w_{i_\ell}\in Q_{i_\ell}(V_{i_\ell})=V_{i_\ell,\mr{Dep}}$ and the final item of Lemma~\ref{lem:sunflower}. 

The desired decomposition of spaces for the lemma will have 
\begin{align*}
W_{i,\mr{Pet}} &:= L_i^\ast,\quad W_{i,\ast} := P_i(L_i)\leqslant L_i\cap P_i(V_i).
\end{align*}
The fact $P_i(L_i)\leqslant L_i\cap P_i(V_i)$ is deduced from $Q_i(V_i)\leqslant L_i$. $W_{i,\mr{Lin}}$ will be constructed explicitly in the next step but is chosen so that
\[W_{i,\mr{Lin}}\leqslant Q_i(L_i) = Q_i(V_i)=V_{i,\mr{Dep}}\]
and $W_{i,\mr{Lin}} + W_{i,\mr{Pet}} = V_{i,\mr{Dep}}\leqslant L_i$. Given these properties of $W_{i,\mr{Lin}}$, note that the above analysis, along with Lemma~\ref{lem:sunflower}, establishes the final bullet point for our output.

\noindent\textbf{Step 5: Controlling approximate homomorphisms.}
Recall $\wt{K_i}\leqslant S_i$ and there is a natural isomorphism of groups
\[S_i \simeq \{(v,v_1,v_2,v_3,v_4)\colon v\in P_i(V_i), v_1,\ldots,v_4\in Q_i(V_i)\}.\]
Using this as an identification, we may write 
\[\wt{K_i} = \bigcap_{j=1}^{\dim(S_i) - \dim(\wt{K_i})}\on{ker}((\xi_j^{P_i}, \xi_j^{Q_i},\xi_j^{Q_i},-\xi_j^{Q_i},-\xi_j^{Q_i}))\]
where $\xi_j^{P_i}\in P_i(V_i)^\vee$ and $\xi_j^{Q_i}\in Q_i(V_i)^\vee$ (i.e., corresponding dual vector spaces). Note that the annihilators all have the special form of $(\cdot, \xi_j^{Q_i},\xi_j^{Q_i},-\xi_j^{Q_i},-\xi_j^{Q_i})$ since
\[\{(Q_iv_1,Q_iv_2,Q_iv_3,Q_iv_4)\in V_i^{\otimes 4}\colon Q_iv_1 + Q_iv_2 - Q_iv_3 -Q_iv_4 = 0\}=R_i\leqslant\wt{K_i}.\]
Note that
\[L_i^\ast = \{v\in V_i\colon P_i v = 0\text{ and }\xi_j^{Q_i}(Q_iv) = 0\text{ for all }j\}\]
since $v\in L_i^\ast$ is equivalent under this identification to $(0,Q_iv,0,0,0)\in\wt{K_i}$. Without loss of generality we may assume that for $1\le j\le\dim(V_{i,\mr{Dep}}) - \dim(L_i^\ast)$, vectors $\xi_j^{Q_i}$ are independent in $Q_i(V_i)^\vee$ (and they must span the orthogonal space to $L_i^\ast$ within $Q_i(V_i)^\vee$).

By appropriate scaling, we may assume $\xi_j^{Q_i}(w_{i,j})$ is an integer bounded by $M(\delta)$ for $1\le j\le\dim(V_{i,\mr{Dep}})$. We extend each $\xi_j^{Q_i}$ to an operator on $(G_{(i,1)}/G_{(i,2)})^\vee$ by setting $\xi_j^{Q_i}(w_{i,j}) = 0$ for $j>\dim(V_{i,\mr{Dep}})$. Possibly at the cost of another $M(\delta)$ scaling, we may assume that $\xi_j^{Q_i}(\Gamma\cap G_{(i,1)}\imod G_{(i,2)})\in\mb{Z}$. We extend $\xi_j^{P_i}(\cdot)$ in an analogous manner to $(G_{(i,1)}/G_{(i,2)})^\vee$ by setting $\xi_j^{P_i}(w_{i,j}) = 0$ for $1\le j\le\dim(V_i,\mr{Dep})$ and $j>\dim(V_i)$. Again, we may scale such that $\xi_j^{P_i}(\Gamma\cap G_{(i,1)}\imod G_{(i,2)})\in\mb{Z}$. The crucial point here is that now $\xi_j^{P_i}$ and $\xi_j^{Q_i}$ are $i$-th horizontal characters of height at most $M(\delta)$.

At this point, we now let the parameter $\vec{i}$ be a vector in $\mathbb{Z}^{D'}$. We have
\[\tau_{|\vec{i}|}(\on{Taylor}_{\vec{i}}(\wt{g}_{\vec{h}})) \in K_{|\vec{i}|}\]
and thus
\[\on{dist}(\tau_{|\vec{i}|}(\on{Taylor}_{\vec{i}}(g_{\vec{h}}^\ast)), S_{|\vec{i}|} + T^{-1}\on{Horiz}_{|\vec{i}|}(\Gamma^{\otimes 4}))\le M(\delta) N^{-|\vec{i}|}\]
after Pigeonholing $\vec{h}$ appropriately. Here distance is in $L^{\infty}$ after expressing both of these expressions in the basis $\exp(\mc{X}_{|\vec{i}|})^{\otimes 4}$ and $T$ is an integer bounded by $M(\delta)$.

Furthermore note that 
\[\tau_{|\vec{i}|}(\on{Taylor}_{\vec{i}}(g_{\vec{h}}^\ast))\in S_{|\vec{i}|}\]
by Lemma~\ref{lem:sunflower}. So if we choose a set of horizontal characters of height $M(\delta)$ relative to $\on{Horiz}_{|\vec{i}|}(\Gamma^{\otimes 4})$ which cut out $\wt{K_{|\vec{i}|}}$ as their common kernel, then noting that $K_{|\vec{i}|}\cap S_{|\vec{i}|} \leqslant \wt{K_{|\vec{i}|}}$ and applying Lemma~\ref{lem:factor}, we may assume that 
\begin{equation}\label{eq:linear-output-1}
\tau_{|\vec{i}|}(\on{Taylor}_{\vec{i}}(\wt{g}_{\vec{h}}))\in\wt{K_{|\vec{i}|}}
\end{equation}
and $\eps_{\vec{h}},\gamma_{\vec{h}}$ have identical properties up to changing implicit constants. We will assume this refined property of the factorization for the remainder of our analysis. 

Given the factorization of $g_{\vec{h}}^\ast$, we thus deduce (taking an appropriate least common multiple) that
\begin{align}
\norm{T_1\cdot\bigg(\xi_j^{P_{|\vec{i}|}}&(\on{Taylor}_{\vec{i}}(g_{h_2})) + \xi_j^{Q_{|\vec{i}|}}(\on{Taylor}_{\vec{i}}(g_{h_1})) + \xi_j^{Q_{|\vec{i}|}}(\on{Taylor}_{\vec{i}}(g_{h_2}))\notag\\
&\quad- \xi_j^{Q_{|\vec{i}|}}(\on{Taylor}_{\vec{i}}(g_{h_3})) - \xi_j^{Q_{|\vec{i}|}}(\on{Taylor}_{\vec{i}}(g_{h_4}))\bigg)}_{\mb{R}/\mb{Z}}\le M(\delta) N^{-|\vec{i}|}\label{eq:linear-output-2}
\end{align}
for all $1\le j\le\dim(V_{|\vec{i}|,\mr{Dep}})-\dim(L_{|\vec{i}|}^\ast)$ where $T_1$ is an integer bounded by $M(\delta)$. Here we have used that $\xi_j^{P_{|\vec{i}|}}(\on{Taylor}_{\vec{i}}(g_h))$ is equal for all $h\in H$ by Lemma~\ref{lem:sunflower}.

We define functions $f,g\colon H\to\mb{R}^{\sum_{|\vec{i}| < d - 1}\dim(V_{|\vec{i}|,\mr{Dep}})-\dim(L_{|\vec{i}|}^\ast)}$ via
\begin{align*}
f(h) &= (T_1\xi_j^{Q_{|\vec{i}|}}(\on{Taylor}_{\vec{i}}(g_h)))_{|\vec{i}| < d - 1, 1\le j\le\dim(V_{|\vec{i}|,\mr{Dep}})-\dim(L_{|\vec{i}|}^\ast)},\\
g(h) &= (T_1\xi_j^{P_{|\vec{i}|}}(\on{Taylor}_{\vec{i}}(g_h)) + T_1\xi_j^{Q_{|\vec{i}|}}(\on{Taylor}_{\vec{i}}(g_h)))_{1\le |\vec{i}| \le d-1,~1\le j\le\dim(V_{|\vec{i}|,\mr{Dep}})-\dim(L_{|\vec{i}|}^\ast)}.
\end{align*}
Note that for the additive quadruples on which we have \eqref{eq:linear-output-2}, we are exactly in the situation necessary to apply results on approximate homomorphisms. 

In particular, we may apply Lemma~\ref{lem:approximate}. We see that there exists $H'\subseteq H$ having density at least $\epsilon(\delta)$ such that for all $\vec{i},j$ and $h\in H'$, we have
\begin{equation}\label{eq:linear-output-3}
\norm{T_1\xi_j^{Q_{|\vec{i}|}}(\on{Taylor}_{\vec{i}}(g_h)) - \bigg(\gamma_{\vec{i},j} + \sum_{k=1}^{d^\ast}\alpha_{\vec{i},j,k} \{\beta_k h\}\bigg)}_{\mb{R}/\mb{Z}}\le M(\delta) N^{-|\vec{i}|},
\end{equation}
where:
\begin{itemize}
    \item $d^\ast\le m(\delta)$;
    \item $\beta_k\in(1/N')\mb{Z}$ where $N'$ is a prime between $100N$ and $200N$.
\end{itemize} 

At this point, for each $\vec{i}$ we find elements $Z_{|\vec{i}|,j}$ for $1\le j\le\dim(V_{|\vec{i}|,\mr{Dep}})-\dim(L_{|\vec{i}|}^\ast)$ which are $M(\delta)$-rational combinations of $\{w_{|\vec{i}|,j}\colon 1\le j\le\dim(V_{|\vec{i}|,\mr{Dep}})\}$ such that 
\begin{equation}\label{eq:basis-def}
T_1\xi_j^{Q_{|\vec{i}|}}(Z_{|\vec{i}|,j}) = 1\text{ and }\xi_{j'}^{Q_{|\vec{i}|}}(Z_{|\vec{i}|,j}) = 0
\end{equation}
for $j'\neq j$ such that $1\le j,j'\le\dim(V_{|\vec{i}|,\mr{Dep}})-\dim(L_{|\vec{i}|}^\ast)$. We define
\[W_{|\vec{i}|,\mr{Lin}} = \on{span}_{\mb{R}}((Z_{|\vec{i}|,j})_{1\le j\le\dim(V_{|\vec{i}|,\mr{Dep}})-\dim(L_{|\vec{i}|}^\ast)}).\]

We see that there are no nontrivial linear relations between $W_{|\vec{i}|,\ast}$ and $W_{|\vec{i}|,\mr{Pet}}+W_{|\vec{i}|,\mr{Lin}}$ since $W_{|\vec{i}|,\ast}\leqslant P_{|\vec{i}|}(V_{|\vec{i}|})$ and $W_{|\vec{i}|,\mr{Pet}}+W_{|\vec{i}|,\mr{Lin}}\leqslant Q_{|\vec{i}|}(V_{|\vec{i}|})$. There are no linear relations between $W_{|\vec{i}|,\mr{Pet}}$ and $W_{|\vec{i}|,\mr{Lin}}$ as $W_{|\vec{i}|,\mr{Pet}}$ lies in the joint kernel of the $\xi_j^{Q_{|\vec{i}|}}$ and therefore using \eqref{eq:basis-def} one can prove any such relation is trivial. Furthermore, by construction we have $V_{|\vec{i}|,\mr{Dep}} = Q_{|\vec{i}|}(V_{|\vec{i}|}) = W_{|\vec{i}|,\mr{Lin}} + W_{|\vec{i}|,\mr{Pet}}$. Finally $L_{|\vec{i}|} = P_{|\vec{i}|}(L_{|\vec{i}|}) + Q_{|\vec{i}|}(L_{|\vec{i}|})  = W_{|\vec{i}|,\ast} + W_{|\vec{i}|,\mr{Lin}} + W_{|\vec{i}|,\mr{Pet}}$; this implicitly uses $Q(L_{|\vec{i}|}) = Q_{|\vec{i}|}(V_{|\vec{i}|}) \leqslant L_{|\vec{i}|}$.

\noindent\textbf{Step 6: Constructing the desired factorizations and completing the proof.}
Using the refined factorization \eqref{eq:linear-output-1} implies that
\[\pi_1(\tau(\on{Taylor}_{\vec{i}}(\wt{g}_{\vec{h}}))) \in L_{|\vec{i}|}\]
since $\pi_1(\wt{K_{|\vec{i}|}}) \leqslant L_{|\vec{i}|}$. Applying $g_{\vec{h}}^\ast = \eps_{\vec{h}}\cdot\wt{g_{\vec{h}}}\cdot\gamma_{\vec{h}}$ in the first coordinate then implies that
\begin{equation}\label{eq:linear-output-5}
\on{dist}(\on{Taylor}_{\vec{i}}(g_{h_1}), L_{|\vec{i}|} + T_2^{-1}\on{Horiz}_i(\Gamma))\le M(\delta) N^{-|\vec{i}|}
\end{equation}
for $h_1\in H'$ where distance is in $L^\infty$ after expressing values in terms of $\exp(\mc{X}_i)$. Here $T_2$ is an integer bounded by $M(\delta)$. Furthermore recall from Lemma~\ref{lem:sunflower} that
\begin{equation}\label{eq:linear-output-6}
\on{Taylor}_{\vec{i}}(g_h)-\on{Taylor}_{\vec{i}}(g_{h'})\in V_{|\vec{i}|,\mr{Dep}}=Q_{|\vec{i}|}(V_{|\vec{i}|})
\end{equation}
for $h,h'\in H'\subseteq H$.

Let $Y_{|\vec{i}|,j}\in\on{span}_{\mb{R}}(\mc{X}\cap\log(G_{(|\vec{i}|,1)})\setminus \mc{X}\cap\log(G_{(|\vec{i}|,2)}))$ be such that $\exp(Y_{|\vec{i}|,j}) \imod G_{(|\vec{i}|,2)}= Z_{|\vec{i}|,j}$. Then for $h\in H'$, we define (with the product taken in, say, lexicographic order)
\begin{equation}\label{eq:linear-output-7}
\wt{g}_h(n) = \prod_{|\vec{i}| \le d}\prod_{j=1}^{\dim(W_{|\vec{i}|,\mr{Lin}})}\exp(Y_{|\vec{i}|,j})^{T_1^{-1}\binom{n}{\vec{i}}\cdot (\gamma_{\vec{i},j}+\sum_{k=1}^{d^\ast}\alpha_{\vec{i},j,k}\{\beta_k h\})}.
\end{equation}
By construction and Lemma~\ref{lem:taylorexansion}, for $h,h'\in H'$ we have 
\begin{align*}
\on{Taylor}_{\vec{i}}(\wt{g}_h^{-1}g_h) - \on{Taylor}_{\vec{i}}(\wt{g}_{h'}^{-1}g_{h'}) & \in Q_{|\vec{i}|}(V_{|\vec{i}|}),\\
\on{dist}(\on{Taylor}_{|\vec{i}|}(\wt{g}_h^{-1}g_h), L_{|\vec{i}|} + T_2^{-1} \on{Horiz}_{|\vec{i}|}(\Gamma))&\le M(\delta) \cdot N^{-|\vec{i}|},\\
\snorm{T_1\xi_j^{Q_{|\vec{i}|}}(\wt{g}_h^{-1}g_h)}_{\mb{R}/\mb{Z}}&\le M(\delta) \cdot N^{-|\vec{i}|},
\end{align*}
where $1\le |\vec{i}|\le d-1$ and $1\le j\le\dim(V_{|\vec{i}|,\mr{Dep}})-\dim(L_{|\vec{i}|}^\ast)$. The first line comes from \eqref{eq:linear-output-6}, the second line from \eqref{eq:linear-output-5}, and the third from \eqref{eq:linear-output-3} and \eqref{eq:linear-output-7}, in conjunction with \eqref{eq:basis-def}.

We now fix an element $h_2\in H'$. For each $h_1\in H'$ we write 
\begin{align*}
g_{h_1}' &= \wt{g}_{h_1} \cdot (\wt{g}_{h_1}^{-1} g_{h_1}')= \wt{g}_{h_1} \cdot (\wt{g}_{h_1}^{-1} g_{h_1}') \cdot (\wt{g}_{h_2}^{-1} g_{h_2}')^{-1} \cdot (\wt{g}_{h_2}^{-1} g_{h_2}').
\end{align*}
By applying Lemma~\ref{lem:factor}, we may write 
\[(\wt{g}_{h_1}^{-1} g_{h_1}') \cdot (\wt{g}_{h_2}^{-1} g_{h_2}')^{-1} = \eps_{h_1}^\ast g_{h_1}^\ast \gamma_{h_1}^\ast,\qquad(\wt{g}_{h_2}^{-1} g_{h_2}') = \eps^\ast g^\ast \gamma^\ast\]
where $\gamma^{\ast},\gamma_{h_1}^\ast$ are $M(\delta)$-rational, $\eps^{\ast},\eps_{h_1}^\ast$ are $(M(\delta),\vec{N})$-smooth, and we have $\on{Taylor}_{\vec{i}}(g_{h_1}^\ast) \in L_{|\vec{i}|}^\ast = W_{|\vec{i}|,\mr{Pet}}$ using the first and third lines above and $\on{Taylor}_{\vec{i}}(g^\ast) \in L_{|\vec{i}|}^\ast+P_{|\vec{i}|}(L_{|\vec{i}|})=W_{|\vec{i}|,\ast} + W_{|\vec{i}|,\mr{Pet}}$ using the second and third lines above. (Recall that $L_{|\vec{i}|}^\ast\leqslant Q_{|\vec{i}|}(V_{|\vec{i}|})$ is cut out by the $\xi_j^{Q_{|\vec{i}|}}$.) Additionally, these sequences are the identity at $0$.

Therefore, for $h_1\in H'$ we have
\begin{align*}
g_{h_1}' &= \eps_{h_1}^\ast \eps^\ast ((\eps_{h_1}^\ast \eps^\ast)^{-1}\wt{g}_{h_1}(\eps_{h_1}^\ast \eps^\ast))((\eps^\ast)^{-1}g_{h_1}^\ast\eps^\ast)((\eps^\ast)^{-1}\gamma_{h_1}^\ast\eps^\ast(\gamma_{h_1}^\ast)^{-1})(\gamma_{h_1}^{\ast}g^\ast(\gamma_{h_1}^\ast)^{-1})(\gamma_{h_1}^\ast\gamma^\ast)\\
&=:(\eps_{h_1}^\ast\eps^\ast) \cdot g_{h_1}^{\triangle} \cdot (\gamma_{h_1}^\ast\gamma^\ast).
\end{align*}
So, for $h_3,h_4\in H$ we deduce using Lemma~\ref{lem:taylorexansion} and the above analysis that
\begin{align*}
\on{Taylor}_{\vec{i}}(\wt{g}_{h_3}) &\in W_{|\vec{i}|,\mr{Lin}},\\
\on{Taylor}_{\vec{i}}(g_{h_3}^{\triangle}) &\in L_{|\vec{i}|} =  W_{|\vec{i}|,\ast} + W_{|\vec{i}|,\mr{Lin}}  + W_{|\vec{i}|,\mr{Pet}},\\
\on{Proj}_{W_{|\vec{i}|},\mr{Lin}}(\on{Taylor}_{\vec{i}}(g_{h_3}^{\triangle})) &= \on{Proj}_{W_{|\vec{i}|},\mr{Lin}}(\on{Taylor}_{|\vec{i}|}(\wt{g}_{h_3})),\\
\on{Taylor}_{\vec{i}}(\wt{g}_{h_3}^{-1}g_{h_3}^{\triangle}) - \on{Taylor}_{\vec{i}}(\wt{g}_{h_4}^{-1}g_{h_4}^{\triangle}) &\in W_{|\vec{i}|,\mr{Pet}}.
\end{align*}
Furthermore note that $\eps_{h_1}^\ast \eps^\ast$ is sufficiently smooth and $\gamma_{h_1}^\ast\gamma^\ast$ is appropriately rational. This proves the lemma provided we handle the exceptional case of $d = 2$ which we will next step.

\noindent\textbf{Step 7: Handling the exceptional case $d = 2$.}
In this exceptional case, we have $r = 1$ and $d=2$, and $\eta$ is nontrivial. In this case, we have $\psi_h \equiv 1$, by projecting to the one-dimensional subgroup generated by $\eta$, we may assume that the nilmanifold underlying $\chi_h$ is one-dimensional and $\chi_h = e(\vec{\xi_h} \cdot)$. By pigeonholing in a progression and applying the hypothesis, we have that for some integer $T \le M(\delta)$, and a proportion $\epsilon(\delta)$ of additive quadruples $(h_1, h_2, h_3, h_4)$ in $H$ that for each component $i$
$$\|T(\xi_{h_1}^i + \xi_{h_2}^i - \xi_{h_3}^i - \xi_{h_4}^i)\|_{\mathbb{R}/\mathbb{Z}} \le M(\delta)N^{-1}.$$
Applying Lemma~\ref{lem:approximate}, there exists a subset $H' \subseteq H$ of size at least $\epsilon(\delta)|H|$ such that
$$\xi_h^i = \gamma + \sum_{k = 1}^{d^*} \alpha_{i, k}\{\beta_{i, k} \cdot h\} + M(\delta)^{-1}\mathbb{Z}$$
where
\begin{itemize}
    \item $d^\ast\le m(\delta)$;
    \item $\beta_{i, k}\in(1/N')\mb{Z}$ where $N'$ is a prime between $100N$ and $200N$.
\end{itemize}
We can then take $W_{1, \mathrm{Lin}}$ to be the one-dimensional subspace generated by $\eta$, and $W_{1, *}$ the orthogonal complement, and $W_{1, \mathrm{Pet}}$ to be the zero subspace. Each of these spaces enjoy $M(\delta)$-rational bases via \cite[Lemma B.11]{Len23b} and dividing $\gamma$ by $\alpha_{i, k}$ by an integer bounded by $M(\delta)$, we have achieved the conclusion of the lemma.
\end{proof}
\subsection{Constructing a higher step nilsequence from bracket linear coefficients}
We are now ready to prove Proposition~\ref{prop:degreerankextraction}.
\begin{proof}[Proof of Proposition~\ref{prop:degreerankextraction}.]
The proof is again very similar to that of \cite[Lemma 6.3]{LSS24b}, but as we work with multidimensional (input) nilsequences, we must make some modifications. Unfortunately, the nature of the modifications we make are such that they require extensive justification; hence, our proof is still rather lengthy.

We begin with the following lemma which is analogous to \cite[Lemma 10.2]{LSS24b}.
\begin{lemma}\label{lem:Taylor-mod}
Let $G/\Gamma$ be a nilmanifold with filtration degree $k$ and consider the adapted Malcev basis $\mathcal{X} = \{X_1, \dots, X_m\}$. Let $g: \mathbb{Z}^D \to G$ be a polynomial sequence. Then $g$ admits a representation of the form
$$g(\vec{n}) = \prod_{i = 0}^k \prod_{x_1 + \cdots + x_D = i} \prod_{j = m - \mathrm{dim}(G_i) + 1}^m \exp(X_j)^{\alpha_{x, j} \cdot \frac{n^x}{x!}}$$
(with the product along $x_1 + \cdots + x_D = i$ taken in any order with the coefficients $\alpha_{x, j}$ depending on the order in which we take the product).
\end{lemma}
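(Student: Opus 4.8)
The plan is to reduce the claim to the standard Taylor-expansion theory for polynomial sequences on filtered groups (as developed in \cite[Appendix~B]{GTZ12} and used repeatedly above, e.g.\ in Lemma~\ref{lem:taylorexansion}). The key observation is that a $\mathbb{Z}^D$-polynomial sequence into a group $G$ with a degree-$k$ filtration $(G_i)_{i\ge 0}$ is, by Definition~\ref{def:I-polynomial} together with the multidegree filtration machinery of Definitions~\ref{def:filt-prec} and~\ref{def:Z-filtration}, automatically polynomial with respect to the induced multidegree filtration $G_{(x_1,\dots,x_D)} = G_{x_1+\cdots+x_D}$ on the target and the domain multidegree filtration on $\mathbb{Z}^D$. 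Indeed this is precisely the content of the reduction used in the proof of Lemma~\ref{lem:splitting}: a $\mathbb{Z}^D$-input sequence of degree $k$ is multidegree $J = \{(x_1,\dots,x_D) : x_1 + \cdots + x_D \le k\}$. So the real work is just to invoke the multidimensional analogue of the "Taylor/binomial expansion" of a polynomial sequence.

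First I would write down the multidimensional binomial expansion: by \cite[Lemma~B.9]{GTZ12} (the Taylor expansion of polynomial sequences, applied with the multidegree filtration just described), $g$ can be written as
\[
g(\vec{n}) = \prod_{0 \le |x| \le k} g_x^{\binom{\vec{n}}{x}},
\]
where $|x| = x_1 + \cdots + x_D$, the binomial coefficient is the product $\binom{\vec{n}}{x} = \prod_{\ell=1}^D \binom{n_\ell}{x_\ell}$, each $g_x$ lies in $G_{|x|}$ (the filtration subgroup indexed by the total degree), and the product may be taken in any prescribed order at the cost of altering the individual group elements $g_x$ but not which filtration subgroup they inhabit --- exactly as in the proofs of Lemmas~\ref{lem:taylorexansion} and~\ref{lem:taylorhom} above. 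Next, since $\binom{\vec{n}}{x}$ and $\frac{\vec{n}^x}{x!} = \prod_\ell \frac{n_\ell^{x_\ell}}{x_\ell!}$ differ by a polynomial in $\vec n$ of strictly smaller total degree in each relevant sense, one can re-expand: replacing $\binom{\vec n}{x}$ by $\frac{\vec n^x}{x!}$ introduces correction terms supported on monomials $x'$ with $|x'| < |x|$ (or more precisely $x' \prec x$), which can be absorbed into the $g_{x'}$ for smaller $x'$ by repeated application of Baker--Campbell--Hausdorff, again using that within the relevant quotients the top-degree piece behaves abelianly. This is the same bookkeeping already carried out in Lemma~\ref{lem:taylorhom}.

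Finally I would coordinatize each $g_x \in G_{|x|}$ in the adapted Mal'cev basis. Because the basis $\mathcal{X} = \{X_1,\dots,X_m\}$ is adapted to the filtration with the degree-$k$ nesting property, the subgroup $G_i$ is spanned (as a manifold) by $\exp(X_{m-\dim(G_i)+1}),\dots,\exp(X_m)$, so every element of $G_{|x|}$ can be written as an ordered product $\prod_{j=m-\dim(G_{|x|})+1}^m \exp(X_j)^{\alpha_{x,j}}$ for suitable real exponents $\alpha_{x,j}$; substituting this into the expansion above and interleaving the two levels of product (outer over $i=|x|$ and over $x$ with $|x|=i$, inner over the basis index $j$) yields exactly the displayed formula, with the order-dependence of the $\alpha_{x,j}$ inherited from the order-dependence already flagged for the $g_x$. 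The only mildly delicate point --- and the step I would expect to need the most care --- is verifying that passing from $\binom{\vec n}{x}$ to $\frac{\vec n^x}{x!}$ and reordering the product genuinely only perturbs coefficients of lower-order terms and never forces a term outside the claimed range $0 \le |x| \le k$; this follows because all commutators of the $g_x$ land in filtration subgroups of strictly larger total degree, hence vanish once the total degree exceeds $k$, so the induction on degree closes cleanly.
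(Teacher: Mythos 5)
Your plan is correct in outline but takes a different route from the paper's, and one step in your sketch is glossed over in a way that would need to be spelled out. You start from the GTZ binomial Taylor expansion $g(\vec{n}) = \prod_x g_x^{\binom{\vec n}{x}}$ and then convert to the monomial basis $\frac{\vec n^x}{x!}$, whereas the paper begins directly in the Lie algebra, writing $g(\vec n) = \exp\bigl(\sum_x g_x \cdot \frac{n^x}{x!}\bigr)$ with $g_x \in \log(G_{|x|})$ --- this already places $g$ in the monomial basis, so no binomial-to-monomial conversion is needed. The paper then peels off one filtration degree at a time: writing $g_{x,\ell} = \sum_j \alpha_{x,j}X_j$, setting $g_{\ell+1}(n) = \bigl(\prod_{|x|=\ell}\prod_j \exp(X_j)^{\alpha_{x,j}\frac{n^x}{x!}}\bigr)^{-1}g_\ell(n)$, and using Baker--Campbell--Hausdorff together with the nesting property of $\mathcal{X}$ to verify that $g_{\ell+1}$ has no Taylor coefficients of degree $\le\ell$. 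Both routes rest on the same BCH degree-by-degree absorption, but the paper's is more economical because it never leaves the monomial basis and does only one peel-off.

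The step you should not gloss over: in your "coordinatize and interleave" paragraph you pass from $h_x^{n^x/x!}$ with $h_x = \prod_j\exp(X_j)^{\alpha_{x,j}}$ to $\prod_j\exp(X_j)^{\alpha_{x,j}n^x/x!}$. This distribution is false in a non-abelian group; the two sides differ by a BCH correction whose Lie-algebra part lies in $\log(G_{>|x|})$ and whose $n$-dependence is through monomials of total degree $>|x|$. That discrepancy is absorbable by exactly the same downward-in-filtration, upward-in-degree induction you invoke for the binomial-to-monomial conversion, but it is a second, independent BCH-bookkeeping phase in your argument and needs to be stated. The paper's exponential-form peel-off handles both phases in one iteration, which is the main reason its proof is shorter.
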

\begin{proof}
By Baker-Campbell Hausdorff and Taylor expansion, we may write
$$g(n) = \exp\left(\sum_{i = 0}^k \sum_{x_1+ \cdots + x_D = i} g_{x} \cdot \frac{n^x}{x!}\right)$$
with $g_x \in \log(G_{|x|})$. We now take $g_0(n) := g(n)$, $g_{0, j} := g_j$, and iteratively define a sequence $g_\ell(n)$ by the following process: for $|x| = \ell$, write $g_{x, \ell} = \sum_{j = m - \mathrm{dim}(G_\ell) + 1} \alpha_{x, j}X_j$. Then let (for the outer product in any order)
$$g_{\ell + 1}(n) := \left(\prod_{x_1 + \cdots + x_D = \ell} \prod_{j = m - \mathrm{dim}(G_\ell) + 1}^m  \exp(X_j)^{\alpha_{x, j} \cdot \frac{n^x}{x!}}\right)^{-1}g_\ell(n).$$
By Baker-Campbell-Hausdorff, we may write for some $g_{x, \ell + 1}$
$$g_{\ell + 1}(n) = \exp\left(\sum_{i = \ell + 1}^k \sum_{x_1 + \cdots + x_D = i} g_{x, \ell + 1} \cdot \frac{n^x}{x!}\right)$$
from which we complete the inductive definition of $g_\ell$. Note that $\alpha_{x, j}$ exists and is well-defined since $\mathcal{X}$ is a nesting basis. In addition, $g_{x, \ell + 1}$ exists and is well-defined since inductively and the nesting property we see that $g_{\ell + 1}$ has no Taylor coefficients of degree $0$ to $\ell$. This completes the proof.
\end{proof}
We will now indicate modifications one must make for the proof of Proposition~\ref{prop:degreerankextraction}. Let $H^*$ denote the set of $h$'s from the linearization step. Again, we deal with multidimensional input nilcharacters rather than single dimensional input nilcharacters. We must take care that the Taylor expansions we work with are multidimensional. Thus, our Taylor coefficients are slightly different. Instead of \cite[Page 52 bullet point 8]{LSS24b}, we work with
 \begin{align*}
    \on{Taylor}_{\vec{i}}(g_h) &= \prod_{j=1}^{\dim(W_{i,\ast})}\exp(Z_{|\vec{i}|,j}^\ast)^{z_{x,j}^\ast}\cdot\prod_{j=1}^{\dim(W_{|\vec{i}|,\mr{Pet}})} \exp(Z_{|\vec{i}|,j}^{\mr{Pet}})^{z_{\vec{i},j}^{h,\mr{Pet}}}\\
    &\qquad \qquad\cdot\prod_{j=1}^{\dim(W_{|\vec{i}|,\mr{Lin}})} \exp(Z_{|\vec{i}|,j}^{\mr{Lin}})^{z_{\vec{i},j}^{h,\mr{Lin}}} \imod G_{(|\vec{i}|,2)}
    \end{align*}
    where 
    \[z_{\vec{i},j}^{h,\mr{Lin}} = \gamma_{\vec{i},j} + \sum_{k=1}^{d^\ast}\alpha_{\vec{i},j,k}\{\beta_k \cdot h\}\]
    where $d^\ast\le m(\delta)$ and $\beta_k\in(1/N')\mb{Z}^{D}$ where $N'$ is a prime in $[100N,200N]$.

    Also, we work with the following expansions of $g_h^*$, $g_h^\mathrm{Lin}$, and $g_h^{\mathrm{Pet}}$. 
\begin{align*}
g_h^\ast &= \prod_{i \in [d - 1]} \prod_{x_1 + \cdots + x_D =  i}\prod_{j=1}^{\dim(W_{i,\ast})}\exp(z_{x,j}^\ast)^{z_{x,j}^\ast\cdot\frac{n^x}{x!}} \cdot\prod_{i \in [d - 1]}\prod_{x_1 + \cdots  + x_{D} = i} \prod_{j=1}^{\dim(W_{i,\mr{Lin}})}\exp(Z_{i,j}^{\mr{Lin}})^{\gamma_{x,j}\cdot\frac{n^x}{x!}},\\
g_h^{\mr{Lin}} &= \prod_{i \in [d - 1]} \prod_{x_1 + \cdots + x_D =  i}\prod_{j=1}^{\dim(W_{i,\mr{Lin}})}\exp(Z_{i,j}^{\mr{Lin}})^{(z_{x,j}^{h,\mr{Lin}}-\gamma_{x,j})\cdot\frac{n^x}{x!}},\\
g_h^{\mr{Pet}} &= \prod_{i \in [d - 1]} \prod_{x_1 + \cdots + x_D =  i}\prod_{j=1}^{\dim(W_{i,\mr{Pet}})}\exp(Z_{i,j}^{\mr{Pet}})^{z_{x,j}^{h,\mr{Pet}}\cdot\frac{n^x}{x!}}
\end{align*}
We similarly define $g_h^{\mathrm{Rem}}$ via $g_h = g_h^* \cdot g_h^{\mathrm{Lin}} \cdot g_h^{\mathrm{Pet}} \cdot g_h^{\mathrm{Rem}}$. Thus, by Lemma~\ref{lem:Taylor-mod}, we have
\[g_h^{\mr{Rem}} = \prod_{i \in [d - 1]} \prod_{x_1 + \cdots + x_D =  i}\prod_{j=\dim(G)-\dim(G_{i,1})+1}^{\dim(G)}\exp(X_j)^{\kappa_{x,j}^{h}\cdot\frac{n^x}{x!}}.\]
We now lift to the universal nilmanifold. We require the following definition, which is \cite[Defintion 10.3]{LSS24b}.
\begin{definition}\label{def:universal-1}
The \textbf{universal nilmanifold of degree-rank $(s-1,r^\ast)$} and the associated discrete cocompact subgroup are defined as follows. We write $G_{\mr{Univ}} = G_{\mr{Univ}}^{\vec{D}}$ where $\vec{D} = \vec{D}^{\ast}+\vec{D}^{\mr{Lin}}+\vec{D}^{\mr{Pet}}$ with $\vec{D}^{\ast},\vec{D}^{\mr{Lin}},\vec{D}^{\mr{Pet}}\in (\mb{Z}_{\ge 0})^{s-1}$. We specify $G_{\mr{Univ}}^{\vec{D}}$ by formal generators of the Lie algebra
$e_{i,j}$ for $1\le i\le s-1$ and $1\le j\le D_i$ where $D_i = D_i^\ast + D_i^{\mr{Lin}} + D_i^{\mr{Pet}}$ with the relations:
\begin{itemize}
    \item Any $(r-1)$-fold commutator of $e_{i_1,j_1},\ldots, e_{i_r,j_r}$ with $i_1 + \cdots + i_r > (s-1)$ vanishes;
    \item Any $(r-1)$-fold commutator of $e_{i_1,j_1},\ldots, e_{i_r,j_r}$ with $i_1 + \cdots + i_r = (s-1)$ and $r>r^\ast$ vanishes.
\end{itemize}
The associated discrete group which we will be concerned with is $\Gamma_{\mr{Univ}}$ which is the discrete group generated by $\exp(e_{i,j})$ for $1\le i\le s-1$ and $1\le j\le D_i$.
\end{definition}
Also, when we lift to the universal nilmanifold, we will define
\begin{align*}
g_h^{\ast,\mr{Univ}}(n) &= \prod_{i \in [d - 1]} \prod_{x_1 + \cdots + x_D =  i} \prod_{j=1}^{\dim(W_{i,\ast})} \exp(e_{i,j})^{z_{x,j}^\ast\cdot\frac{n^x}{x!}}  \prod_{i \in [d - 1]} \prod_{\alpha_1 + \cdots+ \alpha_D = i} \prod_{j=1}^{\dim(W_{i,\mr{Lin}})} \exp(e_{i,j +  \dim(W_{i,\ast})})^{\gamma_{x,j}\cdot\frac{n^{\alpha}}{x!}},\\
g_h^{\mr{Lin},\mr{Univ}}(n)  &= \prod_{i \in [d - 1]} \prod_{x_1 + \cdots + x_D =  i} \prod_{j=1}^{\dim(W_{i,\mr{Lin}})} \prod_{k=1}^{d^\ast}\exp(e_{i,D_i^\ast + (j-1)d^\ast+k})^{\alpha_{x,j,k}\{\beta_k \cdot h\}\cdot\frac{n^x}{x!}},\\
g_h^{\mr{Pet},\mr{Univ}}(n) &=\prod_{i \in [d - 1]} \prod_{j=1}^{\dim(W_{i,\mr{Pet}})} \exp(e_{i,j + D_i^\ast + D_i^{\mr{Lin}}})^{z_{x,j}^{h,\mr{Pet}}\cdot\frac{n^x}{x!}},\\
g_h^{\mr{Rem},\mr{Univ}}(n)  &= \prod_{i \in [d - 1]} \prod_{x_1 + \cdots + x_D =  i}\prod_{j=1}^{\dim(G_{(i,2)})}\exp(e_{i,j + D_i - \dim(G_{(i,2)})})^{\kappa_{i,j}^h\cdot\frac{n^x}{x!}}
\end{align*}
and 
\[g_h^{\mr{Univ}} := g_h^{\ast,\mr{Univ}} \cdot g_h^{\mr{Lin},\mr{Univ}} \cdot g_h^{\mr{Pet},\mr{Univ}} \cdot g_h^{\mr{Rem},\mr{Univ}}.\]
We define a homomorphism $\phi\colon G_{\mr{Univ}}\to G$ by defining the map on generators. Define
\begin{align*}
\phi(\exp(e_{i,j})) = 
\begin{cases}
\exp(z_{x,j}^\ast) &\text{ if } 1\le j\le\dim(W_{i,\ast}),\\
\exp(Z_{i,j - \dim(W_{i,\ast})}^{\mr{Lin}}) &\text{ if } \dim(W_{i,\ast}) + 1\le j\le\dim(W_{i,\ast}) + \dim(W_{i,\mr{Lin}})=D_i^\ast,\\
\exp(Z_{i,\ell}^{\mr{Lin}}) &\text{ if } 1 + (\ell-1)d^\ast\le j - D_i^\ast\le\ell d^\ast\text{ for }1\le\ell\le\dim(W_{i,\mr{Lin}}),\\
\exp(Z_{i,j - D_i^\ast-D_i^{\mr{Lin}}}^{\mr{Pet}}) &\text{ if } D_i^{\ast}+D_i^{\mr{Lin}} + 1\le j\le D_i^{\ast}+D_i^{\mr{Lin}} + \dim(W_{i,\mr{Pet}}),\\
\exp(X_{j - D_i + \dim(G)}) &\text{ if } D_i - \dim(G_{(i,2)}) + 1\le j\le D_i.
\end{cases}
\end{align*}
We now define
\[\wt{F}(g\Gamma_{\mr{Univ}}) := F(\phi(g)\Gamma).\]
Similar to \cite[Claim 10.6]{LSS24b}, we have the following. (Note that $\exp(e_{i, j})$ often does not get sent to $Z_{x, j}$.)
\begin{claim}\label{clm:univ-lift}
Given the above setup, we have 
\[\wt{F}(g_h^{\mr{Univ}}(n)\Gamma_{\mr{Univ}}) = F(g_h(n)\Gamma) = \chi_h(n).\]
\end{claim}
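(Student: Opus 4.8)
The plan is to reduce Claim~\ref{clm:univ-lift} to a purely algebraic identity in $G$ and then verify it block by block. Since $\wt F$ is defined on cosets by $\wt F(g\Gamma_{\mr{Univ}}) = F(\phi(g)\Gamma)$, it suffices to show that $\phi(g_h^{\mr{Univ}}(n))$ lies in the $\Gamma$-coset of $g_h(n)$; in fact I would prove the stronger identity $\phi(g_h^{\mr{Univ}}(n)) = g_h(n)$ in $G$. Before that I would record the two standard well-definedness facts, treated exactly as in \cite[Section 10]{LSS24b}: that $\phi$ is a genuine homomorphism $G_{\mr{Univ}}\to G$ (its images of generators lie in $G_{(i,1)}$ modulo $G_{(i,2)}$, so the defining commutator relations of Definition~\ref{def:universal-1} hold in $G$ by its degree-rank filtration together with the commutator vanishing established in the linearization lemma), and that $\phi(\Gamma_{\mr{Univ}})\subseteq\Gamma$ (each $\phi(\exp(e_{i,j}))$ is $M(\delta)$-rational), which is what makes $\wt F$ well-defined on $G_{\mr{Univ}}/\Gamma_{\mr{Univ}}$.

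Next, since $\phi$ is a homomorphism and both $g_h^{\mr{Univ}}$ and $g_h$ are defined as products of four analogous blocks ($\ast$, $\mr{Lin}$, $\mr{Pet}$, $\mr{Rem}$), I would reduce to checking that $\phi$ carries each universal block onto the corresponding block of $g_h$. Each block is an ordered product --- over $i$, over $x$ with $|x| = i$, and over an index $j$ --- of elementary factors $\exp(e_{i,j'})^{c\cdot n^x/x!}$, and since $\phi(\exp(e_{i,j'})^{t}) = \phi(\exp(e_{i,j'}))^{t}$ and $\phi$ preserves the order of a product, it is enough to match these factors one at a time. For the $\ast$, $\mr{Pet}$, and $\mr{Rem}$ blocks this is immediate from the case analysis defining $\phi$ on generators: the relevant $e_{i,j}$ map respectively to $\exp(Z_{i,j}^\ast)$, $\exp(Z_{i,j}^{\mr{Pet}})$, and the appropriate Mal'cev basis exponentials, carrying exactly the exponents $z_{x,j}^\ast$, $z_{x,j}^{h,\mr{Pet}}$, $\kappa_{x,j}^h$ that appear in $g_h^\ast$, $g_h^{\mr{Pet}}$, $g_h^{\mr{Rem}}$.

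The one block needing a short computation is $\mr{Lin}$: there the universal sequence spreads each coefficient $z_{x,j}^{h,\mr{Lin}}-\gamma_{x,j}$ across $d^\ast$ distinct generators $e_{i,D_i^\ast+(j-1)d^\ast+k}$ (raised to $\alpha_{x,j,k}\{\beta_k\cdot h\}\cdot n^x/x!$), all of which $\phi$ sends to the single element $\exp(Z_{i,j}^{\mr{Lin}})$; using that $t\mapsto\exp(Z_{i,j}^{\mr{Lin}})^t$ is a one-parameter subgroup, the product over $k$ collapses and the exponents add up to $\sum_k\alpha_{x,j,k}\{\beta_k\cdot h\} = z_{x,j}^{h,\mr{Lin}}-\gamma_{x,j}$, recovering exactly the $(i,x,j)$-factor of $g_h^{\mr{Lin}}$. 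Reassembling the blocks yields $\phi(g_h^{\mr{Univ}}(n)) = g_h(n)$, hence $\wt F(g_h^{\mr{Univ}}(n)\Gamma_{\mr{Univ}}) = F(g_h(n)\Gamma) = \chi_h(n)$.

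I expect the main difficulty to be purely bookkeeping: confirming that the generator index blocks of sizes $\dim(W_{i,\ast})$, $\dim(W_{i,\mr{Lin}})$, $\dim(W_{i,\mr{Lin}})\cdot d^\ast$, $\dim(W_{i,\mr{Pet}})$, and $\dim(G_{(i,2)})$ inside $G_{\mr{Univ}}$ are exactly the ranges used in the definition of $\phi$ and in the four universal sequences, and that one may fix compatible orderings of the nested products on the universal side and on the $G$ side (legitimate by Lemma~\ref{lem:Taylor-mod}, which permits any fixed order, at the cost only of order-dependent coefficients that we simply propagate through $\phi$). Modulo replacing one-dimensional binomial coefficients by the multidimensional monomials $n^x/x!$, this is \cite[Claim 10.6]{LSS24b}.
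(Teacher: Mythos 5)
Your proposal is correct and follows the same route as the paper, which simply defers to \cite[Claim 10.6]{LSS24b}: show $\phi(g_h^{\mr{Univ}}(n)) = g_h(n)$ by matching the four blocks under $\phi$, with the only non-trivial step being that the $d^\ast$ generators $e_{i,D_i^\ast + (j-1)d^\ast + k}$ all map to the single element $\exp(Z_{i,j}^{\mr{Lin}})$, so the exponents $\alpha_{x,j,k}\{\beta_k\cdot h\}$ sum to $z_{x,j}^{h,\mr{Lin}} - \gamma_{x,j}$, recovering the $\mr{Lin}$ factor of $g_h$. One minor caveat in your parenthetical: $M(\delta)$-rationality of $\phi(\exp(e_{i,j}))$ does not by itself give $\phi(\Gamma_{\mr{Univ}})\subseteq\Gamma$ (a rational element need not lie in the lattice); this well-definedness point is a detail in the surrounding setup inherited from the paper rather than something needed for the claim itself, which only requires the group-element identity $\phi(g_h^{\mr{Univ}}(n))=g_h(n)$ that you verify.
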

We next define the quotient universal nilmanifold in which we lift to.
\begin{definition}\label{def:univer-2}
We define $G_{\mr{Rel}} = G_{\mr{Rel}}^{\vec{D}^{\ast},\vec{D}^{\mr{Lin}},\vec{D}^{\mr{Pet}}}$ as the Lie subgroup of $G_{\mr{Univ}}$ where $\log(G_{\mr{Rel}})$ is spanned by:
\begin{itemize}
    \item Any $(r-1)$-fold commutator of $e_{i_1,j_1},\ldots, e_{i_r,j_r}$ with at least one index $\ell$ such that $j_\ell>D^\ast_{i_\ell} + D^{\mr{Lin}}_{i_\ell}$;
    \item Any $(r-1)$-fold commutator of $e_{i_1,j_1},\ldots, e_{i_r,j_r}$ with $j_\ell>D^\ast_{i_\ell}$ for at least two distinct indices $\ell$.
\end{itemize}
We then define $G_{\mr{Quot}}$ as $G_{\mr{Quot}}:= G_{\mr{Univ}}/G_{\mr{Rel}}$ and $\Gamma_{\mr{Quot}} = \Gamma_{\mr{Univ}}/(\Gamma_{\mr{Univ}}\cap G_{\mr{Rel}})$.
\end{definition}
As in \cite[Lemma 10.10, Lemma 10.11]{LSS24b}, we have the following.
\begin{lemma}\label{lem:universal-complexity-2}
Given the above setup, let $G_{\mr{Quot}} = G_{\mr{Quot}}^{\vec{D}}$ and note that $G_{\mr{Quot}}$ has a degree-rank $(d-1,r)$ filtration given by 
\[(G_{\mr{Quot}})_{(d',r')} = (G_{\mr{Univ}})_{(d',r')}/((G_{\mr{Univ}})_{(d',r')}\cap G_{\mr{Rel}}).\]
Furthermore the dimension of $G_{\mr{Univ}}$ is bounded by $O_s(\snorm{D}_{\infty}^{O_K(1)})$ and one may find an adapted Mal'cev basis $\mc{X}_{\mr{Quot}}$ such that the complexity of $G_{\mr{Quot}}/\Gamma_{\mr{Quot}}$ is $\exp(\snorm{D}_{\infty}^{O_K(1)})$.
\end{lemma}
We will define
\[g_h^{\ast,\mr{Quot}}(n) = \prod_{i \in [d - 1]} \prod_{x_1 + \cdots + x_D =  i} \prod_{j=1}^{\dim(W_{i,\ast})} \exp(\wt{e}_{i,j})^{z_{x,j}^\ast\cdot\frac{n^x}{x!}}  \prod_{i \in [d - 1]} \prod_{x_1 + \cdots+ x_D = i} \prod_{j=1}^{\dim(W_{i,\mr{Lin}})} \exp(\wt{e}_{i,j +  \dim(W_{i,\ast})})^{\gamma_{x,j}\cdot\frac{n^{x}}{x!}}\]
\[g_h^{\mr{Lin},\mr{Quot}}(n)  = \prod_{i \in [d - 1]} \prod_{x_1 + \cdots + x_D =  i} \prod_{j=1}^{\dim(W_{i,\mr{Lin}})} \prod_{k=1}^{d^\ast}\exp(\wt{e}_{i,D_i^\ast + (j-1)d^\ast+k})^{\alpha_{x,j,k}\{\beta_k \cdot h\}\cdot\frac{n^x}{x!}}\]
Note that
\[g_h^{\ast,\mr{Quot}} = g_h^{\ast,\mr{Univ}}\imod G_{\mr{Rel}},\qquad g_h^{\mr{Lin},\mr{Quot}} = g_h^{\mr{Lin},\mr{Univ}}\imod G_{\mr{Rel}}.\]
We will define
\[g_h^{\mathrm{Pet}, \mathrm{Quot}} = g_h^{\mathrm{Rem}, \mathrm{Quot}} = \mathrm{Id}, g_h^{\mathrm{Quot}} = g_h^{\ast, \mathrm{Quot}} \cdot g_h^{\mr{Lin}, \mr{Quot}}.\]
Given this setup, we have the following which is exactly \cite[Lemma 10.12]{LSS24b}.
\begin{lemma}\label{lem:reduct-quot}
Given the above setup, let 
\[G_{\mr{Univ}}^{\triangle} :=\{(g,g \imod G_{\mr{Rel}})\in G_{\mr{Univ}}\times G_{\mr{Quot}}\colon g\in G_{\mr{Univ}}\}\]
which is given the degree-rank filtration
\[(G_{\mr{Univ}}^{\triangle})_{(d,r)} :=\{(g,g \imod G_{\mr{Rel}})\in (G_{\mr{Univ}})_{(d,r)}\times (G_{\mr{Quot}})_{(d,r)}\colon g\in (G_{\mr{Univ}})_{(d,r)}\}.\]
Define 
$\Gamma_{\mr{Univ}}^{\triangle} = G_{\mr{Univ}}^{\triangle} \cap (\Gamma_{\mr{Univ}}\times\Gamma_{\mr{Quot}})$. We have:
\begin{itemize}
    \item $(g_h^{\mr{Univ}},g_h^{\mr{Quot}})$ is a polynomial sequence on $G_{\mr{Univ}}^{\triangle}$ with respect to the given degree-rank filtration;
    \item The function
    \[(g,g')\mapsto\wt{F}(g\Gamma_{\mr{Univ}}) \otimes \ol{F^\ast}(g'\Gamma_{\mr{Quot}})\]
    for $(g,g')\in G_{\mr{Univ}}^{\triangle}$ is $(G_{\mr{Univ}}^{\triangle})_{(d-1,r)}$-invariant;
    \item $G_{\mr{Univ}}^{\triangle}$ has complexity bounded by $M(\delta)$;
    \item Each coordinate of $ \wt{F}(g\Gamma_{\mr{Univ}}) \otimes \ol{F^\ast}(g'\Gamma_{\mr{Quot}})$ is $M(\delta)$-Lipschitz.
\end{itemize}
\end{lemma}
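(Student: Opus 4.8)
The plan is to follow the proof of \cite[Lemma 10.12]{LSS24b} essentially verbatim, the only substantive change being that we carry the multidimensional Taylor data organized in Lemma~\ref{lem:Taylor-mod} (and Lemma~\ref{lem:taylorexansion}, Lemma~\ref{lem:taylorhom}) through the argument in place of the one-dimensional expansions used there. For the first assertion, observe that $g_h^{\mr{Pet},\mr{Univ}}$ is built from the generators $\exp(e_{i,j+D_i^\ast+D_i^{\mr{Lin}}})$ and $g_h^{\mr{Rem},\mr{Univ}}$ from $\exp(e_{i,j+D_i-\dim(G_{(i,2)})})$, all of which have second index exceeding $D_i^\ast+D_i^{\mr{Lin}}$, hence lie in $G_{\mr{Rel}}$ by the first clause of Definition~\ref{def:univer-2}; since $G_{\mr{Rel}}$ is normal this gives $g_h^{\mr{Univ}}\equiv g_h^{\ast,\mr{Univ}}\,g_h^{\mr{Lin},\mr{Univ}}\equiv g_h^{\ast,\mr{Quot}}\,g_h^{\mr{Lin},\mr{Quot}} = g_h^{\mr{Quot}} \imod{G_{\mr{Rel}}}$, so $(g_h^{\mr{Univ}},g_h^{\mr{Quot}})$ does take values in $G_{\mr{Univ}}^{\triangle}$. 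Each factor $\exp(e_{i,\cdot})^{c\cdot n^x/x!}$ with $|x|=i$ is a polynomial sequence of degree-rank $(i,1)$ on $G_{\mr{Univ}}$ (Lemma~\ref{lem:taylorhom}), so $g_h^{\mr{Univ}}$ is polynomial on $G_{\mr{Univ}}$; since reduction modulo $G_{\mr{Rel}}$ is a filtered homomorphism and taking a homomorphic image commutes with the discrete derivatives of Definition~\ref{def:I-polynomial}, $(g_h^{\mr{Univ}},g_h^{\mr{Quot}})$ is polynomial with respect to the filtration $(G_{\mr{Univ}}^{\triangle})_{(d',r')}$ in the statement.

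The second assertion is the crux. Write $\wt\eta := \eta\circ\phi$ for the vertical frequency of $\wt F = F\circ\phi$ on the vertical part $(G_{\mr{Univ}})_{(d-1,r)}$ (recall $F$ has $G_{(d,r)}$-vertical frequency $\eta$ and $\phi$ is filtered). The key claim is that $\wt\eta$ annihilates $G_{\mr{Rel}}\cap (G_{\mr{Univ}})_{(d-1,r)}$, and hence descends to a homomorphism $\eta^\ast$ on $(G_{\mr{Quot}})_{(d-1,r)}$ with $\eta^\ast(g\imod{G_{\mr{Rel}}}) = \wt\eta(g)$; one then takes $F^\ast$ to be a coordinate of a nilcharacter on $G_{\mr{Quot}}/\Gamma_{\mr{Quot}}$ with vertical frequency $\eta^\ast$, produced via \cite[Lemma B.4]{LSS24b}. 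Granting the claim, for $g\in(G_{\mr{Univ}})_{(d-1,r)}$ and arbitrary $g_0$ we get
\[\wt F(gg_0\Gamma_{\mr{Univ}})\otimes\ol{F^\ast}(\ol g\,\ol{g_0}\Gamma_{\mr{Quot}}) = e\bigl(\wt\eta(g)-\eta^\ast(\ol g)\bigr)\,\wt F(g_0\Gamma_{\mr{Univ}})\otimes\ol{F^\ast}(\ol{g_0}\Gamma_{\mr{Quot}}) = \wt F(g_0\Gamma_{\mr{Univ}})\otimes\ol{F^\ast}(\ol{g_0}\Gamma_{\mr{Quot}}),\]
which is the asserted $(G_{\mr{Univ}}^{\triangle})_{(d-1,r)}$-invariance. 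To prove the claim, Definition~\ref{def:univer-2} tells us $G_{\mr{Rel}}\cap (G_{\mr{Univ}})_{(d-1,r)}$ is spanned by $(r-1)$-fold commutators of $e_{i_1,j_1},\dots,e_{i_r,j_r}$ with $i_1+\cdots+i_r=d-1$ for which either some $j_\ell > D_{i_\ell}^\ast+D_{i_\ell}^{\mr{Lin}}$, or at least two indices $\ell$ have $j_\ell > D_{i_\ell}^\ast$; under $\phi$ these map to the corresponding commutators of the vectors $Z_{i_\ell,\cdot}^{\mr{Pet}}\in W_{i_\ell,\mr{Pet}}$, $Z_{i_\ell,\cdot}^{\mr{Lin}}\in W_{i_\ell,\mr{Lin}}$, $Z_{i_\ell,\cdot}^{\ast}\in W_{i_\ell,\ast}$ in $G$, and the two vanishing statements in the final bullet of the linearization lemma say exactly that $\eta$ kills such a commutator whenever it contains one $W_{\cdot,\mr{Pet}}$ factor, respectively two $W_{\cdot,\mr{Lin}}$ factors. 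Hence $\wt\eta$ descends as claimed. (This also records the definition of $F^\ast$ that makes the statement literally true.)

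For the third assertion: by the linearization lemma all of $D_i^\ast, D_i^{\mr{Lin}}, D_i^{\mr{Pet}}$ are $O(m(\delta))$, so $\snorm{\vec D}_\infty = m(\delta)$, and Lemma~\ref{lem:universal-complexity-2} gives $G_{\mr{Univ}}$ of dimension $m(\delta)$ and $G_{\mr{Quot}}/\Gamma_{\mr{Quot}}$ of complexity $M(\delta)$; the reduction $G_{\mr{Univ}}\to G_{\mr{Quot}}$ is a rational homomorphism of height $M(\delta)$, so its graph $G_{\mr{Univ}}^{\triangle}\leqslant G_{\mr{Univ}}\times G_{\mr{Quot}}$ is an $M(\delta)$-rational subgroup and admits an adapted Mal'cev basis of complexity $M(\delta)$ by the standard subgroup/graph constructions (e.g. \cite[Appendix B]{LSS24b}, \cite[Lemma B.11]{Len23b}). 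For the fourth assertion, $\wt F = F\circ\phi$ is $M(\delta)$-Lipschitz since $F$ is $M(\delta)$-Lipschitz by hypothesis and $\phi$ sends Mal'cev generators to elements of height $M(\delta)$, while $F^\ast$ can be taken $M(\delta)$-Lipschitz in the construction via \cite[Lemma B.4]{LSS24b}; so each coordinate of $\wt F\otimes\ol{F^\ast}$ is $M(\delta)$-Lipschitz. The only genuine difficulty I anticipate is the descent of $\wt\eta$ through $G_{\mr{Rel}}$ in the second assertion, where one must line up the two families of generators in Definition~\ref{def:univer-2} with the two vanishing statements of the linearization lemma while keeping the degree-rank bookkeeping consistent; the remaining assertions are bounded-complexity bookkeeping identical in form to \cite[Lemma 10.12]{LSS24b}.
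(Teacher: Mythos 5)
The paper itself does not prove this lemma; it states it as ``exactly \cite[Lemma 10.12]{LSS24b}'' and moves on, so there is no internal proof to compare against beyond the implicit intent of ``follow that proof with multidimensional bookkeeping.'' Your reconstruction matches that intent, and the structure (values in the graph subgroup via the $\mr{Pet}$ and $\mr{Rem}$ parts dying in $G_{\mr{Rel}}$, the vertical frequency $\wt\eta=\eta\circ\phi$ descending to $G_{\mr{Quot}}$ via the two vanishing statements, then tensoring vertical characters) is correct and is the right argument.

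One small but genuine omission in the crux step: the first clause of Definition~\ref{def:univer-2} requires some $j_\ell>D^\ast_{i_\ell}+D^{\mr{Lin}}_{i_\ell}$, and under $\phi$ such a generator lands either in $W_{i_\ell,\mr{Pet}}$ \emph{or} in $\log(G_{(i_\ell,2)})$ (the last case of the $\phi$ definition, corresponding to the $\mr{Rem}$ block). You only invoke the $W_{\cdot,\mr{Pet}}$ vanishing statement. For the $G_{(\cdot,2)}$-image generators the linearization lemma is silent, but the vanishing is in fact automatic: bracketing an element of $G_{(i_\ell,2)}$ against the other factors (of total degree $d-1$) lands the iterated commutator in $G_{(d-1,r+1)}$, which is trivial by the degree-rank $(d-1,r)$ hypothesis, so $\eta$ kills it trivially. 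You should add this sentence; otherwise the claim that $\wt\eta$ annihilates all of $G_{\mr{Rel}}\cap(G_{\mr{Univ}})_{(d-1,r)}$ is not fully justified. With that patched, the proof is complete.
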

The point, of course, is the second bullet point, which states that the $F(g_h(n)\Gamma)$ and $\wt{F}(g_h^{\mathrm{Univ}}\Gamma)$ differ by a nilsequence of strictly lower degree-rank via viewing $F \otimes \overline{\wt{F}}$ on a quotient nilmanifold of the product. 

We will now actually construct the additional nilmanifold which incorporates the bracket linear structure in the nilmanifold formalism. Via reindexing, we may write
\[g_h^{\mr{Quot}}(n) = \prod_{i \in [d - 1]} \prod_{x_1 + \cdots+ x_D = i} \prod_{j=1}^{\dim(W_{i,\mr{Lin}})} \exp(\wt{e}_{i,j +  \dim(W_{i,\ast})})^{\gamma_{x,j}\cdot\frac{n^{x}}{x!}}\]
\[\prod_{i \in [d - 1]} \prod_{x_1 + \cdots + x_D =  i} \prod_{j=1}^{\dim(W_{i,\mr{Lin}})} \exp(\wt{e}_{i,j})^{\alpha_{x,j}\{\beta_{x, j} \cdot h\}\cdot\frac{n^x}{x!}}.\]
\textbf{The next part of the proof will contain the most significant changes from \cite{LSS24b}.} Since $\{\beta_{x, j} \cdot h\}$ are multidimensional in $x$, we will need to lift to a larger universal nilmanifold. Thankfully, since our undesirable $h$-dependent terms $g_h^{\mathrm{Pet}}$ and $g_h^{\mathrm{Rem}}$ are eliminated from the argument above, this makes it so that we do not need to run a multidegree-rank induction argument. We now define the multidegree universal nilmanifold with associated dimensional data. We now consider $G_{\mathrm{Multi-Univ}}/\Gamma_{\mathrm{Multi-Univ}}$ to be the same as $G_{\mathrm{Univ}}$ except for each $x$, we have generators $e_{x, j}$ with $e_{x, j}$ satisfying the same commutator relations with $e_{|x|, j}$. Similarly, we define $G_{\mathrm{MultiRel}}/\Gamma_{\mathrm{MultiRel}}$ as the same as $G_{\mathrm{Rel}}/\Gamma_{\mathrm{Rel}}$ except that we take generators $e_{x, j}$ instead of $e_{i, j}$ and relations with $e_{x, j}$ satisfying the same commutator relations with $\wt{e}_{|x|, j}$. Take $G_{\mathrm{MultiQuot}}$ as $G_{\mathrm{Multi-Univ}}/G_{\mathrm{MultiRel}}$ and specify $\wt{e}_{x, j}$ as projection of $e_{x, j}$. We define $\Gamma_{\mathrm{MultiQuot}}$ similarly.

We now lift $F^*$ to $F_{\mathrm{MultiQuot}}^*$ on $G_{\mathrm{MultiQuot}}/\Gamma_{\mathrm{MultiQuot}}$ by taking each generator $\wt{e}_{x, j}$ to $\wt{e}_{|x|, j}$. Hence, defining
\[g_h^{\mr{MultiQuot}}(n) = \prod_{i \in [d - 1]} \prod_{x_1 + \cdots+ x_D = i} \prod_{j=1}^{\dim(W_{i,\mr{Lin}})} \exp(\wt{e}_{x,j +  \dim(W_{i,\ast})})^{\gamma_{x,j}\cdot\frac{n^{x}}{x!}}\]
\[\prod_{i \in [d - 1]} \prod_{x_1 + \cdots + x_D =  i} \prod_{j=1}^{\dim(W_{i,\mr{Lin}})} \exp(\wt{e}_{x,j})^{\alpha_{x,j}\{\beta_{x, j} \cdot h\}\cdot\frac{n^x}{x!}},\]
we see that $F_{\mathrm{MultiQuot}}^*(g_h^{\mr{MultiQuot}}(n)\Gamma_{\mr{MultiQuot}})$ is a well-defined nilsequence on $G_{\mathrm{MultiQuot}}/\Gamma_{\mathrm{MultiQuot}}$.

We now define $G_{\mr{Lin}}$ to be the Lie subgroup of $G_{\mr{MultiQuot}}$ such that $\log(G_{\mr{Lin}})$ is the subspace generated by all $(r-1)$-fold iterated commutators (with $r\ge 1$) of $\wt{e}_{x_1,j_1},\ldots,\wt{e}_{x_r,j_r}$ with $j_\ell>D_{|x|_\ell}^\ast$ for exactly one index $\ell$. We have the following observation, which is essentially \cite[Claim 11.1]{LSS24b}.

\begin{claim}\label{clm:abel}
We have that $G_{\mr{Lin}}$ is well-defined, abelian, and normal with respect to $G_{\mr{MultiQuot}}$.  
\end{claim}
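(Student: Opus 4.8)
The plan is to argue entirely on the level of Lie algebras, following the proof of \cite[Claim 11.1]{LSS24b} essentially verbatim; the only novelty is that the generator indices $x$ are now multi-indices, but since the dimensional thresholds $D^\ast_{|x|}$ (and $D^{\mr{Lin}}_{|x|},D^{\mr{Pet}}_{|x|}$) depend only on $|x|$, this changes nothing of substance. Set $\mf{g}_{\mr{Lin}} := \log(G_{\mr{Lin}}) \le \log(G_{\mr{MultiQuot}})$; by definition this is the $\mb{R}$-span of all iterated commutators of generators $\wt{e}_{x_1,j_1},\ldots,\wt{e}_{x_r,j_r}$ (in every bracketing) for which \emph{exactly one} position $\ell$ satisfies $j_\ell > D^\ast_{|x_\ell|}$ --- call such a position \emph{heavy}. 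The input I would use is the defining relations of $G_{\mr{MultiRel}}$ (the multi-index analogue of Definition~\ref{def:univer-2}): any iterated commutator of the $\wt{e}_{x,j}$ with two or more heavy positions vanishes in $G_{\mr{MultiQuot}}$.

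First I would prove abelianness. Given two spanning commutators $w,w'$ of $\mf{g}_{\mr{Lin}}$, the Jacobi identity expands $[w,w']$ inside $\log(G_{\mr{MultiQuot}})$ as an $\mb{R}$-linear combination of iterated commutators of the combined generator list of $w$ and $w'$, each such term involving all constituent generators with their multiplicities and hence carrying at least two heavy positions (one from $w$, one from $w'$). By the observation above every term vanishes in $G_{\mr{MultiQuot}}$, so $[\mf{g}_{\mr{Lin}},\mf{g}_{\mr{Lin}}] = 0$. Since an abelian subspace of the nilpotent Lie algebra $\log(G_{\mr{MultiQuot}})$ is trivially a Lie subalgebra, $G_{\mr{Lin}} = \exp(\mf{g}_{\mr{Lin}})$ is a well-defined connected, simply connected, abelian Lie subgroup; this settles ``well-defined'' and ``abelian''.

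Next I would prove normality by showing $\mf{g}_{\mr{Lin}}$ is an ideal, for which it suffices to check $[\wt{e}_{x_0,j_0},w] \in \mf{g}_{\mr{Lin}}$ for every generator $\wt{e}_{x_0,j_0}$ of $\log(G_{\mr{MultiQuot}})$ and every spanning commutator $w$ of $\mf{g}_{\mr{Lin}}$. If $j_0 \le D^\ast_{|x_0|}$, then $[\wt{e}_{x_0,j_0},w]$ is (a combination of) iterated commutators with still exactly one heavy position --- the one inherited from $w$ --- hence lies in $\mf{g}_{\mr{Lin}}$ by definition. If $j_0 > D^\ast_{|x_0|}$, then $[\wt{e}_{x_0,j_0},w]$ carries two heavy positions and so vanishes in $G_{\mr{MultiQuot}}$, hence lies in $\mf{g}_{\mr{Lin}}$ trivially. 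Either way $[\log(G_{\mr{MultiQuot}}),\mf{g}_{\mr{Lin}}] \subseteq \mf{g}_{\mr{Lin}}$, so $\mf{g}_{\mr{Lin}}$ is an ideal and $G_{\mr{Lin}}$ is normal in $G_{\mr{MultiQuot}}$.

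The hard part will essentially be bookkeeping rather than any real difficulty: one must track heavy positions carefully through repeated Jacobi expansions, and --- since heavy-ness is a property of positions in the commutator word and not of the underlying generator --- confirm that repetitions among the generators of $w,w'$ (or of $\wt{e}_{x_0,j_0}$ and those of $w$) are counted with multiplicity, so that ``two heavy positions'' genuinely holds in the degenerate cases. The only other point deserving a sentence is that the multi-index relations cutting out $G_{\mr{MultiRel}}$ must be stated so that the ``$\ge 2$ heavy positions $\Rightarrow 0$'' implication holds exactly as in the scalar case of \cite[Claim 11.1]{LSS24b}; granting that, the proof is a direct transcription.
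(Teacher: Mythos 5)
Your proof is correct, and it is essentially the argument the paper intends: the paper itself gives no proof and simply defers to \cite[Claim 11.1]{LSS24b}, and your argument is a faithful transcription of that claim's proof to the multi-index setting, with the single observation (which you make) that heaviness depends only on $|x|$, so the defining relations of $G_{\mr{MultiRel}}$ kill any iterated commutator with two or more heavy positions exactly as in the scalar case.
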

Due to normality, $G_{\mr{MultiQuot}}$ acts on $G_{\mr{Lin}}$ via conjugation. In particular, we define $G_{\mr{MultiQuot}}\rtimes G_{\mr{Lin}}$ with the group law given by 
\[(g,g_1) (g',g_1') := (gg',g_1^{g'}g_1') = (gg',((g')^{-1} g_1 g')g_1').\]
We now introduce a manner in which the additive group $R=\mb{R}^{\sum_{i=1}^{d-1}\sum_{x_1 + \cdots + x_D = i} D_x^{\mr{Lin}}}$, with elements denoted
\[t = (t_{x,j})_{x,~D_{|x|,\ast}<j\le D_{|x|} + D_{|x|}^{\mr{Lin}}},\]
acts on $G_{\mr{MultiQuot}}\ltimes G_{\mr{Lin}}$. 

For each $t\in R$, we define the homomorphism $g\mapsto g^{t}$ from $G_{\mr{MultiQuot}}$ to itself on generators. We map $\exp(\wt{e}_{x,j})\to\exp(\wt{e}_{x,j})^{t_{x,j}}$ for $1\le |x|\le d-1$ and $D_{|x|}^\ast<j\le D_{|x|}^\ast + D_i^{\mr{Lin}}$ while $\exp(\wt{e}_{x,j})$ is fixed
for $1\le |x|\le d-1$ and $1\le j\le D_{|x|}^\ast$ and we define
$\rho\colon R\to\on{Aut}(G_{\mr{MultiQuot}}\rtimes G_{\mr{Lin}})$ by
\[\rho(t)(g,g_1) := (g\cdot g_1^{t},g_1).\]
Via calculations in \cite[pg. 60]{LSS24b}, $\rho$ is well-defined. Thus, we can take the semidirect product $G_{\mathrm{Multi}} = R \rtimes_\rho (G_{\mathrm{MultiQuot}} \rtimes G_{\mathrm{Lin}})$ with respect to $\rho$. We give $G_{\mathrm{Multi}}$ a multi-degree filtration as follows. For $d_1 \in \mathbb{N}$ and $d_2 \in \mathbb{N}^D$, we define
\begin{itemize}
    \item If $x_1>1$ then $(G_{\mr{Multi}})_{(x_1,d_2)}= \mr{Id}_{G_{\mr{Multi}}}$;
    \item If $|d_2|>0$ then $(G_{\mr{Multi}})_{(1,d_2)} = \{(0,(g,\mr{id}_{G_{\mr{Lin}}}))\colon g\in (G_{\mr{MultiQuot}})_{d_2}\cap G_{\mr{Lin}}\}$;
    \item $(G_{\mr{Multi}})_{(1,0)} = \{(t,(g,\mr{id}_{G_{\mr{Lin}}}))\colon t\in R, g\in (G_{\mr{MultiQuot}})_{0}\cap G_{\mr{Lin}}\}$ or equivalently just $\{(t,(g,\mr{id}_{G_{\mr{Lin}}}))\colon t\in R, g\in G_{\mr{Lin}}\}$;
    \item If $|d_2| > 0$ then $(G_{\mr{Multi}})_{(0,d_2)} = \{(0,(g,g_1))\colon g\in (G_{\mr{Quot}})_{d_2}, g_1\in (G_{\mr{MultiQuot}})_{(d_2,0)}\cap G_{\mr{Lin}}\}$;
    \item $(G_{\mr{Multi}})_{(0,0)} = G_{\mr{Multi}}$.
\end{itemize}
\begin{claim}\label{clm:filtration}
$(G_{\mr{Multi}})_{(d_1,d_2)}$ is a valid multidegree filtration on $G_{\mr{Multi}}$.
\end{claim}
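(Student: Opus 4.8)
The plan is to verify the filtration axioms from Definition~\ref{def:arb-filtration} directly for the explicitly-defined subgroups $(G_{\mr{Multi}})_{(d_1,d_2)}$, namely: (i) that each $(G_{\mr{Multi}})_{(d_1,d_2)}$ is genuinely a subgroup, (ii) the nesting property, and (iii) the commutator property $[(G_{\mr{Multi}})_{i},(G_{\mr{Multi}})_{j}]\leqslant (G_{\mr{Multi}})_{i+j}$, together with the multidegree-filtration normalization $(G_{\mr{Multi}})_{(0,0)} = \bigvee (G_{\mr{Multi}})_{\vec e_i}$ (which is immediate here since $(G_{\mr{Multi}})_{(0,\vec 0)} = G_{\mr{Multi}}$ by fiat). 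The key structural fact driving everything is that $G_{\mr{Lin}}$ is abelian and normal in $G_{\mr{MultiQuot}}$ (Claim~\ref{clm:abel}), so that the semidirect products $G_{\mr{MultiQuot}}\rtimes G_{\mr{Lin}}$ and $R\rtimes_\rho(G_{\mr{MultiQuot}}\rtimes G_{\mr{Lin}})$ are well-formed, and the $R$-action $\rho(t)(g,g_1) = (g\cdot g_1^t, g_1)$ is well-defined (as recorded via the calculation in \cite[pg.~60]{LSS24b}).

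First I would check that each piece is a subgroup. For $(G_{\mr{Multi}})_{(0,d_2)}$ with $|d_2|>0$ and for $(G_{\mr{Multi}})_{(1,d_2)}$, this reduces to checking that $(G_{\mr{MultiQuot}})_{d_2}\cap G_{\mr{Lin}}$ and $(G_{\mr{Quot}})_{d_2}$ are subgroups closed under the semidirect-product operations — using that $G_{\mr{Lin}}$ is normal so conjugation $g_1^{g'}$ stays inside $G_{\mr{Lin}}$, and that the relevant filtration pieces of $G_{\mr{MultiQuot}}$ are themselves subgroups by Lemma~\ref{lem:universal-complexity-2} applied to the multidegree analogue. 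For $(G_{\mr{Multi}})_{(1,0)}$ and $(G_{\mr{Multi}})_{(0,0)}$ one additionally needs the $R$-factor to interact correctly, which again follows from well-definedness of $\rho$. Nesting is a routine comparison of the defining conditions: decreasing $d_1$ from $1$ to $0$ enlarges the group (one adds the full $G_{\mr{Quot}}$-fiber and the $G_{\mr{Lin}}$-fiber), and within fixed $d_1$, the multidegree ordering on $d_2$ corresponds to the nesting $(G_{\mr{MultiQuot}})_{d_2}\geqslant (G_{\mr{MultiQuot}})_{d_2'}$ for $d_2\preceq d_2'$, which is inherited from the multidegree filtration on $G_{\mr{MultiQuot}}$.

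The commutator relations are the substantive part and split by cases on the indices $(x_1,d_2)$ and $(x_1',d_2')$. The key cases: a commutator $[(G_{\mr{Multi}})_{(1,d_2)}, (G_{\mr{Multi}})_{(1,d_2')}]$ should land in $(G_{\mr{Multi}})_{(2,d_2+d_2')} = \mr{Id}$ — this holds because both groups live in the $G_{\mr{Lin}}$-fiber (with trivial $R$- and trivial semidirect-action parts), and $G_{\mr{Lin}}$ is abelian, so the bracket vanishes. A commutator $[(G_{\mr{Multi}})_{(1,0)}, (G_{\mr{Multi}})_{(0,d_2)}]$ should land in $(G_{\mr{Multi}})_{(1,d_2)}$: expanding the group law, the $R$-component of the first factor acts on the $G_{\mr{Lin}}$-component of the second via $\rho$, producing a term of the form $g_1^t$ with $g_1\in (G_{\mr{MultiQuot}})_{(d_2,0)}\cap G_{\mr{Lin}}$, which indeed lies in $(G_{\mr{Multi}})_{(1,d_2)}$ after checking the $R$-twist preserves the filtration degree $d_2$; while the $G_{\mr{MultiQuot}}$-on-$G_{\mr{Lin}}$ conjugation part yields a commutator $[(G_{\mr{MultiQuot}})_{\vec 0}, (G_{\mr{MultiQuot}})_{d_2}\cap G_{\mr{Lin}}]\leqslant (G_{\mr{MultiQuot}})_{d_2}\cap G_{\mr{Lin}}$ by normality of $G_{\mr{Lin}}$ and the filtration property of $G_{\mr{MultiQuot}}$. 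Finally $[(G_{\mr{Multi}})_{(0,d_2)}, (G_{\mr{Multi}})_{(0,d_2')}]\leqslant (G_{\mr{Multi}})_{(0,d_2+d_2')}$ reduces to the commutator property of the multidegree filtration on $G_{\mr{Quot}}$ and on $(G_{\mr{MultiQuot}})\cap G_{\mr{Lin}}$, using that $[(G_{\mr{Quot}})_{d_2}, (G_{\mr{MultiQuot}})_{(d_2',0)}\cap G_{\mr{Lin}}]$ lands appropriately by the defining relations of $G_{\mr{MultiRel}}$ (which kill all $(r-1)$-fold commutators with two "Lin-or-higher" indices). Throughout, I would lean on the fact that the analogous claim \cite[Claim 11.2]{LSS24b} has essentially the identical proof in the single-variable case, and the only genuinely new bookkeeping is that the index $x$ ranges over $\mathbb{N}^D$ rather than $\mathbb{N}$, so the commutator relations among the $\wt e_{x,j}$ are inherited through the projection $\wt e_{x,j}\mapsto \wt e_{|x|,j}$ into $G_{\mr{Quot}}$.

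\textbf{The main obstacle} I anticipate is verifying, in the mixed case $[(G_{\mr{Multi}})_{(1,0)},(G_{\mr{Multi}})_{(0,d_2)}]\leqslant (G_{\mr{Multi}})_{(1,d_2)}$, that the $\rho$-twist term $g_1^{t}$ — which mixes the $R$-coordinate with the $G_{\mr{Lin}}$-coordinate — genuinely preserves the claimed multidegree, since $\rho(t)$ is only an automorphism of the whole group and a priori need not respect the filtration; one must unwind the definition of $g\mapsto g^t$ on generators and observe that it multiplies each $\exp(\wt e_{x,j})$-exponent (for the "Lin" indices) by a scalar $t_{x,j}$, hence maps $(G_{\mr{MultiQuot}})_{d_2}\cap G_{\mr{Lin}}$ into itself, which is exactly what is needed. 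This is the one spot where the semidirect structure is doing nontrivial work; the remaining cases are bookkeeping.
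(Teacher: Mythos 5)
Your proposal is correct and takes essentially the same route as the paper: you identify the same case split (pure $G_{\mr{Lin}}$ brackets vanishing by abelianness, the $(0,\vec a)$-with-$(0,\vec b)$ case via normality and the inherited multidegree filtration, and the mixed $(1,0)$-with-$(0,\vec b)$ case as the genuinely new content), and you correctly isolate the key observation that the $\rho$-twist $g_1\mapsto g_1^t$ preserves $(G_{\mr{MultiQuot}})_{(\vec b,0)}\cap G_{\mr{Lin}}$, which is exactly the computation the paper performs at the end.
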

\begin{proof}
(The proof is little more than simply changing $G_{\mathrm{Quot}}$ to $G_{\mathrm{MultiQuot}}$ in \cite[Claim 11.2]{LSS24b}.) Note that 
\[(t,(g,g_1)) = (t,(\mr{id}_{G_{\mr{MultiQuot}}},\mr{id}_{G_{\mr{Lin}}})) \cdot (0, (g,g_1))\]
and therefore $(G_{\mr{Multi}})_{(0,0)} = (G_{\mr{Multi}})_{(1,0)}\vee \bigvee_{i = 1}^D (G_{\mr{Multi}})_{(0,e_i)}$. We next check various commutator relations. First note that 
\[[(G_{\mr{Multi}})_{(1,0)},(G_{\mr{Multi}})_{(1,0)})] = \mr{Id}_{G_{\mr{Multi}}}.\]
This follows because if $g,h\in G_{\mr{Lin}}$ we have $gh=hg$ hence
\[(t,(g,\mr{id}_{G_{\mr{Lin}}})) \cdot (t',(h,\mr{id}_{G_{\mr{Lin}}}))) = (t + t',(gh,\mr{id}_{G_{\mr{Lin}}})) = (t',(h,\mr{id}_{G_{\mr{Lin}}}))) \cdot (t,(g,\mr{id}_{G_{\mr{Lin}}})).\]
Therefore it suffices to verify that
\begin{align*}
[(G_{\mr{Multi}})_{(0,\vec{a})},(G_{\mr{Multi}})_{(0,\vec{b})}]&\leqslant(G_{\mr{Multi}})_{(0,\vec{a} + \vec{b})},\\
[(G_{\mr{Multi}})_{(1,\vec{a})},(G_{\mr{Multi}})_{(0,\vec{b})}]&\leqslant(G_{\mr{Multi}})_{(1,\vec{a} + \vec{b})}.
\end{align*}

We first tackle the first claim, in which we may reduce to the case $|\vec{a}|,|\vec{b}|>0$. We wish to show
\[[(g,g_1),(g',g_1')] \in \{(h,h_1)\colon h\in (G_{\mr{MultiQuot}})_{\vec{a}+\vec{b}}, h_1\in (G_{\mr{MultiQuot}})_{\vec{a}+\vec{b}}\cap G_{\mr{Lin}}\}\]
if $g,g_1\in (G_{\mr{MultiQuot}})_{\vec{a}}$, $g',g_1'\in (G_{\mr{MultiQuot}})_{\vec{b}}$, and $g_1,g_1'\in G_{\mr{Lin}}$. It suffices to prove $(G_{\mr{MultiQuot}})_{\vec{a}+\vec{b}}$ is normal in $(G_{\mr{MultiQuot}})_{\vec{a}}$ and $(G_{\mr{MultiQuot}})_{\vec{b}}$ and then check at the level of generators. 

To check normality, we have
\begin{align*}
(g,g_1)(g',g_1')(g,g_1)^{-1} &= (g,g_1)(g',g_1')(g^{-1},gg_1^{-1}g^{-1})\\
&=(gg',(g')^{-1}g_1g' \cdot g_1')(g^{-1},gg_1^{-1}g^{-1})\\
& = (gg'g^{-1}, (g(g')^{-1})g_1 (g'g^{-1}) \cdot gg_1'g^{-1} \cdot gg_1^{-1}g^{-1})
\end{align*}
and the result follows noting that $G_{\mr{Lin}},(G_{\mr{MultiQuot}})_{\vec{a}}$ are normal in $G_{\mr{MultiQuot}}$ for all $\vec{a} \neq 0$. 

Since
\[(g,g_1) = (g,\mr{id}_{G_{\mr{Lin}}}) \cdot (\mr{id}_{\mr{MultiQuot}}, g_1)\]
and it suffices to check the claim on generators, we may reduce to the case where exactly one of $g,g_1$ and exactly one of $g_1,g_1'$ are the identity. The result is clear when $g,g'$ are trivial and the case when $g_1,g_1'$ are trivial follows from the fact that we have a valid filtration on $G_{\mr{MultiQuot}}$. In the remaining cases, we may assume by symmetry that $g_1 = \mr{id}_{G_{\mr{Lin}}}$ and $g' = \mr{id}_{G_{\mr{MultiQuot}}}$. We have 
\[(g^{-1},\mr{id}_{G_{\mr{Lin}}})(\mr{id}_{G_{\mr{MultiQuot}}},(g_1')^{-1})(g,\mr{id}_{G_{\mr{Lin}}})(\mr{id}_{G_{\mr{MultiQuot}}},g_1') = (\mr{id}_{G_{\mr{MultiQuot}}},g^{-1}(g_1')^{-1}gg_1')\]
and we see that the final coordinate satisfies $[g,g_1']\in (G_{\mr{MultiQuot}})_{(\vec{a}+\vec{b},0)}\cap G_{\mr{Lin}}$. We have finished verifying the first claim.

Now note that $\{(h,\mr{id}_{G_{\mr{Lin}}})\colon h\in G_{\mr{Lin}}\}$ is a normal subgroup of $G_{\mr{MultiQuot}}\ltimes G_{\mr{Lin}}$, since $G_{\mr{Lin}}$ is abelian. Thus, combining with the first claim gives the second claim, namely
\[[(G_{\mr{Multi}})_{(1,\vec{a})},(G_{\mr{Multi}})_{(0,\vec{b})})]\leqslant(G_{\mr{Multi}})_{(1,\vec{a} + \vec{b})},\]
for $|\vec{a}|>0$.

The only nontrivial case left is $\vec{a}=0$ and $|\vec{b}|>0$ for the second claim. Furthermore, combining what we know, it suffices to check the case when $(t,(\mr{id}_{G_{\mr{MultiQuot}}},\mr{id}_{G_{\mr{Lin}}}))$ is the element from $(G_{\mr{Multi}})_{(1,0)}$. Note however that
\begin{align*}
(t,&(\mr{id}_{G_{\mr{MultiQuot}}},\mr{id}_{G_{\mr{Lin}}}))\cdot(0,(g,g_1))\cdot(-t,(\mr{id}_{G_{\mr{MultiQuot}}},\mr{id}_{G_{\mr{Lin}}}))\cdot(0,(g,g_1))^{-1}\\
&= (t,(g,g_1))\cdot(-t,(\mr{id}_{G_{\mr{MultiQuot}}},\mr{id}_{G_{\mr{Lin}}}))\cdot(0,(g^{-1},gg_1^{-1}g^{-1})) = (0,(gg_1^{-t},g_1))\cdot(0,(g^{-1},gg_1^{-1}g^{-1}))\\
&= (0,(gg_1^{-t}g^{-1},\mr{id}_{G_{\mr{Lin}}})).
\end{align*}
and the fact that if $g,g_1\in(G_{\mr{MultiQuot}})_{(\vec{b},0)}$ and $g_1\in G_{\mr{Lin}}$ then $gg_1^{-t}g^{-1}\in(G_{\mr{MultiQuot}})_{(\vec{b},0)}$. This follows because if $g_1\in(G_{\mr{MultiQuot}})_{(\vec{b},0)}\cap G_{\mr{Lin}}$ then $g_1^t$ is in the same group.
\end{proof}
Writing $t=(t_{x,j})_{1\le |x|\le d-1,~D_{|x|}^\ast<j\le D_{|x|} + D_{|x|}^{\mr{Lin}}}$, we define
\[\Gamma_{\mr{Multi}} = \{(t,(g,g_1))\colon t_{x,j}\in\mb{Z}, g\in\Gamma_{\mr{MultiQuot}},g_1\in\Gamma_{\mr{MultiQuot}}\cap G_{\mr{Lin}}\}.\] 
Let $\phi\colon\mb{R}\to\mb{R}$ be a $1$-bounded, $1$-periodic function such that:
\begin{itemize}
    \item $\phi(x)=1$ if $|\{x\}|\le 1/2-2\epsilon(\delta)$;
    \item $\phi(x)=0$ if $|\{x\}|\ge 1/2-\epsilon(\delta)$;
    \item $\phi$ is $O(1/\epsilon(\delta))$-Lipschitz.
\end{itemize}
Define $H^\ast\subseteq H$ such that for all $1\le |x|\le d-1$ and $D_{|x|}^\ast<j\le D_{|x|}^\ast + D_{|x|}^{\mr{Lin}}$ we have $|\{\beta_{x,j} \cdot h\}|\ge 1/2-\epsilon(\delta)$. Using the fact that $\beta_{x,j}\in(1/N')\mb{Z}^{D}$ where $N'$ is a prime between $100N$ and $200N$, we see that there are at most $O(\epsilon(\delta) \cdot N^{D'}\cdot\sum_{i=1}^{d-1} D_i^{\mr{Lin}})$ indices which do not satisfy the criterion and choosing our parameter $\epsilon(\delta)$ sufficiently small while still preserving the property of being $\epsilon(\delta)$, we may assume that $H^\ast$ is at least half the size of $H$.

Given $(t,(g,g_1))\in G_{\mr{Multi}}$, we may find $(t',(g',g_1'))\in (t,(g,g_1))\Gamma_{\mr{Multi}}$ such that $(t')_{i,j}\in(-1/2,1/2]$ for all $i,j$. Define
\[F_{\mr{Multi}}((t,(g,g_1)) \Gamma_{\mr{Multi}}) = F^\ast(g'\Gamma_{\mr{MultiQuot}})\cdot\prod_{\substack{1\le i\le d-1\\ D_i^{\ast}<j\le D_i^\ast + D_i^{\mr{Lin}}}} \phi(t_{i,j}') ;\]
we check that this in fact gives a well-defined function on $G_{\mr{Multi}}/\Gamma_{\mr{Multi}}$. Note that if $(t',(g',g_1'))\in (t,(g,g_1))\Gamma_{\mr{Multi}}$ and $t_{i,j}'\in(-1/2,1/2]$ then $t_{i,j}' = \{t_{i,j}\}$ and hence $t'$ is unique. Furthermore, note that 
\[(t',(g',g_1')) \cdot (0, (\gamma',\gamma_1')) = (t', (g',g_1')\cdot (\gamma',\gamma_1')) = (t', (g' \gamma', (\gamma')^{-1} g_1' \gamma' \gamma_1'))\]
and trivially 
\[F^\ast(g'\Gamma_{\mr{MultiQuot}}) = F^\ast(g'\gamma'\Gamma_{\mr{MultiQuot}})\]
if $\gamma'\in\Gamma_{\mr{MultiQuot}}$. Now recall that 
\begin{align*}
g_h^{\ast,\mr{MultiQuot}}(n) &= \prod_{i \in [d - 1]}\prod_{x_1 + \cdots + x_D = i} \prod_{j=1}^{\dim(W_{i,\ast})} \exp(\wt{e}_{i,j})^{z_{x,j}^\ast\cdot\frac{n^x}{x!}}  \prod_{i \in [d - 1]} \prod_{x_1 + \cdots + x_D = i} \prod_{j=1}^{\dim(W_{i,\mr{Lin}})} \exp(\wt{e}_{i,j +  \dim(W_{i,\ast})})^{\gamma_{x,j}\cdot\frac{n^x}{x!}}.
\end{align*}
We set 
\begin{align*}
g_0(n) &= \prod_{i \in [d - 1]} \prod_{x_1 + \cdots + x_D = i} \prod_{j=1}^{D_i^\ast} \exp(\wt{e}_{i,j})^{\gamma_{x,j}\cdot\frac{n^x}{x!}},\quad g_1(n) = \prod_{i \in [d - 1]} \prod_{x_1 + \cdots + x_D = i}\prod_{j=D_i^\ast+1}^{D_i^\ast+D_i^{\mr{Lin}}} \exp(\wt{e}_{i,j})^{\alpha_{i,j}\cdot\frac{n^x}{x!}}
\end{align*}
and define
\begin{align*}
g_{\mr{Final}}(h,n) &= (0,(g_0(n),g_1(n)))\cdot ((\beta_{x,j} \cdot h)_{\substack{1\le |x|\le d-1\\D_i^{\ast}<j\le D_{|x|}^\ast + D_{|x|}^{\mr{Lin}}}},(\mr{id}_{G_{\mr{MultiQuot}}},\mr{id}_{G_{\mr{Lin}}})) \\
&= (0,(g_0(n),\mr{id}_{G_{\mr{Lin}}}))\cdot(0,(\mr{id}_{G_{\mr{MultiQuot}}},g_1(n)))\cdot((\beta_{x,j} \cdot h)_{\substack{1\le |x|\le d-1\\D_{|x|}^{\ast}<j\le D_{|x|}^\ast + D_{|x|}^{\mr{Lin}}}},(\mr{id}_{G_{\mr{MultiQuot}}},\mr{id}_{G_{\mr{Lin}}})). 
\end{align*}
The crucial point is that for all $h \in H$,
\begin{align*}
g_{\mr{Final}}(h,n)\Gamma_{\mr{Multi}} &= (0,(g_0(n),g_1(n)))\cdot ((\{\beta_{i,j} \cdot h\})_{\substack{1\le |x|\le d-1\\D_{|x|}^{\ast}<j\le D_{|x|}^\ast + D_{|x|}^{\mr{Lin}}}},(\mr{id}_{G_{\mr{MultiQuot}}},\mr{id}_{G_{\mr{Lin}}}))\Gamma_{\mr{Multi}}\\
&= ((\{\beta_{x,j}h\})_{\substack{1\le |x|\le d-1\\D_{|x|}^{\ast}<j\le D_{|x|}^\ast + D_{|x|}^{\mr{Lin}}}},(\mr{id}_{G_{\mr{MultiQuot}}},\mr{id}_{G_{\mr{Lin}}}))\cdot(0,(g_{0,h}^\ast(n),g_1(n)))\Gamma_{\mr{Multi}},
\end{align*}
writing
\[g_{0,h}^\ast(n)=g_0(n)(g_1(n))^{t(h)}\]
where $t(h)=(\{\beta_{x,j} \cdot h\})_{1\le |x|\le d-1,~D_{|x|}^{\ast}<j\le D_{|x|}^\ast + D_{|x|}^{\mr{Lin}}}\in R$. This is precisely the desired sense, discussed earlier, in which we have used the group action to ``raise'' parts of $G_{\mr{Lin}}$ to $h$-fractional powers.

Therefore, for all $h\in H^\ast$ we have 
\begin{equation}\label{eq:multi-rep}
F_{\mr{Multi}}(g_{\mr{Final}}(h,n)\Gamma_{\mr{Multi}}) = F^\ast(g_h^{\mr{MultiQuot}}(n)\Gamma_{\mr{MultiQuot}}).
\end{equation}
This, in combination with (the discussion after) Lemma~\ref{lem:reduct-quot} completes the proof of Proposition~\ref{prop:degreerankextraction}.  
\end{proof}

\section{Proof of the multidimensional inverse theorem}
The purpose of this section is to deduce Theorem~\ref{thm:maininversetheorem2} assuming that the theorem holds for $(\ell, \ell' - 1)$. This will be done via a symmetry and integration argument as in \cite[Section 12]{LSS24b}
\subsection{Setup for the symmetry and integration arguments}
We begin this section with the following proposition which is essentially the analogue of \cite[Theorem 6.4]{LSS24b}.
\begin{prop}\label{prop:correlationdata}
Fix $\delta\in (0,1/2)$, $\ell, \ell', k \le K$ be parameters. Assume Theorem~\ref{thm:maininversetheorem2} for $(\ell, \ell'-1)$. Let $f\colon[N]^k\to\mb{C}$ be a $1$-bounded function such that 
\[\snorm{f}_{U([N]^k, \dots, [N]^k, e_1[N], \dots, e_\ell[N])}\ge\delta\]
with $\ell'$ copies of $[N]$.

Then the following data exists:
\begin{itemize}
    \item A subset $H\subseteq[N]^k$ of size at least $\epsilon(\delta) N^k$;
    \item A multi-degree nilcharacter $\chi(h,n) \in \mathrm{Nil}^{(1, \ell + \ell' - 1)}(M(\delta), m(\delta), k, M(\delta))$;
    \item For all $h\in H$, there exists one-bounded functions $f_{1, h}, \dots, f_{k, h}$ with $f_{i, h}$ not depending on the $i$th coordinate and $\psi_h \in \mathrm{Nil}^{\ell + \ell' - 2}(M(\delta), m(\delta), k, M(\delta))$ such that
    \[\mathbb{E}_{h \in [N]^k} \|\mathbb{E}_{n \in [N]^k}\Delta_h f(n) \otimes \ol{\chi(h,n)} \otimes \prod_{j = 1}^\ell f_{j, h}(n) \cdot \psi_h(n)\|_\infty \ge \epsilon(\delta).\]
\end{itemize}
\end{prop}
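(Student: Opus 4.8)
\textbf{Proof plan for Proposition~\ref{prop:correlationdata}.}
The plan is to follow the two-step reduction scheme that opens Section~8. First I would invoke the inductive hypothesis: starting from $\snorm{f}_{U([N]^k,\dots,[N]^k,e_1[N],\dots,e_\ell[N])}\ge\delta$ with $\ell'+1$ copies of $[N]^k$, a standard Cauchy--Schwarz (pigeonholing on the last $[N]^k$-difference, as carried out at the start of Section~6) gives
$$\mb{E}_{h\in[N]^k}\snorm{\Delta_h f}_{U([N]^k,\dots,[N]^k,e_1[N],\dots,e_\ell[N])}^{2^{\ell+\ell'-1}}\ge\delta^{2^{\ell+\ell'}},$$
with $\ell'$ copies of $[N]^k$. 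Applying Theorem~\ref{thm:maininversetheorem2} for the pair $(\ell,\ell'-1)$ to $\Delta_hf$ for each $h$ in a large set, one obtains nilcharacters $\chi_h\in\mathrm{Nil}^{\ell+\ell'-1}(M(\delta),m(\delta),k,M(\delta))$, one-bounded functions $f_{i,h}$ with $f_{i,h}$ not depending on coordinate $i$, and lower-degree nilsequences $\psi_h$, such that
$$\mb{E}_{h\in[N]^k}\Big\|\mb{E}_{n\in[N]^k}\Delta_hf(n)\otimes\overline{\chi_h(n)}\otimes\prod_{i=1}^\ell f_{i,h}(n)\cdot\psi_h(n)\Big\|_\infty\ge\epsilon(\delta).$$

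Next I would pass from the $h$-dependent family $\chi_h$ to a single multidegree nilcharacter $\chi(h,n)$. This is exactly what the machinery of Sections~6--7 is built for: apply Lemma~\ref{lem:CS-basic} (a Gowers-type Cauchy--Schwarz) to obtain a phase $\Theta$ and a bias over $1\%$ of additive quadruples $h_1+h_2=h_3+h_4$ in the box norm of the $\chi_{h_i}$; then appeal to Lemma~\ref{lem:four-fold-biased} to convert this into the hypothesis of Theorem~\ref{thm:nilsequenceconstruction} (a $1\%$-density additive-quadruple bias for $\chi_{h_1}\otimes\chi_{h_2}\otimes\overline{\chi_{h_3}\otimes\chi_{h_4}}$ against a lower-degree nilsequence). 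Before doing so one must first standardize the underlying nilmanifolds and vertical frequencies by the pigeonhole/partition-of-unity arguments already used in Proposition~\ref{prop:initialmaneuvers} and in Lemma~\ref{lem:sunflower}, and split the $\mb{Z}^k$-input nilcharacters into products of multidegree pieces via Lemma~\ref{lem:splitting} (so that the product-of-low-multidegree hypothesis in Lemma~\ref{lem:four-fold-biased} applies). Theorem~\ref{thm:nilsequenceconstruction} then yields a subset $H'\subseteq H$ of size $\epsilon(\delta)|H|$ and a multidegree nilcharacter $\widetilde\chi\in\mathrm{Nil}^{(1,\ell+\ell'-1)}(M(\delta),m(\delta),k+k,M(\delta))$ (after the domain relabeling $D=k$) with $\chi_h(\cdot)$ being $(m(\delta),M(\delta))$-equivalent to $\widetilde\chi(h,\cdot)$ for every $h\in H'$.

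Finally I would feed this equivalence back into the correlation bound via Lemma~\ref{lem:equiv}: replacing $\chi_h$ by $\chi(h,\cdot):=\widetilde\chi(h,\cdot)$ costs only an $(m(\delta)M(\delta))^{O(1)}$ factor and introduces an auxiliary nilsequence $\psi'$ of the same (or lower) degree, which can be absorbed into $\psi_h$ after multiplying the degrees appropriately; restricting the $h$-average to $H'$ (which has density $\epsilon(\delta)$) and relabeling $H'$ as $H$ gives the three bulleted conclusions, with the quantitative bounds tracked through Notation~\ref{not:asymptoticnotation}. The main obstacle I expect is the bookkeeping in the middle step: ensuring that the repeated pigeonholing on nilmanifolds, vertical frequencies, and phases $\Theta$ (each costing a density drop) still leaves an $\epsilon(\delta)$-fraction of $h$'s, and verifying that the multidegree structure of the nilcharacters produced by the induction is compatible with the product hypothesis of Lemma~\ref{lem:four-fold-biased}; these are the points where the multidimensional-input setting genuinely differs from \cite[Theorem 6.4]{LSS24b} and where one must invoke Lemma~\ref{lem:splitting} and the splitting lemma of \cite{LSS24b} carefully rather than quoting the one-dimensional argument verbatim.
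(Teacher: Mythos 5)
Your proposal is broadly sound and captures the first reduction correctly (the $\ell'-1$ induction, normalization via Lemma~\ref{lem:splitdegreerank} and Lemma~\ref{lem:nilcharacters}, and the need to bring the family $\chi_h$ into the product-of-multidegree form demanded by Lemma~\ref{lem:four-fold-biased}). Where you diverge from the paper is in the middle step: you propose to run Lemma~\ref{lem:four-fold-biased} once and then invoke Theorem~\ref{thm:nilsequenceconstruction} as a black box, extracting the single multidegree nilcharacter $\widetilde\chi$ before returning to the correlation via Lemma~\ref{lem:equiv}. The paper instead iterates Proposition~\ref{prop:correlationdegreerank}, which re-derives the additive-quadruple bias \emph{from the correlation with $\Delta_h f$} at every stage of the degree-rank descent and substitutes the freshly extracted multidegree piece back into the correlation at each stage, keeping the $\psi_h$ and $f_{j,h}$ bookkeeping explicit throughout. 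Theorem~\ref{thm:nilsequenceconstruction} itself is proved by iterating the same Proposition~\ref{prop:degreerankextraction}, so both routes perform the same core degree-rank induction; the paper packages it at the correlation level (which is what its proof of Theorem~\ref{thm:maininversetheorem2} actually needs to track), while you package it at the additive-quadruple level (which is how the paper uses it in the proof of Theorem~\ref{thm:maininversetheorem}). Two small points you should make explicit: (i) you call both Lemma~\ref{lem:CS-basic} and Lemma~\ref{lem:four-fold-biased}, but the latter already contains the Gowers--Cauchy--Schwarz argument internally, so only the latter is needed once the $f_{j,h}$ are absorbed; (ii) Lemma~\ref{lem:four-fold-biased} produces a bias with $\chi_{h_2}(n+h_1-h_4)$ and $\chi_{h_4}(n+h_1-h_4)$ rather than unshifted arguments, so before feeding this into Theorem~\ref{thm:nilsequenceconstruction} you need to use (as the paper notes in the proof of Proposition~\ref{prop:correlationdegreerank}) that $\chi_h(\cdot)$ and $\chi_h(\cdot+n_0)$ are $(m(\delta),M(\delta))$-equivalent for fixed $n_0$, absorbing the error into $\psi_{\vec{h}}$.
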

We shall deduce this via a degree-rank induction argument from the following proposition; this is essentially our analogue of \cite[Proposition 6.3]{LSS24b}. 
\begin{prop}\label{prop:correlationdegreerank}
Fix $\delta\in (0,1/2)$ and $\ell, \ell', k \le K$ be parameters. Let $f\colon[N]^k\to\mb{C}$ be a $1$-bounded function such that for each $h \in H$, a subset of $[N]^k$, we have
\[\mathbb{E}_{h \in [N]^k} \|\mathbb{E}_{n \in [N]^k} \Delta_h f(n) \otimes \chi_{h, 0}(n) \otimes \ol{\chi_0(h, n)} \otimes \prod_{j = 1}^\ell f_{j, h, 0}(n) \cdot \psi_h(n)\|_\infty \ge \epsilon(\delta)\]
for
\begin{itemize}
    \item a multi-degree nilcharacter $\chi_0(h,n) \in \mathrm{Nil}^{(1, \ell + \ell' - 1)}(M(\delta), m(\delta), k, M(\delta))$ that is a product of $O_K(1)$ $\mathbb{Z}$-multidegree nilcharacters also in $\mathrm{Nil}^{(1, \ell + \ell' - 1)}(M(\delta), m(\delta), k, M(\delta))$;
    \item a degree-rank $(\ell + \ell' - 1, r)$ nilcharacter $\chi_{h, 0}(n) \in \mathrm{Nil}^{(\ell + \ell' - 1, r)}(M(\delta), m(\delta), k, M(\delta))$ that is a product of $O_K(1)$ $\mathbb{Z}$-multidegree nilcharacters in $\mathrm{Nil}^{(\ell + \ell' - 1, r)}(M(\delta), m(\delta), k, M(\delta))$;
    \item a nilsequence $\psi_h(n) \in \mathrm{Nil}^{\ell + \ell' - 2}(M(\delta), m(\delta), k, M(\delta))$;
    \item and one bounded functions $f_{1, h, 0}, \dots, f_{k, h, 0}$ with $f_{i, h, 0}$ not depending on the $i$th coordinate.
\end{itemize}
Then the following data exists:
\begin{itemize}
    \item A subset $H\subseteq H$ of size at least $\epsilon(\delta) |H|$;
    \item A multi-degree nilcharacter $\chi(h,n) \in \mathrm{Nil}^{(1, \ell + \ell' - 1)}(M(\delta), m(\delta), k, M(\delta))$;
    \item A degree-rank $(\ell + \ell' - 1, r)$ nilcharacter $\chi_h(n) \in \mathrm{Nil}^{(\ell + \ell' - 1, r)}(M(\delta), m(\delta), k, M(\delta))$;
    \item For all $h\in H$, there exists one-bounded functions $f_{1, h}, \dots, f_{\ell, h}$ such that there exists $\psi_h \in \mathrm{Nil}^{\ell + \ell' - 2}(M(\delta), m(\delta), k, M(\delta))$ such that
    \[\mathbb{E}_{h \in [N]^k} \|\mathbb{E}_{n \in [N]^k}\Delta_h f(n) \otimes \chi_h(n) \otimes \ol{\chi(h,n)} \otimes \prod_{j = 1}^\ell f_{j, h}(n)\|_\infty \ge \epsilon(\delta).\]
\end{itemize}
\end{prop}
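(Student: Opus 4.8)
\textbf{Proof proposal for Proposition~\ref{prop:correlationdegreerank}.}

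The plan is to follow the degree-rank descent strategy of \cite[Section 12]{LSS24b}, adapted to the present multidimensional setting. The starting point is the hypothesis, which exhibits $\Delta_h f$ as correlating with a tensor product of the ``input'' nilcharacter $\chi_0(h,n)$, the fixed-$h$ nilcharacter $\chi_{h,0}(n)$ of degree-rank $(\ell+\ell'-1,r)$, the products $\prod_j f_{j,h,0}$, and a lower-degree error $\psi_h$. First I would apply a Cauchy--Schwarz argument in the $h$ variable in the spirit of Lemma~\ref{lem:CS-basic} and Lemma~\ref{lem:four-fold-biased}: after absorbing the $f_{j,h,0}$ into $\chi_{h,0}$ (legitimate since this does not affect equidistribution, exactly as in the proof of Lemma~\ref{lem:four-fold-biased}) one obtains that for a positive proportion of additive quadruples $h_1+h_2=h_3+h_4$ in $H$ the four-fold tensor $\chi_{h_1,0}\otimes\chi_{h_2,0}\otimes\overline{\chi_{h_3,0}\otimes\chi_{h_4,0}}$ correlates with a nilsequence of strictly lower degree-rank than $(\ell+\ell'-1,r)$. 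This is precisely the hypothesis format needed to invoke Theorem~\ref{thm:nilsequenceconstruction} (via Proposition~\ref{prop:degreerankextraction}): that theorem produces a set $H'\subseteq H$ of proportion $\epsilon(\delta)$ and a multidegree nilcharacter $\widetilde\chi\in\mathrm{Nil}^{(1,\ell+\ell'-1)}$ such that for each $h\in H'$, $\chi_{h,0}(\cdot)$ is $(m(\delta),M(\delta))$-equivalent to $\widetilde\chi(h,\cdot)$ up to a nilsequence of degree-rank $<(\ell+\ell'-1,r)$.

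The second step is to feed this structural information back into the original correlation. For $h\in H'$, replace $\chi_{h,0}(n)$ by the equivalent $\widetilde\chi(h,n)$ using Lemma~\ref{lem:equiv}; the equivalence error — a degree-rank $<(\ell+\ell'-1,r)$ nilsequence together with the $\psi_h$ already present — gets absorbed into a new lower-complexity error term $\psi_h'\in\mathrm{Nil}^{\ell+\ell'-2}$, while the $\widetilde\chi(h,n)$ term can be merged with the existing input nilcharacter $\chi_0(h,n)$ via a product of nilmanifolds (the resulting object is still in $\mathrm{Nil}^{(1,\ell+\ell'-1)}$ after increasing $M(\delta),m(\delta)$). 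One must also here re-invoke the splitting lemma (Lemma~\ref{lem:splitting} / \cite[Lemma C.6]{LSS24b}) to ensure the combined input nilcharacter is genuinely a product of nilcharacters that are degree at most one in each of the first coordinates, so that the conclusion's shape is maintained and so that the next stage of the iteration can be applied. After this, the degree-rank of the fixed-$h$ nilcharacter has dropped, or its rank $r$ has decreased; iterating a bounded number $O_K(1)$ of times drives it down to the point where the only remaining fixed-$h$ object is of degree $\le \ell+\ell'-2$, which can be folded into $\psi_h$, yielding the displayed conclusion with $\chi_h(n)$ the final accumulated degree-rank $(\ell+\ell'-1,r)$ nilcharacter and $\chi(h,n)$ the accumulated input nilcharacter. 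Bookkeeping of the $\epsilon(\delta),M(\delta),m(\delta)$ bounds through the bounded iteration is routine given Notation~\ref{not:asymptoticnotation}.

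The main obstacle I anticipate is the interface between the Cauchy--Schwarz step and the hypothesis of Theorem~\ref{thm:nilsequenceconstruction}: one needs the four-fold tensor of the $\chi_{h,0}$'s to appear with the shifts $n$ versus $n+h_1-h_4$ disentangled (as in Lemma~\ref{lem:four-fold-biased}, one uses the multilinear/symmetric structure and adds extra differences to linearize the shift dependence) so that the correlation is genuinely against a \emph{lower degree-rank} nilsequence in the fixed variable $n$ over $[N]^k$, not merely a lower-degree one. This requires carefully tracking that $\chi_{h,0}$ is a product of multilinear nilcharacters depending on the right number of coordinates (which is exactly the extra hypothesis imposed in the proposition statement) so that \cite[Lemma C.3, Lemma C.4]{LSS24b} apply to extract the degree-rank drop; getting the filtration indices to line up in the multidimensional input setting, rather than the single-variable setting of \cite{LSS24b}, is where the genuine work lies. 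A secondary technical point is ensuring that absorbing the $f_{j,h,0}$ into $\chi_{h,0}$ and later re-extracting coordinate-independent functions $f_{j,h}$ via Lemma~\ref{lem:splitting} is consistent across the iteration; this is handled exactly as in the proof of Lemma~\ref{lem:four-fold-biased} and in the deduction of Theorem~\ref{thm:maininversetheorem}.
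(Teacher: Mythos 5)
Your proposal correctly identifies the three key ingredients the paper uses — the Cauchy--Schwarz / four-fold argument (Lemma~\ref{lem:four-fold-biased}), the degree-rank extraction (Proposition~\ref{prop:degreerankextraction}), and the nilcharacter-equivalence swap (Lemma~\ref{lem:equiv}) — and you correctly anticipate the need to handle the shift $n$ versus $n+h_1-h_4$, which the paper dispatches via the shift-equivalence of $\chi_{h,0}(n)$ and $\chi_{h,0}(n+n_0)$. In that sense the approach is essentially the same as the paper's.

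The one organizational discrepancy: you propose to iterate the degree-rank drop $O_K(1)$ times \emph{inside} this proposition until the fixed-$h$ nilcharacter has degree $\le \ell+\ell'-2$, but the proposition as the paper uses it is the \emph{single-step} degree-rank descent, and the iteration (the degree-rank induction, plus re-splitting and Fourier expansion at each stage) is performed in the deduction of Proposition~\ref{prop:correlationdata}, not here. The statement is admittedly written loosely — both $\chi_{h,0}$ and the output $\chi_h$ are given degree-rank $(\ell+\ell'-1,r)$, whereas the paper's proof actually produces $\widetilde{\chi}_h\in\mathrm{Nil}^{<(\ell+\ell'-1,r)}$ — so your instinct to keep going is understandable, and your fuller iteration would prove a strictly stronger statement and thereby subsume the subsequent induction. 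It is not a gap, but it does misplace where the induction lives in the argument's architecture; the paper's proof of this proposition is just the two steps you start with, after which one appeals directly to Lemma~\ref{lem:equiv}.
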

\begin{proof}
First, we absorb the terms in the product of the $\mathbb{Z}$-multidegree nilcharacters which are degree $0$ in the $e_1, \dots, e_\ell$ variables inside the $f_{1, h}, \dots, f_{\ell, h}$ functions. By Lemma~\ref{lem:four-fold-biased}, Proposition~\ref{prop:degreerankextraction}, and the fact that $\chi_{h, 0}(n)$ and $\chi_{h, 0}(n + n_0)$ are $(m(\delta), M(\delta))$ equivalent for fixed $n_0$, we see that there exist nilcharacters $\widetilde{\chi_h}(n) \in \mathrm{Nil}^{< (\ell + \ell' - 1, r)}(M(\delta), m(\delta), k, M(\delta))$ and $\widetilde{\chi}(h, n) \in \mathrm{Nil}^{(1, \ell + \ell' - 1)}(M(\delta), m(\delta), k, M(\delta))$ such that $\chi_{h, 0}$ and $\widetilde{\chi}_h \otimes \widetilde{\chi}$ are $(m(\delta), M(\delta))$-equivalent. The proposition then follows from Lemma~\ref{lem:equiv} applied to these two nilcharacters. 
\end{proof}
\begin{proof}[Proof of Proposition~\ref{prop:correlationdata}]
By the discussion in the beginning of Section 6, we see that there exists a family of nilcharacters $\chi_h \in \mathrm{Nil}^{\ell + \ell' - 1}(M(\delta), d(\delta), \ell, M(\delta))$ and functions $f_{1, h}, \dots, f_{\ell, h}$ such that $f_{i, h}$ does not depend on coordinate $i$, and
$$\mathbb{E}_{h \in [N]^k} \bigg\|\mathbb{E}_{n \in [N]^k} \Delta_h f(n) \chi_h(n)\prod_{i = 1}^{\ell} f_{i, h}(n)\bigg\|_\infty^2 \ge \epsilon(\delta).$$
By \cite[Lemma C.6]{LSS24b} (or rather the stronger Lemma~\ref{lem:splitdegreerank}) and Fourier expanding via Lemma~\ref{lem:nilcharacters} we may assume that $\chi_h$ is a product of $\mathbb{Z}$-multidegree nilcharacters. (We implicitly used that each vertical character $F$ of the product can be taken to be a component of a nilcharacter of the same degree-rank. This follows from a slightly modified version of \cite[Lemma B.4]{LSS24b}, adapted to degree-rank. Applying that lemma, we find a nilcharacter $\chi$ of the same $G_{(\ell + \ell' - 1, r)}$ vertical frequency. Then by taking $(F/(2\|F\|_\infty), \chi\sqrt{1 - |F/(2\|F\|_\infty)|^2})$, we find that a multiple of $F$ is a component of a nilcharacter.)

This implies that there exists at least $\epsilon(\delta)N^k$ many $h \in [N]^k$ such that
\[\bigg\|\mathbb{E}_{n \in [N]^k} \Delta_h f(n) \chi_h(n)\prod_{i = 1}^{\ell} f_{i, h}(n)\bigg\|_\infty \ge \epsilon(\delta).\]
The proposition thus follows from Proposition~\ref{prop:correlationdegreerank}, the degree-rank induction, the splitting lemma at each stage of the degree-rank induction, and Fourier expansion\footnote{Technically, the (slightly modified) splitting lemma only gives a product ``multidegree-rank $(\vec{d}, r)$ filtration", but by taking joins, one can take a degree-rank $(|\vec{d}|, r)$ filtration. The advantage of using the splitting lemma is that we may use multilinearity properties of multidimensional input nilcharacters.}.
\end{proof}
\subsection{The symmetry and integration arguments}
We are now ready to deduce Theorem~\ref{thm:maininversetheorem2}.
\begin{proof}[Proof of Theorem~\ref{thm:maininversetheorem2}]
We begin with the hypothesis of there being at least $\epsilon(\delta)|H|$ many $h$ such that
$$\bigg\|\mathbb{E}_{n \in [N]^k} \Delta_h f(n) \otimes \chi(h, n) \otimes \prod_{i = 1}^\ell f_{i, h}(n) \cdot \psi_h(n)\bigg\|_\infty \ge \epsilon(\delta).$$
By \cite[Lemma C.6]{LSS24b} (the splitting lemma) and Fourier expanding via Lemma~\ref{lem:nilcharacters}, we may assume that $\chi$ is a product of nilcharacters (with complexity, dimension, input dimension, and output dimension $M(\delta), m(\delta), k, M(\delta)$, respectively) that depend on at most $\ell + \ell'$ coordinates. Applying \cite[Lemma C.5]{LSS24b} (multilinearization) to each nilcharacter of this product, we may assume that $\chi(h, n)$ is a multilinear $(1, 1, \dots, 1)$ nilcharacter of the form $\chi(h, n, n, \dots, n)$. We may further assume that the final $\ell + \ell' - 1$ coordinates are permutation invariant. We thus wish to show that $\chi(h, n, \dots, n)$ is $(M(\delta), m(\delta))$-equivalent to $\chi(n, h, \dots, n)$. Now applying Lemma~\ref{lem:four-fold-biased}, we see that
\begin{align*}
&\mb{E}_{\substack{h_1 + h_2 = h_3 + h_4 \\ h_i\in[N]^k}}\snorm{\mb{E}_{n\in[N]^k} \wt{\chi}(h_1,n)\otimes \wt{\chi}(h_2,n + h_1-h_4) \otimes \ol{\wt{\chi}(h_3,n)}\otimes \ol{\wt{\chi}(h_4,n + h_1-h_4)} \\
& \qquad\qquad\qquad\otimes \psi_{h_1}(n)\otimes \psi_{h_2}(n + h_1 - h_4)\otimes \ol{\psi_{h_3}(n)}\otimes \ol{\psi_{h_4}(n + h_1 - h_4)}\cdot e(\Theta n)}_{\infty}\ge \epsilon(\delta)
\end{align*}
where $\wt{\chi}(h, n) = \chi(h, n)\prod_{i = 1}^\ell f_{i, h}(n).$ Now by Cauchy-Schwarzing away $f_{i, h}$, we may bound the above by the multidimensional $U(e_1[N], \dots, e_\ell[N])$ Gowers norm of
$$n \mapsto \chi(h_1,n)\otimes \chi(h_2,n + h_1-h_4) \otimes \ol{\chi(h_3,n)}\otimes \ol{\chi(h_4,n + h_1-h_4)}$$
$$\otimes \psi_{h_1}(n)\otimes \psi_{h_2}(n + h_1 - h_4)\otimes \ol{\psi_{h_3}(n)}\otimes \ol{\psi_{h_4}(n + h_1 - h_4)}\cdot e(\Theta n).$$
Expanding out the box norm, pigeonholing in some variables, and using multilinearization of $\chi$, we obtain a value of $n_0$ such that
\begin{align*}
&\mb{E}_{\substack{h_1 + h_2 = h_3 + h_4 \\ h_i\in[N]^k}}\snorm{\mb{E}_{n\in[N]^k} \chi(h_1,n + n_0)\otimes \chi(h_2,n + h_1-h_4 + n_0) \otimes \ol{\chi(h_3,n + n_0)}\otimes \ol{\chi(h_4,n + h_1-h_4 + n_0)} \\
& \qquad\qquad\qquad\otimes \psi_{h_1}'(n)\otimes \psi_{h_2}'(n + h_1 - h_4)\otimes \ol{\psi_{h_3}'(n)}\otimes \ol{\psi_{h_4}'(n + h_1 - h_4)}\cdot e(\Theta' n)}_{\infty}\ge \epsilon(\delta)
\end{align*}
for slightly different choices $\psi'_h \in \mathrm{Nil}^{\ell + \ell' - 2}(M(\delta), m(\delta), k, M(\delta))$. Noting that $\chi(h, n + n_0)$ is $(M(\delta), m(\delta))$ equivalent to $\chi(h, n)$, we are thus in the situation of the first displayed inequality \cite[Step 1]{LSS24b} and we may proceed accordingly. We eventually obtain
$$\chi(h, n, n, \dots, n) \otimes \overline{\chi(n, h, n, \dots, n)} = F^*(\epsilon(h, n, \dots, n)g^{\mathrm{Output}}(h, n, \dots, n)\gamma(h, n, \dots, n)\Gamma)$$
with
\begin{itemize}
    \item $F^*$ is a nilcharacter with frequency $\xi' = (\xi, -\xi, 0)$ where $\xi$ is the frequency of $\chi$;
    \item $g^{\mathrm{Output}}$ lies on a subgroup $H$ with $\xi'(H \cap (G \times G)_s) = 0$;
    \item $\gamma$ is a $M(\delta)$-rational sequence;
    \item and $\epsilon$ is a $(M(\delta), \vec{N})$-smooth sequence.
\end{itemize}
We now recall that
\[\mb{E}_{h\in[N]^k}\bigg\|\mb{E}_{n\in[N]^k} \Delta_{h} f(n) \otimes \chi(h, n,\ldots,n)\cdot\psi(n) \cdot\psi_{h}(n) \otimes \prod_{i = 1}^\ell f_{i, h}(n)\bigg\|_{\infty}\ge \epsilon(\delta).\]
Write $s = \ell + \ell'$. By applying Pigeonhole there exist $q,q'\in[s]^k$ such that 
\[\mb{E}_{h\in[N/s]^k}\snorm{\mb{E}_{n\in[N/s]^k} \Delta_{sh + q'} f(sn + q) \otimes \chi(sh + q', sn + q,\ldots, sn + q) \otimes \prod_{i = 1}^\ell f_{i,sh + q'}(sn + q)\cdot\psi(sn + q) \cdot\psi_{sh + q'}(sn + q)}_{\infty} \]
\[\ge \epsilon(\delta).\]

Applying multilinearity and \cite[Lemma C.6]{LSS24b} (the splitting lemma), and adjusting $\psi$ and $\psi_h$, and $f_{i, h}$, we see that
\begin{align*}
&\mb{E}_{h\in[N/s]^k}\bigg\|\mb{E}_{n\in[N/s]^k} \Delta_{sh + q'} f(sn + q) \otimes \chi(sh, sn,\ldots, sn) \otimes \prod_{i = 1}^\ell f_{i, h}\cdot\psi(n) \cdot\psi_{h}(n)\bigg\|_{\infty}\ge \epsilon(\delta).
\end{align*}
Now define
\[T(h,n) := \chi(n+h,\ldots,n+h) \otimes \ol{\chi(h,n,\ldots,n)} \otimes \ol{\chi(n,h,\ldots,n)}^{\otimes (s-1)} \otimes \ol{\chi(n,n,\ldots,n)}, b_h(n) = \prod_{i = 1}^\ell f_{i, h}(n).\]
Since $T$ is a nilcharacter, we automatically know
\begin{align*}
&\mb{E}_{h\in[N/s]^k}\snorm{\mb{E}_{n\in[N/s]^k} \Delta_{sh + q'} f(sn + q) \otimes \chi(sh, sn,\ldots, sn) \otimes b_h(n)\cdot\psi(n) \cdot\psi_{h}(n)\\
&\qquad\qquad\qquad\qquad\qquad\qquad\qquad\qquad\otimes T(h,n)^{\otimes (s)^{s-1}} \otimes \ol{T(h,n)}^{\otimes (s)^{s-1}}}_{\infty}\ge \epsilon(\delta).
\end{align*}
We define
\begin{align*}
\wt{f}_1(n) &= f(sn + q) \cdot\ol{\chi(n,\ldots, n)}^{\otimes (s)^{s-1}},\\
\wt{f}_2(n + h) &= \ol{f(s(n+h) + q+q')} \cdot\chi(n+h,\ldots,n+h)^{\otimes (s)^{s-1}},
\end{align*}
which yields
\begin{align*}
&\mb{E}_{h\in[N/s]^k}\snorm{\mb{E}_{n\in[N/s]^k} \wt{f}_1(n)\otimes\wt{f}_2(n+h) \otimes \chi(sh, sn,\ldots, sn)\cdot\psi(n) \cdot\psi_{h}(n)\\
&\qquad\qquad \otimes (\ol{\chi(h,n,\ldots,n)} \otimes \ol{\chi(n,h,n,\ldots,n)}^{\otimes (s-1)})^{\otimes (s)^{s-1}} \otimes \ol{T(h,n)}^{\otimes (s)^{s-1}} \otimes b_h(n)}_{\infty}\ge \epsilon(\delta).
\end{align*}

By multilinearity, we see that $T(h,n)$ is $(M(\delta), m(\delta))$-equivalent to 
\[\bigotimes_{k=1}^{s-1}\chi(h,h,\ldots,h,n,\ldots,n)^{\binom{s-1}{k}}\otimes\bigotimes_{k=2}^{s-1}\chi(n,h,\ldots,h,n,\ldots,n)^{\binom{s-1}{k}}.\]
(There are $k+1$ many $h$'s in the first term and $k$ many $h$'s in the second term.) Applying the splitting lemma, we may approximate each coordinate as a sum of products of multidegree $(s-1,s-2)$ and $(0,s-1)$ nilsequences in variables $(h,n)$. So, absorbing everything into $\psi(n)$ of degree $(s-1)$ and the $\psi_h(n)$ of degree $(s-2)$, we find
\begin{align*}
&\mb{E}_{h\in[N/s]^k}\norm{\mb{E}_{n\in[N/s]^k} \wt{f}_1(n)\otimes\wt{f}_2(n+h) \otimes \chi(sh, sn,\ldots, sn)\cdot\psi(n) \cdot\psi_{h}(n)\\
&\qquad \qquad\qquad\qquad\otimes (\ol{\chi(h,n,\ldots,n)} \otimes \ol{\chi(n,h,\ldots,n)}^{\otimes (s-1)})^{\otimes (s)^{s-1}} \otimes b_h(n)}_{\infty}\ge \epsilon(\delta).
\end{align*}

Applying multilinearity, we see that
\[\chi(sh, sn,\ldots, sn) \text{ and } \chi(h, n,\ldots, n)^{\otimes s^{s}}\]
are $(M(\delta),m(\delta))$-equivalent. Applying Lemma~\ref{lem:equiv} and the splitting lemma, and adjusting $\psi$ and $\psi_h$ again, we see that
\begin{align*}
&\mb{E}_{h\in[N/s]^k}\norm{\mb{E}_{n\in[N/s]^k} \wt{f}_1(n)\otimes\wt{f}_2(n+h) \otimes \chi(h, n,\ldots, n)^{\otimes s^{s}}\cdot\psi(n) \cdot\psi_{h}(n)\\
&\qquad\qquad\qquad\qquad \otimes (\ol{\chi(h,n,\ldots,n)} \otimes \ol{\chi(n,h,\ldots,n)}^{\otimes (s-1)})^{\otimes (s)^{s-1}} \otimes b_h(n)}_{\infty}\ge \epsilon(\delta).
\end{align*}
We also see that
\[\chi(h, n,\ldots, n)^{\otimes s^s}\otimes\ol{\chi(h,n,\ldots,n)}^{\otimes (s)^{s-1}} \text{ and } \chi(h,n,\ldots,n)^{\otimes (s-1) \cdot (s)^{s-1}}\]
are $(M(\delta),d(\delta))$-equivalent. Thus, applying a similar procedure as above, we see that
\begin{align*}
&\mb{E}_{h\in[N/s]^k}\norm{\mb{E}_{n\in[N/s]^k} \wt{f}_1(n)\otimes\wt{f}_2(n+h) \cdot\psi(n) \cdot\psi_{h}(n) \otimes (\chi(h,n,\ldots,n) \otimes \ol{\chi(n,h,\ldots,n)})^{\otimes (s-1)(s)^{s-1}} \otimes b_h(n)}_{\infty}\\
&\qquad\qquad\qquad\qquad\qquad\ge \epsilon(\delta).
\end{align*}

This is finally where we may apply our earlier factorization for $\chi(h,n,\ldots,n) \otimes \ol{\chi(n,h,\ldots,n)}$. Recall that
\begin{align*}
&\chi(h,n,\ldots,n) \otimes \ol{\chi(n,h,n,\ldots,n)} = F^\ast(\eps(h,n,\ldots,n) g^{\mr{Output}}(h,n,\ldots,n)\cdot\gamma(h,n,\ldots,n) (\Gamma\times\Gamma))
\end{align*}
where $\gamma$ is $M(\delta)$-periodic and $\eps$ is $(M(\delta),N)$-smooth. Let $Q$ denote the period of $\gamma$. Break $[N/s]^k$ into arithmetic progressions of length roughly $\epsilon(\delta) N$ and common difference $Q$; call these $\mc{P}_1,\ldots,\mc{P}_\ell$. Thus, approximating the smooth part, there exist a constant $\eps_{\mc{P}_{i,h}}$ with $d_{G \times G}(\eps_{\mc{P}_{i, h}}, \mathrm{Id}) \le M(\delta)$ and a $M(\delta)$-rational $\gamma_{\mc{P}_{i,h}}$ such that 
\begin{align*}
&\mb{E}_{h\in[N/s]^k}\norm{\mb{E}_{n\in[N/s]^k} \sum_{i=1}^\ell\mbm{1}_{n\in\mc{P}_i}\wt{f}_1(n)\otimes\wt{f}_2(n+h) \cdot\psi(n) \cdot\psi_{h}(n) \\
&\qquad\qquad\otimes (F^\ast(\eps_{\mc{P}_{i,h}} \gamma_{\mc{P}_{i,h}}(\gamma_{\mc{P}_{i,h}}^{-1} g^{\mr{Output}}(h,n,\ldots,n)\gamma_{\mc{P}_{i,h}}) (\Gamma\times\Gamma)))^{\otimes (s-1)(s)^{s-1}} \otimes b_h(n)}_{\infty}\ge \epsilon(\delta)
\end{align*}

By Pigeonhole, there exists an index $i$ such that 
\begin{align*}
&\mb{E}_{h\in[N/s]^k}\norm{\mb{E}_{n\in[N/s]^k} \mbm{1}_{n\in\mc{P}_i}\cdot\wt{f}_1(n)\otimes\wt{f}_2(n+h) \cdot\psi(n) \cdot\psi_{h}(n) \\
&\qquad\qquad\otimes (F^\ast(\eps_{\mc{P}_{i,h}}\gamma_{\mc{P}_{i,h}} (\gamma_{\mc{P}_{i,h}}^{-1}g^{\mr{Output}}(h,n,\ldots,n)\cdot\gamma_{\mc{P}_{i,h}}) (\Gamma\times\Gamma)))^{\otimes (s-1)(s)^{s-1}} \otimes b_h(n)}_{\infty}\ge \epsilon(\delta).
\end{align*}
By pigeonholing in $\mc{P}_{i, h}$, there is a $M(\delta)$-rational $\gamma\in\Gamma\times\Gamma$
\begin{align*}
&\mb{E}_{h\in[N/s]^k}\norm{\mb{E}_{n\in[N/s]^k}\mbm{1}_{n\in\mc{P}_i} \wt{f}_1(n)\otimes\wt{f}_2(n+h) \cdot\psi(n) \cdot\psi_{h}(n) \\
&\qquad\qquad\otimes (F^\ast(\eps_{\mc{P}_{i,h}}\gamma g^{\mr{Conj}}(h,n,\ldots,n)(\Gamma\times\Gamma)))^{\otimes(s-1)(s)^{s-1}} \otimes b_h(n)}_{\infty}\ge \epsilon(\delta),
\end{align*}
where $g^{\mr{Conj}}=\gamma^{-1}g^{\mr{Output}}\gamma$. Relabeling $\epsilon_{\mc{P}_{i, h}}$ to $\epsilon$, we see that
\begin{align*}
&\mb{E}_{h\in[N/s]^k}\norm{\mb{E}_{n\in[N/s]} \mbm{1}_{n\in\mc{P}_i}\cdot\wt{f}_1(n)\otimes\wt{f}_2(n+h) \cdot\psi(n) \cdot\psi_{h}(n) \\
&\qquad\qquad\otimes (F^\ast(\eps g^{\mr{Conj}}(h,n,\ldots,n)(\Gamma\times\Gamma)))^{\otimes (s-1)(s)^{s-1}} \otimes b_h(n)}_{\infty}\ge \epsilon(\delta)
\end{align*}

Fourier expanding the progression, there exists $\Theta_h$ such that 
\begin{align*}
&\mb{E}_{h\in[N/s]^k}\norm{\mb{E}_{n\in[N/s]^k} e(\Theta_h n)\cdot\wt{f}_1(n)\otimes\wt{f}_2(n+h) \cdot\psi(n) \cdot\psi_{h}(n) \\
&\qquad\qquad\otimes (F^\ast(\eps g^{\mr{Conj}}(h,n,\ldots,n) (\Gamma\times\Gamma))^{\otimes (s-1)(s)^{s-1}} \otimes b_h(n)}_{\infty}\ge \epsilon(\delta).
\end{align*}
As $(s-2)\ge 1$, we may absorb $e(\Theta_h n)$ into $\psi_h(n)$ and obtain 
\begin{align*}
&\mb{E}_{h\in[N/s]^k}\norm{\mb{E}_{n\in[N/s]^k} \wt{f}_1(n)\otimes\wt{f}_2(n+h) \cdot\psi(n) \cdot\psi_{h}(n) \\
&\qquad\qquad\otimes (F^\ast(\eps g^{\mr{Conj}}(h,n,\ldots,n) (\Gamma\times\Gamma)))^{\otimes (s-1)(s)^{s-1}} \otimes b_h(n)}_{\infty}\ge \epsilon(\delta).
\end{align*}
Replacing $F^\ast$ with $F^{\mr{Final}}(g) = F^\ast(\eps g\Gamma)$ and writing $g^{\mr{Final}}(h,n)=g^{\mr{Conj}}(h,n,\ldots,n)$, we have
\begin{align*}
&\mb{E}_{h\in[N/s]^k}\norm{\mb{E}_{n\in[N/s]^k} \wt{f}_1(n)\otimes \wt{f}_2(n+h) \cdot\psi(n) \cdot\psi_{h}(n) \\
&\qquad\qquad\otimes (F^{\mr{Final}}( g^{\mr{Final}}(h,n)(\Gamma\times\Gamma)))^{\otimes (s-1)(s)^{s-1}} \otimes b_h(n)}_{\infty}\ge \epsilon(\delta).
\end{align*}

Now $g^{\mr{Final}}(h,n)$ takes values in $\gamma^{-1}H\gamma$ such that $\xi'(\gamma^{-1}H\gamma\cap(G\times G)_s) = 0$. The key point is to note that $F^{\mr{Final}}$ is right-invariant under $(\gamma^{-1}H\gamma)\cap(G\times G)_s$ since it has $(G\times G)_s$-vertical frequency $\xi'$. Taking the quotient by $(\gamma^{-1}H\gamma)\cap(G\times G)_s$ gives that each coordinate of $(F^{\mr{Final}}(g^{\mr{Final}}(h,n)(\Gamma\times\Gamma)))^{\otimes (s)^{s-1}} \in \mathrm{Nil}^{s - 1}(M(\delta), m(\delta), k, M(\delta))$. 

Applying the splitting lemma to $(F^{\mr{Final}})^{\otimes (s)^{s-1}}$ and adjusting $\psi(n)$ and $\psi_h(n)$ again, we have
\begin{align*}
&\mb{E}_{h\in[N/s]^k}\snorm{\mb{E}_{n\in[N/s]^k} \wt{f}_{1}(n)\otimes\wt{f}_{2}(n+h) \cdot\psi(n) \cdot\psi_{h}(n) \otimes b_h(n)}_{\infty}\ge \epsilon(\delta)
\end{align*}
Since $\psi_h(n)$ is a nilsequence of degree $(s-2)$ and complexity $(M(\delta),m(\delta))$, by the converse of Theorem~\ref{thm:maininversetheorem2} (see Lemma~\ref{lem:converse}) we have that 
\begin{align*}
&\mb{E}_{h\in[N/s]^k}\snorm{\wt{f}_{1}(\cdot) \otimes \wt{f}_{2}(\cdot+h) \otimes\psi(\cdot)}_{U([N/s]^k, [N/s]^k, \dots, [N/s]^k, e_1[N/s], \dots, e_\ell[N/s])}\ge \epsilon(\delta).
\end{align*}
for $\ell' - 1$ copies of $[N/s]$. By the Gowers--Cauchy--Schwarz inequality, we have that 
\begin{align*}
&\snorm{\mb{E}_{n\in[N/s]}\wt{f}_{1}(n) \psi(n)}_{U([N/s]^k, [N/s]^k, \dots, [N/s]^k, e_1[N/s], \dots, e_\ell[N/s])}\ge \epsilon(\delta).
\end{align*}
Applying the induction hypothesis, there is a nilsequence $\Theta(n)$ of degree $(s-1)$ and one-bounded functions $f_1, \dots, f_\ell$ and complexity $(M(\delta),m(\delta))$ such that
\begin{align*}
\|\mb{E}_{n\in[N/s]} \wt{f}_{1}(n) \otimes \Theta(n)\cdot \psi(n)  \otimes \prod_{i = 1}^\ell f_i(n) \|_\infty\ge \epsilon(\delta).
\end{align*}
Finally, unravelling the definition of $\wt{f}_1$ and Fourier expanding $1_{[N/s]}$, we are done.
\end{proof}

\section{Constructing the structured extension of the cubic measure}
In this section, we prove Theorem~\ref{thm:ergodicinversetheorem}.
\begin{proof}[Proof of Theorem~\ref{thm:ergodicinversetheorem}.]
\textbf{Step 1: Preliminary maneuvers.} By construction of Host-Kra factors and applying Theorem~\ref{thm:austinergodictheorem}, we see that $L^2(\mathbf{Z})$ generated by dual functions of the form
$$\mathcal{D}f = \lim_{N \to \infty} \mathcal{D}_N f = \lim_{N \to \infty} \mathbb{E}_{h_1, \dots, h_\ell \in [N]} \mathbb{E}_{k_1, \dots, k_{\ell' + 1} \in [N]^k} \prod_{\omega \in \{0, 1\}^{\ell + \ell'} \setminus \{0\}} \vec{T}^{\omega \cdot (e_1h_1, \dots, e_\ell h_\ell, k_1, \dots, k_{\ell' + 1})} f$$
with $f$ one-bounded. By separability\footnote{Recall that we work with the standard assumption that our measure space is \emph{regular} in that it is isomorphic to the set of all Borel sets on a compact metric space.}, we may restrict to a countable sequence $f_n$ such that $\mathcal{D}f_n$ is dense in $L^2(\mathbf{Z})$. To obtain such a factor, we find an appropriate space $\mathbf{Y} = (Y, \mathcal{Y}, \nu)$, measure-preserving transformations $S_1, \dots, S_k$ on $\mathbf{Y}$, and a unital homomorphism $\pi: L^\infty(\mathbf{Z}) \to L^\infty(\mathbf{Y})$ (each equipped with the $L^2$ topology) with $\pi \circ \vec{T} = \vec{S} \circ \pi$ and for which $\pi$ pushes forward the measure of $Y$ to $X$; this would entail finding functions $\widetilde{f}_n$ such that for any multivariate polynomial $P$ and shifts $h_1, \dots, h_n$,
\begin{equation}\label{eq:intertwine}
\int P(\vec{T}^{h_1}\mathcal{D}f_{k_1}, \dots, \vec{T}^{h_n}\mathcal{D}f_{k_n}) d\mu = \int P(\vec{S}^{h_1}\tilde{f}_{k_1}, \dots, \vec{S}^{h_n}\tilde{f}_{k_n}) d\nu.    
\end{equation}
If this exists, the we obtain that the sigma algebra of $\mathbf{Y}$ is a sub-sigma algebra of $\mathbf{Z}$ and hence by classical ergodic theoretic results (e.g., \cite[Lemma 5.25]{EW11}), we obtain a factor map from $L^\infty(\mathbf{Z})$ and $L^\infty(\mathbf{Y})$. The remainder of the proof is devoted to finding such functions $\widetilde{f}_n$ and $(\mathbf{Y}, \vec{S})$.

\textbf{Step 2: Passing to finitary dual functions.} We first normalize $f_n$ so that it has absolute value at most $1$. For any $n, m$, we may find a scale $N_{n, m} \ge m$ such that
$$\|\mathcal{D} f_n - \mathcal{D}_{N_{n, m}} f_n\|_{L^2(\mathbf{X})} \le 2^{-10(m + n)}.$$
We next define
$$E_{n, m} := \{x: |\mathcal{D}f(x) - \mathcal{D}_{N_{n, m}}f(x)| \ge 2^{-(m + n)}\}.$$
This set has measure at most $2^{-99(m + n)} \le 2^{-10(m + n)}$, so the function
$$f = \sum_{n, m} 2^{9(m + n)}1_{E_{m, n}}$$
lies in $L^1(X)$. By the maximal ergodic theorem applied to $f$, we see that for each $\lambda > 0$,
$$f^*(x) = \sup_{N \ge 1} \mathbb{E}_{i \in [\pm N]^k} f \circ \vec{T}^i(x)$$
$$\lambda \mu\{f^* > \lambda \} \le \|f\|_{L^1(\mathbf{X})} := C.$$
Hence, for almost all $x$, there exists some $C_x$ such that $f^*(x) \le C_x$. Note however that for any $m$ and $n$, $|f^*(x)| \ge 2^{9(m + n)}\sup_{H} \frac{\{h \in [\pm H]^k: \vec{T}^h x \in E_{n, m}\}}{(2H)^k}$ and hence, it follows that for almost all $x$, there exists some $C_x$ such that
$$\sup_{H} \frac{|\{h \in [\pm H]^k: \vec{T}^h x \in E_{n, m}\}|}{(2H)^k} \le C_x 2^{-9(m + n)}.$$

\textbf{Step 3: Invoking the Hardy-Littlewood maximal inequality.} We observe from the Cauchy-Schwarz-Gowers inequality that
$$\mathbb{E}_{h \in [\pm N_{n, m}]^2} \mathcal{D}_{N_{n, m}}f_n(\vec{T}^hx)g(h) \ll \|g\|_{U([\pm N_{n, m}]^k, \dots, [\pm N_{n, m}]^k, e_1[\pm N_{n, m}], \dots, e_\ell[\pm N_{n, m}])}$$
for any function $g$ (where there are $\ell' + 1$ copies of $[\pm N_{n, m}]^k$). We now apply (a shifted version of) Theorem~\ref{thm:maininversetheorem2} and Lemma~\ref{lem:strongregularity} with $H = L^2[\pm N_{n, m}]^k$ and $S$ are the set of functions of the form $f_1(h)f_2(h) \cdots f_j(h) \chi(h)$ where $f_i, \chi$ are one-bounded with $f_i$ independent of coordinate $i$ and $\chi$ is a degree $\ell + \ell'$ nilsequence of complexity at most $O_\epsilon(1)$. By splitting up $\chi$ into a sum of $O_\epsilon(1)$ terms, we may assume that $\chi$ is also one-bounded.

Thus, for almost all $x$, there exists a $O_{q, n}(1) = D_{q, n}$-combination of elements in $S$ of the form
$$\chi_{m, n, q, x}(h) = \sum_{i = 1}^{D_{n, q}} (f_1)^i_{m, n, q, x}(h)\cdots (f_j)^i_{m, n, q, x}(h)F^i_{m, n, q, x}(g_{m, n, q, x}(h)\Gamma)$$
such that $F^i_{m, n, q, x}(g_{m, n, q, x}(h)\Gamma)$ is a nilsequence of Lipschitz norm at most $D_{q, n}$ and (suppressing notation)
$$\mathcal{D}_{N_{n, m}}f_n(\vec{T}^hx) = \chi_{m, n, q}(h) + f_{n, \mathrm{sml}}(h) + f_{n, \mathrm{psd}}(h)$$
where $\|f_{n, \mathrm{sml}}\|_{L^2[\pm N_{n, m}]^k} \le 2^{-1000(n + q)}$ and $\|f_{n, \mathrm{psd}}\|_{U(e_1[\pm N_{n, m}], \dots, e_j[\pm N_{n, m}], [\pm N_{n, m}]^k, \dots, [\pm N_{n, m}]^k)} \le \frac{1}{\mathcal{F}(D_{q, n})}$ and $\mathcal{F}$ is a growth function depending on $q$ and $n$ to be chosen later. It follows that
\begin{align*}
\|\mathcal{D}_{N_{n, m}}f_n(\vec{T}^hx) - \chi_{m, n, q}(h)\|_{L^2[\pm N_{n, m}]^k}^2 &\le \|f_{n, \mathrm{psd}} + f_{n, \mathrm{sml}}\|_{L^2[\pm N_{n, m}]^k}^2 \\
&\le 4\|f_{n, \mathrm{psd}}\|_{L^2[\pm N_{n, m}]^2}^2 + 2^{-1000(n + q)} \\
&\le 4(|\langle f_{n, \mathrm{psd}}, \mathcal{D}_{N_{n, m}}f_n \rangle| + |\langle f_{n, \mathrm{psd}}, f_{n, \mathrm{sml}}\rangle| \\
&+ |\langle f_{n, \mathrm{psd}}, \chi_{m, n, q}\rangle| + 2^{-1000(n + q)}) \\
&\le \frac{D_{k, n}^2}{\mathcal{F}(D_{k, n})} + 4 \cdot 2^{-1000(n + q)}
\le 2^{-100(n + q)}    
\end{align*}
if $\mathcal{F}$ is chosen appropriately. Since $L^2[\pm N_{n, m}]^k$ is compact, we may assume that there are finitely many such functions $\chi_{m, n, q}$. By ordering these functions and choosing which of these finitary functions leads to the smallest $L^2[\pm N]^k$ norm (and using the ordering to break ties), we may assume that the selection $x \mapsto \chi_{m, n, q, x}$ is measurable. We wish to make these structured finitary model functions give good $L^2$ approximations on all fixed scales, and not just growing scales $N_{n, m}$. This procedure involves a use of the discrete Hardy-Littlewood maximal inequality.

Let $E'_{m, n, q}$ denote the set of all $x$ such that for every choice of $\chi_{m, n, q}(h)$ with the aforementioned complexity bounds,
$$\sup_{1 \le H} \|\mathcal{D}_{N_{n, m}}f_n(\vec{T}^hx) - \chi_{m, n, q}(h)\|_{L^2[\pm H]^k}^2 \ge 2^{-10(n + q)}.$$
Now take $\chi_{m, n, q}$ be as we have constructed via the regularity procedure. Consider the discrete function $M$ defined via $h \mapsto |\mathcal{D}_{N_{n, m}}f_n(\vec{T}^hx) - \chi_{m, n, q}(h)|^2$ which is supported on $[\pm N_{n, m}]^k$ and extended to be zero outside its domain. By the discrete Hardy-Littlewood maximal theorem, the maximal function $M^*$ satisfies
$$\sup_{t > 0} t \lambda\{M^* > t\} \ll 2^{-100(n + q)}$$
where $\lambda$ is Lebesgue measure. This tells us that the proportion of $g$ such that
$$\sup_{1 \le H}\|\mathcal{D}_{N_{n, m}}f(\vec{T}^{h + g}x) - \chi_{m, n, q}(h + g)\|_{L^2[\pm H]^k}^2$$
is larger than $\gg 2^{-20(n + q)}$ is $\ll 2^{-80(n + q)}$ (so in particular $\ll 2^{-10(n + q)}$). (We note that these implicit constants which we have suppressed depend on $k$.) Noting that
$$\mathbb{E}_{g \in [\pm N_{n, m}]^k} 1(\vec{T}^g x \in E_{m, n, q}') \ll \mathbb{E}_g 1_{G_x}(g) \ll 2^{-10(q + n)},$$
we see that
$$2^{-10(q + n)} \gg \int \mathbb{E}_{g \in [\pm N_{n, m}]^k} 1(\vec{T}^g x \in E_{m, n, q}') dx$$
$$= \mathbb{E}_{g \in [\pm N_{n, m}]^k} \int 1(\vec{T}^g x \in E_{m, n, q}') dx = \mu(E_{m, n, q}').$$
Hence, the functions
$$\mathbb{E}_{m \in [M]} \sum_{q, n} 2^{5(q + n)}1_{E_{m, n, q}'}$$
are uniformly bounded in $L^1$, and hence by Fatou's lemma,
$$\liminf_{M \to \infty} \mathbb{E}_{m \in [M]} \sum_{q, n} 2^{5(q + n)} 1_{E_{m, n, q}'}$$
is in $L^1$. In particular, for almost every $x$, there exists some $C_x'$ such that
$$\liminf_{M \to \infty} \mathbb{E}_{m \in [M]} \sum_{q, n} 2^{5(n + q)}1_{E_{m, n, q}'} \le C'_x$$
and hence for an infinite sequence of $m$'s depending on $x$, we have
$$\sum_{q, n} 2^{5(n + q)} 1_{E_{m, n, q}'} \le C_x'.$$
In particular, there exists some $L_x$ such that for all $q \ge L_x$, all $m$ in that sequence, and all $n$, we have $x \not\in E_{m, n, q}'$. Hence,
$$\sup_{1 \le H} \|\mathcal{D}_{N_{n, m}}f(\vec{T}^h x) - \chi_{m, n, q}(h)\|_{L^2[\pm H]^k}^2 \le 2^{-10(n + q)}$$
for all $q \ge L_x$. Hence, by our earlier observation, we see that
$$\sup_{1 \le H} \|\mathcal{D} \vec{T}^h f_n - \chi_{m, n, q}(h)\|_{L^2[\pm H]^k}^2 \ll 2^{-10(n + q)} + C_x 2^{-9(m + n)}.$$
Hence, for all $H$, we have
$$\limsup_{m \to \infty} \|\mathcal{D} \vec{T}^h f_n - \chi_{m, n, q}(h)\|_{L^2[\pm H]^k}^2 \ll 2^{-10(n + q)}.$$
Now fix some $x$ that is generic and that satisfies all of the above. Thus, we have found ``structured finitary model functions" $\chi_{m, n, q}$ for $\mathcal{D}f_n$ which $L^2$ approximate $\mathcal{D}f_n$ on each scale; the rest of the proof involve ``taking limits to lift these finitary model functions to infinitary model functions." The nilfactor part will be simple enough as the space of all nilsystems is in some sense relatively compact. We follow \cite{Tao15}. To model the additional functions $(f_j)^i_{m, n, q}$, we use the Furstenberg correspondence principle.

\textbf{Step 4: Constructing the pro-nilsystem.} We now construct the inverse limit of the nilsystem. By taking products, we may assume that the nilsequence portions of $\chi_{m, n, q}$ lie on a fixed nilmanifold $G_{m, n, q}/\Gamma_{m, n, q}$. Since there are finitely many nilmanifolds of a given dimension and complexity up to isomorphism (since an isomorphism class of a nilmanifold is specified by its Mal'cev bases data $\mathcal{X}$ so relations between commutators of elements of $\mathcal{X}$, of which there are only finitely many of for a given dimension and complexity, specify the nilmanifold), we may assume that $G_{m, n, q}/\Gamma_{m, n, q}$ is independent of $m$. We now consider the polynomial sequence. Since we passed to a fixed nilmanifold $G_{n, q}/\Gamma_{n, q}$ for each element of the linear combination, we may assume that $g^i_{m, n, q}(\cdot) = g_{m, n, q}(\cdot)$ is independent of $i$. By Lemma~\ref{lem:liftlinear}, we may lift $g_{m, n, q}$ to a linear polynomial sequence $\widetilde{g}_{m, n, q}$ on $\widetilde{G}_{n, q}/\widetilde{\Gamma}_{n, q}$ with $\widetilde{g}_{m, n, q}(0) = \mathrm{id}_{\widetilde{G}_{n, q}}$. By \cite[Lemma 2.1]{Len23b}, we may assume that the coefficients of $\widetilde{g}_{m, n, q}$ (in particular $\widetilde{g}_{m, n, q}(e_i)$) are bounded.

By passing to a subsequence in $m$, we may assume that its coefficients converge, and so $\widetilde{g}_{m, n, q}$ converges to a polynomial sequence $g_{n, q}$ in $\widetilde{G}_{n, q}$. Hence, we pass to a nilsystem $(\widetilde{G}_{n, q}/\widetilde{\Gamma}_{n, q}, \vec{T}_{g_{n, q}})$. Also, by taking products, we may assume that $\widetilde{G}_{n, q}$ is increasing in $q$ and in $n$. By Arzela-Ascoli, we may also assume that each of the Lipschitz functions $F^i_{m, n, q}$ converges to a continuous function independent of $m$. We next pass to the closure of the image of the identity and by the equidistribution theory of nilsequences (e.g., \cite{Lei04, GT12}), it is a subnilmanifold with the action $\vec{T}_{g_{n, q}}$ ergodic. The system $(\widetilde{G}_{n, q}/\Gamma_{n, q}, \vec{T}_{g_{n, q}}, 0)$ admits an ergodic inverse limit $(Z, \vec{U}, 0)$. We write $\widetilde{F}^i_{n, q}$ to be the lift of $F^i_{n, q}$ in $Z$. 

\textbf{Step 5: Invoking the Furstenberg correspondence principle.} We now construct the remaining factors. We note that given $n$, $q$, and $x$ (and fixing $H$), there are $O_{n, q}$ many functions that appear in the collection $((f_{i'})^i_{m, n, q, x})_{i' \in [j], i \in \mathcal{O}}$. By passing to a subsequence in $m$ (and using a diagonalization argument), we may assume that these functions converge pointwise to functions $(f_{i'})^i_{q, n}$ that are independent of $m$. Hence, we see that $\chi_{m, n, q}$ converges pointwise to $\chi_{n, q}$. 

Next, let $\mathcal{F}_{i'}$ to be the set of all $(f_{i'})^i_{q, n}$ and their conjugates, and $\mathcal{F}$ to be the set of all $F^i_{n, q}$ and their conjugates; for $\phi \in \mathcal{F}$, we let $\vec{T}_{g_\phi}$ be the transformation corresponding to $\phi$ and $\widetilde{\phi}$ denote the lift of $\phi$ to $Z$. Let $\widetilde{\mathcal{F}_{i'}}$, $\widetilde{\mathcal{F}}$ be multi-sets where we include each element of $\mathcal{F}_{i'}$, $\mathcal{F}$ $n$ many times for each $n \in \mathbb{N}$, respectively. 

%By a diagonalization argument, we may choose a subsequence $H_q$ such that for any polynomial $P$ (of which there are countably many of) and shifts $h = ((s_{\alpha_{i'}})_{\alpha_{i'} \in \widetilde{\mathcal{F}}_{i'}} (s_\phi)_{\phi \in \widetilde{\mathcal{F}}})_{i' \in [j]}$ with all but finitely many zero,
%$$\mathbb{E}_{h \in [\pm H_q]^k} P((\vec{T}^{h_{\alpha_1}}\alpha_1(h))_{\alpha_1 \in \widetilde{\mathcal{F}_1}}, \dots, (\vec{T}^{h_{\alpha_j}}\alpha_j(h))_{\alpha_j \in \widetilde{\mathcal{F}_j}}, (\vec{T}_{g_\phi}^{h_\phi}\phi(h))_{\phi \in \widetilde{\mathcal{F}}})$$
%converges as $q \to \infty$.

Let $\mathbb{D}$ be the closed unit disk and consider $X_0 = \mathbb{D}^{\mathbb{Z}^{k - 1}}$, and $Y = X_0^{\mathcal{F}_1} \times \cdots \times X_0^{\mathcal{F}_j} \times Z$. Let $\sigma_{\mathcal{F}_{i'}}$ be the action of $\vec{T}^h$ on $X_0$ by omitting the $i'$th coordinate of $h$. Define $\vec{S}^h$ on $Y$ to be $\sigma_{\mathcal{F}_{i'}}^{h}$ on each of the $\mathcal{F}_{i'}$ components, and $\vec{U}^h$ on the $Z$ component. Now define $x_0 = ((\alpha_1)_{\alpha_1 \in \mathcal{F}_1}, \dots, (\alpha_j)_{\alpha_j \in \mathcal{F}_j}, 0)$, for each $\alpha_{i'} \in \mathcal{F}_{i'}$, we define $\widetilde{\alpha_{i'}} \in C(Y)$ via $\widetilde{\alpha_{i'}}(x) = (x_0)_{\alpha_{i'}}(0)$ and for each $\phi \in \mathcal{F}$, $\widetilde{\phi}$ naturally embeds in $C(Y)$. Consider the sequence of measures
$$\nu_p := \mathbb{E}_{h \in [\pm H_p]^k} \delta_{\vec{S}^{h}x_0}.$$
By the Banach-Alaoglu theorem, a subsequence of $\nu_q$ converges weak star to a measure $\nu$. Hence, passing to such a subsequence, we may assume that for any polynomial $P$ with rational coefficients and shifts $((h_{\alpha_i})_{\alpha_i \in \mathcal{F}_i}, (h_\phi)_{\phi \in \mathcal{F}})$ with all but finitely many zero,
$$\lim_{p \to \infty} \mathbb{E}_{h \in [\pm H_p]^k} P((\vec{T}^{h_{\alpha_1}}\alpha_1(h))_{\alpha_1 \in \widetilde{\mathcal{F}_1}}, \dots, (\vec{T}^{h_{\alpha_j}}\alpha_j(h))_{\alpha_j \in \widetilde{\mathcal{F}_j}}, (\vec{T}_{g_\phi}^{h_\phi}\phi(h))_{\phi \in \widetilde{\mathcal{F}}})$$
$$= \int P((\vec{S}^{h_{\alpha_1}}\widetilde{\alpha_1})_{\alpha_1 \in \widetilde{\mathcal{F}_1}}, \dots, (\vec{S}^{h_{\alpha_j}}\widetilde{\alpha_j})_{\alpha_j \in \widetilde{\mathcal{F}_j}}, (\vec{S}_{g_\phi}^{h_\phi}\widetilde{\phi})_{\phi \in \widetilde{\mathcal{F}}}) d\nu.$$
Now let $\mathcal{Y}$ be the Borel sigma algebra on $Y$ and $\mathbf{Y} = (Y, \mathcal{Y}, \nu)$.

\textbf{Step 6: Completing the proof.} The functions $\chi_{n, q}$ lift to $\widetilde{\chi}_{n, q}$ on $L^\infty(\mathbf{Y})$. Taking a limit as $m$ goes to infinity, we have
\begin{equation}\label{eq:finitarymodelapproximation}
\|\mathcal{D} \vec{T}^h f_n - \chi_{n, q}(h)\|_{L^2[\pm H_p]^k}^2 \ll 2^{-10(n + q)}.
\end{equation}
Thus, we have for $q' \ge q$,
\[\|\chi_{n, q'}(h) - \chi_{n, q}(h)\|_{L^2[\pm H_p]^k}^2 \ll 2^{-10(n + q)}. \]
Taking a limit as $p$ goes to infinity, we have
$$\int |\widetilde{\chi}_{n, q'} - \widetilde{\chi}_{n, q}|^2 d\nu \ll 2^{-10(n + q)}.$$
Thus, $\widetilde{\chi}_{n, q}$ forms a Cauchy sequence in $q$; hence it converges to $\widetilde{\chi}_n \in L^\infty(Y, \nu, \vec{S})$. Let $\widetilde{f}_n = \widetilde{\chi}_n$. We see by applying the Cauchy-Schwarz inequality, the triangle inequality, taking limit as $p$ goes to infinity, and then taking a limit as $q$ goes to infinity in \eqref{eq:finitarymodelapproximation} that
\[\int_Y \widetilde{f}_n d\nu = \int_X \mathcal{D} f_n d\mu.\]
We observe that \eqref{eq:intertwine} holds by observing that the proof of \eqref{eq:finitarymodelapproximation} holds for $\mathcal{D}f_n$ replaced with 
$$P(\vec{T}^{h_1}\mathcal{D}f_{m_1}, \dots, \vec{T}^{h_n}\mathcal{D} f_{m_n})$$ 
and $\chi_{n}(h)$ replaced with $P(\vec{S}^{h_1}\chi_{m_1}, \dots, \vec{S}^{h_n}\chi_{m_n})(h)$ for any polynomial $P$ and any shifts $h_1, \dots, h_n$. Finally, we must ensure ergodicity of the system. By taking a disintegration $\nu = \int \nu_\omega d\omega$ with respect to $I(\vec{S})$ (which is possible since $\mathbf{Y}$ is regular), we see via a pushforward map that this gives a disintegration of $(X, \vec{T}, \mu, \mathcal{Z})$. It follows by ergodicity that almost every measure of the disintegration agrees with $\mu$ and hence we may choose one of them as a substitute for $\nu$. This completes the proof.
\end{proof}

\section{Obtaining the nil + null decomposition}
For the purposes of using Theorem~\ref{thm:ergodicinversetheorem}, we require the following lemma.
\begin{lemma}\label{lem:joiningconstruction}
Let $\mathbf{X}$ be a probability space with factors $\mathbf{Z}_1, \dots, \mathbf{Z}_k$ and let $\mathbf{E}_i$ be extensions of $\mathbf{Z}_i$ for $1 \le i \le k$. Then there exists a measure $\nu$ on the underlying space of $\mathbf{E}_1 \times \mathbf{X}\times \cdots \times \mathbf{E}_k$ such that given $f_0 \in L^\infty(\mathbf{X})$, $f_i \in L^\infty(\mathbf{E}_i)$,
$$\int_{\mathbf{X} \times \mathbf{E}_1 \times \cdots \times \mathbf{E}_k} f_0 \otimes f_1 \otimes f_2 \otimes \cdots \otimes f_k d\nu = \int f_0 \cdot \mathbb{E}(f_1|\mathbf{Z}_1) \cdots \mathbb{E}(f_k |\mathbf{Z}_k) d\mu_{\mathbf{X}}.$$
\end{lemma}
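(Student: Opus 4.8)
\textbf{Proof proposal for Lemma~\ref{lem:joiningconstruction}.}

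The plan is to build $\nu$ by iterated relative independence over the factor maps $\mathbf{E}_i \to \mathbf{Z}_i$, together with the identity factor on the central copy of $\mathbf{X}$. First I would record that each $\mathbf{Z}_i$ is itself a factor of $\mathbf{X}$, so $\mathbf{X}$ is an extension of $\mathbf{Z}_i$ for every $i$. The naive idea is to form the $(k+1)$-fold relatively independent joining of $\mathbf{E}_1, \mathbf{X}, \dots, \mathbf{E}_k$ over the $\mathbf{Z}_i$; since the $\mathbf{Z}_i$ are generally distinct factors this is not literally a single relatively independent joining, so the cleanest route is to disintegrate. Fix disintegrations $\mu_{\mathbf{E}_i} = \int_{Z_i} (\mu_{\mathbf{E}_i})_{z_i}\, d\mu_{\mathbf{Z}_i}(z_i)$ of each extension over its base, and let $\pi_i\colon X \to Z_i$ denote the factor maps. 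Then define
\[
\nu = \int_{X} (\mu_{\mathbf{E}_1})_{\pi_1(x)} \times \delta_x \times (\mu_{\mathbf{E}_2})_{\pi_2(x)} \times \cdots \times (\mu_{\mathbf{E}_k})_{\pi_k(x)} \, d\mu_{\mathbf{X}}(x),
\]
a probability measure on the underlying space of $\mathbf{E}_1 \times \mathbf{X} \times \cdots \times \mathbf{E}_k$. This is well-defined because regularity of all spaces (assumed throughout per Section~2.6) guarantees the disintegrations exist and depend measurably on the base point, and $x \mapsto \pi_i(x)$ is measurable.

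Next I would verify the stated integral identity by the usual reduction to product test functions, which generate the product $\sigma$-algebra. For $f_0 \in L^\infty(\mathbf{X})$ and $f_i \in L^\infty(\mathbf{E}_i)$, unwinding the definition of $\nu$ gives
\[
\int f_0 \otimes f_1 \otimes \cdots \otimes f_k \, d\nu = \int_X f_0(x) \prod_{i=1}^k \left( \int_{E_i} f_i \, d(\mu_{\mathbf{E}_i})_{\pi_i(x)} \right) d\mu_{\mathbf{X}}(x).
\]
The inner integral $\int_{E_i} f_i \, d(\mu_{\mathbf{E}_i})_{z_i}$ is, by the defining property of the disintegration, exactly $\mathbb{E}_{\mu_{\mathbf{E}_i}}(f_i \mid \mathbf{Z}_i)$ evaluated at $z_i = \pi_i(x)$; pulling this back along $\pi_i$ identifies it with $\mathbb{E}_{\mu_{\mathbf{X}}}(f_i \mid \mathbf{Z}_i)$ viewed as a function on $X$ (here one uses that conditional expectation onto a factor is computed the same way whether one sits upstairs in $\mathbf{E}_i$ or in $\mathbf{X}$, since $\mathbf{Z}_i$ is a common factor — this is the content of the convention fixed right after the conditional-expectation definition, combined with the tower property). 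Substituting yields $\int f_0 \cdot \prod_i \mathbb{E}(f_i \mid \mathbf{Z}_i) \, d\mu_{\mathbf{X}}$, as desired. One should also check $\nu$ is genuinely a probability measure and that each coordinate projection recovers the correct marginal (e.g.\ projecting to $\mathbf{E}_i$ gives $\int (\mu_{\mathbf{E}_i})_{\pi_i(x)} d\mu_{\mathbf{X}}(x) = \int (\mu_{\mathbf{E}_i})_{z_i} d\mu_{\mathbf{Z}_i}(z_i) = \mu_{\mathbf{E}_i}$), though the lemma as stated only demands the displayed identity.

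The main obstacle is bookkeeping rather than anything deep: one must make sure the disintegration used for each $\mathbf{E}_i$ is the one relative to the \emph{correct} factor $\mathbf{Z}_i \subseteq \mathbf{X}$ and that the measurability of $x \mapsto (\mu_{\mathbf{E}_i})_{\pi_i(x)}$ is legitimate (it is, being the composition of the measurable map $\pi_i$ with the measurable assignment $z_i \mapsto (\mu_{\mathbf{E}_i})_{z_i}$). A minor subtlety worth a sentence is the identification of $\mathbb{E}_{\mu_{\mathbf{E}_i}}(f_i\mid \mathbf{Z}_i)\circ\pi_i$ with a function on $X$ that serves as $\mathbb{E}_{\mu_{\mathbf{X}}}(f_i\mid\mathbf{Z}_i)$ when $f_i$ is only $\mathbf{E}_i$-measurable and not $\mathbf{X}$-measurable — the point is that we never need $f_i$ itself on $X$, only its conditional expectation onto the common base $\mathbf{Z}_i$, which lives on $Z_i$ and hence on both $E_i$ and $X$. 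With these points addressed the proof is complete.
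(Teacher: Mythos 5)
Your proof is correct, and it takes a genuinely different (and more direct) route than the paper's. The paper builds $\nu$ iteratively: it sets $\nu^1 = \mu_{\mathbf{E}_1} \times_{\mathbf{Z}_1} \mu_{\mathbf{X}}$, then $\nu^j = \nu^{j-1} \times_{\mathbf{X}} (\mu_{\mathbf{E}_j} \times_{\mathbf{Z}_j} \mu_{\mathbf{X}})$, projects $\nu^k$ down to $\mathbf{E}_1 \times \mathbf{X} \times \prod_{j\ge 2} \mathbf{E}_j$, and then verifies the integral identity by an induction on $j$ that tracks a conditional expectation identity $\mathbb{E}_{\nu^\ell}(f_1\otimes f_0\otimes f_2\otimes 1\otimes\cdots\otimes f_\ell\otimes 1\mid\mathbf{X}) = f_0\prod_{i\le\ell}\mathbb{E}(f_i\mid\mathbf{Z}_i)$ at each step. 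You instead define $\nu$ in one shot as the fiberwise product $\int_X (\mu_{\mathbf{E}_1})_{\pi_1(x)}\times\delta_x\times\cdots\times(\mu_{\mathbf{E}_k})_{\pi_k(x)}\,d\mu_{\mathbf{X}}(x)$ and read off the identity directly from the defining property of disintegration, with no induction. If you unwind the paper's iterated relatively-independent joinings through their own disintegration-based definition, you find the two constructions produce the same measure, so the difference is entirely in presentation: your route reaches the identity in one line, while the paper's keeps everything at the level of relatively independent joinings and conditional expectations (never touching explicit disintegrations), which keeps the argument uniform with the rest of their measure-theoretic setup and records an intermediate conditional-expectation identity that could be reused. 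Your handling of the one subtle point — that $\mathbb{E}(f_i\mid\mathbf{Z}_i)$ lives on $Z_i$, hence pulls back to both $E_i$ and $X$, so no version of $f_i$ on $X$ is ever needed — is exactly right and is the only place where sloppiness could have caused trouble.
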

\begin{proof}
We give an inductive construction. We define $\nu^1 = \mu_{\mathbf{E}_1} \times_{\mathbf{Z}_1} \mu_{\mathbf{X}}$ and for $j > 1$, $\nu^j = \nu^{j - 1} \times_{\mathbf{X}} (\mu_{\mathbf{E}_j} \times_{\mathbf{Z}_j} \mu_{\mathbf{X}}).$ This gives a measure on the underlying space of $(\mathbf{E}_1 \times \mathbf{X}) \times \prod_{j = 2}^k (\mathbf{E}_j \times \mathbf{X})$. We now define $\nu$ to be the projection of $\nu^k$ to $(\mathbf{E}_1 \times \mathbf{X} \times \prod_{j = 2}^k \mathbf{E}_j)$. Let $\mu_i = \mu_{\mathbf{E}_i} \times_{\mathbf{Z}_i} \mu_{\mathbf{X}}$. Then for $h \in L^\infty(\mathbf{X})$,
$$\mathbb{E}_{\mu_i}(f_i \otimes h| \mathbf{X}) = \mathbb{E}(f_i|\mathbf{Z}_i) h$$
since if $g \in L^\infty(\mathbf{X})$, we see that
$$\int (f_i \otimes h) \cdot(1 \otimes g) d\mu_i = \int \mathbb{E}(f_i|\mathbf{Z}_i) \cdot \mathbb{E}( gh|\mathbf{Z}_i) d\mu_{\mathbf{Z}_i} =\int \mathbb{E}(f_i|\mathbf{Z}_i) \cdot h \cdot g d\mu_{\mathbf{X}}.$$
From this, we claim by induction that
$$\mathbb{E}_{\nu^{\ell}}(f_1 \otimes f_0 \otimes f_2 \otimes 1 \otimes f_3 \otimes 1 \otimes \cdots \otimes f_{\ell} \otimes 1|\mathbf{X}) = f_0 \cdot \mathbb{E}(f_1 |\mathbf{Z}_1) \cdots \mathbb{E}(f_\ell |\mathbf{Z}_\ell).$$
Let $F = f_1 \otimes f_0 \otimes f_2 \otimes 1 \otimes f_3 \otimes 1 \otimes \cdots \otimes f_{\ell} \otimes 1$. Assuming that this claim holds for $\ell - 1$,
\begin{align*}
\int F \cdot (1 \otimes \cdots \otimes g) d\nu^\ell &= \int  f_1 \otimes f_0 \otimes f_2 \otimes 1 \otimes f_3 \otimes 1 \otimes \cdots \otimes f_{\ell} \otimes g d\nu^\ell \\
&= \int \mathbb{E}_{\nu^{\ell - 1}}(f_1 \otimes f_0 \otimes f_2 \cdots \otimes f_{\ell - 1} \otimes 1|\mathbf{X}) \cdot \mathbb{E}_{\mu_\ell}(f_\ell \otimes g|\mathbf{X}) d\mu_{\mathbf{X}} \\
&= \int \mathbb{E}_{\nu^{\ell - 1}}(1 \otimes \cdots \otimes f_0 \cdot\mathbb{E}(f_1 |\mathbf{Z}_1) \cdots \mathbb{E}(f_{\ell - 1} |\mathbf{Z}_{\ell - 1})|\mathbf{X}) \mathbb{E}_{\mu_\ell}(1 \otimes \mathbb{E}(f_\ell|\mathbf{Z}_\ell) g|\mathbf{X}) d\mu_\mathbf{X} \\
&= \int f_0 \cdot \mathbb{E}(f_1|\mathbf{Z}_1)\cdots \mathbb{E}(f_{\ell - 1}|\mathbf{Z}_{\ell - 1}) \cdot \mathbb{E}(f_\ell|\mathbf{Z}_\ell)g d\mu_\mathbf{X}.
\end{align*}
(Note that the projection of $\nu^\ell$ and $\mu_\ell$ to their final factors agree with $\mu_{\mathbf{X}}$.) This gives the desired claim. Finally, the lemma follows from the last line of the above calculation for $g \equiv 1$, $\ell = k$.
\end{proof}

We also require a twisted generalized von Neumann-type theorem.
\begin{prop}\label{prop:twistedgeneralizedvonneumann}
Let $\mathbf{Z} = (Z, \nu, T_1, \dots, T_k, \mathcal{Z})$ be an ergodic $\mathbb{Z}^k$ system with $k$-step nilfactors $\mathbf{\Xi_1}, \dots, \mathbf{\Xi_k}$. Let $\chi_i\in L^\infty(\mathbf{\Xi_i})$ be smooth vertical characters, $\epsilon > 0$, and $f_0, f_1, f_2, \dots, f_j \in L^\infty(\mathbf{Z})$. Then there exists a constant $C_{\chi_1, \dots, \chi_k, \epsilon} > 0$ such that
\begin{equation}\label{eq:gvn}
\lim_{N \to \infty} \mathbb{E}_{n \in [\pm N]} \left|\int f_0\cdot T_1^n\chi_1 \cdots T_k^n \chi_k \cdot T_1^n f_1 \cdots T_j^n f_j d\nu\right|^2 \le \min_{i \ge 1} C_{\chi_1, \dots, \chi_k, \epsilon}\|f_i\|_{\vec{T}, \dots, \vec{T}, T_i, T_1T_i^{-1}, \dots, T_jT_i^{-1}}^2 + \epsilon    
\end{equation}
where there are $k - j + 1$ copies of $\vec{T}$ and $T_1T_i^{-1}, \dots, T_jT_i^{-1}$ ranges over all $\ell$ for $\ell \neq i$.
\end{prop}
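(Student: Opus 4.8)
The plan is to prove Proposition~\ref{prop:twistedgeneralizedvonneumann} by a van der Corput iteration, using a weak-regularity decomposition of the nilcharacter product $\prod_i T_i^n\chi_i$ so that it can be absorbed into the conjugation operators at each step. First I would fix $i = 1$ without loss of generality (the general case is symmetric). The key observation is that since each $\chi_i$ lives on a $k$-step nilfactor and is a vertical character, products and shifts of the $\chi_i$'s vary in a ``controlled'' way: the sequence $n \mapsto \prod_i T_i^n \chi_i$ (evaluated pointwise, say on the relevant nilsystem) is a nilsequence of bounded complexity. The point of the $\epsilon$ in \eqref{eq:gvn} is precisely that we only need to control this up to an error of size $\epsilon$, so we may apply Lemma~\ref{lem:weakregularity} (in the ergodic/$L^2$ setting, or on the finitary model coming from the nilfactor) to write $\prod_i T_i^n\chi_i = \Phi_{\mathrm{str}}(n) + \Phi_{\mathrm{psd}}(n)$ where $\Phi_{\mathrm{str}}$ is a bounded-complexity combination of functions whose $n$-dependence factors through finitely many one-dimensional characters (or bracket-polynomial phases) and $\Phi_{\mathrm{psd}}$ has small Gowers-type norm in the $\vec{T}$-directions.

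The main steps, in order: (1) Reduce to bounding $\lim_N \mathbb{E}_{n\in[\pm N]}|\int f_0 \Phi_{\mathrm{str}}(n) \prod_{\ell=1}^j T_\ell^n f_\ell\, d\nu|^2$ up to the error $\epsilon$, using Cauchy--Schwarz and the fact that the $\Phi_{\mathrm{psd}}$-contribution is negligible after averaging (here one uses that $f_0,\dots,f_j$ are bounded and the Gowers--Cauchy--Schwarz inequality relating the average to the relevant box/Host--Kra seminorm, following the standard generalized von Neumann argument of Bergelson--Host--Kra). (2) For the structured part, since $\Phi_{\mathrm{str}}(n)$ is a bounded linear combination of terms of the form $c(n)\psi(n)$ with $c$ a scalar sequence of bounded complexity and $\psi$ lower-order, expand the square, interchange sum and integral, and apply the van der Corput lemma in the variable $n$. (3) Each application of van der Corput introduces a shift $h$, a factor $T_1^h$ (or $T_\ell^h$) acting on the functions, and replaces $f_1$ by $\Delta_{T_1^h}f_1 = \overline{T_1^h f_1}f_1$ and each $f_\ell$ ($\ell \neq 1$) by $\overline{T_\ell^h f_\ell} f_\ell$; crucially, the nilcharacter factors $T_i^n\chi_i$ get differenced too, but a difference of a $k$-step nilcharacter in the direction of $\vec{T}$ drops its effective degree, and after $k - j + 1$ differencings in the $\vec{T}$ directions together with one differencing each in the $T_1, T_1 T_\ell^{-1}$ directions, all the nilcharacter contributions become constants (or scalars of modulus one) that can be pulled out. (4) Iterating exactly $k-j+1$ times in the full $\vec{T}=(T_1,\dots,T_k)$ directions and then once in each of $T_1$ and $T_1 T_\ell^{-1}$ for $\ell\neq 1$ yields, after repeated Cauchy--Schwarz, precisely the seminorm $\|f_1\|_{\vec{T},\dots,\vec{T},T_1,T_1T_2^{-1},\dots,T_1T_j^{-1}}$ on the right-hand side, with the constant $C_{\chi_1,\dots,\chi_k,\epsilon}$ absorbing the complexity $1/\epsilon^{O(1)}$ of the weak-regularity decomposition and the number of terms. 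The change of variables converting $T_\ell^n f_\ell$-differences into $T_\ell T_1^{-1}$-directions is the standard trick: differencing $T_1^n f_1 \cdot T_\ell^n f_\ell$ in $n$ by $h$ in the $e_1$-direction produces $T_\ell^{n}(T_\ell^h T_1^{-h})\cdots$, i.e. the commuting transformation $T_\ell T_1^{-1}$ appears.

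The hard part, I expect, will be step (2)--(3): making rigorous that the nilcharacter product, after the weak-regularity split, genuinely behaves like a degree-$k$ polynomial phase whose iterated $\vec{T}$-differences vanish, \emph{uniformly in the van der Corput shift variables}. One must be careful that the ``structured'' part's $n$-dependence, when differenced alongside the $f_\ell$'s, does not reintroduce uncontrolled terms; the cleanest route is to work on the $k$-step nilfactors $\mathbf{\Xi}_i$ directly (where $\chi_i$ is an honest smooth vertical character) and use that a $(k+1)$-fold iterated multiplicative derivative of a degree-$k$ nilsequence is identically one up to a vertical-character phase that integrates away. A secondary technical point is ensuring $C_{\chi_1,\dots,\chi_k,\epsilon}$ genuinely depends only on the $\chi_i$ and $\epsilon$ and not on $f_0,\dots,f_j$ — this follows because the number of Cauchy--Schwarz steps and the complexity of the weak-regularity decomposition depend only on $k$, the $\chi_i$, and $\epsilon$, while each step only costs a bounded power and the functions stay $1$-bounded throughout.
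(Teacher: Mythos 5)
Your proposal has a genuine gap, and it is precisely at the step you yourself flag as ``the hard part.''

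The weak-regularity decomposition of $\prod_i T_i^n\chi_i$ is the first problem. You propose to write this object as $\Phi_{\mathrm{str}}(n) + \Phi_{\mathrm{psd}}(n)$ via Lemma~\ref{lem:weakregularity}, but the object in question is a sequence of $L^\infty$ functions on the system (or, equivalently, a nilsequence-valued function), not a scalar sequence in a Hilbert space with a designated structured class. There is no natural choice of Hilbert space $H$ and structured set $S$ for which the weak regularity lemma would give you that $\Phi_{\mathrm{str}}$ ``factors through finitely many one-dimensional characters'' while $\Phi_{\mathrm{psd}}$ has small box seminorm in the $\vec{T}$-directions: these are two unrelated conditions and neither follows from the abstract weak regularity lemma as applied to a nilsequence. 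What the paper does instead is concrete Fourier expansion into vertical characters (Lemma~\ref{lem:nilcharacters}): this is a finite decomposition with an explicit $\epsilon$-error, and it is applied not to the product $\prod_i T_i^n\chi_i$ directly but to the \emph{differenced} characters $\Delta_{\vec{T}^m}\chi_i$ at each stage of the iteration. The $\epsilon$ in the statement exists to absorb these repeated Fourier-approximation errors, not to absorb a pseudorandom piece.

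The second and more serious problem is the uniformity in the differencing. You correctly observe in step (3) that after enough iterated derivatives the nilcharacter should ``become constant,'' but the obstacle you identify --- that the differenced objects depend on the van der Corput shift variables in an a priori uncontrolled way --- is exactly the crux, and you leave it unresolved. The paper resolves it with Lemma~\ref{lem:differentiation}: for a vertical character $\chi_i$ on a $k$-step nilfactor, the $m$-difference $\Delta_{\vec{T}^m}\chi_i$ is realized on a degree $k-1$ nilsystem that is a subnilmanifold of a \emph{fixed, $m$-independent} ambient nilmanifold $\overline{G^\square}/\overline{\Gamma^\square}$, with Lipschitz constants bounded uniformly in $m$. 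Without this statement, there is no uniform bound on the complexity of the structured objects that emerge after each Cauchy--Schwarz step, and the ``constant $C_{\chi_1,\dots,\chi_k,\epsilon}$'' would silently depend on the shift variables and on $N$. This is not a detail you can defer: it is the content of the proposition. Finally, a minor structural point: the paper does not do all $k-j+1$ copies of $\vec{T}$ first; it uses one application of the mean ergodic theorem to introduce a single $\vec{T}^m$-average, then runs the $j$ van der Corput steps (converting $T_\ell^n$-dependence into $T_\ell T_i^{-1}$-directions via the shift by $T_i^{-n}$, as you describe), and only then uses $k-j-1$ further Cauchy--Schwarz steps plus a final mean ergodic theorem application to obtain the remaining $\vec{T}$-directions. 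The interleaving matters because the vertical-character Fourier expansion of the differenced $\chi$'s has to be performed after each step, and the degree of the character has to drop by exactly one each time so that after $k$ steps what remains is constant and can be pulled out of the integral before the last Cauchy--Schwarz.
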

\begin{remark}
The limit on the left hand side exists by \cite{Tao08, A09}.
\end{remark}
\begin{proof}
Let $\Lambda$ denote the left hand side of \eqref{eq:gvn}. It suffices to show seminorm control for $f_j$. We also apply induction on $j$. By the mean ergodic theorem, we may write
$$\int f_0\cdot T_1^n\chi_1 \cdots T_k^n \chi_k \cdot T_1^n f_1 \cdots T_j^n f_j d\nu = \lim_{M \to \infty} \mathbb{E}_{m \in [\pm M]^k} \vec{T}^m(f_0\cdot T_1^n\chi_1 \cdots T_k^n \chi_k \cdot T_1^n f_1 \cdots T_j^n f_j)$$
where we are taking an $L^2$ limit. Thus,
$$\Lambda \le \lim_{N \to \infty} \lim_{M \to \infty} \mathbb{E}_{n \in [\pm N]} \mathbb{E}_{m \in [\pm M]^k} \int \Delta_{\vec{T}^m} (f_0\cdot T_1^n\chi_1 \cdots T_k^n \chi_k \cdot T_1^n f_1 \cdots T_j^n f_j) d\nu.$$
Since $\chi_i$ are vertical characters, $\Delta_{\vec{T}^m}\chi_i$ can be realized as smooth functions on degree $k - 1$ nilsystems by Lemma~\ref{lem:differentiation}. While these nilsystems may not necessarily have complexity bounded uniformly in $m$, Lemma~\ref{lem:differentiation} implies that these nilsystems all are subnilsystems of a common nilsystem and crucially, the underlying smooth functions of $\Delta_{\vec{T}^m}\chi_i$ on the degree $k -  2$ nilsystem are in fact restrictions of functions on the common degree $k - 1$ nilmanifold with uniformly bounded (in $m$) Lipschitz norm. Thus, by Lemma~\ref{lem:nilcharacters}, we may Fourier approximate each $\Delta_{\vec{T}^m}\chi_i$ up to $\epsilon$ error into degree $k - 1$ nilcharacters with respect to the common nilmanifold, and pigeonholing in one of the Fourier coefficients $\chi_{i, m}$ and suppressing $\epsilon$, we have
$$\Lambda \ll \lim_{N \to \infty} \lim_{M \to \infty} \mathbb{E}_{n \in [\pm N]}\mathbb{E}_{m \in [\pm M]^k} \int \Delta_{\vec{T}^m}(f_0\cdot T_1^n f_1 \cdots T_j^n f_j) \cdot T_1^n \chi_{1, m} \cdots T_k^n\chi_{k, m} d\nu.$$
Applying the Cauchy-Schwarz (with one of the functions being $\Delta_{\vec{T}^m}f_0$) and van der Corput inequlities, we obtain
$$\Lambda^2 \ll \limsup_{N \to \infty} \limsup_{M \to \infty} \mathbb{E}_{n \in [\pm N]} \mathbb{E}_{m \in [\pm M]^k} \mathbb{E}_{n' \in [\pm N]} \mu_{N'}(n') \int  T_1^n\Delta_{T_1^{n'}, \vec{T}^m}(f_1) \cdots T_k^n\Delta_{T_j^{n'}, \vec{T}^m}(f_j)$$
$$T_1^n\Delta_{T_1^{n'}}(\chi_{1, m}) \cdots (T_k)^n\Delta_{T_k^{n'}}\chi_{k, m} d\nu$$
where $\mu_N(n) = 2_{[\pm (2N - 1)]}(n) \cdot \left(1 - \frac{|n|}{2N - 1}\right)$ is the Fej\'er kernel. Now applying $T_1^{-n}$, we obtain
$$\Lambda^2 \ll \limsup_{N \to \infty} \limsup_{M \to \infty} \mathbb{E}_{n \in [\pm N]} \mathbb{E}_{m \in [\pm M]^k} \mathbb{E}_{n' \in [\pm N]} \mu_{N}(n') \int  \Delta_{T_1^{n'}, \vec{T}^m}(f_1) \cdots (T_kT_1^{-1})^n\Delta_{T_j^{n'}, \vec{T}^m}(f_j)$$
$$\Delta_{T_1^n}(\chi_{1, m}) \cdots (T_kT_1^{-1})^n\Delta_{T_k^n}(\chi_{k, m}) d\nu.$$
We now Fourier expand $\Delta_{T_i^{n'}}\chi_{1, m}$ and pigeonholing in a Fourier coefficient to a degree $k - 2$ nilsequence $\chi_{1, m, n'}$ (with complexity and Lipschitz norm in an ambient nilmanifold uniform in $m$ and $n'$), we obtain a bound on the left hand side of \eqref{eq:gvn} by
$$\Lambda^2 \ll \lim_{N \to \infty} \limsup_{M \to \infty} \mathbb{E}_{n \in [\pm N]} \mathbb{E}_{m \in [\pm M]^k} \mathbb{E}_{n' \in [\pm N]} \mu_{N}(n') \int  \Delta_{T_1^{n'}, \vec{T}^m}(f_1) \cdots (T_kT_1^{-1})^n$$
$$\Delta_{T_j^{n'}, \vec{T}^m}(f_j) \cdot \chi_{1, m, n'} \cdots (T_kT_1^{-1})^n\chi_{k, m, n'} d\nu.$$
Now, applying this procedure $j - 1$ more times (and suppressing $\epsilon$ each time), we eventually obtain
$$\Lambda^{2^j} \ll \limsup_{N \to \infty} \limsup_{M \to \infty} \mathbb{E}_{m \in [\pm M]^k} \mathbb{E}_{n \in [\pm N]} \mathbb{E}_{h_1, \dots, h_{j} \in [\pm N]} \mu_{N}(\vec{h}) \int (T_{j + 1}T_j^{-1})^n\chi_{j + 1, m, \vec{h}}\cdots (T_kT_j^{-1})^n\chi_{k, m, \vec{h}} $$
$$\Delta_{\vec{T}^m, T_j^{h_1}, (T_jT_1^{-1})^{h_2}, \dots, (T_jT_{j - 1}^{-1})^{h_{j}}}(f_j) d\nu.$$
Now Fourier expanding $\chi_{i, m, \vec{h}}$ with respect to the vertical torus of the common ambient nilmanifold of bounded dimension and complexity and applying the Cauchy-Schwarz inequality $k - j - 1$ times while suppressing $\epsilon$ again (and also using the mean ergodic theorem for $\vec{T}$), we eventually obtain (see also the proof of Lemma~\ref{lem:converse})
$$\Lambda^{2^{k - 1}} \ll \limsup_{N \to \infty} \limsup_{M, M_1, \dots, M_{k - j - 1} \to \infty} \mathbb{E}_{n, h_1, \dots, h_j \in [\pm N]} \mathbb{E}_{m \in [\pm M], k_i \in [\pm M_i]} \mu_N(\vec{h}) \int \Delta_{\vec{T}^{k_{k - j - 1}}} \chi_{n, m, \vec{h}, k_1, \dots, k_{k - j - 1}}$$
$$\Delta_{\vec{T}^m, \dots \vec{T}^{k_{k - j - 1}}, T_j^{h_1}, (T_jT_1^{-1})^{h_2}, \dots, (T_jT_{j - 1}^{-1})^{h_{j}}}(f_j) d\nu$$
where
$$\chi_{n, m, \vec{h}, k_1, \dots, k_{k - j - 1}} = (T_{j + 1}T_j^{-1})^n\chi_{j + 1, m, \vec{h}, k_1, \dots, k_{k - j - 1}}\cdots (T_kT_j^{-1})^n\chi_{k, m, \vec{h}, k_1, \dots, k_{k - j - 1}}.$$
Note that $\chi_{n, m, \vec{h}, k_1, \dots, k_{k - j - 1}}$ is constant, so applying the Cauchy-Schwarz inequality and using the mean ergodic theorem for $\vec{T}$ once more, we obtain a bound by
$$\Lambda^{2^{k}} \ll \limsup_{N \to \infty} \mathbb{E}_{h_1, h_2, \dots, h_j \in [\pm N]} \mu_N(\vec{h}) \|\Delta_{T_j^{h_1}, (T_jT_1^{-1})^{h_2}, \dots, (T_jT_{j - 1}^{-1})^{h_{j - 1}}}(f_j)\|_{\vec{T}, \dots, \vec{T}}^{2^{k - j + 1}}$$
$$\ll \limsup_{N \to \infty} \mathbb{E}_{h_1, h_2, \dots, h_j \in [\pm N]}\|\Delta_{T_j^{h_1}, (T_jT_1^{-1})^{h_2}, \dots, (T_jT_{j - 1}^{-1})^{h_{j - 1}}}(f_j)\|_{\vec{T}, \dots, \vec{T}}^{2^{k - j}}.$$
By Theorem~\ref{thm:austinergodictheorem} (or rather the remark after that theorem), the above limit is equal to 
$$\|f_j\|_{T_1T_j^{-1}, T_2T_j^{-1}, \dots, T_{j - 1}T_j^{-1}, \vec{T}, \dots, \vec{T}}^{2^{k + 1}}.$$
The proposition follows from taking a $2^{k}$-th root.
\end{proof}

We are now ready to prove the main theorem.
\begin{proof}[Proof of Theorem~\ref{thm:maintheorem}]
We give an inductive argument. We show that for any probability space $\mathbf{X} = (X, \mathcal{X}, \mu)$ and commuting measure preserving transformations $T_1, \dots, T_k$ on $\mathbf{X}$, and $\chi_1, \dots, \chi_k$ that are lifts of measureable functions on nilfactors of degree at most $k$, the following may be written as a sum of a degree $k$ nilsequence and a null-sequence.
\begin{equation}\label{eq:multicorrelation}
\int f_0\cdot T_1^n(\chi_1 f_1) \cdots T_j^n (f_j \chi_j) \cdots T_k^n\chi_k d\mu
\end{equation}
We do not lose any generality by assuming that $\chi_1 = \cdots = \chi_j \equiv 1$. The base case of $j = 0$ follows from Lemma~\ref{lem:Leibmanresult2}. We now prove this for $j$ assuming $j - 1$. Theorem~\ref{thm:maintheorem} follows from this by setting $j = k$. By Proposition~\ref{prop:Leibmanresult}, the ergodic decomposition theorem, and the fact that every ergodic component of a nilfactor is still a nilsystem (which follows from \cite{Lei04}), we may assume that $\vec{T}$ is ergodic. By smooth approximation and permitting an error of a uniformly bounded sequence of arbitrarily small $\ell^\infty(\mathbb{Z})$ norm, we may assume that $\chi_i$ are smooth.

By Proposition~\ref{prop:twistedgeneralizedvonneumann}, we see that \eqref{eq:multicorrelation} is $\le C_{\epsilon, \chi_1, \dots, \chi_k} \min_i \|f_i\|_{T_i, (T_1T_i^{-1}), \cdots (T_j T_i^{-1}), \vec{T}, \dots, \vec{T}}^2 + \epsilon$ for every $\epsilon > 0$. Thus, letting $\mathbf{E}_i$ be the extension in Theorem~\ref{thm:ergodicinversetheorem} of $\mathbf{Z}_i = \mathbf{Z}_{T_i, (T_1T_i^{-1}), \cdots (T_j T_i^{-1}), \vec{T}, \dots, \vec{T}}$, we see that at a cost of a null-sequence, we may replace $f_i$ with its conditional expectation on $\mathbf{Z}_{i}$, and by passing to an extension, realize $f_i \in L^\infty(\mathbf{E}_i)$. Next, we use Lemma~\ref{lem:joiningconstruction} to obtain a measure $\nu$ on the underlying space of $\mathbf{X} \times \mathbf{E}_1 \times \cdots \times \mathbf{E}_j$ such that
$$\int b_0 \otimes b_1 \otimes \cdots \otimes b_j d\nu = \int b_0\cdot \mathbb{E}(b_1|\mathbf{Z}_1) \cdots \mathbb{E}(b_j|\mathbf{Z}_j) d\mu$$
for $b_i \in L^\infty(\mathbf{E}_i)$ and $b_0 \in L^\infty(\mathbf{X})$. In addition, for $0 \le i \le j$ we replace $f_i$ with $1 \otimes 1 \cdots \otimes f_i \otimes \cdots \otimes 1$, and for each $i'$, $\chi_{i'}$ by $\chi_{i'} \otimes 1 \otimes \cdots \otimes 1$. Note that we have the invariant action of $\mathbb{Z}^k$ on $\nu$ via the diagonal action $T_i^\Delta$. By abuse of notation, we shall shorthand $T_i^\Delta$ as $T_i$.

By Lemma~\ref{lem:inverselimitapproximation} and Lemma~\ref{lem:inversetheoremjoining} (and noticing that any element in $L^2(\mathbf{X})$ and in $L^2(\mathbf{E}_i)$ bounded by $\epsilon$ is also bounded by $\epsilon$ in $L^2(\nu)$), up to a uniformly bounded sequence by an arbitrarily small parameter, we may replace $f_i$ with a product
$$g_i = \chi_i'\cdot \prod_{i' \neq i} b_{T_{i'}T_i^{-1}}$$
with $\chi_i'$ an extension of a smooth function on a degree $k$ nilmanifold and for a transformation $S$, we denote $b_S$ a generic $1$-bounded function invariant under $S$. By Fourier expanding $\chi_i$ via Lemma~\ref{lem:nilcharacters}, again at the cost of a uniformly bounded sequence by an arbitrarily small parameter, we may assume that $\chi_i'$ are lifts of vertical characters on the nilmanifold. We show that
\begin{equation}\label{eq:appliedinversetheorem}
\int f_0\cdot T_1^n( g_1) \cdots T_j^n (g_j) \cdots T_k^n\chi_k d\nu
\end{equation}
is the sum of a degree $k$ nilsequence, a null-sequence, and a sequence uniformly bounded by an arbitrarily small parameter. Invoking Lemma~\ref{lem:standard}, we obtain a decomposition of \eqref{eq:appliedinversetheorem} into a nilsequence and a null-sequence, and invoking Lemma~\ref{lem:standard} again (since we got a uniformly bounded error when we passed to smooth $\chi_i$), we obtain a decomposition of \eqref{eq:multicorrelation} into a nilsequence and a null-sequence as well. Now write $T_j^n g_j = T_j^n \chi_{j}' \prod_{i' \neq j} T_j^n b_{T_{i'}T_j^{-1}} = T_j^n \chi_j' \cdot \prod_{i' \neq j} T_{i'}^n b_{T_{i'}T_j^{-1}}$. Thus, letting $h_i = g_i\cdot  b_{T_i T_j^{-1}}$, we have
\begin{equation}
\int f_0\cdot T_1^n(h_1) \cdots T_{j - 1}^n (h_{j - 1}) \cdot T_j^n(\chi_j \chi_j') \cdots T_k^n\chi_k d\nu.
\end{equation}
The theorem follows via induction.
\end{proof}

\appendix

\section{Auxiliary Lemmas}
The first two lemmas we prove are lemmas involving nilsequences. As many of the lemmas are stated with quantitative bounds but we only need qualitative versions of them in Sections 9-10, the reader interested only in those sections can safely ignore these quantitative bounds on first reading. We set the following convention.
\begin{notation}
Let $G$ be a group. For $g_1, \dots, g_t \in G$ and $n_1, \dots, n_t \in \mathbb{Z}$, we denote $\vec{g}^{\vec{n}} = g_1^{n_1}g_2^{n_2} \cdots g_t^{n_t}$.
\end{notation}
We require the following lemma for the proof of Proposition~\ref{prop:twistedgeneralizedvonneumann}.
\begin{lemma}\label{lem:differentiation}
Let $G/\Gamma$ be a degree $k$ nilmanifold of complexity $M$, dimension $m$ and $F: G/\Gamma \to \mathbb{C}$ be a $G_k$-vertical character with Lipschitz norm $K$. Let $G'/\Gamma'$ be a subnilmanifold of $G/\Gamma$. Consider the nilsystem $\mathbf{G}' = (G'/\Gamma', \mu, T_{g_1}, \dots, T_{g_\ell})$ where $T_g$ is translation by $g$ with $g_i$ all commuting with each other. Let $\vec{g}^{\vec{t}} = g_1^{t_1}g_2^{t_2} \cdots g_\ell^{t_\ell}$. Then for any $\vec{h} \in \mathbb{Z}^\ell$ and $x \in G$, the function $F(\vec{g}^{\vec{h}}x\Gamma)\overline{F(x\Gamma)}$ lifts from a function $\widetilde{F_{\vec{h}}}$ on a degree $k - 1$ (\textbf{$h$-dependent}) nilfactor of $\mathbf{G}'$ whose underlying nilmanifold is a subnilmanifold of an \textbf{$h$-independent} $\overline{G^\square}/\overline{\Gamma^\square}$ of complexity $M^{O(m^{O(1)})}$, dimension $O(m)$, and such that the lift from $\widetilde{F_h}$ to $\overline{G^\square}/\overline{\Gamma^\square}$ has Lipschitz constant at most $(MK)^{O(m^{O(1)})}$. (Note that we are not making any assumptions on the dimension or complexity of $G'/\Gamma'$.)
\end{lemma}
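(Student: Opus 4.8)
\textbf{Proof proposal for Lemma~\ref{lem:differentiation}.}

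The plan is to realize the function $F(\vec{g}^{\vec h}x\Gamma)\overline{F(x\Gamma)}$ as a nilcharacter on a quotient of a doubled nilmanifold, exploiting that a $G_k$-vertical character, when one takes a single ``difference'' in the group direction, loses one degree. First I would pass to the group $G\times G$ with the product degree-$k$ filtration and consider the polynomial (indeed constant-in-$n$, but one should think of it as a point) pair $(\vec{g}^{\vec h}x,x)$. The natural object is the function $\Phi(a,b)=F(a\Gamma)\overline{F(b\Gamma)}$ on $(G\times G)/(\Gamma\times\Gamma)$. Its vertical frequency with respect to $(G\times G)_k = G_k\times G_k$ is $(\eta,-\eta)$, where $\eta$ is the vertical frequency of $F$. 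The key algebraic point is that the element $(\vec{g}^{\vec h}x,x)$ lies in the ``graph-like'' subgroup generated by the diagonal $G^\Delta$ together with $\{(\vec g^{\vec h},\mathrm{id})\}$; since $\vec g^{\vec h}$ commutes with itself as $\vec h$ varies this is a controlled subgroup. On the subgroup $G^\Delta\cdot(\langle g_1,\dots,g_\ell\rangle\times\{\mathrm{id}\})$ the top-degree piece $G_k\times G_k$ intersected with the relevant subgroup is annihilated by $(\eta,-\eta)$ precisely along the diagonal $G_k^\Delta$, so after quotienting by the subgroup $H^\square$ generated by all $(r-1)$-fold commutators that force a diagonal $G_k$ entry, the descended function has degree $\le k-1$. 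This is the mechanism that produces the degree-$(k-1)$ nilfactor.

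Concretely, the key steps in order would be: (1) set $\overline{G^\square}=(G\times G)/H^\square$ with $H^\square$ the normal subgroup generated by the appropriate top-degree diagonal directions (so that $(\eta,-\eta)$ descends and kills degree $k$), equipped with the induced filtration which is now degree $\le k-1$; define $\overline{\Gamma^\square}$ as the image of $\Gamma\times\Gamma$; (2) check via the Mal'cev basis machinery (as in \cite[Section 3]{LSS24b} or \cite[Appendix B]{Len23b}) that $\overline{G^\square}/\overline{\Gamma^\square}$ has complexity $M^{O(m^{O(1)})}$ and dimension $O(m)$ — this is routine rationality bookkeeping since $H^\square$ is cut out by commutator words of bounded length in a bounded-complexity group; (3) observe that $\Phi$ descends to a Lipschitz function $\overline{\Phi}$ on $\overline{G^\square}/\overline{\Gamma^\square}$ (the quotient of a Lipschitz vertical character by the directions on which it is invariant remains Lipschitz, with constant blowing up by at most $(MK)^{O(m^{O(1)})}$, by the explicit coordinate estimates for quotient nilmanifolds); (4) for fixed $\vec h$, note that $x\mapsto (\vec g^{\vec h}x,x)H^\square$ factors through $G'/\Gamma'$ composed with a fixed embedding into $\overline{G^\square}/\overline{\Gamma^\square}$ — i.e., the orbit closure of this map inside $\mathbf{G}'$ is the desired $h$-dependent subnilmanifold — and set $\widetilde{F_{\vec h}}$ to be the restriction of $\overline{\Phi}$ (shifted by the constant $(\vec g^{\vec h},\mathrm{id})$, which one absorbs into the point of the nilmanifold); (5) conclude that $F(\vec g^{\vec h}x\Gamma)\overline{F(x\Gamma)} = \widetilde{F_{\vec h}}$ lifts from this degree-$(k-1)$ nilfactor with the claimed uniform bounds.

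The main obstacle I anticipate is step (2) together with the uniformity claim in step (4): one must be careful that, although $G'/\Gamma'$ itself has uncontrolled complexity and dimension, the \emph{ambient} nilmanifold $\overline{G^\square}/\overline{\Gamma^\square}$ into which everything embeds is built only from $G/\Gamma$ (and the combinatorics of which commutators to quotient), so its complexity and dimension depend only on $M$ and $m$; the $h$-dependence must be entirely confined to \emph{which} point / sub-nilmanifold one lands on, not to the ambient space or the Lipschitz constant of the lift. Making this separation rigorous requires choosing $H^\square$ and the Mal'cev basis before introducing $\vec h$ at all, and then checking that translation by $(\vec g^{\vec h},\mathrm{id})$ — which is \emph{not} bounded in $\vec h$ — nonetheless only moves the base point and does not affect the Lipschitz norm of $\overline{\Phi}$ (which is translation-invariant in its Lipschitz constant up to the fixed complexity of $\overline{G^\square}$). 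The degree-lowering claim in step (3), i.e.\ that $\overline{\Phi}$ genuinely has degree $\le k-1$ after the quotient, follows from the Furstenberg--Weiss-type commutator computation exactly as in the proof of Lemma~\ref{lem:sunflower} and needs no new idea, only a careful identification of $H^\square$; I would state it as a short claim and verify it by the same two-lines-in-$G\times G$ argument used there.
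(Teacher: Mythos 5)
Your approach differs substantively from the paper's, and it contains one genuine error that matters. You work in $G\times G$ with the product filtration and assert that the quotient $\overline{G^\square}=(G\times G)/H^\square$ is ``now degree $\le k-1$''. This is false for any admissible choice of $H^\square$: since $\Phi$ has nontrivial vertical frequency $(\eta,-\eta)$ on $(G\times G)_k=G_k\times G_k$, $H^\square$ must lie inside $\ker(\eta,-\eta)$, a proper subgroup of $G_k\times G_k$; hence $(G_k\times G_k)/H^\square$ is nontrivial and $(G\times G)/H^\square$ still has degree $k$. In particular, with the natural choice $H^\square=G_k^\triangle$ you get $(G_k\times G_k)/G_k^\triangle\cong G_k\neq 0$.

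The paper gets genuine degree $k-1$ by working in the fiber product $G^\square=G\times_{G_2}G=\{(g,g'):g'g^{-1}\in G_2\}$ rather than $G\times G$: there $(G^\square)_k$ \emph{equals} $G_k^\triangle$, so $G^\square/G_k^\triangle$ really is degree $k-1$. The price is that your base point $(\vec{g}^{\vec h}x,x)$ does not lie in $G^\square$ (its two entries differ by $\vec{g}^{\vec h}\notin G_2$). The key device you are missing is the decomposition $\vec{g}^{\vec h}=\{\vec{g}^{\vec h}\}[\vec{g}^{\vec h}]$ into a fractional part with bounded Mal'cev coordinates and an integral part $[\vec{g}^{\vec h}]\in\Gamma$. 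The lattice element is slid across $\Gamma$ and turned into a conjugation of $x$, producing the pair $(x,[\vec{g}^{\vec h}]x[\vec{g}^{\vec h}]^{-1})$ whose entries differ by a commutator and hence lie in $G^\square$; the bounded fractional part is absorbed into the function, giving $F_{\vec h}(x,y)=F(x)\overline{F(\{\vec{g}^{\vec h}\}y)}$ with uniformly bounded Lipschitz norm. This one decomposition thus does double duty: it lands the orbit in $G^\square$ and it controls the Lipschitz constant uniformly in $\vec h$.

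Two further remarks. First, your parenthetical that the Lipschitz constant of $\overline{\Phi}$ is ``translation-invariant up to the fixed complexity of $\overline{G^\square}$'' is not true in the Mal'cev formalism: the metric is not left-invariant, and translating by an element with unbounded coordinates can blow up the Lipschitz constant; the paper's $\{\cdot\}/[\cdot]$ split is exactly what makes the relevant translation bounded. As it happens, your construction places all $h$-dependence in the base point and never actually translates $\overline{\Phi}$, so this false statement is not load-bearing for you --- but flagging it as the central technical worry, rather than the degree of the ambient quotient, misidentifies where the difficulty lies. Second, although the orbit in $(G\times G)/G_k^\triangle$ does sit in a degree-$(k-1)$ \emph{subnilmanifold} (a coset of a conjugate of the diagonal, whose intersection with $G_k\times G_k$ is exactly $G_k^\triangle$), the subsequent use of this lemma in Proposition~\ref{prop:twistedgeneralizedvonneumann} Fourier-expands on the ``common degree $k-1$ nilmanifold'' $\overline{G^\square}/\overline{\Gamma^\square}$ itself. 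A degree-$k$ ambient with a degree-$(k-1)$ subnilmanifold would not feed directly into that argument, so getting degree $k-1$ for the ambient is not cosmetic.
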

\begin{remark}
The hypotheses and conclusions of this lemma may seem rather contrived. The lemma is stated as such purely because of its application to Proposition~\ref{prop:twistedgeneralizedvonneumann}.
\end{remark}
\begin{proof}
Following the notation in \cite{Len23b}, we write for an element $g \in G$ as $g = \{g\}[g]$ where all of the Mal'cev coordinates of $[g]$ are integers and all of the Mal'cev coordinates of $\{g\}$ lie between $-1/2$ and $1/2$. We write 
\begin{align*}
\vec{g}^{\vec{n} + \vec{h}}x\Gamma &= \vec{g}^{\vec{n}} \vec{g}^{\vec{h}}x\Gamma = \vec{g}^{\vec{n}} \vec{g}^{\vec{h}} x [\vec{g}^{\vec{h}}]^{-1}\Gamma = \vec{g}^{\vec{n}}\{\vec{g}^{\vec{h}}\}[\vec{g}^{\vec{h}}] x[\vec{g}^{\vec{h}}]^{-1}\Gamma \\
&= \{\vec{g}^{\vec{h}}\} \{\vec{g}^{\vec{h}}\}^{-1}\vec{g}^{\vec{n}}\{\vec{g}^{\vec{h}}\} [\vec{g}^{\vec{h}}]x[\vec{g}^{\vec{h}}]^{-1}\Gamma = \{\vec{g}^{\vec{h}}\}(\{\vec{g}^{\vec{h}}\}^{-1}\vec{g}\{\vec{g}^{\vec{h}}\})^{\vec{n}} [\vec{g}^{\vec{h}}] x[\vec{g}^{\vec{h}}]^{-1}\Gamma.
\end{align*}
Define $G^\square = G \times_{G_2} G = \{(g, g'): g'g^{-1} \in G_2\}$. Let
$$F_{\vec{h}}(x, y) = F(x)\overline{F(\{g^{\vec{h}}\}y)}, g_{\vec{h}}(n) = (g^nx, (\{\vec{g}^{\vec{h}}\}^{-1}\vec{g}\{\vec{g}^{\vec{h}}\})^{\vec{n}} [\vec{g}^{\vec{h}}] x[\vec{g}^{\vec{h}}]^{-1})$$
By \cite[Lemma A.3]{Len23b}, $F_{\vec{h}}$ descends to $\overline{G^\square} := G^\square/G_k^\triangle$ where $G_k^\triangle$ is the diagonal group in $G^\square$, and $\overline{G^\square}$ is degree $k - 1$ and has the desired dimension and complexity with $F_{\vec{h}}$ having the desired Lipschitz norm by that same lemma. We now define the map $\phi_{\vec{h}}: G/\Gamma \to G^\square/\Gamma^\square$ induced by $x \mapsto \widetilde{x} = (x, [g^{\vec{h}}]x[g^{\vec{h}}]^{-1})$ and equip $G^\square$ with the translations by $\widetilde{g_i} = (g_i, \{\vec{g}^{\vec{h}}\}^{-1}g_i\{\vec{g}^{\vec{h}}\})$. The image of this mapping is a translation on a subnilmanifold of $G^\square/\Gamma^\square$. By our computation earlier, we see that the functions $x \mapsto F(\vec{g}^{\vec{n} + \vec{h}}x\Gamma)\overline{F(\vec{g}^{\vec{n}}x\Gamma)}$ and $F_{\vec{h}}(\widetilde{\vec{g}}^{\vec{n}}\widetilde{x})$ are equal. Now defining $\pi_{\vec{h}}: G/\Gamma \to \overline{G^\square}/\overline{\Gamma^\square}$ via $\pi_{\vec{h}} = \overline{\phi_{\vec{h}}}$ and restricting to $G'/\Gamma'$, we see that $G'/\Gamma'$ maps to a sub-nilmanifold $H/\Lambda$ of $\overline{G^\square}/\overline{\Gamma^\square}$. This completes the proof. 
\end{proof}
We require the statement that all nilsequences can be lifted to linear nilsequences of a slightly larger nilmanifold. This is essentially \cite[Proposition C.2]{GTZ12} and our proof follows their proof closely.

\begin{lemma}\label{lem:liftlinear}
Let $F(g(\cdot)\Gamma)$ be a $\mathbb{Z}^k$ nilsequence of complexity and Lipschitz norm $\le M$, degree $\ell$, and dimension $m$ with $G/\Gamma$ the underlying nilmanifold. Then there exists a nilmanifold $\widetilde{G}/\widetilde{\Gamma}$ with complexity $M^{m^{O_{k, \ell}(1)}}$ and dimension $m^{O_{k, \ell}(1)}$, a Lipschitz function $\widetilde{F}: \widetilde{G}/\widetilde{\Gamma}$ with norm at most $M^{m^{O_{k, \ell}(1)}}$, and elements $\widetilde{g}_1, \dots, \widetilde{g}_k$ such that $F(g(n)\Gamma) = \widetilde{F}(\vec{\widetilde{g}}^{\vec{n}}\widetilde{\Gamma})$.
\end{lemma}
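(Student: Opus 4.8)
\textbf{Proof plan for Lemma~\ref{lem:liftlinear}.}

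The plan is to follow the standard ``coefficient extraction'' strategy from \cite[Appendix C]{GTZ12}, adapted to the $\mathbb{Z}^k$ setting. First I would invoke the Taylor expansion (Lemma~\ref{lem:taylorexansion}, or rather its degree-filtration analogue from \cite[Appendix B]{GTZ12}) to write the polynomial sequence $g\colon\mathbb{Z}^k\to G$ in the form
\[
g(n) = \prod_{\vec{i}} g_{\vec{i}}^{\binom{n}{\vec{i}}},
\]
where the product ranges over multi-indices $\vec{i}\in\mathbb{N}^k$ with $|\vec{i}|\le\ell$, the element $g_{\vec{i}}$ lies in $G_{|\vec{i}|}$, and the number of such multi-indices is $O_{k,\ell}(1)$. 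The binomial coefficients $\binom{n}{\vec{i}}$ are themselves integer-valued polynomials in $n$, but crucially each $\binom{n}{\vec{i}}$ can be expressed as a $\mathbb{Z}$-linear combination of products of at most $\ell$ ``shifted linear'' monomials; more to the point, the classical device is that monomials of bounded degree in $n_1,\dots,n_k$ can be realized as genuine \emph{linear} orbits on a suitable nilpotent group built from iterated commutators. So the target nilmanifold $\widetilde{G}/\widetilde{\Gamma}$ will be a product (or an appropriate extension) of $G$ with a ``free'' nilpotent factor $U$ of degree $\ell$ on $k$ generators $u_1,\dots,u_k$, together with the lattice generated by the $u_i$ and their commutators; the point of $U$ is that the vector $(\binom{n}{\vec{i}})_{\vec{i}}$ is precisely recovered (up to an invertible integer linear change of variables on the exponents) from the Mal'cev coordinates of $\vec{u}^{\vec{n}}=u_1^{n_1}\cdots u_k^{n_k}$.

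Concretely, the steps I would carry out are: (1) introduce the free degree-$\ell$ nilpotent Lie group $U=U_{k,\ell}$ on generators $u_1,\dots,u_k$ with its natural lattice $\Lambda$, noting that $\dim(U)=O_{k,\ell}(1)$ and that $U$ has complexity $O_{k,\ell}(1)$ with respect to a standard Mal'cev basis; (2) observe that the Mal'cev coordinates of $u_1^{n_1}\cdots u_k^{n_k}$ are, by Baker--Campbell--Hausdorff, integer polynomials in $\vec n$ whose span (as $\vec n$ varies, or rather as a polynomial identity) contains all monomials $\vec n^{\vec i}$ with $|\vec i|\le\ell$ — hence also all $\binom{n}{\vec i}$ — via an explicit invertible integer matrix; (3) form $\widetilde{G} = G\times U$ (or if one wants the exponents to match on the nose, a central product / quotient, but $G\times U$ suffices after absorbing the change of variables into $\widetilde F$), define $\widetilde{g}_j = (g_{\vec e_j}\cdots, u_j)$ appropriately — more precisely one takes $\widetilde g_j$ to have $U$-component $u_j$ and $G$-component chosen so that the abelianization matches the degree-one Taylor data of $g$ — and then build the higher-order Taylor coefficients of $g$ into $\widetilde G$ by enlarging $G$ slightly (replacing $G$ by $G\times G_2\times\cdots$ with appropriate filtration, the standard trick to turn a polynomial sequence into a linear one) so that $\vec{\widetilde g}^{\vec n}$ reads off exactly $\{\binom{n}{\vec i} : |\vec i|\le\ell\}$ in its coordinates; (4) define $\widetilde F$ to be $F$ composed with the projection $\widetilde G/\widetilde\Gamma\to G/\Gamma$ obtained by reading the coordinates $g_{\vec i}^{\binom n{\vec i}}$ and multiplying them in the right order, and track that this projection is a polynomial map of bounded complexity so that $\widetilde F$ is Lipschitz with norm $M^{m^{O_{k,\ell}(1)}}$ and the nilmanifold has the stated dimension and complexity bounds.

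The main obstacle I anticipate is entirely bookkeeping: verifying that the enlarged group $\widetilde G$ genuinely carries a degree-$\le\ell$ filtration for which $n\mapsto\vec{\widetilde g}^{\vec n}$ is a \emph{linear} (degree-one) sequence, while simultaneously keeping the complexity of the Mal'cev basis polynomial in $m$ and the dimension polynomial in $m$ with $O_{k,\ell}(1)$ exponent. The delicate point is that the ``free nilpotent'' factor $U_{k,\ell}$ and the commutator structure needed to linearize an order-$\ell$ polynomial both contribute $O_{k,\ell}(1)$ extra dimensions but must be installed so that rationality of the lattice (and hence the complexity bound) is preserved; this is where one uses \cite[Lemma 2.1, Lemma B.11]{Len23b} (or the analogous rationality lemmas) to replace a merely abstract basis by an adapted Mal'cev basis of controlled height. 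Apart from this, the argument is a direct transcription of \cite[Proposition C.2]{GTZ12}, with single-index Taylor coefficients replaced by the multi-index coefficients $g_{\vec i}$ and the quantitative dependence read off from the bounds in the cited lemmas; I would simply note that the multi-dimensional case differs from the one-dimensional case only in that there are $O_{k,\ell}(1)$ rather than $\ell+1$ Taylor coefficients, and the free group $U$ has $k$ generators rather than one.
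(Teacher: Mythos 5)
Your proposal does not follow the paper's proof, and it has a genuine gap. The paper's argument (which really is the argument of \cite[Proposition C.2]{GTZ12}) is the ``shift on polynomial sequences'' trick: form the semidirect product $\widetilde H = \on{poly}(\mathbb{R}^k\to G)\ltimes\mathbb{R}^k$ with $\mathbb{R}^k$ acting by translation of the argument, take the base point $\widetilde x=(g,\vec 0)$ and the translations $\widetilde h_i=(\mathrm{id},e_i)$, observe that the orbit $\vec{\widetilde h}^{\vec n}\widetilde x=\widetilde x\cdot(\widetilde x^{-1}\vec{\widetilde h}\widetilde x)^{\vec n}$ lives in a coset of $\widetilde G=\on{poly}(\mathbb{R}^k\to G^{+1})\ltimes\mathbb{R}^k$ (which is $\ell$-step because of the shifted filtration), and define $\widetilde F$ by evaluating the polynomial part at $0$ and applying $F$. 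No free nilpotent factor appears, and no reading-off of Mal'cev coordinates is required; $\widetilde F$ is manifestly well-defined on the quotient because $\widetilde\Gamma$ preserves $\Gamma$-cosets at the level of $p(0)$. Your description of this as a ``direct transcription'' of GTZ C.2 is therefore not accurate: you are proposing a genuinely different construction.

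The gap in your construction is step (2). The Mal'cev coordinates of $u_1^{n_1}\cdots u_k^{n_k}$ in the free degree-$\ell$ nilpotent group $U_{k,\ell}$ on $k$ generators do \emph{not} span all monomials $\vec n^{\vec i}$ with $|\vec i|\le\ell$; iterated commutators of the $u_j$ never produce pure powers $n_j^d$ with $d\ge 2$. The cleanest failure is $k=1$: the free $\ell$-step nilpotent group on one generator is $\mathbb{R}$, so $u^n$ has the single coordinate $n$ and certainly does not encode $\binom{n}{2},\dots,\binom{n}{\ell}$. Even for $k\ge 2$, BCH applied to $u_1^{n_1}\cdots u_k^{n_k}$ produces coordinates built from Lie monomials in $u_1,\dots,u_k$, which are ``multilinear in the generators'' and miss the diagonal monomials. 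So a universal gadget on $k$ generators does not exist, and the subsequent ``read off $(\binom{n}{\vec i})$, multiply $g_{\vec i}^{\binom{n}{\vec i}}$, apply $F$'' step has nothing to read off. Your parenthetical escape hatch in (3) — ``replace $G$ by $G\times G_2\times\cdots$, the standard trick to turn a polynomial sequence into a linear one'' — is gesturing at the correct idea, but it is precisely the shift construction in disguise, and you would need to actually carry it out rather than combine it with the free-nilpotent gadget. Additionally, even granting the gadget, the map $\widetilde G/\widetilde\Gamma\to G/\Gamma$ you define by reading fundamental-domain Mal'cev coordinates and re-multiplying is not a priori well-defined or continuous on the quotient; this is exactly the kind of issue the paper's $\widetilde F((p,\vec v)\widetilde\Gamma):=F(g(0)\,p(0)\,\Gamma)$ formula is designed to sidestep.
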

\begin{proof}
Consider the polynomial sequence $\mathrm{poly}(\mathbb{R}^k, G)$ and the action $\mathbb{R}^k$ on the space by left translation. Using this action, we define the semidirect product $\widetilde{H} = \mathrm{poly}(\mathbb{Z}^k, G) \ltimes \mathbb{R}^k$ and $\widetilde{\Lambda} = \mathrm{poly}(\mathbb{Z}^k, \Gamma) \ltimes \Gamma$. (Note that $\widetilde{H}$ is $\ell + 1$\emph{-step}.) Next, define $\widetilde{h}_i \in \widetilde{H}$ via $\widetilde{x} = (g, \vec{0})$, $\widetilde{h}_i = (\mathrm{id}, e_i)$. Here, $e_i$ denotes the standard vectors in $\mathbb{R}^k$. Let $\widetilde{G} = \mathrm{poly}(\mathbb{Z}^k, G^{+1}) \ltimes \mathbb{R}^k$ where $G^{+1}$ denotes the shifted filtration of $G_i^{+1} = G_{i + 1}$. We observe that 
$$\vec{\widetilde{h}}^{\vec{n}} \widetilde{x} = \widetilde{x} (\widetilde{x}^{-1} \vec{\widetilde{h}} \widetilde{x})^{\vec{n}}$$
and hence the orbit lies on a coset of $\widetilde{G}$, which is \emph{$\ell$-step}. Defining $\widetilde{g}_i = \widetilde{x}^{-1} h_i \widetilde{x}$, and $\widetilde{F}((p, \vec{v})\widetilde{\Gamma}) = F(g(0) \cdot p(0)\Gamma)$, we see that the desired property of
$$F(g(n)\Gamma) = \widetilde{F}(\vec{\widetilde{g}}^{\vec{n}}\widetilde{\Gamma})$$
holds. The quantitative bounds follow via standard computations (e.g., \cite[Proposition C.2]{LSS24}).
\end{proof}
We next require the following factorization result, which is essentially a multidimensional version of \cite[Lemma B.2]{LSS24b} (which in turn is a generalization of \cite[Lemma A.1]{Len23b}). 
\begin{lemma}\label{lem:factor}
Let $s, r, D' \le K$ be positive integers. Consider a nilmanifold $G/\Gamma$ of degree-rank $(s,r)$ of dimension $d$ and complexity $M$. Consider a polynomial sequence $g: \mathbb{Z}^{D'} \to G$ such that $g(0) = \mr{id}_G$ and consider a set of horizontal characters $\psi_{i,j}$ for $1\le j\le\ell_i$ and where $\psi_{i,\cdot}$ is an $i$-th horizontal character of height at most $H$. Furthermore suppose that for all $\vec{i} \in \mathbb{Z}^{D'},j$, 
\[\on{dist}(\psi_{|\vec{i}|,j}(\on{Taylor}_{\vec{i}}(g)), \mb{Z})\le H\cdot N^{-|\vec{i}|}.\]
Then one may factor
\[g = \eps \cdot g' \cdot\gamma\]
where:
\begin{itemize}
    \item $\eps(0) = g'(0) = \gamma(0) = \mr{id}_G$;
    \item $\psi_{|\vec{i}|,j}(\on{Taylor}_{\vec{i}}(g')) = 0$;
    \item $\gamma$ is $(MH)^{O_K(d^{O_K(1)})}$-rational;
    \item $\eps$ is $((MH)^{O_K(d^{O_K(1)})}, \vec{N})$-smooth.
\end{itemize}
\end{lemma}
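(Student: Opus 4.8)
\textbf{Proof proposal for Lemma~\ref{lem:factor}.}

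The plan is to run the standard single-scale factorization argument (as in \cite[Lemma A.1]{Len23b} and \cite[Lemma B.2]{LSS24b}) but with the degree parameter replaced by the total degree $|\vec{i}|$ coming from the multidimensional Taylor expansion. First I would invoke Lemma~\ref{lem:taylorexansion} to write $g(n) = \prod_{|\vec{i}| < s} g_{\vec{i}}^{\binom{n}{\vec{i}}}$ with $g_{\vec{i}} \equiv \on{Taylor}_{\vec{i}}(g) \imod G_{(|\vec{i}|,2)}$, so that the hypothesis $\on{dist}(\psi_{|\vec{i}|,j}(\on{Taylor}_{\vec{i}}(g)),\mb{Z}) \le H N^{-|\vec{i}|}$ controls how far each $\psi_{|\vec{i}|,j}$-component of each Taylor coefficient is from an integer. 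The strategy is then to peel off, degree-rank class by degree-rank class (in increasing order of $(|\vec{i}|, \text{rank})$, or just increasing $|\vec{i}|$ since we only need a degree filtration here), a rational correction $\gamma_{\vec{i}}$ and a smooth correction $\varepsilon_{\vec{i}}$ so that the residual polynomial sequence $g'$ has $\psi_{|\vec{i}|,j}(\on{Taylor}_{\vec{i}}(g')) = 0$ exactly.

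The key steps, in order: (1) Fix a class $i_0 = |\vec{i}|$ and suppose inductively that all Taylor coefficients of total degree $< i_0$ have already been cleaned. For each $\vec{i}$ with $|\vec{i}| = i_0$, decompose $\on{Taylor}_{\vec{i}}(g)$ in the horizontal vector space $G_{(i_0,1)}/G_{(i_0,2)}$ using the basis $\exp(\mc{X}_{i_0})$: the portion detected by the characters $\psi_{i_0,\cdot}$ is within $H N^{-i_0}$ of a lattice point, so we can write it as (rational vector) $+$ (small vector), where ``rational'' means denominator $(MH)^{O_K(d^{O_K(1)})}$ (clearing denominators across all the $\psi_{i_0,j}$ via \cite[Lemma B.11]{Len23b}-type rationality bounds) and ``small'' means Mal'cev coordinates $\ll (MH)^{O_K(d^{O_K(1)})} N^{-i_0}$. (2) Lift these to group elements: $\gamma_{\vec{i}} \in G_{(i_0,1)}$ rational and $\varepsilon_{\vec{i}} \in G_{(i_0,1)}$ with $((MH)^{O_K(d^{O_K(1)})}, \vec{N})$-smooth polynomial part $\varepsilon_{\vec{i}}^{\binom{n}{\vec{i}}}$ (smoothness follows since $\binom{n}{\vec{i}} \ll N^{i_0}$ cancels the $N^{-i_0}$). (3) Replace $g$ by $\bigl(\prod_{|\vec{i}|=i_0} \varepsilon_{\vec{i}}^{\binom{n}{\vec{i}}}\bigr)^{-1} g(n) \bigl(\prod_{|\vec{i}|=i_0} \gamma_{\vec{i}}^{\binom{n}{\vec{i}}}\bigr)^{-1}$; by Lemma~\ref{lem:taylorhom} and Baker--Campbell--Hausdorff, this leaves all lower-degree Taylor coefficients unchanged (the corrections live in $G_{(i_0,1)}$ and commutators with anything push into strictly higher degree-rank) and replaces $\on{Taylor}_{\vec{i}}(g)$ by something annihilated by all $\psi_{i_0,j}$. (4) Collect all the $\varepsilon_{\vec{i}}$, $\gamma_{\vec{i}}$ across classes into $\varepsilon$, $\gamma$ and verify $\varepsilon(0) = g'(0) = \gamma(0) = \mr{id}_G$ (automatic since $\binom{0}{\vec{i}} = 0$ for $\vec{i} \ne 0$ and $g(0) = \mr{id}_G$), tracking that the accumulated rationality and smoothness bounds stay of the form $(MH)^{O_K(d^{O_K(1)})}$ through the $O_K(1)$-many degree classes and $O_K(d^{O_K(1)})$-many Taylor coefficients.

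The main obstacle I expect is purely bookkeeping rather than conceptual: ensuring the rationality and smoothness constants do not blow up beyond $(MH)^{O_K(d^{O_K(1)})}$ as the corrections are composed across the (boundedly many) degree classes, since each conjugation by $\gamma$ or $\varepsilon$ in step (3) distorts the Mal'cev coordinates of the not-yet-cleaned higher coefficients by a factor polynomial in the complexity --- but with $O_K(1)$ classes and polynomial-in-$d$ many coefficients per class, this compounds only to the claimed bound. A secondary subtlety is that the multidimensional Taylor coefficients $\on{Taylor}_{\vec{i}}(g)$ are only defined modulo $G_{(|\vec{i}|,2)}$, so one must check that the cleaning at level $i_0$ is well-posed, i.e., that $\psi_{i_0,j}$ genuinely descends to $G_{(i_0,1)}/G_{(i_0,2)}$ (true, since horizontal characters kill $G_{(i_0,2)}$ by definition) and that the residual indeterminacy in $G_{(i_0,2)}$ is absorbed harmlessly when we move to the next class --- this is exactly the same phenomenon handled in the proof of Lemma~\ref{lem:taylorexansion} and requires no new ideas.
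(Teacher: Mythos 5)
Your proposal is correct in substance and uses the right core ingredient (linear decomposition of each Taylor coefficient into a rational part, a small part, and a part annihilated by the horizontal characters), but it takes a genuinely different \emph{route} from the paper's proof. You run a degree-by-degree peeling induction: clean the Taylor coefficients at total degree $i_0$, conjugate/multiply to absorb them, re-expand, and move to $i_0 + 1$, tracking how each cleaning step distorts the not-yet-cleaned higher-degree coefficients. The paper instead does a \emph{single-shot} decomposition in the Lie algebra: write $g(n) = \exp\bigl(\sum_{|\vec k|\le s}\binom{n}{\vec k} g_{\vec k}\bigr)$ with $g_{\vec k} \in \log G_{(|\vec k|,1)}$, treat each $\psi_{|\vec k|,j}$ (via $\log$) as a linear functional on $\log G_{(|\vec k|,1)}$, split every $g_{\vec k}$ simultaneously and independently as $g_{\vec k,\mr{small}} + g_{\vec k,\mr{rat}} + (g_{\vec k} - g_{\vec k,\mr{small}} - g_{\vec k,\mr{rat}})$ by a multidimensional version of \cite[Lemma A.1]{LSS24b}, and then set $\eps(n) = \exp\bigl(\sum \binom{n}{\vec k} g_{\vec k,\mr{small}}\bigr)$, $\gamma(n) = \exp\bigl(\sum \binom{n}{\vec k} g_{\vec k,\mr{rat}}\bigr)$, $g' = \eps^{-1} g \gamma^{-1}$. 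The point of this formulation is that the distortion issue you flag as ``the main obstacle'' simply never arises: there is no re-expansion between degree classes, and the verification $\psi_{|\vec k|,j}(\on{Taylor}_{\vec k}(g')) = 0$ is an immediate one-line application of Lemma~\ref{lem:taylorhom} (Taylor is a homomorphism into the abelianization $G_{(|\vec k|,1)}/G_{(|\vec k|,2)}$, so the Taylor coefficients of $\eps^{-1} g \gamma^{-1}$ are exactly $g_{\vec k} - g_{\vec k,\mr{small}} - g_{\vec k,\mr{rat}}$ modulo $G_{(|\vec k|,2)}$, with no BCH cross-terms at the leading level). Your peeling argument does reach the same bound since there are only $O_K(1)$ degree classes, but you pay for it in bookkeeping; the paper's Lie-algebra parametrization makes the accounting trivial and pushes all the remaining work into the ``standard Mal'cev basis computations'' that verify $\gamma$ is rational and $\eps$ is smooth.
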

\begin{proof}
By Lemma~\ref{lem:taylorexansion}, we may write
\[g(n) = \exp\Big(\sum_{|\vec{k}| \le s} \binom{n}{\vec{k}}g_{\vec{k}}\Big)\]
for some $g_{\vec{k}}\in\log(G_{(|\vec{k}|,0)}) = \log(G_{(|\vec{k}|,1)})$ with \[\on{Taylor}_{\vec{k}}(g) = \exp(g_{\vec{k}})\imod G_{(|\vec{k}|,2)}.\]
As $\log$ is a homorphism, we may treat $\psi_{|\vec{k}|, j}$ as a linear map on $\log(G_{(|\vec{k}|, 1)})$. 
%and note that each $\psi_{i,j}$ can be descended to a linear map on $\log(G_{(i,1)})$ with the property that $\psi_{i,j}(\log(\Gamma\cap G_{(i,1)}))\in\mb{Z}$ and $\psi_{i,j}(\log(G_{(i,2)})) = 0$. That $\psi_{i,j}$ descends uses the fact that $\log(x) + \log(y) \equiv \log(xy) \imod \log(G_{(i,2)})$ for $x,y\in G_{(i,1)}$, which follows from Baker--Campbell--Hausdorff.

Since $\on{dist}(\psi_{|\vec{k}|,j}(\on{Taylor}_{\vec{k}}(g)), \mb{Z})\le H\cdot N^{-|\vec{k}|}$ by assumption, by \cite[Lemma A.1]{LSS24b}, we may write $g_{\vec{k}} = g_{\vec{k},\mr{small}} + g_{\vec{k},\mr{rat}} + (g_i - g_{\vec{k},\mr{small}} - g_{\vec{k},\mr{rat}})$ such that $g_{\vec{k},\mr{rat}}$ is an $H^{O_K(d^{O_K(1)})}$-rational combination of elements in $\mc{X}\cap\log(G_{(|\vec{k}|,1)})$, such that $\snorm{g_{\vec{k},\mr{small}}}_{\infty}\le (MH)^{O_K(d^{O_K(1)})}\cdot N^{-|\vec{k}|}$, and such that $\psi_{|\vec{k}|,j}(g_{\vec{k}} - g_{\vec{k},\mr{small}} - g_{\vec{k},\mr{rat}}) = 0$. Defining
\[\gamma := \exp\Big(\sum_{|\vec{k}| \le s} \binom{n}{\vec{k}}g_{\vec{k},\mr{rat}}\Big),\quad\eps := \exp\Big(\sum_{|\vec{k}| \le s} \binom{n}{\vec{k}}g_{\vec{k},\mr{small}}\Big),\]
and $g' := \eps^{-1} g \gamma^{-1}$, we note that $\gamma$ and $\eps$ having the appropriate smooth and rational properties follow from standard Mal'cev bases computations (e.g., \cite[Lemma B.1, B.3, B.14]{Len23b}.)
\end{proof}
As the ergodic portion of this paper uses Lemma~\ref{lem:nilcharacters}, a result more standard in the finitary regime, we restate it for the convenience of the reader.
\begin{lemma}[Expansion into functions with vertical frequency]\label{lem:nilcharacters}
Let $G/\Gamma$ be a nilmanifold with dimension $d$, complexity $M$, degree $k$, and step $s$. Let $0 < \delta < 1/100$, and $F\colon G/\Gamma \to \mathbb{C}$ be $L$-Lipschitz. Given a $Q$-rational subgroup $H \subseteq Z(G)$, we have an approximation
$$\sup_{x \in G} \left|F(x\Gamma) - \sum_{|\xi| \le (\delta/(QL))^{-O_k(d^{O(1)})}} F_\xi(x\Gamma)\right| \le \delta$$
where $F_\xi$ is an $H$-vertical character of frequency $\xi$ and Lipschitz norm at most $(\delta/(QML))^{-O_k(d)^{O(1)}}$.
\end{lemma}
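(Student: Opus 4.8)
The plan is to reconstruct $F$ from its Fourier expansion along the fibres of the projection $G/\Gamma\to G/(H\Gamma)$, which are tori isomorphic to $H/(H\cap\Gamma)$, and then to truncate this expansion via a Fej\'er-type kernel so as to keep uniform control. We may assume $H$ is connected, so $H\cong\mathbb{R}^{d'}$ for some $d'\le d$.

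First I would set up the fibre torus. Since $H$ is a $Q$-rational connected subgroup of $Z(G)$, $\log H$ admits a basis consisting of $Q^{O_k(d^{O(1)})}$-rational combinations of the Mal'cev basis of $G$, so that $H\cap\Gamma$ is a cocompact lattice in $H$ and one obtains an identification $\mathbb{T}^{d'}\cong H/(H\cap\Gamma)$ in which all relevant distortions (of the induced metric, of the Haar measure normalisation, of the dual lattice heights) are bounded by $(QM)^{O_k(d^{O(1)})}$; these are standard Mal'cev-basis estimates as in \cite[Appendix B]{Len23b}. Because $F$ is $L$-Lipschitz and left translation by the bounded-complexity elements of $H$ distorts the metric on $G/\Gamma$ by at most $(QM)^{O_k(d^{O(1)})}$, for each fixed $x$ the function $t\mapsto F(\iota(t)x\Gamma)$ on $\mathbb{T}^{d'}$ is Lipschitz with constant at most $A:=(QML)^{O_k(d^{O(1)})}$.

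Next I would define, for each continuous homomorphism $\xi\colon H\to\mathbb{R}$ with $\xi(H\cap\Gamma)\subseteq\mathbb{Z}$ (equivalently each character of the fibre torus, whose height is bounded in the chosen basis), the candidate component
\[
F_\xi(x\Gamma)\;=\;w_\xi\int_{H/(H\cap\Gamma)}F(hx\Gamma)\,e(-\xi(h))\,dh,
\]
where $dh$ is the normalised Haar measure and the $w_\xi$ are the Fej\'er weights of a $d'$-dimensional Fej\'er kernel of degree $R$. One checks routinely that: (i) $F_\xi$ is well defined on $G/\Gamma$, since $F$ is $\Gamma$-invariant and $H$ is central; (ii) by centrality of $H$ and translation-invariance of Haar measure, $F_\xi(g'x\Gamma)=e(\xi(g'))F_\xi(x\Gamma)$ for all $g'\in H$, so $F_\xi$ is an $H$-vertical character of frequency $\xi$; and (iii) the $x$-dependence enters only through $F(hx\Gamma)$ with the integral over a fixed torus, so $F_\xi$ has Lipschitz norm at most $(\delta/(QML))^{-O_k(d)^{O(1)}}$. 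Summing over $|\xi|\le R$ reconstructs the degree-$R$ Fej\'er mean of $t\mapsto F(\iota(t)x\Gamma)$ evaluated at $t=0$, which equals $F(x\Gamma)$ up to the Fej\'er error.

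The quantitative heart of the argument — and the only genuine obstacle — is choosing $R=(\delta/(QL))^{-O_k(d^{O(1)})}$ so that the $d'$-dimensional Fej\'er mean of an $A$-Lipschitz function on $\mathbb{T}^{d'}$ is within $\delta$ of that function in sup norm, uniformly in $x$; one cannot replace this by the raw Fourier partial sum, since Lipschitz functions need not have uniformly convergent Fourier series, so the Fej\'er smoothing is essential and one must track how the approximation rate degrades with $d'\le d$ and with $A$. Granting the standard fact that the degree-$R$ Fej\'er mean of an $A$-Lipschitz function on $\mathbb{T}^{d'}$ differs from it by $O_{d'}(A/R)$ in sup norm, taking $R$ as above gives the claimed approximation, and all remaining bookkeeping (rationality of the basis, height bounds on the surviving $\xi$, propagation of Lipschitz constants) is routine via the Mal'cev-basis lemmas already cited. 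This recovers the statement, which is the nilmanifold analogue of the vertical Fourier expansions used in \cite{GTZ12}.
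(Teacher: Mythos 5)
The paper does not prove this lemma; it is explicitly stated as ``a result more standard in the finitary regime'' and is simply restated for the reader's convenience (the analogous statements in the literature are, e.g., \cite[Lemma A.6]{Len23b} and its relatives). Your proof is the standard Fourier-expansion-along-the-central-torus argument and is essentially correct: identify the fibre $H/(H\cap\Gamma)$ with $\mathbb{T}^{d'}$ via the Mal'cev basis with $(QM)^{O_k(d^{O(1)})}$ distortion, define the candidate components as weighted fibrewise Fourier coefficients, verify the vertical-character and Lipschitz properties directly from the integral formula and centrality of $H$, and close with a quantitative kernel estimate. This is indeed how such expansions are established.

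Two small inaccuracies in the justification, neither fatal to the conclusion. First, you claim that ``Lipschitz functions need not have uniformly convergent Fourier series'' to motivate the Fej\'er mean; in fact $A$-Lipschitz functions on $\mathbb{T}^{d'}$ do satisfy the Dini--Lipschitz criterion, so raw Fourier partial sums converge uniformly, with rate $O(A\log R/R)$ coming from the Lebesgue constant. The real reason for preferring a positive summability kernel is that positivity makes the sup-norm estimate one line, and preserves the property that each $F_\xi$ inherits the Lipschitz constant of $F$ with no extra loss beyond the distortion factor. Second, the claimed Fej\'er rate $O_{d'}(A/R)$ should be $O_{d'}(A\log R/R)$: one has $\int |t|\,K_R(t)\,dt \asymp (\log R)/R$, not $1/R$. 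If you want a genuine $O(A/R)$ rate you should use a Jackson kernel (the normalised square of the Fej\'er kernel) rather than the Fej\'er kernel itself. Since the surviving parameters are all taken to quasi-polynomial size anyway, the extra logarithm is absorbed in the $O_k(d^{O(1)})$ exponent and the conclusion is unaffected.
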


Below is a standard lemma (e.g., \cite{BHK05, Lei15, Mo20}) regarding decompositions of sequences into nilsequences and null-sequences.
\begin{lemma}\label{lem:standard}
Suppose given $\epsilon > 0$, a sequence $a(n)$ can be written as a sum of a degree $k$ nil-sequence $a_{\mathrm{nil}, \epsilon}(n)$, a null-sequence $a_{\mathrm{null}, \epsilon}(n)$, and a sequence $a_\epsilon(n)$ which is uniformly bounded by $\epsilon$. Then $a(n)$ is the sum of a generalized nilsequence and a null-sequence.
\end{lemma}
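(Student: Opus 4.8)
\textbf{Proof proposal for Lemma~\ref{lem:standard}.}

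The plan is to exploit the structure of the space of generalized nilsequences together with a diagonalization (telescoping) argument, handling the error terms of decreasing size so that their accumulated contribution is again a generalized nilsequence plus a null-sequence. First I would apply the hypothesis with a sequence $\epsilon = \epsilon_j \to 0$, say $\epsilon_j = 2^{-j}$, to obtain for each $j$ a decomposition
$$a(n) = a_{\mathrm{nil}, j}(n) + a_{\mathrm{null}, j}(n) + a_j(n)$$
with $a_{\mathrm{nil}, j}$ a degree $k$ nilsequence, $a_{\mathrm{null}, j}$ a null-sequence, and $\snorm{a_j}_{\ell^\infty(\mathbb{Z})} \le 2^{-j}$. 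Subtracting consecutive decompositions gives
$$a_{\mathrm{nil}, j+1}(n) - a_{\mathrm{nil}, j}(n) = \big(a_{\mathrm{null}, j}(n) - a_{\mathrm{null}, j+1}(n)\big) + \big(a_j(n) - a_{j+1}(n)\big),$$
so the difference of two consecutive nilsequences is the sum of a null-sequence and a sequence bounded uniformly by $3 \cdot 2^{-j}$ in $\ell^\infty$. The key point, which I would isolate as the main technical step, is that a \emph{degree $k$ nilsequence} which is bounded in sup-norm by a small $\eta$ is already, up to an error controlled in $\ell^\infty$ (indeed up to a null-sequence by the Wiener--Wintner/van der Corput type arguments, but sup-norm smallness already suffices here), close in uniform norm to something we can absorb. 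More precisely, since a uniform limit of degree $k$ nilsequences is by definition a degree $k$ generalized nilsequence, it suffices to show that $\sum_j \big(a_{\mathrm{nil}, j+1} - a_{\mathrm{nil}, j}\big)$ converges appropriately.

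Concretely, I would argue as follows. Write $b_j(n) := a_{\mathrm{nil}, j+1}(n) - a_{\mathrm{nil}, j}(n)$. From the identity above, $b_j = c_j + d_j$ where $c_j$ is a null-sequence and $\snorm{d_j}_{\ell^\infty(\mathbb{Z})} \le 3 \cdot 2^{-j}$. The issue is that $c_j$ itself need not be small in $\ell^\infty$, so we cannot immediately telescope in the uniform norm. Here I would use that $b_j$ is a degree $k$ nilsequence and that null-sequences have $\ell^\infty$-unbounded-but-$L^2$-averaged-small behavior: the cleanest route is to observe that $b_j = c_j + d_j$ with $c_j$ a degree $k$ nilsequence minus $d_j$, hence $c_j$ is itself (the difference of) a degree $k$ nilsequence and a bounded error; but a degree $k$ nilsequence that is a null-sequence must in fact be identically a null-sequence with the property that its nilsequence part can be taken to have small sup-norm --- this uses the fact (following from equidistribution theory on nilmanifolds, e.g.\ Leibman's theorem as used in \cite{Lei04, GT12}) that a nilsequence $F(g^n \Gamma)$ which has vanishing uniform density of large values must have $\int |F|^2 \, d\mu_{G'/\Gamma'} $ small on the orbit closure $G'/\Gamma'$, whence $F$ can be modified on $G'/\Gamma'$ to be uniformly small at the cost of a null-sequence. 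Applying this to $b_j$, we may rewrite $b_j = c_j' + d_j'$ where now $c_j'$ is a null-sequence and $\snorm{d_j'}_{\ell^\infty(\mathbb{Z})} \le C 2^{-j}$ \emph{and} $d_j'$ is again a degree $k$ nilsequence. Then $\sum_j d_j'$ converges uniformly to a degree $k$ generalized nilsequence, $\sum_j c_j'$ converges in the null-sequence seminorm (the null-sequence seminorm satisfies the triangle inequality and the partial sums are Cauchy since $\snorm{c_j'} $ is summable --- here one should check that $\snorm{c_j'}_{\mathrm{null}} \lesssim \snorm{b_j}_{\ell^\infty} + \snorm{d_j'}_{\ell^\infty} \lesssim 2^{-j}$, using that null-sequences form a complete seminormed space and that the seminorm of a sequence is bounded by its $\ell^\infty$-norm), and adding the limits to $a_{\mathrm{nil}, 1}$ exhibits $a(n) = a_{\mathrm{nil},1}(n) + \sum_j d_j'(n) + \big(a_{\mathrm{null},1}(n) + \sum_j c_j'(n) + \lim_j a_j(n)\big)$, noting $\lim_j a_j = 0$, as a generalized nilsequence plus a null-sequence.

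The main obstacle I anticipate is precisely the lemma invoked in the middle paragraph: that a degree $k$ nilsequence which is small in the null-sequence seminorm can be split as a uniformly small nilsequence plus a genuine null-sequence. This is where equidistribution theory enters --- one decomposes the underlying polynomial orbit into its equidistributed components on subnilmanifolds via the factorization theorem, and on each component a null-sequence seminorm bound translates (via unique ergodicity) into an $L^2$ bound on the underlying function over the relevant subnilmanifold, which one then upgrades to a uniform bound by a standard mollification/compactness argument on the (finite-complexity, but here we only need qualitative) nilmanifold. I would cite \cite{BHK05, Lei15, Mo20} for this standard fact, as the lemma statement itself does, rather than reproving it; the remainder of the argument is the elementary telescoping and completeness bookkeeping sketched above.
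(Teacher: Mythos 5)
Your overall scaffolding (apply the hypothesis with $\epsilon_j \to 0$, look at differences of consecutive nilsequence parts, show these are small in $\ell^\infty$, then telescope) matches the paper's. The gap is in your middle technical step, and it is a real one.

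You correctly observe that $b_j := a_{\mathrm{nil},j+1} - a_{\mathrm{nil},j}$ is a nilsequence satisfying $|b_j(n)| \le C 2^{-j}$ outside a set of upper density zero. But you then try to close the gap by invoking a lemma of the form ``a nilsequence with small null-seminorm can be rewritten as a uniformly small nilsequence plus a genuine null-sequence,'' which you propose to prove via equidistribution: null-seminorm smallness gives an $L^2$ bound for the underlying function $F$ on the orbit closure, which you claim ``upgrades to a uniform bound by a standard mollification/compactness argument.'' This upgrade does not exist. A continuous function on a compact nilmanifold can have arbitrarily small $L^2$ norm and $L^\infty$ norm equal to $1$ (a narrow bump), and without an assumed Lipschitz or derivative bound on the nilsequences $a_{\mathrm{nil},j}$ (which the lemma does not provide) there is no route from $L^2$ to $L^\infty$. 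Relatedly, the re-splitting lemma itself is false: one cannot in general peel a uniformly small nilsequence off a nilsequence $F(g^n\Gamma)$ with small-but-nonzero null-seminorm and be left with an exact null-sequence, since by unique ergodicity the residual null-seminorm is $\|F - G\|_{L^2}$ for whatever nilsequence $G$ you subtract, and making that vanish forces $G = F$.

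The fact you actually need is much softer and is what the paper uses: a nilsequence whose absolute value exceeds $\eta$ at some $n_0$ does so on a syndetic set of $n$ (because return times of a minimal nilsystem to a neighborhood are syndetic), and a syndetic set has positive lower density. Hence if $|b_j(n)| \le \eta$ off a density-zero set, then $|b_j(n)| \le \eta$ for \emph{all} $n$ — no $L^2$ detour, no mollification, no re-splitting. You gesture at return times in passing, but your proof explicitly routes through the $L^2 \to L^\infty$ step, which is where it breaks. Once the $\ell^\infty$-Cauchy property of $(a_{\mathrm{nil},j})_j$ is established this way, your telescoping and the convergence of the null parts go through, and the argument coincides with the paper's.
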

\begin{proof}
Inputting $\epsilon = 1/(2m)$ we obtain decompositions $a(n) = c_{\mathrm{nil}, m}(n) + c_{\mathrm{null}, m}(n) + c_{\mathrm{small}, m}(n)$ with
$$c_{\mathrm{nil}, m} = a_{\mathrm{nil}, \epsilon}, c_{\mathrm{null}, m} = a_{\mathrm{null}, \epsilon}, c_{\mathrm{small}, m} = a_\epsilon.$$
For $m_1 \neq m_2$, we have via the triangle inequality that
\begin{equation}\label{eq:nilnulltriangle}
|c_{\mathrm{nil}, m_1}(n) - c_{\mathrm{nil}, m_2}(n)| \le |c_{\mathrm{null}, m_1}(n) - c_{\mathrm{null}, m_2}(n)| + |c_{\mathrm{small}, m_1}(n) - c_{\mathrm{small}, m_2}(n)|    
\end{equation}
Let $A$ be the set of all $n \in \mathbb{Z}$ such that $|c_{\mathrm{nil}, m_1}(n) - c_{\mathrm{nil}, m_2}(n)| > \frac{1}{m_1} + \frac{1}{m_2}$. Since $|c_{\mathrm{small}, m_1}(n)| \le \frac{1}{2m_1}$ and $|c_{\mathrm{small}, m_2}| \le \frac{1}{2m_2}$, we see that for $n \in A$ and using \eqref{eq:nilnulltriangle}, $|c_{\mathrm{null}, m_1}(n) - c_{\mathrm{null}, m_2}(n)| > \frac{1}{2m_1} + \frac{1}{2m_2}$. Hence,
$$\limsup_{N \to \infty} \mathbb{E}_{n \in [\pm N]} 1_A(n) \le \limsup_{N \to \infty} \left(\frac{1}{2m_1} + \frac{1}{2m_2}\right)^{-1}\mathbb{E}_{n \in [\pm N]} |c_{\mathrm{null}, m_1}(n) - c_{\mathrm{null}, m_2}(n)| = 0.$$
Consequently, on $A^c$, $|c_{\mathrm{nil}, m_1}(n) - c_{\mathrm{nil}, m_2}(n)| \le \frac{1}{m_1} + \frac{1}{m_2}$ and $1_A$ has upper density zero. 

We claim that $|c_{\mathrm{nil}, m_1}(n) - c_{\mathrm{nil}, m_2}(n)| \le \frac{1}{m_1} + \frac{1}{m_2}$ for all $n \in \mathbb{Z}$. Suppose there exists some $n_0 \in \mathbb{Z}$ such that $|c_{\mathrm{nil}, m_1}(n_0) - c_{\mathrm{nil}, m_2}(n_0)| > \frac{1}{m_1} + \frac{1}{m_2}$. Given any nilsystem, the set of return times to any neighborhood on the underlying nilmanifold is syndetic. Since $c_{\mathrm{nil}, m_1} - c_{\mathrm{nil}, m_2}$ is a nilsequence, the set of all $n$ such that $|c_{\mathrm{nil}, m_1}(n) - c_{\mathrm{nil}, m_2}(n)| > \frac{1}{m_1} + \frac{1}{m_2}$ must be syndetic since it is non-empty, but it must also be a subset of $A$, which is not syndetic since it has upper density zero. This is a contradiction. 

It follows that $c_{\mathrm{nil},m}$ is a Cauchy sequence and thus by converges to a generalized nilsequence. It also follows that $c_{\mathrm{null}, m}$ is a Cauchy sequence that converges to a nullsequence and $c_{\mathrm{small}, m}$ converges uniformly to zero. The lemma follows.
\end{proof}

We next require the following result which is essentially \cite[Lemma 2.2]{Lei15}.
\begin{lemma}\label{lem:Leibmanresult2}
Let $\mathcal{N}$ be the set of all generalized nilsequences and $\mathcal{Z}$ the set of all nullsequences and $\mathcal{M} = \mathcal{N} + \mathcal{Z}$. Then $\mathcal{M}$ is closed in $\ell^\infty(\mathbb{Z})$ and each element of $\mathcal{M}$ is represented uniquely as the sum of an element of $\mathcal{N}$ and $\mathcal{Z}$. Furthermore, the projections of $\mathcal{M}$ to $\mathcal{N}$ and $\mathcal{Z}$ are continuous in $\ell^\infty(\mathbb{Z})$.
\end{lemma}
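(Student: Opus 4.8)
\textbf{Proof proposal for Lemma~\ref{lem:Leibmanresult2}.}

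The plan is to first establish the uniqueness of the decomposition, which immediately gives well-definedness of the two projection maps, and then to prove that $\mathcal{M}$ is closed together with continuity of the projections; these last two facts will be proven simultaneously. For uniqueness, suppose $a = b_1 + c_1 = b_2 + c_2$ with $b_i \in \mathcal{N}$ and $c_i \in \mathcal{Z}$. Then $b_1 - b_2 = c_2 - c_1$ is simultaneously a generalized nilsequence and a null-sequence. But a generalized nilsequence $b$ that is a null-sequence must be identically zero: if $b(n_0) \neq 0$ for some $n_0$, then since $b$ is a uniform limit of nilsequences it is a uniform limit of sequences whose return times to a fixed neighborhood are syndetic, so the set $\{n : |b(n)| > |b(n_0)|/2\}$ is syndetic (nonempty plus an equicontinuity/limiting argument), contradicting $\limsup_N \mathbb{E}_{n \in [\pm N]} |b(n)|^2 = 0$. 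This is essentially the same argument used in the proof of Lemma~\ref{lem:standard}, so I would cite that reasoning. Hence $b_1 = b_2$ and $c_1 = c_2$, and the projections $\pi_{\mathcal{N}} : \mathcal{M} \to \mathcal{N}$, $\pi_{\mathcal{Z}} : \mathcal{M} \to \mathcal{Z}$ are well-defined.

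Next I would show $\mathcal{M}$ is closed in $\ell^\infty(\mathbb{Z})$ together with continuity of $\pi_{\mathcal{N}}$ and $\pi_{\mathcal{Z}}$. Given a Cauchy sequence $a_m = b_m + c_m \in \mathcal{M}$ converging uniformly to some $a \in \ell^\infty(\mathbb{Z})$, the key claim is that $\|b_{m_1} - b_{m_2}\|_{\ell^\infty(\mathbb{Z})} \le \|a_{m_1} - a_{m_2}\|_{\ell^\infty(\mathbb{Z})}$, and similarly for the $c_m$. To see this, set $\varepsilon = \|a_{m_1} - a_{m_2}\|_{\ell^\infty(\mathbb{Z})}$ and note $b_{m_1} - b_{m_2} - (a_{m_1} - a_{m_2}) = -(c_{m_1} - c_{m_2})$, so $b_{m_1} - b_{m_2}$ differs from the null-sequence $c_{m_2} - c_{m_1}$ by a sequence bounded by $\varepsilon$; therefore the set $A = \{n : |b_{m_1}(n) - b_{m_2}(n)| > \varepsilon\}$ has upper density zero (the density is controlled by $\mathbb{E}_{n \in [\pm N]} |c_{m_1}(n) - c_{m_2}(n)|^2 \to 0$ via Chebyshev). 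But $b_{m_1} - b_{m_2}$ is a generalized nilsequence, so by the syndeticity-of-return-times argument already invoked above, if it exceeded $\varepsilon$ anywhere it would exceed it on a syndetic set, contradicting the zero upper density of $A$. Hence $\|b_{m_1} - b_{m_2}\|_{\ell^\infty(\mathbb{Z})} \le \varepsilon$, so $(b_m)$ is uniformly Cauchy, and since $\mathcal{N}$ (uniform limits of nilsequences) is by definition closed under uniform limits, $b_m \to b \in \mathcal{N}$; then $c_m = a_m - b_m \to a - b =: c$ uniformly, and a uniform limit of null-sequences is a null-sequence, so $c \in \mathcal{Z}$. Thus $a = b + c \in \mathcal{M}$, proving closedness, and the contraction estimates $\|\pi_{\mathcal{N}}(a_{m_1}) - \pi_{\mathcal{N}}(a_{m_2})\| \le \|a_{m_1} - a_{m_2}\|$ (and likewise for $\pi_{\mathcal{Z}}$, which then satisfies $\|\pi_{\mathcal{Z}}(a) - \pi_{\mathcal{Z}}(a')\| \le 2\|a - a'\|$) give continuity.

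The main obstacle is the lemma that a generalized nilsequence which is also a null-sequence vanishes identically, and more precisely the quantitative version: a generalized nilsequence exceeding a threshold $\varepsilon$ at a single point exceeds $\varepsilon$ on a syndetic set. For a genuine nilsequence $F(g^n \Gamma)$ this is immediate from minimality of the closed orbit (or the fact that $\{n : g^n \Gamma \in U\}$ is syndetic for $U$ a nonempty open set in the orbit closure). The subtlety is passing to uniform limits: if $b = \lim_j b_j$ uniformly with $b_j$ nilsequences and $b(n_0) > \varepsilon$, pick $j$ with $\|b - b_j\|_\infty < (b(n_0) - \varepsilon)/2$, so $b_j(n_0) > \varepsilon + (b(n_0) - \varepsilon)/2$; the set where $b_j$ exceeds $\varepsilon + (b(n_0) - \varepsilon)/4$ is syndetic, and on it $b$ exceeds $\varepsilon + (b(n_0) - \varepsilon)/4 - (b(n_0)-\varepsilon)/2 > \varepsilon$ — wait, one must be slightly more careful with the constants, but a clean choice (e.g. comparing against $\varepsilon + (b(n_0)-\varepsilon)/3$ throughout) makes it work. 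I would isolate this as a short preliminary claim and then reuse it twice. Everything else — closure of $\mathcal{N}$ and $\mathcal{Z}$ under uniform limits, the Chebyshev bound on the density of $A$ — is routine.
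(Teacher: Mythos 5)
The paper does not actually prove this lemma—it cites \cite[Lemma 2.2]{Lei15}—but your self-contained argument correctly reconstructs Leibman's proof and is the same syndeticity-versus-upper-density technique the paper itself uses for Lemma~\ref{lem:standard}. The one wrinkle you flagged is real but routine: to apply Chebyshev you need a slack parameter, i.e., bound the upper density of $\{n : |b_{m_1}(n)-b_{m_2}(n)| > \varepsilon + \delta\}$ by $\delta^{-2}\limsup_N \mathbb{E}_{n\in[\pm N]}|c_{m_1}(n)-c_{m_2}(n)|^2 = 0$, conclude $\|b_{m_1}-b_{m_2}\|_\infty \le \varepsilon + 2\delta$ from the syndeticity argument, and send $\delta\to 0$.
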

We next need a result that the integral combination of generalization nilsequences is still a generalized nilsequence, which is recorded as \cite[Theorem 0.5]{Lei15}. First, we must define what we mean by an integral combination of generalized nilsequences.
\begin{definition}
An \textbf{integral combination of (resp. generalized) nilsequences} is a sequence of the form
$$n \mapsto \int_\Omega a_\omega(n) d\nu(\omega)$$
where $(\Omega, \nu)$ is a probability space and $\omega \mapsto a_\omega(n)$ is absolutely integrable and for almost every $\omega$, $n \mapsto a_\omega(n)$ is a (resp. generalized) nilsequence.
\end{definition}
We now state the relevant result.
\begin{prop}\label{prop:Leibmanresult}
Any integral combination of generalized nilsequences can be written as a sum of a generalized nilsequence and a null-sequence.
\end{prop}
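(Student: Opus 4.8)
\textbf{Proof proposal for Proposition~\ref{prop:Leibmanresult}.}

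The plan is to reduce the statement to the corresponding assertion for \emph{basic} (i.e.\ honest, single) generalized nilsequences, which is exactly \cite[Theorem 0.5]{Lei15}, and then to handle the limiting process involved in passing from basic generalized nilsequences to the uniform limits which we have christened ``generalized nilsequences'' in our Definition of that term. The key structural facts we will invoke are Lemma~\ref{lem:Leibmanresult2} (that $\mc{M} = \mc{N} + \mc{Z}$ is closed in $\ell^\infty(\mb{Z})$, that the decomposition into a generalized nilsequence and a null-sequence is unique, and that the two coordinate projections are $\ell^\infty$-continuous), together with the fact that a uniform limit of generalized nilsequences is a generalized nilsequence (immediate from the definition, since a uniform limit of uniform limits of nilsequences is again a uniform limit of nilsequences by a diagonal argument).

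First I would set up the integral combination $a(n) = \int_\Omega a_\omega(n)\,d\nu(\omega)$ with $n \mapsto a_\omega(n)$ a generalized nilsequence for a.e.\ $\omega$, and $\omega \mapsto a_\omega$ bounded in $\ell^\infty$ (after truncating on a set of measure $1-\eps$ and absorbing the tail into an $\ell^\infty$-error of size $O(\eps)$, at the cost — via Lemma~\ref{lem:standard} — of only having to prove the conclusion up to such errors). Next, for each $\omega$ write $a_\omega$ as a uniform limit of nilsequences, and for fixed $m$ choose for each $\omega$ a nilsequence $a_\omega^{(m)}$ with $\|a_\omega - a_\omega^{(m)}\|_{\ell^\infty} \le 1/m$; one can arrange $\omega \mapsto a_\omega^{(m)}$ to be measurable by a standard selection argument (the space of nilsequences of bounded complexity carries a natural Polish structure, and one exhausts by such spaces). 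Then $\int_\Omega a_\omega^{(m)}(n)\,d\nu(\omega)$ is an integral combination of \emph{basic} generalized nilsequences, so by \cite[Theorem 0.5]{Lei15} it lies in $\mc{M}$; and $\|a - \int_\Omega a_\omega^{(m)}\,d\nu\|_{\ell^\infty} \le 1/m \to 0$. Since $\mc{M}$ is closed in $\ell^\infty(\mb{Z})$ by Lemma~\ref{lem:Leibmanresult2}, we conclude $a \in \mc{M}$, i.e.\ $a$ is the sum of a generalized nilsequence and a null-sequence. (Strictly speaking this last step already completes the proof once the measurable selection is in hand; the continuity of the projections in Lemma~\ref{lem:Leibmanresult2} is only needed if one wants to additionally identify the nilsequence part as a suitable limit.)

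The main obstacle I expect is the measurable-selection issue: \cite[Theorem 0.5]{Lei15} (and its hypotheses) are phrased for integral combinations of basic generalized nilsequences, so one must genuinely produce, for each $m$, a $\nu$-measurable family $\omega \mapsto a_\omega^{(m)}$ of basic generalized nilsequences uniformly $1/m$-close to $a_\omega$ — and the approximation of a uniform limit of nilsequences by a single nilsequence is not canonical. The cleanest route is probably to note that one may take the approximants within a countable dense family (the underlying nilmanifolds, Mal'cev data, group elements, and Lipschitz functions of bounded complexity can all be taken from countable dense subsets), reducing the selection to choosing, measurably in $\omega$, an index from a countable set; this is handled by partitioning $\Omega$ into countably many measurable pieces according to which index works. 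Once this bookkeeping is done the rest is soft: everything else is an appeal to Lemma~\ref{lem:Leibmanresult2}, Lemma~\ref{lem:standard}, \cite[Theorem 0.5]{Lei15}, and the closure of $\mc{M}$ under uniform limits.
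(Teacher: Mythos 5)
The paper supplies no proof of this proposition; it is stated as a direct citation of \cite[Theorem 0.5]{Lei15}, and the surrounding prose makes clear the author regards that theorem as \emph{being} the proposition. So there is no ``paper proof'' to compare against. Your argument is a derivation of the statement from a putatively narrower form of Leibman's theorem (integral combinations of honest nilsequences rather than of their uniform limits), via the closedness of $\mathcal{M}=\mathcal{N}+\mathcal{Z}$ from Lemma~\ref{lem:Leibmanresult2}. If Leibman's theorem already covers integral combinations of generalized nilsequences in the sense of this paper — which the author's phrasing suggests — your reduction is logically superfluous, though not wrong; if Leibman only treats integrals of single nilsequences, your reduction is exactly what is needed. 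In either case the argument is sound: approximate $a_\omega$ by a nilsequence $a_\omega^{(m)}$ to $\ell^\infty$-accuracy $1/m$, measurably in $\omega$; apply Leibman to conclude $\int a_\omega^{(m)}\,d\nu\in\mathcal{M}$; observe $\|a-\int a_\omega^{(m)}\,d\nu\|_{\ell^\infty}\le 1/m$; and invoke closedness of $\mathcal{M}$.

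Two small points worth flagging. First, the measurable-selection issue you identify is genuine, and your fix (restrict to a countable dense family of nilsequences, then note that for fixed $b$ the map $\omega\mapsto\sup_n|a_\omega(n)-b(n)|$ is a countable supremum of $\nu$-measurable functions and hence measurable, so the partition of $\Omega$ by ``first index that works'' is measurable) is the correct one. Second, the initial truncation to a uniformly $\ell^\infty$-bounded family is needed for the estimate $\|a-\int a_\omega^{(m)}\,d\nu\|_{\ell^\infty}\le 1/m$ to make sense, but the definition of ``integral combination'' in this paper only guarantees $\omega\mapsto a_\omega(n)$ is absolutely integrable for each fixed $n$, which by itself does not permit the truncation to incur an error uniform in $n$; one either needs a uniform-integrability hypothesis (which holds in all applications, where the $a_\omega$ are uniformly one-bounded), or one should truncate pointwise in $n$ and then run a dominated-convergence argument with Lemma~\ref{lem:standard}. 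This is a patchable imprecision rather than a structural gap.
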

We also require facts about inverse limits of dynamical systems, which occupy the next two lemmas.
\begin{lemma}[Inverse limits respect joinings]\label{lem:inversetheoremjoining}
Suppose that $\mathbf{X} = (X, \mathcal{X}, \mu)$ is a probability space and for each $1 \le i \le k$, we have an increasing sequence of $\Sigma$ algebras $\Sigma_i^1 \subseteq \Sigma_i^2 \subseteq \Sigma_i^3 \subseteq \Sigma_{\mathrm{limit}}^i$ where $$\Sigma_{\mathrm{limit}}^i = \bigvee_{j = 1}^\infty \Sigma_j^i.$$
Then
$$\bigvee_{j = 1}^\infty (\Sigma_i^1 \vee \Sigma_i^2 \vee \cdots \vee \Sigma_i^k) = \Sigma_{\mathrm{limit}}^1 \vee \cdots \vee \Sigma_{\mathrm{limit}}^k.$$
\end{lemma}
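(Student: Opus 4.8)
The plan is to prove the equality of $\sigma$-algebras by proving containment in both directions, where one direction is trivial and the other follows from a standard approximation argument. Write $\Sigma^{\mathrm{join}} := \bigvee_{j=1}^\infty(\Sigma_j^1 \vee \cdots \vee \Sigma_j^k)$ and $\Sigma^{\mathrm{lim}} := \Sigma^1_{\mathrm{limit}} \vee \cdots \vee \Sigma^k_{\mathrm{limit}}$. For the containment $\Sigma^{\mathrm{join}} \subseteq \Sigma^{\mathrm{lim}}$, simply note that for each fixed $j$ and each $i$ we have $\Sigma_j^i \subseteq \Sigma_{\mathrm{limit}}^i$, so $\Sigma_j^1 \vee \cdots \vee \Sigma_j^k \subseteq \Sigma^{\mathrm{lim}}$ for every $j$; since $\Sigma^{\mathrm{lim}}$ is a $\sigma$-algebra containing each of these, it contains the $\sigma$-algebra they generate, which is $\Sigma^{\mathrm{join}}$.

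For the reverse containment, the key point is that $\Sigma^{\mathrm{join}}$ is a $\sigma$-algebra containing $\Sigma^i_{\mathrm{limit}}$ for each $i$. Indeed, fix $i$; for every $m$ we have $\Sigma_m^i \subseteq \Sigma_m^1 \vee \cdots \vee \Sigma_m^k \subseteq \Sigma^{\mathrm{join}}$. Since this holds for all $m$ and $\Sigma^{\mathrm{join}}$ is a $\sigma$-algebra, it contains $\bigvee_{m=1}^\infty \Sigma_m^i = \Sigma^i_{\mathrm{limit}}$. As this holds for each $1 \le i \le k$, and $\Sigma^{\mathrm{join}}$ is a $\sigma$-algebra, it contains $\Sigma^1_{\mathrm{limit}} \vee \cdots \vee \Sigma^k_{\mathrm{limit}} = \Sigma^{\mathrm{lim}}$. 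Combining the two containments gives the claim.

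I do not expect any serious obstacle here: the statement is purely a lattice-theoretic identity about the operations ``generated $\sigma$-algebra'' ($\vee$) and ``increasing union then generated'' ($\bigvee$), and it follows formally from the fact that $\bigvee$ distributes appropriately over finite joins once one unwinds the definitions as above. The only mild subtlety worth a sentence in the writeup is that $\bigvee$ over an index set always means ``the $\sigma$-algebra generated by the union,'' so that both sides are genuinely $\sigma$-algebras and the monotonicity/minimality arguments apply verbatim; no measure-theoretic input (such as regularity of $\mathbf{X}$, or martingale convergence) is actually needed for the identity itself, though of course it is used elsewhere when this lemma is applied to inverse limits of dynamical systems.
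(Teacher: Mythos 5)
The paper states this lemma in Appendix~A without proof, so there is no argument of the paper's to compare against; your two-way containment argument is correct, and is essentially the canonical proof. Unwinding the definition of $\bigvee$ as ``the $\sigma$-algebra generated by the union,'' both sides of the identity are the $\sigma$-algebra generated by $\bigcup_{i,j}\Sigma_j^i$, and your two paragraphs verify exactly this via minimality of generated $\sigma$-algebras. You are also right that the monotonicity hypothesis plays no role in the identity itself (it matters only in the companion Lemma~\ref{lem:inverselimitapproximation}, where martingale convergence is the point). One remark worth surfacing: the indexing in the paper's statement is scrambled --- the chain is written $\Sigma_i^1\subseteq\Sigma_i^2\subseteq\cdots$ with the superscript as the sequence index, but the limit $\Sigma_{\mathrm{limit}}^i=\bigvee_j\Sigma_j^i$ and the conclusion $\bigvee_j(\Sigma_i^1\vee\cdots\vee\Sigma_i^k)$ mix subscript and superscript roles inconsistently. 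Your normalization ($\Sigma_j^i$ is the $j$-th term of the $i$-th chain, $\Sigma^{\mathrm{join}}=\bigvee_j(\Sigma_j^1\vee\cdots\vee\Sigma_j^k)$, $\Sigma^{\mathrm{lim}}=\bigvee_i\Sigma^i_{\mathrm{limit}}$) is clearly the intended reading, and the argument is correct under that reading.
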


\begin{lemma}[Approximation of inverse limits]\label{lem:inverselimitapproximation}
Let $\Sigma_1 \subseteq \Sigma_2 \subseteq \Sigma_3 \subseteq \cdots$ be a increasing sequence of $\sigma$-algebras on a measure space $\mathbf{X} = (X, \mathcal{X}, \mu)$ with inverse limit $\Sigma$. Let $f \in L^2(\Sigma)$ and $\epsilon > 0$. Then there exists $M_0$ such that for $m \ge M_0$, $\|f - \mathbb{E}_\mu(f|\Sigma_m)\|_{L^2(\mu)} < \epsilon$.
\end{lemma}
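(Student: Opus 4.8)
The plan is to exploit the Hilbert space structure of $L^2(\mu)$ together with a standard density argument. First I would recall that for each $m$ the conditional expectation $\mathbb{E}_\mu(\cdot\mid\Sigma_m)$ is precisely the orthogonal projection of $L^2(\mathbf{X})$ onto the closed subspace $H_m := L^2(X,\Sigma_m,\mu)$, and that since the $\Sigma_m$ are increasing, $H_1 \subseteq H_2 \subseteq \cdots$ is an increasing chain of closed subspaces of $H := L^2(X,\Sigma,\mu)$ (here $\Sigma = \bigvee_m \Sigma_m$ is the inverse limit, i.e.\ the $\sigma$-algebra generated by $\bigcup_m \Sigma_m$).

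The key step is to show $\bigcup_m H_m$ is dense in $H$. Since the $\Sigma_m$ are increasing, $\mathcal{A} := \bigcup_m \Sigma_m$ is an algebra of subsets of $X$ generating $\Sigma$. By the standard approximation theorem for generated $\sigma$-algebras, for every $A \in \Sigma$ and every $\delta > 0$ there is $B \in \mathcal{A}$ with $\mu(A \triangle B) < \delta$; hence $\|1_A - 1_B\|_{L^2(\mu)}^2 = \mu(A\triangle B) < \delta$, so every indicator $1_A$ with $A \in \Sigma$ lies in the $L^2$-closure of $\bigcup_m H_m$. Taking finite linear combinations, all $\Sigma$-simple functions lie in this closure, and since simple functions are dense in $H$, we get $\overline{\bigcup_m H_m} = H$.

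Finally I would invoke the elementary Hilbert-space fact that for an increasing chain of closed subspaces with dense union the associated orthogonal projections converge strongly to the identity: given $f \in H$ and $\epsilon > 0$, choose $g \in \bigcup_m H_m$ with $\|f - g\|_{L^2(\mu)} < \epsilon$, and let $M_0$ be such that $g \in H_{M_0}$. For $m \ge M_0$ we have $g \in H_m$, so since $\mathbb{E}_\mu(f\mid\Sigma_m)$ is the nearest point to $f$ in $H_m$,
\[
\|f - \mathbb{E}_\mu(f\mid\Sigma_m)\|_{L^2(\mu)} \le \|f - g\|_{L^2(\mu)} < \epsilon,
\]
which is exactly the claim. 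I do not expect a genuine obstacle; the only point requiring care is the density claim, namely the measure-theoretic fact that $\Sigma$ is approximated (up to small $\mu$-error) by the generating algebra $\mathcal{A}$, which is entirely standard. As an alternative one could simply cite the $L^2$ martingale convergence theorem applied to the $L^2$-bounded martingale $f_m = \mathbb{E}_\mu(f\mid\Sigma_m)$, but the direct argument above is shorter and self-contained.
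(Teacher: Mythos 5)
Your proof is correct. The paper does not actually supply a proof of this lemma; it is stated without argument in Appendix~A as a standard auxiliary fact. Your argument is the canonical one: identify $\mathbb{E}_\mu(\cdot\mid\Sigma_m)$ with the orthogonal projection onto $H_m = L^2(X,\Sigma_m,\mu)$, observe that $\bigcup_m H_m$ is dense in $L^2(X,\Sigma,\mu)$ because any $A\in\Sigma=\bigvee_m\Sigma_m$ can be approximated in measure by sets from the generating algebra $\bigcup_m\Sigma_m$, and then use the elementary Hilbert-space fact that projections onto an increasing chain of closed subspaces with dense union converge strongly to the identity. The martingale-convergence route you mention is equally valid. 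One minor stylistic point: you do not need to single out the density claim as the ``only point requiring care''---the approximation of a generated $\sigma$-algebra by a generating algebra up to small symmetric difference is itself a textbook consequence of the monotone class (or Dynkin $\pi$--$\lambda$) theorem, so the whole argument is essentially self-contained given standard measure theory.
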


\section{Auxiliary additive combinatorics lemmas}
\begin{lemma}\label{lem:converse}
Fix $\delta\in(0,1/2)$, positive integers $\ell, \ell', k \le K$, $f_1, \dots, f_\ell:[N]^k \to \mathbb{C}$ be one-bounded, and let $F(g(n)\Gamma) \in \mathrm{Nil}^{\ell + \ell'}(M, m, k, 1)$. If $f\colon[N]^k\to\mb{C}$ is a $1$-bounded function such that
\[\bigg|\mb{E}_{n\in[N]^k}f(n) F(g(n)\Gamma) \cdot \prod_{i = 1}^\ell f_i(n)\bigg|\ge\delta,\]
then 
\[\snorm{f}_{U([N]^k, \dots, [N]^k, e_1[N], \dots, e_\ell[N])}\ge (\delta/(Mm))^{m^{O_K(1)}}.\]
\end{lemma}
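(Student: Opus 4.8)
The plan is to prove Lemma~\ref{lem:converse} by a standard ``Cauchy--Schwarz away the structured factors'' argument, exploiting the fact that a bounded-complexity nilsequence on $[N]^k$ is low in all the relevant box norms after sufficiently many derivatives, while the product $\prod f_i(n)$ with $f_i$ independent of coordinate $i$ can be absorbed into the box-norm inner product along the directions $e_1[N],\dots,e_\ell[N]$. Concretely, starting from the hypothesis
\[\bigl|\mb{E}_{n\in[N]^k}f(n)\,F(g(n)\Gamma)\prod_{i=1}^\ell f_i(n)\bigr|\ge\delta,\]
I would first apply the Cauchy--Schwarz--Gowers inequality (Lemma~\ref{lem:CauchySchwarzGowers}) repeatedly in the $\ell$ directions $e_1,\dots,e_\ell$. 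Since $f_i$ does not depend on coordinate $i$, a derivative $\Delta_{(h_i,h_i')}$ in the $e_i$ direction kills $f_i$ (the two shifts of $f_i$ differing only in coordinate $i$ coincide), so after $\ell$ applications all the $f_i$ are eliminated. This reduces matters to bounding
\[\mb{E}_{x\in\mb Z_{N'}^k}\mb{E}_{h_i,h_i'\in e_i[N]}\Delta_{(h_1,h_1'),\dots,(h_\ell,h_\ell')}\bigl(f(x)\chi(x)\bigr)\]
from below, where $\chi(n)=F(g(n)\Gamma)$; embedding into $\mb Z_{N'}$ for a prime $N'\asymp N$ costs only a constant factor, exactly as in the proof of Proposition~\ref{prop:initialmaneuvers}.

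Next I would deal with the nilsequence $\chi$. The key structural input is that after taking $\ell'+1$ further derivatives of $\chi$ in the $[N]^k$ directions, one obtains a derivative of a degree $\le \ell+\ell'$ nilsequence, and a degree-$d$ nilsequence with $\le d$ derivatives becomes essentially a vertical character on a lower-degree quotient; iterating, $\ell+\ell'+1$-fold derivatives of $\chi$ are $O(1)$-constant in a quantitatively controlled sense. More precisely: expand $\Delta_{h_i,h_i'}\chi$ using the differentiation/vertical-frequency machinery (Lemma~\ref{lem:nilcharacters}, Lemma~\ref{lem:differentiation}) to peel off one degree at a time; each peeling introduces a vertical frequency which can be Fourier-expanded and pigeonholed, reducing the degree of the ambient nilmanifold by one while keeping complexity and dimension bounded by $(Mm)^{m^{O_K(1)}}$. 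After $\ell+\ell'$ such peelings in the $\ell'+1$ copies of $[N]^k$ together with the $e_i$ directions, the iterated derivative of $\chi$ becomes a degree-zero object, i.e.\ essentially constant. Then the expanded box-norm expression $\snorm{f\chi}_{U([N]^k,\dots,[N]^k,e_1[N],\dots,e_\ell[N])}^{2^{\ell+\ell'}}$ splits (via Cauchy--Schwarz in the remaining vertical-frequency variables and the triangle inequality) into a main term controlled by $\snorm{f}_{U([N]^k,\dots,[N]^k,e_1[N],\dots,e_\ell[N])}$ times a bounded factor, plus error terms that are uniformly small once the Fourier approximations are taken to precision a suitable power of $\delta/(Mm)$.

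Assembling the chain of inequalities gives
\[\delta^{2^{\ell+\ell'}}\le (Mm)^{m^{O_K(1)}}\,\snorm{f}_{U([N]^k,\dots,[N]^k,e_1[N],\dots,e_\ell[N])}^{\,c}\]
for some $c=c(K)>0$, which upon rearrangement yields the claimed bound $\snorm{f}_U\ge(\delta/(Mm))^{m^{O_K(1)}}$. The main obstacle I expect is the bookkeeping in the degree-peeling step: one must verify that the vertical frequencies introduced at each stage live on a \emph{common} ambient nilmanifold of bounded complexity (so that the pigeonholing loses only a bounded factor at each of the $O_K(1)$ stages), and that the Lipschitz norms of the functions produced stay polynomially controlled; this is exactly the content of Lemma~\ref{lem:differentiation} and Lemma~\ref{lem:nilcharacters}, and the argument parallels the converse-direction arguments in \cite{GTZ12, LSS24b}, so the technical work is routine but needs care with the multidimensional box-norm structure (directions $e_i$ versus full $[N]^k$).
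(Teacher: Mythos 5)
Your plan matches the paper's proof in all essentials: reduce $F$ to a vertical character, Cauchy--Schwarz once in each $e_i$ direction to eliminate $f_i$ (using that $f_i$ is $n_i$-independent), and for each remaining direction peel one degree off $\chi$ by passing to the degree-reduced quotient $\overline{G^\square}=G^\square/G_s^{\triangle}$, Fourier-expanding to a vertical character and pigeonholing; the paper just packages this as a single induction on $(\ell,\ell')$ rather than doing all the differencing up front. Two small corrections to your citations: the Cauchy--Schwarz steps are \emph{direct} Cauchy--Schwarz in the averaging variables (not the Gowers inner-product inequality of Lemma~\ref{lem:CauchySchwarzGowers}, which goes the wrong way), and the degree-lowering of $\Delta_h\chi$ is the finitary factoring via $\overline{G^\square}$ from \cite[Lemma~A.3]{Len23b}, not the ergodic Lemma~\ref{lem:differentiation}. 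With those substitutions your argument is the paper's.
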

\begin{proof}
Let $s = \ell + \ell'$. In the degenerate case when $s = 0$, we take a degree $s$ nilsequence of complexity $M$ to be a constant function $\psi$ bounded by $M$. This implies that 
\[|\mb{E}_{n\in[N]^k}f(n)|\ge\eps/M\]
and by Cauchy--Schwarz we have 
\[\mb{E}_{n,n'\in[N]^k}f(n)\ol{f(n')}\ge (\eps/M)^2.\]
By unwinding definitions this implies the case $s = 0$.

For larger $s$, by applying \cite[Lemma~A.6]{Len23b} we may assume that 
\[\bigg|\mb{E}_{n\in[N]^k}f(n) \ol{F_{\xi}(g(n)\Gamma)} \prod_{i = 1}^\ell f_{i}(n)\bigg|\ge (\eps/M)^{O_K(d^{O_K(1)})}\]
where $F_{\xi}$ is a $(M/\eps)^{O_K(d^{O_K(1)})}$-Lipschitz function with $G_s$-vertical frequency $\xi$ bounded in height by $(M/\eps)^{O_K(d^{O_K(1)})}$, after Pigeonhole. First, we assume that $\ell \ge 1$. Cauchy--Schwarz in all but the $n_\ell$ variable implies that
\begin{equation}\label{eq:cseell}
\mb{E}_{n\in[N]^k,h\in[\pm N]}\Delta_{e_\ell h} f(n) F_{\xi}(g(n + he_\ell)\Gamma)\ol{F_{\xi}(g(n)\Gamma)} \prod_{i = 1}^{\ell - 1} f_{i}(n)\ge (\eps/M)^{O_K(d^{O_K(1)})},
\end{equation}
where we extend $f$ by $0$ in the usual manner. We define 
\[G^{\Box} = \{(g,g')\colon g,g'\in G,g^{-1}g'\in G_2\}\]
and note that this has a filtration $(G^{\Box})_i = \{(g,g')\colon g,g'\in G_i,g^{-1}g'\in G_{i+1}\}$ by \cite[Lemma~A.3]{Len23b} (with $G^{\Box} = (G^{\Box})_{1}$). Let $\Gamma^{\Box} = (\Gamma\times\Gamma)\cap G^{\Box}$ and note that 
\[\wt{F}_{\xi}((x,y) (\Gamma\times\Gamma)) := F_{\xi}(x \Gamma) \ol{F_{\xi}(y\Gamma)}\]
is invariant under $G^{\triangle}_{s}$ and thus descends to $\overline{G^\square} = G^\square/G_s^\triangle$. We have the appropriate complexity and Lipschitz bounds by \cite[Lemma A.3]{Len23b}.

Let 
\[(g(0),g(he_\ell)) = \{(g(0),g(he_\ell))\} \cdot [(g(0),g(he_\ell))]\]
with $\|\psi(\{(g(0),g(he_\ell))\})\|_\infty \le 1$ and $[(g(0),g(he_\ell))]\in\Gamma\times\Gamma$. Define
\[g_h'(n) = \{(g(0),g(he_\ell))\}^{-1}(g(n),g(n+he_\ell))[(g(0),g(he_\ell))]^{-1};\]
this is easily seen to be a polynomial sequence with respect to $G^{\Box}$. Thus, absorbing $\{(g(0),g(he_\ell))\}$ in $\wt{F}_\xi$, we have
\[\mb{E}_{n\in[N]^k,h\in[\pm N]}\Delta_{he_\ell}f(n)\ol{\wt{F}_{\xi}(g_h'(n) \overline{\Gamma^\square})} \prod_{i = 1}^{\ell - 1} f_{i}(n)\ge (\eps/M)^{O_K(d^{O_K(1)})}.\]
Applying induction, and deduce that 
\[\mb{E}_{h\in[\pm N]}\snorm{\Delta_{he_\ell} f}_{U([N]^k, \dots, [N]^k, e_1[N], \dots, e_{\ell - 1}[N])}\ge (\eps/M)^{O_K(d^{O_K(1)})}.\]
The desired result follows. We now assume that $\ell = 0$. We instead square instead of Cauchy-Schwarz so instead of \eqref{eq:cseell}, we obtain
\[\mb{E}_{n\in[N]^k,h\in[\pm N]^k}\Delta_{h} f(n) F_{\xi}(g(n + h)\Gamma)\ol{F_{\xi}(g(n)\Gamma)}\ge (\eps/M)^{O_K(d^{O_K(1)})}.\]
We now instead define
\[(g(0),g(h)) = \{(g(0),g(h))\} \cdot [(g(0),g(h))]\]
with $\|\psi(\{(g(0),g(h))\})\|_\infty \le 1$, and $[(g(0),g(h))]\in\Gamma\times\Gamma$, and define
\[g_h'(n) = \{(g(0),g(h))\}^{-1}(g(n),g(n+h))[(g(0),g(h))]^{-1}.\]
Again, absorbing $\{g(0), g(h)\}$ in $\wt{F}_\xi$, we have
\[\mb{E}_{n\in[N]^k,h\in[\pm N]^k}\Delta_{h}f(n)\ol{\wt{F}_{\xi}(g_h'(n) \overline{\Gamma^\square})}\ge (\eps/M)^{O_K(d^{O_K(1)})}.\]
By induction, this implies that
$$\mathbb{E}_{h \in [\pm N]^k} \snorm{\Delta_{h}f}_{U([N]^k, \dots, [N]^k)} \ge (\eps/M)^{O_K(d^{O_K(1)})}$$
which implies the desired result.
\end{proof}

We next require the following lemma about approximate homomorphisms. The proof is almost identical to \cite[Lemma A.1]{LSS24b} so we omit it. (The only change is that our proper progression is replaced with $P + H$ with $H$ a subgroup; the point is that $H$ has rank bounded by the dimension of the subgroup, which in this case is $D'$. To ensure linear independence of $(\{\xi \cdot v_j\})_{\xi \in S}$ as in \cite[Lemma 10.4]{GT08b}, we complete $\xi$ to be a set of generators of $H$.)

\begin{lemma}\label{lem:approximate}
Fix $\delta\in (0,1/2)$, $D' \in \mathbb{N}$, let $H_1,H_2,H_3,H_4\subseteq[N]^{D'}$ and let functions $f_i\colon H_i\to\mb{R}^d$ be such that there are at least $\delta N^{3D'}$ additive tuples $h_1 + h_2 = h_3 + h_4$ with
\[\snorm{(f_1(h_1) + f_2(h_2) - f_3(h_3) - f_4(h_4))_j}_{\mb{R}/\mb{Z}}\le\eps_j\]
for all $1\le j\le d$. Then there exists $H_1'\subseteq H_1$ with $|H_1'|\ge\exp(-(d\log(1/\delta))^{O_{D'}(1)})N$ such that
\[\norm{\bigg(f_1(h) - \sum_{i=1}^{d'}a_i \{\alpha_i \cdot h\} - b\bigg)_j}_{\mb{R}/\mb{Z}} \le \eps_j\]
for all $h\in H_1'$, for appropriate choices of $d'\le (d\log(1/\delta))^{O_{D'}(1)}$, $a_i,b\in\mb{R}^d$, and $\alpha_i\in(1/N')\mb{Z}^{D'}$ where $N'$ is a prime between $100N$ and $200N$.
\end{lemma}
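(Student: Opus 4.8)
\textbf{Proof proposal for Lemma~\ref{lem:approximate}.}

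The plan is to follow the proof of \cite[Lemma A.1]{LSS24b} essentially verbatim, tracking the one structural change: in the present setting the domain is $[N]^{D'}$ rather than $[N]$, so the ``Bohr set / GAP'' output of the Bogolyubov-type step naturally has the form $P + H$ where $P$ is a (bounded rank) generalized arithmetic progression and $H$ is a subgroup, rather than a single proper progression. First I would run the standard Balog--Szemer\'edi--Gowers machinery: the hypothesis of $\ge \delta N^{3D'}$ additive tuples $h_1+h_2=h_3+h_4$ on which the $\mathbb{R}/\mathbb{Z}$-valued quantity $f_1(h_1)+f_2(h_2)-f_3(h_3)-f_4(h_4)$ is small in each coordinate passes, via a pigeonholing over which small ``box'' in $(\mathbb{R}/\mathbb{Z})^d$ the values land in, to a genuine additive-energy statement for the graph of (a restriction of) $f_1$ inside $[N]^{D'}\times(\mathbb{R}/\mathbb{Z})^d$. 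Applying BSG and then Freiman/Ruzsa in $[N]^{D'}\times(\mathbb{R}/\mathbb{Z})^d$ yields a large subset $H_1'\subseteq H_1$, of density $\exp(-(d\log(1/\delta))^{O_{D'}(1)})$, on which $f_1$ (mod the box) agrees with a Freiman homomorphism, i.e. is controlled by a bounded-complexity coset progression.

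Next I would convert the coset-progression description into the explicit bracket-linear form in the statement. The subgroup part $H$ of the coset progression, being a subgroup of $(\mathbb{Z}/N'\mathbb{Z})^{D'}\times(\text{torus})$ for $N'$ a prime in $[100N,200N]$, has rank bounded by $D'$ (plus $d$ from the torsion-free torus directions, which are harmless); this is exactly where the change from the $D'=1$ case of \cite{LSS24b} enters. One then models elements of $P + H$ by a dilated lattice: choosing a generating set for $H$, completing the relevant frequencies $\xi$ to a full set of generators exactly as in \cite[Lemma 10.4]{GT08b}, one ensures the required linear independence of the family $(\{\xi\cdot v_j\})_{\xi\in S}$ so that the coordinates $\{\alpha_i\cdot h\}$ behave like independent parameters. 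This produces the representation $f_1(h) = \sum_{i=1}^{d'} a_i\{\alpha_i\cdot h\} + b$ (mod the box), with $\alpha_i\in(1/N')\mathbb{Z}^{D'}$ and $d'\le (d\log(1/\delta))^{O_{D'}(1)}$, valid for all $h\in H_1'$, with the per-coordinate $\mathbb{R}/\mathbb{Z}$-error at most $\varepsilon_j$ because the box was chosen of side $\le \varepsilon_j$ in coordinate $j$.

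The only genuinely new bookkeeping, and the step I expect to be the main (minor) obstacle, is verifying that passing from $D'=1$ to general $D'$ does not degrade the quantitative output beyond the stated $\exp(-(d\log(1/\delta))^{O_{D'}(1)})$ density and $d'\le (d\log(1/\delta))^{O_{D'}(1)}$ bounds: one must check that the Freiman dimension and the number of progression generators produced by the Ruzsa/Chang-type argument in $[N]^{D'}\times(\mathbb{R}/\mathbb{Z})^d$ is still bounded polynomially in $\log(1/\delta)$ and $d$, with the implied constant depending only on $D'$, and that completing $\xi$ to a generating set of the (rank $\le D'$) subgroup is harmless. Since $D'\le K$ is a fixed parameter throughout, all these quantities remain of the claimed shape, and the rest of the argument is identical to \cite[Lemma A.1]{LSS24b}; hence I would simply cite that proof and indicate the two modifications above, as the excerpt's remark already does.
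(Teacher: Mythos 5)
Your proposal is correct and follows the same route the paper intends: the paper itself omits the proof and simply notes (in the sentence preceding the statement) that one repeats \cite[Lemma A.1]{LSS24b} with the single change that the coset-progression output now has a subgroup part $H$ of rank bounded by $D'$, and that one completes the frequency set $\xi$ to a generating set of $H$ to retain the linear-independence needed in \cite[Lemma 10.4]{GT08b}. Your more detailed write-up of the BSG/Freiman/Bohr-set pipeline and the bookkeeping that everything stays $\exp(-(d\log(1/\delta))^{O_{D'}(1)})$ and $(d\log(1/\delta))^{O_{D'}(1)}$ is exactly the intended argument, so this matches the paper's proof.
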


We finally require a slightly stronger version of the splitting lemma.
\begin{lemma}\label{lem:splitdegreerank}
Let $J$ and $J'$ be finite downsets in $\mb{N}^k$ and fix $\eps \in (0,1/2)$. Suppose that $\beta(h_1,\ldots,h_k)$ is a nilsequence of multidegree-rank $(J\cup J', r)$ nilsequence with complexity and dimension $(M,d)$. Then there exists $1\le L\le (M/\eps)^{O_{J,J'}(d^{O_{J,J'}(1)})}$ such that 
\[\norm{\beta(h_1,\ldots,h_k) - \sum_{j=1}^{L}\beta_j(h_1,\ldots,h_k)\beta_j'(h_1,\ldots,h_k)}_{L^{\infty}(\mb{Z}^k)}\le\eps\]
with the $\beta_j$ being nilsequences of multidegree-rank $(J, r)$, the $\beta_j'$ being  nilsequences of multidegree-rank $(J', r)$, and $\beta_j,\beta_j'$ having complexity $((M/\eps)^{O_{J,J'}(d^{O_{J,J'}(1)})},d^{O_{J,J'}(1)})$.
\end{lemma}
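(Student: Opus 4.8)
\textbf{Proof strategy for Lemma~\ref{lem:splitdegreerank}.}
The plan is to follow the proof of the ordinary splitting lemma \cite[Lemma C.6]{LSS24b} essentially verbatim, upgrading the bookkeeping from a degree (or multidegree) filtration to a multidegree-rank filtration. Write $\beta(h_1,\dots,h_k) = F(g(h_1,\dots,h_k)\Gamma)$ where $G/\Gamma$ carries a multidegree-rank $(J\cup J',r)$ filtration. The key structural observation is that the subgroup $G_{(J\setminus J', r)}$ generated by those filtration pieces indexed by multidegrees in $(J\cup J')\setminus J'$ together with the rank-$r$ top piece is normal in $G$ (this is immediate from the commutator axiom of an $I$-filtration in Definition~\ref{def:arb-filtration}, since adding any index only moves one up the poset), and the quotient $G/G_{(J\setminus J',r)}$ carries a multidegree-rank $(J',r)$ filtration. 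Dually, one extracts a multidegree-rank $(J,r)$ structure. So the geometric content is the same as in \cite{LSS24b}; only the labelling of which generators land in which piece changes.

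First I would reduce to the case where $F$ has a vertical frequency with respect to the top piece of the filtration: apply Lemma~\ref{lem:nilcharacters} with the central subgroup $H$ taken to be (the connected component of) $G_{(\vec d, r)}$ for $\vec d$ ranging over the maximal elements of $J\cup J'$, paying a complexity loss of $(M/\eps)^{O(d^{O(1)})}$ and truncating the Fourier expansion at frequencies of that height, incurring an $\eps/2$ error. This is the step that produces the sum over $j$ of length $L$. Then, working with a single vertical character, I would use a Stone--Weierstrass / partition-of-unity argument on the nilmanifold $G/(\Gamma G_{(J\setminus J',r)})$ — exactly as in the passage around \cite[Lemma B.3]{LSS24b} used in Proposition~\ref{prop:initialmaneuvers} — to write $F$, up to uniform error $\eps/2$, as a finite sum of products $F_j \cdot F_j'$ where $F_j$ is pulled back from the multidegree-rank $(J,r)$ quotient and $F_j'$ is pulled back from the multidegree-rank $(J',r)$ quotient, with Lipschitz constants and dimensions polynomially controlled. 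Pulling these back along $g$ gives the desired $\beta_j(h_1,\dots,h_k) = F_j(g(h_1,\dots,h_k)\Gamma_J)$ and $\beta_j'(h_1,\dots,h_k) = F_j'(g(h_1,\dots,h_k)\Gamma_{J'})$, which are nilsequences of the asserted multidegree-ranks because $g$ descends to polynomial sequences on each quotient (Definition~\ref{def:I-polynomial} is stable under quotient homomorphisms of filtered groups).

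The main obstacle — and the only genuine difference from \cite{LSS24b} — is verifying that $G_{(J\setminus J',r)}$ is normal with the correct filtration on the quotient when one insists on keeping the rank coordinate $r$ \emph{fixed} across both $J$ and $J'$, rather than splitting the rank as well. The subtlety is that a commutator $[G_{(\vec a, r_1)}, G_{(\vec b, r_2)}]$ lands in $G_{(\vec a + \vec b, r_1 + r_2)}$, so rank is additive under commutators, and one must check that the multidegree-rank ordering of Definition~\ref{def:arb-filtration} (where $(\vec i', r')\preceq(\vec i, r)$ requires either $\vec i' < \vec i$ coordinatewise, or $\vec i' = \vec i$ and $r'\le r$) interacts correctly with this — concretely, that the pieces one wishes to quotient out form a genuine $\preceq$-downset-complement. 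This is a short but careful combinatorial check on the poset $\mathrm{MDR}$, analogous to Claim~\ref{clm:filtration}; once it is in place, all the quantitative estimates are the standard Mal'cev-basis computations cited in \cite{Len23b, LSS24b} and require no new ideas. I would state this normality/quotient fact as a one-line sub-claim, prove it from the poset axioms, and then point to \cite[Lemma C.6]{LSS24b} for the remainder.
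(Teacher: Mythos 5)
Your opening sentence identifies the correct strategy---follow \cite[Lemma C.6]{LSS24b} with bookkeeping upgraded to multidegree-rank---but the mechanics you then describe are not what that argument does, and the substitute you propose has a gap. You plan to Fourier-expand $F$ to a vertical character and then apply a Stone--Weierstrass/partition-of-unity argument directly on $G/\Gamma$ to write $F$ as a sum of products of functions pulled back from the two quotients $G/G_{>J'}$ and $G/G_{>J}$. For this to make sense, $F$ (or the nilsequence $F(g(\cdot)\Gamma)$) must factor, up to small error, through the joint quotient map $G/\Gamma\to\bigl(G/G_{>J'}\bigr)/\Gamma'\times\bigl(G/G_{>J}\bigr)/\Gamma''$, and that map has kernel $(G_{>J}\cap G_{>J'})$-orbits. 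A sum of products of pulled-back functions is necessarily invariant under $G_{>J}\cap G_{>J'}$, but a generic Lipschitz $F$ on $G/\Gamma$ is not, because that intersection need not be trivial: the multidegree filtration axioms give no nesting relation between $G_{\vec a}$ and $G_{\vec b}$ for incomparable $\vec a\in J'\setminus J$ and $\vec b\in J\setminus J'$, so nothing prevents the two normal subgroups from overlapping. The normality check you flag as ``the main obstacle'' is in fact immediate (as you note, $[G,G_{\vec i}]\le G_{\vec i}$); the real obstruction is this failure of joint separation.

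This is exactly why \cite[Lemma C.6]{LSS24b}---and the paper's proof of this lemma---passes through the universal nilmanifold $\widetilde G$ before attempting any product decomposition. In $\widetilde G$, which is free modulo the imposed degree and rank relations, one can actually verify that $\widetilde G_{>J}\cap\widetilde G_{>J'}$ is trivial: any iterated commutator involving at least one generator $e_{\vec a,i}$ with $\vec a\in(J\cup J')\setminus J$ and at least one $e_{\vec b,j}$ with $\vec b\in(J\cup J')\setminus J'$ lands in multidegree $\ge\vec a+\vec b$, which by the downset property lies outside $J$ and outside $J'$, hence outside $J\cup J'$, hence is killed by the defining relations. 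Combined with normality and the fact that the two subgroups generate $\widetilde G$, this gives $\widetilde G\cong(\widetilde G/\widetilde G_{>J})\times(\widetilde G/\widetilde G_{>J'})$ as a direct product, which is what licences the partition-of-unity step. The paper's proof therefore first writes $g$ in explicit Mal'cev coordinates, lifts to $g^{\mathrm{Univ}}$ on $\widetilde G$ (with the extra rank-$r$ relations killing long commutators at maximal multidegrees), and only then performs the splitting. Your proposal omits this lift entirely, and the argument does not go through on the original $G$. To fix it, reinstate the universal nilmanifold construction---that is the single nontrivial step---and then the rank-bookkeeping changes you describe are indeed the only remaining modifications.
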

\begin{proof}
As the proof in \cite[Lemma C.6]{LSS24b} is very lengthy and the changes required from \cite[Lemma C.6]{LSS24b} minimal, we only indicate the required changes and do not replicate the proof. 
We let 
\[\beta(h_1,\ldots,h_k) = F(g(h_1,\ldots,h_k)\Gamma)\]
where the underlying nilmanifold is $G/\Gamma$. As is standard, we may assume that $g(0,\ldots,0)=\mr{id}_G$ up to the insignificant change of adjusting $M$ to $M^{O_{J,J'}(d^{O_{J,J'}(1)})}$. Furthermore let the adapted Mal'cev basis for $G$ be $\mc{X}$. Instead of defining $G_{t}^{\vec{j}}$ in \cite[Lemma C.6]{LSS24b}, we define 
$$G_{(t, r')}^{\vec{j}} = \bigvee_{|\vec{i}| = t} G_{(\vec{i} + \vec{j}, r')}.$$
Note that this gives a degree-rank filtration for $G^{\vec{j}}_{(0, 0)}$. As these subgroups are all $M$-rational with respect to $\mc{X}$, there exists a Mal'cev basis $\mc{X}^{\vec{j}}$ adapted to this filtration of complexity $M^{O_{J,J'}(d^{O_{J,J'}(1)})}$ where each element is an $M^{O_{J,J', r}(d^{O_{J,J'}(1)})}$-rational combination of elements in $\mc{X}$ by \cite[Lemma~B.11]{Len23b}.

Using a variant of \cite[Lemma 10.2]{LSS24b}, adapted to multidegree filtrations, we may write
\[g(h_1,\ldots,h_k) = \prod_{\vec{j}\neq \vec{0}}\prod_{X_{\vec{j},i}\in\mc{X}^{\vec{j}}}\exp(X_{\vec{j},i})^{\alpha_{\vec{j},i}\prod_{\ell=1}^{k}(h_{\ell}^{j_{\ell}}/j_{\ell}!)}.\]
The product here is taken in $\vec{j}$ is increasing $|\vec{j}|$ and then lexicographic order and $X_{\vec{j},i}$ taken in increasing order of $i$.

We now lift to the universal nilmanifold. We define the universal nilmanifold $\wt{G}$ to be generated by generators $\exp(e_{\vec{j},i})^{t_{\vec{j},i}}$ for $\vec{j}\neq \vec{0}$, $1\le i\le\dim(G_{\vec{j}})$, and $t_{\vec{j},i}\in\mb{R}$. We now make a final change to the proof in \cite[Lemma C.6]{LSS24b}. We make the $r^*$-fold commutator $\exp(e_{\vec{j_1}, i_1}), \dots, \exp(e_{\vec{j_{r^*}, i_1}})$ vanish if
\begin{itemize}
    \item $j_1 + \cdots + j_{r^*} \not\in J \cup J'$ or
    \item $j_1 + \cdots + j_{r^*}$ is maximal in $J \cup J'$ and $r^* > r$.
\end{itemize}
Now fix $(d_1, \dots, d_k) \in I$. By defining $\widetilde{G}_{(d_1, \dots, d_k, r')}$ to be generated by the iterated commutators $\exp(e_{\vec{j_1}, i_1}), \dots, \exp(e_{\vec{j_{r^*}}, i_{r^*}})$ with $\vec{j_1} + \cdots + \vec{j_{r^*}}$ larger than $(d_1, \dots, d_k)$ in each component or if $\vec{j_1} + \cdots + \vec{j_{r^*}} = (d_1, \dots, d_k)$ and $r^* \ge r'$, we have a valid multidegree-rank filtration on $\widetilde{G}$ (see \cite[Lemma 10.4]{LSS24b}). The rest of the proof proceeds as in \cite[Lemma C.6]{LSS24b}. Note that even if these changes are implemented, $\widetilde{G}_{>J}$ with is the group generated by $\widetilde{G}_{(i, 0)}$ for all $i \in J' \setminus J$ and $\widetilde{G}_{>J'}$ are still normal and still generate $\widetilde{G}$ (because $\widetilde{G}_{> J} \cap \widetilde{G}_{>J'} = \mathrm{Id}_{\widetilde{G}}$) and $\widetilde{G}/\widetilde{G}_{> J}$ can be given a multidegree-rank $(J', r)$ filtration and $\widetilde{G}/\widetilde{G}_{> J'}$ can be given a multidegree-rank $(J, r)$ filtration.
\end{proof}

\section{From structured extensions to pleasant extensions}\label{sec:structuredtopleasant}
In the following section, we note how given a structured extension type result as in Theorem~\ref{thm:ergodicinversetheorem}, we can deduce a pleasant extensions result as in \cite{A09}.

We use the notion of idempotent classes from \cite{AusThe}. We now detail a construction which deduces a pleasant extensions result from a structured extension result. Let $C^1, \dots, C^n$ and $(D_i^j)_{i \in [m], j \in [n]}$ be order preserving and continuous idempotent classes such that $D_i^j\mathbf{X} \subseteq C^j\mathbf{X}$ for any system $\mathbf{X}$, i.e., $D_i^j\mathbf{X}$ is a factor of $C^j\mathbf{X}$.

\begin{comment}
\begin{itemize}
    \item $D_i^j X \subseteq C_jX$;
    \item $C_jD_i^jX = D_i^j(C_jX) = D_i^jX$
    \item $D_i^j(\bigvee_{i \in [m], j \in [\ell]} D_i^j X) = D_i^jX$
\end{itemize}
\end{comment}

\begin{theorem}\label{thm:structuredtopleasant}
Suppose $\mathbf{X}$ is a $\Gamma$-system such that there exists a $\Gamma$-system $\mathbf{Y}^j$ with $C^j\mathbf{X} = \bigvee_{i = 1}^m D_i^j\mathbf{Y}^j$. Then there exists an ergodic extension $\widetilde{\mathbf{X}}$ of $\mathbf{X}$ with $C^j\widetilde{\mathbf{X}} = \bigvee_{i = 1}^m D_i^j\mathbf{\widetilde{X}}$ for all $j \in [n]$.
\end{theorem}
\begin{proof}
Suppose we have established the theorem for $C^1, \dots, C^{\ell - 1}$. Note that the claims of
$$C^j\widetilde{\mathbf{X}} = \bigvee_{i = 1}^m D_i^j\mathbf{\widetilde{X}}$$
are closed under taking inverse limits of systems. (This is because taking inverse limits commutes with taking joinings.) We shall say that a system $\mathbf{Z}$ has property $P(\ell - 1)$ if
$$C^j\mathbf{Z} = \bigvee_{i = 1}^m D_i^j\mathbf{Z}$$
for all $i \in [\ell - 1]$.

We shall first construct an extension $\mathbf{X}_\infty$ which satisfies the property $Q$ of
$$C^\ell\mathbf{X}_\infty = \bigvee_{i = 1}^m D_i^j\mathbf{X}_\infty.$$
Consider the following diagram. 
\begin{center}
\begin{tikzcd}
\mathbf{X_1} \arrow[dd] &                                                  & \mathbf{X}_2 \arrow[ll] \arrow[ld] \arrow[dd] &                                                  & \cdots \arrow[ld] \arrow[ll] \\
                        & \mathbf{Y_1} \arrow[ld] \arrow[d]                &                                               & \mathbf{Y}_2 \arrow[ll] \arrow[d] \arrow[ld]     & \cdots \arrow[l]             \\
C^\ell\mathbf{X_1}      & \bigvee_{j = 1}^m D_j^\ell\mathbf{Y}_1 \arrow[l] & C^\ell\mathbf{X}_2 \arrow[l]                  & \bigvee_{j = 1}^m D_j^\ell\mathbf{Y}_2 \arrow[l] & \cdots \arrow[l]            
\end{tikzcd}
\end{center}
Here, $\mathbf{X}_1 = \mathbf{X}$, $\mathbf{Y}_i = \mathbf{Y}^\ell(\mathbf{X}_i)$ and $\mathbf{X}_{i + 1}$ is an ergodic relatively independent joining of $\mathbf{X}_i$ and $\mathbf{Y}_i$. Note that the arrow from $C^\ell\mathbf{X}_{i + 1}$ to $\bigvee_{i = 1}^m D_i^\ell\mathbf{Y}_i$ exists because the arrow from $C^\ell(\mathbf{X}_{i + 1})$ to $\mathbf{Y}_i$ exists and $C^\ell(\mathbf{Y}_i) \supseteq \bigvee_{j = 1}^m D_j^\ell\mathbf{Y}_i$. Let $\mathbf{X}_\infty = \lim_{\leftarrow} \mathbf{X}_i$ and $\mathbf{Z}_\infty = \lim_{\leftarrow} C^\ell \mathbf{X}_i$. Note that since $C^\ell$ preserves inverse limits, $C^\ell \mathbf{X}_\infty = \mathbf{Z}_\infty$. In addition, since $C^\ell\mathbf{X}_i \subseteq \bigvee_{i = 1}^m D^\ell_i \mathbf{X}_{i + 1} \subseteq C^\ell\mathbf{X}_{i + 1}$ by our hypotheses, it follows that under limits, we see that $C^\ell\mathbf{X}_\infty = \bigvee_{i = 1}^m D^\ell_i \mathbf{X}_{\infty}$ as desired. 

Now given a system $\mathbf{X} = \mathbf{X}_1$, we iteratively construct extensions $\widetilde{\mathbf{X}}_i$ such that
\begin{itemize}
    \item If $i$ is odd, $\widetilde{\mathbf{X}}_i$ satisfies property $P(\ell - 1)$
    \item If $i$ is even, $\widetilde{\mathbf{X}}_i$ satisfies property $Q$.
\end{itemize}
Taking an inverse limit, we obtain $\widetilde{\mathbf{X}}_\infty$ and letting $\widetilde{\mathbf{X}} = \widetilde{\mathbf{X}}_\infty$, we obtain a system which satisfies both $P(\ell - 1)$ and $Q$, and hence $P(\ell)$.
\end{proof}
Using this result, we deduce the following variant of the main inverse theorem:
\begin{theorem}\label{thm:ergodicinversetheorem2}
Fix integers $k, j, j'$ with $0 < j + 1, j' + 1, k$ and $j \le k$. Let $(X, \mathcal{X}, \mu, \vec{T})$ be an ergodic $\mathbb{Z}^k$ system and let $\mathbf{Z} = \mathbf{Z}_{T_1, T_2, \dots, T_j, \vec{T}, \dots, \vec{T}}$ be the Host-Kra factor for the seminorm $\|\cdot \|_{T_1, T_2, \dots, T_j, \vec{T}, \dots, \vec{T}}$ with $j' + 1$ copies of $\vec{T}$. Then $\mathbf{X}$ admits an extension to an ergodic system $(\widetilde{X}, \widetilde{\mathcal{X}}, \widetilde{\mu}, \widetilde{\vec{T}})$ with
$$\mathbf{Z}_{\mathbf{\widetilde{X}}, \widetilde{T_1}, \dots, \widetilde{T_j}, \widetilde{\vec{T}}, \dots, \widetilde{\vec{T}}} = I(\widetilde{T_1}) \vee I(\widetilde{T_2}) \vee \cdots \vee I(\widetilde{T_j}) \vee \Xi_{j + j', \mathrm{pronil}}$$
where $\Xi_{j + j', \mathrm{pronil}}$ is the largest inverse limit of $j + j'$-step $\mathbb{Z}^k$-nilfactors and if $S \in \mathrm{Aut}(\mathbf{X})$, $I(S)$ denotes the sigma algebra of $S$-invariant sets in $\wt{\mathcal{X}}$.
\end{theorem}

\begin{proof}
First, note that all multidimensional Host-Kra factors are order preserving and continuous idempotent classes. Order preserving and continuity can be seen since Host-Kra factors as being generated by dynamical dual functions. To show \emph{idempotentness}, we use that the function's Host-Kra-Gowers multidimensional norm vanishes if and only if its projection to the multidimensional Host-Kra factor is zero. Idempotentness then follows from the fact that a multidimensional Host-Kra factor-measurable function's multidimensional Host-Kra-Gowers norm is equal to the multidimensional Host-Kra-Gowers norm of the original system, which follows from \cite[Lemma 4]{Ho09} and the multidimensional Host-Kra-Gowers inequality.

Since the norm $\|\cdot\|_{T_1, \dots, T_j, \vec{T}, \dots, \vec{T}}$ controls both the norms $\|\cdot\|_{T_i}$ and $\|\cdot\|_{\vec{T}, \dots, \vec{T}}$ (with $j' + 1 + j$ copies of $\vec{T}$ (since $I(\vec{T}^{[k]}) \subseteq I(T_i^{[k]})$ on any multidimensional Host-Kra cubic measure space), it follows that the idempotent class of $\mathbf{Z}_{T_1, \dots, T_j, \vec{T}, \dots, \vec{T}}$ contains $I(T_i)$ and $\mathbf{Z}_{\vec{T}, \dots, \vec{T}}$. The theorem them follows from Theorem~\ref{thm:structuredtopleasant} and Theorem~\ref{thm:ergodicinversetheorem}.
\end{proof}

\bibliographystyle{amsplain0.bst}
\bibliography{main.bib}

\end{document}